\numberwithin{equation}{section}
\newtheorem{theorem}{Theorem}[section]
\newtheorem{lemma}[theorem]{Lemma}
\newtheorem{definition}[theorem]{Definition}
\newtheorem{remark}[theorem]{Remark}
\newtheorem{proposition}[theorem]{Proposition}
\newtheorem{conjecture}[theorem]{Conjecture}
\newtheorem{corollary}[theorem]{Corollary}
\newcommand{\R}{\mathbb{R}}
\newcommand{\Z}{\mathbb{Z}}
\newcommand{\N}{\mathbb{N}}
\newcommand{\HH}{\mathcal{H}^{n-1}}
\newcommand{\cH}{\mathcal H}
\newcommand{\bS}{\mathbb S}
\renewcommand{\epsilon}{\varepsilon}
\newcommand{\eps}{\varepsilon}
\newcommand{\e}{\varepsilon}
\renewcommand{\leq}{\leqslant}
\renewcommand{\le}{\leqslant}
\renewcommand{\geq}{\geqslant}
\renewcommand{\ge}{\geqslant}
\renewcommand{\emptyset}{\varnothing}
\renewcommand{\S}{\mathbb S}
\newcommand{\Per}{\mathrm{Per}}
\newcommand{\norm}[2][]{\left\|{#2}\right\|_{#1}}
\DeclareMathOperator{\dist}{dist}
\DeclarePairedDelimiter\abs{\lvert}{\rvert}
\let\oldabs\abs
\def\abs{\@ifstar{\oldabs}{\oldabs*}}
\begin{document}

\title[Nonlocal approximation of minimal surfaces]{Nonlocal approximation of minimal surfaces: \\optimal estimates from stability}

\author{Hardy Chan}
\address[Hardy Chan]{Universidad Aut\'onoma de Madrid,
Campus de Cantoblanco,
Nicol\'{a}s Cabrera 13--15, 28049 Madrid, Spain}
\email{hardy.chan@icmat.es}
\author{Serena Dipierro}
\address[Serena Dipierro]{University of Western Australia, Department of Mathematics and Statistics,
35 Stirling Highway, WA6009 Crawley, Australia}
\email{serena.dipierro@uwa.edu.au}
\author{Joaquim Serra}
\address[Joaquim Serra]{Eidgen\"ossische Technische Hochschule Z\"urich,
R\"amistrasse 101,
8092 Zurich,
Switzerland}
\email{joaquim.serra@math.ethz.ch}
\author{Enrico Valdinoci}
\address[Enrico Valdinoci]{University of Western Australia, Department of Mathematics and Statistics,
35 Stirling Highway, WA6009 Crawley, Australia}
\email{enrico.valdinoci@uwa.edu.au}

\maketitle

\begin{abstract}
Minimal surfaces in closed 3-manifolds are classically constructed via the Almgren-Pitts approach. The Allen-Cahn approximation has proved to be a powerful alternative, and Chodosh and Mantoulidis (in Ann. Math. 2020) used it to give a new proof of  Yau's conjecture for generic metrics and establish the multiplicity one conjecture.

The primary goal of this paper is to set the ground for a new approximation based on nonlocal minimal surfaces.  More precisely, we prove that if  $\partial E$ is a stable $s$-minimal surface in  $B_1\subset \R^3$ then:
\begin{itemize}
\item $\partial E\cap B_{1/2}$ enjoys a $C^{2,\alpha}$ estimate that is robust as $s\uparrow 1$ (i.e. uniform in $s$);
\item the distance between different connected components of~$\partial E\cap B_{1/2}$ must be at least of order~$(1-s)^{\frac 1 2}$ (optimal sheet separation estimate);
\item interactions between multiple sheets at distances of order $(1-s)^{\frac 1 2}$ are described by the  D\'avila--del Pino--Wei system.
\end{itemize}

A second important goal of the paper is to establish that hyperplanes are the only stable $s$-minimal hypersurfaces in $\R^4$, for $s\in(0,1)$ sufficiently close to $1$.
This is done by exploiting suitable modifications of the results described above. 
In this application, it is crucially used that our curvature and separations estimates hold without any assumption on area bounds (in contrast to the analogous estimates for Allen-Cahn).
\end{abstract}

\section{Introduction}

The existence and regularity of minimal hypersurfaces is one of the central problems in Riemannian geometry. Particularly influential is the following question raised by S.-T. Yau~\cite{Yau82} in 1982:
\begin{quote}Do all 3-manifolds contain infinitely many immersed minimal surfaces?
\end{quote}
Since many 3-manifolds do not contain any area-minimizing surfaces, one needs to look for critical points
(of the area functional). Such surfaces are naturally constructed by min-max ---i.e., mountain-pass---
methods, and  are of finite Morse index.

\subsubsection*{The Almgren-Pitts approach}  The most standard method for constructing min-max minimal surfaces is that of Almgren and Pitts \cite{Pit81}.  Building on it,  Irie, Marques, and Neves~\cite{IMN18}  gave in 2018 a positive answer to  Yau’s conjecture in the case of generic metrics. Soon after,  Song~\cite{Son18} was able to modify the methods from \cite{IMN18}  to establish the existence of infinitely many closed surfaces in {\em every} 3-manifold ---thus positively
answering Yau's conjecture.

In the case of generic metrics, the results (multiplicity one and  Morse index conjectures) established in \cite{MR4172621, MR4191255}  go far beyond Yau's conjecture\footnote{Also, the results from  \cite{IMN18}   apply to the more general case of hypersurfaces in $n$-dimensional manifolds, for $3\le n\le 7$.}: for every integer $p\ge1$ there exists an embedded minimal surface with index $p$ and area asymptotic to $c_{\circ} p^{1/3}$  (where $c_
\circ$ is an explicit constant\footnote{The constant $c_\circ$ is computed as the product of the (dimensional) constant in the Weyl law times the volume of the ambient manifold raised to the power $2/3$; see \cite{MR4191255}.}).
However, when the same type of construction is run on manifolds that contain degenerate minimal surfaces (for generic metrics, this cannot happen), some information on the index of the constructed surfaces is lost.  It seems complicated to understand in detail what is happening in these cases using the same methods.  Thus, it is natural to ask if some alternatives to the Almgren-Pitts approach
can provide complementary information.

\subsubsection*{The Allen-Cahn approach} Guaraco \cite{MR3743704} proposed an alternative to the Almgren-Pitts theory, later extended by Gaspar–Guaraco
\cite{MR3814054}. Roughly speaking, the idea is to obtain minimal surfaces as limits as $\eps\downarrow 0$ of (the zero level sets of) critical points of the Allen-Cahn functional:
\[
J_\eps(u) : = \int_M \bigg(\eps \frac{|\nabla u|^2}{2} +\frac 1 \eps\frac{(1-u^2)^2}{4} \bigg) d{\rm V}_g,
\]
where $(M,g)$ is a closed 3-manifold, 
$\nabla$ denotes the gradient on~$M$,  and ${\rm V}_g$ is the volume measure.

The connection between the Allen-Cahn functional
and minimal surfaces has been long known:  heuristically, the level sets of critical points of $J_\eps$  converge to minimal surfaces as~$\eps\downarrow 0$ (see \cite{MR0445362, MR1310848, MR1803974}).
For fixed $\eps>0$, critical points of $J_\eps$ solve the following semilinear PDE (known as the Allen-Cahn equation):
\begin{equation}\label{whtiowhoihwh}
-\Delta u = \eps^{-2} (u-u^3).
\end{equation}
Hence, thanks to elliptic regularity, for fixed $\eps>0$, the set of {\em all} critical points of~$J_\eps$ is compact (in a very strong sense). 
This kind of strong compactness simplifies the construction of min-max critical points of~$J_\eps$ (defined on~$H^1(M)$), making it comparable to the construction of critical points for Morse functions on finite dimensional manifolds (the Palais-Smale condition holds). In particular, from a technical viewpoint, the min-max construction for fixed $\eps>0$ becomes much simpler than that in the Almgren-Pitts setting. 

The following natural step is to send $\eps\downarrow 0$ and try to recover minimal surfaces in the limit.  However, this is hard since the nonlinearity in \eqref{whtiowhoihwh} blows up, and the elliptic estimates become useless. 
In \cite{MR4045964}, Chodosh and Mantoulidis succeeded in performing this delicate passage to the limit by carefully exploiting the finite Morse index property of min-max solutions.  Among other things,  they re-proved the existence of infinitely many minimal surfaces in closed 3-manifolds (with generic metrics) and established the multiplicity one conjecture of Marques and Neves.  
The most critical steps in their work are: 
\begin{itemize}
\item Proving a (uniform in $\eps$) curvature estimate for the level sets of stable critical points of~$J_\eps$ (assuming area bounds).
\item Showing that whenever multiple sheets converge towards the same minimal surface, this limit must be degenerate (this requires a careful analysis of the interactions between different surface sheets encoded in the Toda system).
\end{itemize}
To achieve this, \cite{MR4045964} builds on ideas developed by Wang and Wei  in \cite{MR3935478} to classify finite Morse index solutions of the
Allen-Cahn equation in $\R^2$.
For further information on the Allen-Cahn approximation of minimal surfaces, see also  \cite{MR4021161,DePhilippis-Pigati} and the references therein.

\subsubsection*{A new approach via nonlocal minimal surfaces} In  \cite{Yauforth}, finite Morse index nonlocal minimal surfaces in a manifold are introduced and studied (see Section~\ref{secnolocminsurf} here below
for their definition).
Surprisingly, these surfaces are in many respects better behaved than their classical (local) counterparts. 
For instance (see \cite{Yauforth} for precise statements), the collection of all nonlocal minimal surfaces with index bounded by $m$  in a given closed $n$-manifold is compact in a very robust sense: In particular,
any sequence has a subsequence that converges strongly in the natural Hilbert norm associated with nonlocal minimal surfaces. Therefore,  roughly speaking, the ``area'' of the limit equals the limit of the ``areas'' although nonlocal minimal surfaces cannot have ``multiplicity greater than one''.
Moreover, in low dimensions ($n=3,4$), the collection of all nonlocal minimal surfaces with index bounded by~$m$ is compact in the strongest possible sense, namely as ``$C^2$ submanifolds''.
Such strong compactness properties make nonlocal minimal surfaces particularly well-suited for min-max constructions.
Indeed, in \cite{Yauforth} the existence of infinitely many nonlocal minimal surfaces in any given close $n$-manifold is established ---i.e., the nonlocal analog of Yau's conjecture. This result is much less technical than in the case of classical minimal surfaces. Also, it works for all metrics and not just for generic ones.

Nonlocal minimal surfaces depend on a ``fractional'' parameter $s\in (0,1)$ ---see Section~\ref{secnolocminsurf} below. Similarly to the Allen-Cahn approach, classical minimal surfaces arise as a limit case:  when the parameter $s$ approaches $1$.
Thus, it is very natural to ask whether one can send~$s\uparrow1$ in the constructions from \cite{Yauforth} to recover the classical Yau's conjecture. 
As in the  Allen-Cahn approach, two critical steps need to be addressed: \begin{itemize}\item Proving curvature estimates that are robust as~$s\uparrow 1$. \item Extracting information about interactions whenever multiple surface sheets converge towards the same limit surface. \end{itemize}
This paper (see Theorem~\ref{thmmain1} below) addresses the aforementioned critical issues and thus sets the ground for the nonlocal approximation approach described above.

In Section~\ref{secadv} below, we briefly discuss some potential advantages of the nonlocal approximation and future directions.

\medskip

Moreover, building on the new methods, we can address a central open problem in the theory of nonlocal minimal surfaces: we establish for the first time the classification of stable $s$-minimal cones in~$\R^4$, for~$s$ sufficiently close to~$1$ (see Theorem~\ref{thmmain2} below).

\subsection{Nonlocal minimal hypersurfaces} \label{secnolocminsurf}Nonlocal minimal (hyper)surfaces in $\R^n$ were introduced and first studied by Caffarelli, Roquejoffre, and Savin in \cite{MR2675483}. 

Following this highly influential paper, several works extended (several parts of) the classical theory of minimal hypersurfaces to the new nonlocal setting ---see, e.g., \cite{MR3090533, CaffVal, BFV, FFMMM, MR3680376, MR3798717, MR3934589, MR4116635}.

\subsubsection*{Nonlocal minimal hypersurfaces in a closed manifold}
As said above, nonlocal minimal surfaces were initially defined in the Euclidean space $\R^n$  (in \cite{MR2675483}). To study questions in Riemannian geometry, one needs a definition of nonlocal minimal surfaces on manifolds. The paper~\cite{Yauforth} gives a natural\footnote{
Notice that, looking at the definition from \cite{MR2675483},  it is not obvious that there must be  a canonic (in particular coordinate-free) definition of nonlocal minimal surfaces on a manifold 
(using charts and partitions of unity would lead to artificial, i.e. coordinate dependent and arbitrary, notions).} (canonic) definition that we recall next.

Let $n\ge 2$ and  let $(M,g)$  be a closed $n$-dimensional  Riemannian manifold. 

Following the viewpoint of Caccioppoli and De Giorgi, (smooth, two-sided) minimal hypersurfaces in $M$   can be regarded as boundaries $\partial E$ of subsets~$E\subset M$ which are {\em critical points}
of the {\em perimeter functional}.\footnote{The perimeter functional is defined as the total variation of the gradient of the characteristic function of the set $E$, that is
$$\sup \left\{ \int_M \, \chi_E \,{\rm div} T \, d{\rm Vol} \right\},$$
where the supremum is taken among all the smooth vector fields~$T$ on $M$ with $|T|\le1$.}
Analogously, nonlocal (or fractional) minimal hypersurfaces in $M$ are defined as the boundaries~$\partial E$ of subsets~$E\subset M$ which are critical points of the {\em fractional perimeter} in~$M$. 

We first need to give a canonic definition of  $H^{s/2}(M)$.
As observed in \cite{Yauforth},  this can be done in at least three equivalent ways:
 \begin{itemize}
 \item[(i)] Using the {\em heat kernel}\footnote{As customary, by heat kernel here we mean
the fundamental solution of the heat equation $\partial_t u = \Delta u$ on~$M$, where~$\Delta$ denotes the Laplace-Beltrami operator.} $G(x,y ,t)$ of $M$ we can set
 \begin{equation}\label{wethiowhoihw2}
K_s(x,y) :=  
 \int_{0}^\infty  G(x,y,t)\,\frac{dt}{t^{1+s/2}}.
 \end{equation}
and
 \begin{equation}\label{wethiowhoihw}
 [f]^2_{H^{s/2}(M)} := \iint_{M\times M}(f(x)-f(y))^2 K_s(x,y) \,d{\rm V}_g(x)\,d {\rm V}_g(y).
 \end{equation}

\item[(ii)] Following a  {\em spectral approach}, setting
\begin{equation}\label{SPEDE}[f]^2_{H^{s/2}(M)} := \sum_{k\ge 1} \lambda_k^{s/2} \langle f,\varphi_k \rangle^2_{L^2(M)},\end{equation} where $\varphi_k$ is an orthonormal basis of eigenfunctions of the Laplace-Beltrami operator~$(-\Delta)_M$ and~$\lambda_k$ are the corresponding eigenvalues. 

\item[(iii)] Using the {\em Caffarelli-Silvestre extension}, setting 
$$ [f]^2_{H^{s/2}(M)} := \inf\left\{ \int_{M\times \R_+} z^{1-s}|\nabla_{x,z} F(x,z)|^2 \, d{\rm V}_g(x) \otimes dz {\mbox{ s.t. }} F(\, \cdot\,, 0)=f \right\}.$$
\end{itemize}

One can prove that (i)-(iii) define the same norm (not merely equivalent norms), up to multiplicative constants that we omit here ---see~\cite{Yauforth} for the details. We emphasize the interest of having a canonic  $H^{s/2}$ norm on a closed manifold (for instance, it seems that one cannot define a canonic $W^{s,1}$ norm on a manifold,   as this type of norm can only be defined via charts and therefore will depend on the choice of an atlas and a partition of unity).

Now, the fractional perimeter in the closed manifold $M$ is defined as follows:
given $s\in (0,1)$ and a (measurable) set~$E\subset M$, we define
\begin{equation}\label{whieohtoiwhiow}
\Per_s(E) := (1-s)[\chi_E]^2_{H^{s/2}(M)},
\end{equation}
where  $\chi_E$ is the characteristic function of $E$.

One can see that, for every subset $E\subset M$  with smooth boundary,
$(1-s)\Per_s(E) \to \Per(E)$  as $s\uparrow 1$ (up to a multiplicative dimensional constant, see \cite{MR3586796}
and also~\cite{MR1942130, MR2782803, MR2765717}
for further details in the case of $\R^n$).

Given a set $E\subset M$ with smooth boundary, we say that $\partial E$ is  $s$-{\em minimal} (or a {\em critical point} of $\Per_s$) if, for every smooth vector field $X$ on $M$, we have that
\begin{equation}\label{critpoint}
\frac{d}{dt} \Big|_{t=0} \Per_s( \phi^X(E,t))  =0,
\end{equation}
where $\phi^X: M\times \R \to M$ denotes the associated vector flow
satisfying~$\partial_t \phi^X = X\circ \phi^X$ and~$\phi^X(x,0)=x$.

For the definition of stable and finite Morse index nonlocal minimal surfaces in $M$ ---and criteria relating the Morse index and (almost-)stability for collections of disjoint subdomains--- see \cite{Yauforth}.


\subsubsection*{Stable nonlocal minimal hypersurfaces in $\R^n$}

By a suitable modification of~\eqref{wethiowhoihw}-\eqref{whieohtoiwhiow}, one can also define the fractional perimeter $\Per_s$  in $\R^n$, as well as on other non-compact, stochastically complete, Riemannian manifolds $\widetilde M$. For this, one needs to introduce relative fractional perimeters (e.g., we want to say that a hyperplane in $\R^n$  is a $s$-minimal surface even though a half-space has infinite $s$-perimeter).

Given a bounded open set $\Omega\subset \widetilde M$, a relative $s$-perimeter in $\Omega$  is a functional denoted by~$\Per_s(\,\cdot, \Omega)$ and satisfying the following two properties:
\begin{itemize}
\item[(I)] $\Per_s(E, \Omega) - \Per_s(F, \Omega) = \Per_s(E) - \Per_s(F) $ for all (measurable) sets~$E$ and~$F$ such that~$E\setminus \Omega = F\setminus \Omega$ and $\Per_s(F)<\infty$.
\item[(II)] $\Per_s(E, \Omega)<\infty$ if $\partial E$ is a smooth submanifold in a neighbourhood of the compact set~$\overline\Omega$.
\end{itemize}

A natural
Let for simplicity  $\widetilde M=\R^n$ and notice that  \eqref{wethiowhoihw2}  makes complete sense in $\R^n$. As a matter of fact,  a simple computation shows that~$K_s(x,y) = |x-y|^{-n-s}$  (up to a multiplicative constant).
Now, given a bounded open set $\Omega \subset \R^n$, put
\[
\Per_s(E,\Omega):=  (1-s)\iint_{(\R^n\times \R^n )\setminus (\Omega^c\times \Omega^c)} \frac{ (\chi_E(x)-\chi_E(y))^2}{|x-y|^{n+s}}\, dx\,dy,
\]
where $\Omega^c:=\R^n\setminus \Omega$. It is easy to show that, with this definition, properties (I) and (II) hold.

In this framework, we say that $\partial E$ is $s$-{\em minimal} in $\Omega$ if $\Per_s(E;\Omega)<\infty$ and \eqref{critpoint} holds for all vector fields $X\in C^\infty_c(\Omega; \R^n)$ (in other words if $E$ is a critical point of  $\Per_s(\,\cdot\,,\Omega)$ with respect to  $C^2$ inner variations not moving the complement of~$\Omega$).

If, in addition, we have that
\begin{equation}\label{stablecritpoint}
\frac{d^2}{dt^2} \Big|_{t=0} \Per_s( \phi^X(E,t), \Omega)  \ge 0 \quad \mbox{for all } X\in C^\infty_c(\Omega; \R^n)
,\end{equation}
then we say that  $\partial E$ is  {\em stable} (in $\Omega$).

Finally, we recall that $E$ is called a {\em minimizer} of $\Per_s(\,\cdot\,,\Omega)$ (and~$\partial E$ is called
a {\em minimizing} $s$-minimal  hypersurface in $\Omega$)  if
\[
\Per_s( F, \Omega) \ge \Per_s( E, \Omega) \quad \mbox{for all $F\subset\R^n$ such that  $F\setminus \Omega = E\setminus \Omega$}.
\]
Minimizers are stable critical points (but not necessarily the other way around).

If  $\partial E$  is  assumed to be an~$(n-1)$-submanifold  of $\R^n$ of class $C^2$ in a neighborhood of  the  compact set $\overline \Omega$, then one can show  (see \cite{MR2675483, FFMMM}) that $\partial E$ is $s$-minimal in $\Omega$ if, and only if,
\begin{equation}\label{eqsminimal-kern}
(1-s)\,{\rm p.v.} \int_{\R^n} \frac{(\chi_{E^c} -\chi_E)(y)}{|x-y|^{n+s}} \,dy= 0\quad    \mbox{for all } x\in \partial E \cap \Omega.
\end{equation}
Here above and in the rest of this paper, the notation ``p.v.'' stands for ``in the principal value sense'' (that is ${\rm p.v.}\int_{\R^n} \frac{(\chi_{E^c} -\chi_E)(y)}{|x-y|^{n+s}} \,dy:= \lim_{r\downarrow 0}\int_{\R^n\setminus B_r(x)} \frac{(\chi_{E^c} -\chi_E)(y)}{|x-y|^{n+s}} \,dy$).
Equation \eqref{eqsminimal-kern} is called the $s$-{\em minimal surface equation}.

At this point, we have already introduced all the terminology needed for the paper's main results. Thus we can proceed to state them.
\subsection{Main results}

Our first main result concerns robust (as $s\uparrow 1$)  $C^{2,\alpha}$ estimates and optimal sheet separation estimates for stable $s$-minimal surfaces in $\R^3$
(generalizations to 3-manifolds hold with similar proofs, but we will focus on~$\R^3$ for simplicity). 
It reads as follows: 

\begin{figure}[h]
\includegraphics[width=0.4\textwidth]{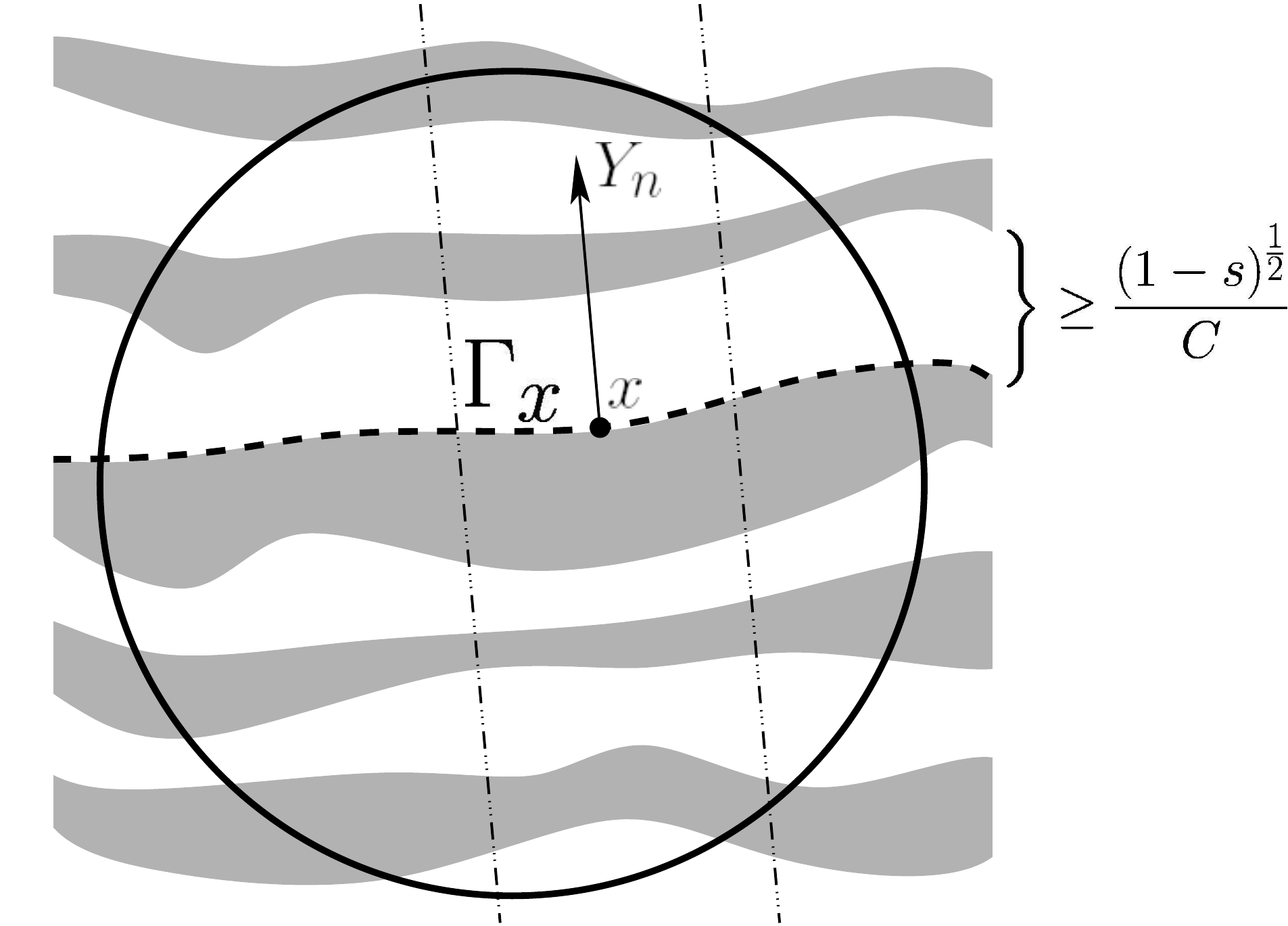}
\caption{\em Sketch of the results in Theorem~\ref{thmmain1}.}
\label{FIG1}
\end{figure}

\begin{theorem}\label{thmmain1}
Let $E\subset \R^n$, with~$n\in\{2, 3\}$. There exist dimensional constants $s_*\in(0,1)$, $\alpha\in(0,1)$, $r_\circ\in(0,1)$, and $C>0$ such that the following holds true when~$s\in[s_*,1)$.

Suppose that $\partial E$ is a stable $s$-minimal set in $B_2$. Assume that $\partial E\cap B_2$ is an $(n-1)$-submanifold of  class $C^2$ and let $\nu$ denote a unit normal vector field.

For any given~$x \in \partial E \cap \overline{ B_1}$,  let  $Y_1, \dots ,Y_n$  be an Euclidean coordinate system with origin at~$x$ and $Y_n$-axis pointing in the $\nu(x)$ direction.

Let also~$\Gamma_x$ be the connected component $\partial E\cap \{ Y_1^2+\dots  +Y_{n-1}^2 \le r_\circ^2 \}$  containing $x$.

Let~$g: B_{r_\circ}' \to \R$ be  such that $Y_n = g(Y_1, \dots, Y_{n-1})$ on  $\Gamma_x$.

Then,
\begin{equation}\label{wtniowhoi1}
\|g\|_{C^{2,\alpha}(B_{r_\circ}')} \le C.
\end{equation}

Moreover,  the following ``sheet separation estimate'' holds:
\begin{equation}\label{wtniowhoi2}
\partial E \cap \left\{  Y_1^2+\dots+ Y_{n-1}^2 \le r_\circ^2 \;{\mbox{ with }}\;  \big|Y_n -g(Y_1, \dots, Y_{n-1})\big|\le
\frac{(1-s)^{\frac 12 }}{C}
\right\}   = \Gamma_x.
\end{equation}
\end{theorem}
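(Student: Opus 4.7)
I would prove \eqref{wtniowhoi1} and \eqref{wtniowhoi2} simultaneously, via a joint contradiction--compactness argument: the two conclusions are intrinsically coupled, because a priori distinct components of $\partial E$ could accumulate onto the same classical minimal surface as $s\uparrow 1$, producing a higher-multiplicity limit that destroys any uniform curvature bound. Ruling this out is exactly the content of~\eqref{wtniowhoi2}, which is why the curvature bound cannot be decoupled from the separation estimate.

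Assume towards a contradiction that the theorem fails along a sequence of stable $s_k$-minimal sets $\partial E_k$ in $B_2$ with $s_k\to s_\infty\in[s_*,1]$. Standard (non-uniform) interior regularity for stable $s$-minimal surfaces---via the nonlocal Simons-type identity and the classification of stable $s$-minimal cones in $\R^2$ and $\R^3$ as hyperplanes---disposes of the case $s_\infty<1$, so we may take $s_k\uparrow 1$. Pick points $x_k\in\partial E_k\cap\overline{B_1}$ where the failure occurs, and extract an $L^1_{\mathrm{loc}}$ subsequential limit $E_\infty$ using compactness of sets with uniformly bounded $(1-s_k)$-weighted $s_k$-perimeter. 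The $\Gamma$-convergence results of~\cite{MR3586796} then identify $\partial E_\infty$ as a classical stable minimal surface in $B_{3/2}$, and in dimensions $n\in\{2,3\}$ the Fischer--Colbrie--Schoen / do~Carmo--Peng rigidity forces $\partial E_\infty$ to be a union of parallel hyperplanes.

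The crux is excluding two such parallel components at finite Euclidean distance. I would test the stability condition~\eqref{stablecritpoint} against a vector field $X_k$ engineered to shift one candidate sheet relative to another, rescaled by $(1-s_k)^{-1/2}$ so that the natural interaction scale is of order~$1$. Expanding the kernel $|x-y|^{-n-s}$ into its short-range and long-range pieces, the nonlocal second variation produces, at leading order, the D\'avila--del~Pino--Wei interaction potential for planar sheets. Since no stable configuration of the DDPW system admits two sheets at arbitrarily small rescaled distance, any pair of sheets at Euclidean distance $\ll(1-s_k)^{1/2}$ would render the quadratic form in~\eqref{stablecritpoint} strictly negative, contradicting stability. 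This gives~\eqref{wtniowhoi2} directly, and then $s$-uniform Schauder estimates for~\eqref{eqsminimal-kern} applied to the now single-sheet limit yield~\eqref{wtniowhoi1}.

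The main obstacle is the sharp expansion of the nonlocal second variation in the previous step: one has to isolate the leading-order DDPW interaction at scale $(1-s_k)^{1/2}$ with a remainder that is negligible uniformly as $s_k\uparrow 1$. Concretely, the kernel must be split into near- and far-range regimes and matched against the tangential and normal components of the test field, so that the DDPW system emerges with its exact physical constants rather than being buried under lower-order contributions or logarithmic corrections. Making this expansion robust in $s$, and combining it with enough geometric control on the approximating sheets to legitimize the rescaling, is the technical heart of the argument.
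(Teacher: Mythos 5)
Your proposal has a genuine gap at its very first quantitative step: the compactness argument has no foundation, because no uniform area bound is available. You invoke ``compactness of sets with uniformly bounded $(1-s_k)$-weighted $s_k$-perimeter'', but the theorem assumes no such bound, and none holds a priori: for the configuration of Remark~\ref{remexample}, with $\sim(1-s)^{-1/2}$ parallel sheets of unit size in $B_1$, one has $(1-s)\,\Per_s(E^s;B_1)\sim (1-s)^{-1/2}\to\infty$ and the classical perimeter blows up at the same rate, while $\chi_{E^s}$ converges only weakly-$*$ to the constant $1/2$ and has no $L^1_{\rm loc}$ limit set $E_\infty$ at all. This is exactly the point the paper stresses (the estimates hold \emph{without} area bounds, in contrast with Allen--Cahn). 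Moreover, even when an $L^1_{\rm loc}$ limit does exist, that convergence carries no curvature information and loses all multiplicity, so it cannot contradict a blow-up of the second fundamental form along the sequence; upgrading to $C^2$ convergence requires precisely the uniform $C^{2,\alpha}$ estimate \eqref{wtniowhoi1} you are trying to prove. The paper avoids this circularity by first proving all estimates \emph{under} a quantitative curvature bound, and only then removing that bound via B.~White's point-picking/rescaling trick, where the rescaled sheets converge in $C^2$ thanks to Proposition~\ref{prop:C2a} and stability is transmitted to the limit either through the separation estimate or through a positive Jacobi field, yielding a contradiction with the curvature estimate for stable minimal surfaces in $\R^3$.

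The second half of your plan is ordered backwards for the same reason. In the paper, the D\'avila--del Pino--Wei expansion (Lemma~\ref{DdPWsys}) is a \emph{consequence} of the uniform $C^{2,\alpha}$ bounds and of the lower bound $g_{i+1}-g_i\ge c\sqrt{1-s}$; it cannot be used to produce the separation estimate \eqref{wtniowhoi2}, since making the kernel expansion rigorous with uniformly negligible remainders (the difficulty you yourself flag) requires exactly those inputs, and your proposed instability statement for DDPW configurations with small rescaled gap is not supplied. The actual mechanism is different and quantitative: one tests the stability inequality of Proposition~\ref{prop:stabilityineqloc} with test functions that are cutoffs in the \emph{layer index}, and via telescoping and covering arguments obtains the optimal $L^{-2}$ bound $(1-s)\int (g_{i+1}-g_i)^{-2}\le C$ (Proposition~\ref{propL-2}, the hard step precisely because no area bound is assumed); then the $s$-mean curvature equation is decoupled and a Harnack inequality for the resulting linearized nonlocal operator, combined with the $L^{-2}$ bound and a scaling argument, upgrades this to $\inf(g_{i+1}-g_i)\ge c\sqrt{1-s}$ in $n=3$ (Proposition~\ref{prop:decouple-3D}); robust Schauder estimates then give \eqref{wtniowhoi1}. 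Without these ingredients your argument, as outlined, does not close.
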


A sketch of the result obtained in Theorem~\ref{thmmain1} is provided in Figure~\ref{FIG1}.
Roughly speaking, Theorem~\ref{thmmain1} provides two precious pieces of information:
\begin{itemize}
\item[1.]
The graph describing~$\Gamma_x$ (which is assumed to be of class $C^2$, but only qualitatively) 
enjoys a $C^{2,\alpha}$ estimate that  is {\em robust} as~$s\uparrow1$.
\item[2.] While~$\partial E$ can consist of many sheets, other sheets must be {\em separated from~$\Gamma_x$ by, at least, a distance ~$\frac{(1-s)^{\frac 1 2}}{C}$},
for a {\em uniform}~$C>0$ as~$s\uparrow1$.
\end{itemize}

\begin{remark}\label{remexample}{\rm
The exponent $\frac 1 2$ in the ``sheet separation estimate''  \eqref{wtniowhoi2} is {\em sharp}. Indeed, if, for example, one defines
\[
E^s := \bigcup_{k\in\mathbb Z }\left\{2k \le \frac{x_n}{C_* \,\sqrt{1-s}}\le 2k+1\right\}
,\]
then, by symmetry, $\partial E^s$ is a~$s$-minimal set for all $s\in (0,1)$. Moreover, $\partial E^s$ is stable provided that $s$ is sufficiently close to $1$ and $C_*$ is chosen large enough (independently of $s$); see~\cite[Remark~2.3]{newprep} for details.
}\end{remark}

\begin{remark}\label{Allen-Cahn}{\rm
We emphasize that Theorem \ref{thmmain1} can be regarded as the counterpart of the regularity theory developed by Chodosh-Mantoulidis \cite{MR4045964} and Wang-Wei \cite{MR4021161} in the context of Allen-Cahn approximations. 
In this direction, a Riemannian manifold version of Theorem \ref{thmmain1} has been recently used by Florit-Simon \cite{Florit} to prove a Weyl law for $s$-minimal surfaces which is robust as $s\uparrow1$.
Florit-Simon's results are used in particular to give a novel proof of the density and equidistribution of classical minimal surfaces for generic metrics in three dimensions, showcasing fractional minimal surfaces also as a new approximation theory for the area functional.
}
\end{remark}

\begin{remark}\label{THRE}{\rm
The same result in Theorem~\ref{thmmain1} holds in dimension~$n=4$ if we assume that~$E$ has a
conical structure
with singularity away from the region of interest, e.g. under the assumption that $t(E-x_\circ) =    E-x_\circ$ for all $t>0$, for some $x_\circ$ with $|x_\circ|\ge 100$
(see Proposition~\ref{prop:C2a} below). As we shall see, such an estimate plays a key role in the classification of stable $s$-minimal hypercones in~$\R^4$ (see Theorem~\ref{thmmain2} below). }
\end{remark}
Our second main result classifies stable  $s$-minimal cones in $\R^4$ for $s$ close to $1$:

\begin{theorem}\label{thmmain2}
There exists $s_*\in(0,1)$ such that for every $s\in (s_*, 1)$ the following holds true.
Let $E\subset \R^4$ be a $s$-minimal hypercone that is stable in $\R^4\setminus \{0\}$. Suppose that  $\partial E$ is nonempty and has a smooth trace on $\mathbb S^{3}$.
Then, $\partial E$ must be a hyperplane.
\end{theorem}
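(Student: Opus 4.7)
The plan is to argue by contradiction, combining the $\R^4$ curvature and sheet separation estimates for cones (announced in Remark~\ref{THRE}) with Simons' classical theorem for stable minimal cones in $\R^n$ for $n\le 7$, and a rigidity step based on the linearization of the $s$-minimal surface equation at a hyperplane.

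Suppose the conclusion fails: there exist $s_k\uparrow 1$ and stable $s_k$-minimal hypercones $E_k\subset\R^4$ with nonempty smooth trace $\Sigma_k:=\partial E_k\cap\mathbb S^3$, none of them a hyperplane. The $\R^4$ cone version of the curvature estimate (Remark~\ref{THRE}), applied to $\partial E_k$ after a rescaling that places the cone apex at distance $\ge 100$ from the point of observation, together with the cone invariance $\lambda\partial E_k=\partial E_k$, gives a uniform $C^{2,\alpha}$ bound for $\Sigma_k$ in $\mathbb S^3$. By Arzel\`a--Ascoli, along a subsequence $\Sigma_k\to\Sigma_\infty$ in $C^2$ for a smooth closed $2$-submanifold $\Sigma_\infty\subset\mathbb S^3$; let $E_\infty$ denote the cone over $\Sigma_\infty$. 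The factor $(1-s)$ in~\eqref{whieohtoiwhiow} makes the fractional Dirichlet seminorm converge to the classical Dirichlet energy as $s\uparrow 1$, so the stability inequality~\eqref{stablecritpoint} passes to the classical second variation of area in the limit. Combined with the $C^2$ convergence of the link, this yields that $E_\infty$ is a stable classical minimal cone in $\R^4\setminus\{0\}$. By Simons' theorem, $\partial E_\infty$ is then a hyperplane $H$, which (after a rotation) we may take to be $\{x_4=0\}$.

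The main obstacle is to upgrade this limit information into the equality $\partial E_k=H$ for $k$ large. After a further extraction, each connected component of $\Sigma_k$ converges in $C^2$ to a portion of $\Sigma_\infty=H\cap\mathbb S^3$, and so is a small $C^{2,\alpha}$ normal graph over $H$. By the cone invariance, the corresponding sheet of $\partial E_k$ is a positively $1$-homogeneous graph $\{x_4=\psi_k(x')\}$ over $H$ in $\R^3\setminus\{0\}$ with $\|\psi_k\|_{C^2(\mathbb S^2)}\to 0$. To show that $\psi_k$ is linear (so that the sheet coincides with a hyperplane through the apex $0$), one renormalizes $\widetilde\psi_k:=\psi_k/\|\psi_k\|_{L^\infty(\mathbb S^2)}$ and passes to the limit in the $s_k$-minimal surface equation~\eqref{eqsminimal-kern} linearized around $H$; this linearized operator, as $s_k\uparrow 1$, converges to the Laplacian on $\R^3$, and its positively $1$-homogeneous solutions on $\R^3\setminus\{0\}$ are exactly the restrictions of linear functions (spherical harmonics of degree $1$). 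A standard improvement-of-flatness iteration then forces $\psi_k$ itself to be linear, so each sheet of $\partial E_k$ is a hyperplane through the origin. Two distinct hyperplanes through the origin meet $\mathbb S^3$ in great $2$-spheres that intersect in a great circle, violating the smoothness of $\Sigma_k$ as a $2$-submanifold of $\mathbb S^3$; hence $\partial E_k$ consists of a single hyperplane, contradicting the standing assumption.
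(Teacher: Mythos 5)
Your first stage (uniform $C^{2,\alpha}$ bounds for the trace via the conical version of the curvature estimate, $C^2$ subconvergence of the links, passage of stability to the limit, and identification of the limit cone as a hyperplane via the classical classification) is essentially the same as Step 1 of the paper's argument (Proposition~\ref{propKEY2}), modulo the care needed to pass stability to the limit when several sheets collapse onto the equator. The genuine gap is in your rigidity step. You renormalize each sheet by $\widetilde\psi_k=\psi_k/\|\psi_k\|_{L^\infty}$ and claim that the limit of the $s_k$-minimal surface equation linearized around $H$ is the Laplacian (equivalently, $\Delta_{\mathbb S^2}+2$ on the link), so that $1$-homogeneous solutions are linear and an improvement-of-flatness iteration forces each sheet to be a hyperplane. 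This is not correct when $N\ge 2$: the nonlocal interaction between distinct sheets enters the equation for $\psi_k$ at order exactly $\sigma_k/d(x,\Gamma_{k,j})\sim\sqrt{\sigma_k}$, which is the same order as $\|\psi_k\|_{L^\infty}\le C\sqrt{\sigma_k}$ (by the optimal separation estimate the sheets sit at distance $\sim\sqrt{\sigma_k}$). Hence after renormalization these terms do \emph{not} vanish; the limit of the renormalized equation is the nonlinear D\'avila--del Pino--Wei/Toda-type system \eqref{eq:DdPW} (see \eqref{whiorhwiohw}, Lemma~\ref{DdPWsys}, Corollary~\ref{cor:PRO-LI}), not the Jacobi equation, and its $1$-homogeneous solutions are not classified by degree-one spherical harmonics. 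An iteration based purely on the linearization therefore cannot rule out multi-sheet configurations; indeed, Remark~\ref{remexample} shows that stable multi-sheet configurations at separation $\sim\sqrt{1-s}$ do exist in general, so whatever excludes them for cones in $\R^4$ must use stability quantitatively in the renormalized regime.

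This is reflected in the fact that in your proposal stability is only used to identify the limit hyperplane, and the dimensional restriction never enters the rigidity step, whereas the statement is expected to be dimension-sensitive. The paper closes precisely this gap: it derives from stability a renormalized inequality for the quantities $1/(\widetilde g_j-\widetilde g_i)^2$ on $\mathbb S^{n-2}$ (Corollary~\ref{prop:FW02}, with the Hardy constant $\frac{(n-3)^2}{4}$ coming from an almost-optimal radial test function), and then runs a Farina-type integral estimate on the Toda system: testing the difference of consecutive equations against the reciprocal of the separation gives
\begin{equation*}
\int_{\mathbb S^{n-2}}|\nabla_{\mathbb S^{n-2}}\log v_i|^2\,d\cH^{n-2}+(n-2)\,\cH^{n-2}(\mathbb S^{n-2})\le 4\int_{\mathbb S^{n-2}}v_i^{-2}\,d\cH^{n-2}+o(1),
\end{equation*}
which contradicts the stability bound because $n-2>\frac{(n-3)^2}{4}$ for $3\le n\le 7$. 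Without an argument of this kind (or some other quantitative use of stability at the level of the renormalized system), your proposal does not prove the theorem; it would at best handle the single-sheet case $N=1$, and even there the passage from ``the renormalized limit is linear'' to ``$\psi_k$ is exactly linear'' needs a genuine quantitative scheme (handling the kernel of $\Delta_{\mathbb S^2}+2$ by re-tilting the hyperplane), not just a citation to a standard iteration.
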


The relation between Theorems~\ref{thmmain1} and~\ref{thmmain2} is that our proof of the latter result will critically rely on a version of the former one which applies to hypercones in  $\R^4$ (recall Remark~\ref{THRE}).\medskip

As a good example of the striking consequences that Theorem~\ref{thmmain2} entails, let us give the following:

\begin{corollary}\label{cormain}
Let $E\subset \R^4$ be a stable $s$-minimal surface and assume that $\partial E$ is smooth. If~$s\in (s_*,1)$, with $s_*$
as in Theorem~\ref{thmmain2}, then $\partial E$ must be a hyperplane.
\end{corollary}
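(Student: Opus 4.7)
The plan is a blow-down argument to reduce to Theorem~\ref{thmmain2}, combined with the monotonicity formula for the fractional perimeter to upgrade ``the tangent cone at infinity is a hyperplane'' to ``$E$ is a hyperplane''.

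Fix an arbitrary $x_0 \in \partial E$ and set $E_R := R^{-1}(E - x_0)$ for $R > 0$. Each $E_R$ is a smooth stable $s$-minimal set in $\R^4$. By the standard $L^1_{\mathrm{loc}}$-compactness for $s$-minimal surfaces, one extracts a subsequence $R_k \to \infty$ such that $E_{R_k} \to E_\infty$ in $L^1_{\mathrm{loc}}(\R^4)$. The monotonicity formula for $\Per_s$, combined with this convergence, forces $E_\infty$ to be a cone with vertex at the origin; second-variation inequalities pass to the $L^1_{\mathrm{loc}}$-limit, so $\partial E_\infty$ is stable in $\R^4 \setminus \{0\}$. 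To apply Theorem~\ref{thmmain2}, one must verify smoothness of the trace $\partial E_\infty \cap \S^{3}$. This is provided by Proposition~\ref{prop:C2a}, the $n=4$ cone version of Theorem~\ref{thmmain1} announced in Remark~\ref{THRE}: it yields a uniform $C^{2,\alpha}$ estimate for $\partial E_\infty$ away from the origin, which, combined with the smoothness of the pre-limits $\partial E_{R_k}$ and a standard bootstrap, gives smoothness of the trace. Theorem~\ref{thmmain2} then concludes that $E_\infty$ is a hyperplane.

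Next I transfer this planarity back to $E$ via a density argument. Let $\theta(r) := \Per_s(E, B_r(x_0))/r^{4-s}$ denote the normalized fractional density at $x_0$. By the monotonicity formula, $\theta$ is non-decreasing in $r$. Its limit as $r \downarrow 0$ equals the density of the tangent plane at the smooth point $x_0$, a universal constant $\omega_*$ (the hyperplane density). Its limit as $r \uparrow \infty$ equals $\Per_s(E_\infty, B_1)$, which also equals $\omega_*$ since $E_\infty$ is a hyperplane. Hence $\theta \equiv \omega_*$, and by the equality case in the monotonicity formula $E$ itself is a cone with vertex at $x_0$. Being smooth at this vertex, it must be a hyperplane through $x_0$. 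Since $x_0 \in \partial E$ was arbitrary, $\partial E$ is a single hyperplane.

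I expect the main obstacle to be the smoothness of the blow-down cone $E_\infty$ on $\S^{3}$, which is indispensable for invoking Theorem~\ref{thmmain2}. This lies outside the scope of Theorem~\ref{thmmain1} as stated (which covers $n \in \{2,3\}$) and relies on the $n=4$ conical extension of Remark~\ref{THRE}, i.e.\ Proposition~\ref{prop:C2a}, itself a nontrivial consequence of the regularity machinery developed in this paper. A secondary, more routine point is the availability of the monotonicity formula and of stability-closure under $L^1_{\mathrm{loc}}$-limits for stable---as opposed to minimizing---$s$-minimal surfaces, both of which are by now standard in the nonlocal minimal surface literature.
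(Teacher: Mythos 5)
Your overall strategy (blow down to a cone, classify the cone, then use monotonicity rigidity) is not the paper's proof, and it contains a genuine gap at exactly the point you flag as ``the main obstacle''. The paper's argument is a one-line deduction: Theorem~\ref{thmmain2} classifies stable $s$-minimal cones with smooth trace, and Theorem~\ref{thm:whtiohwoi} (quoted from \cite{newprep}, Theorem~2.11) is precisely the statement that flatness of stable cones implies flatness of every stable $C^2$ hypersurface; Corollary~\ref{cormain} follows by combining the two. What you are attempting is, in effect, a re-proof of Theorem~\ref{thm:whtiohwoi} in the special case at hand, and the hard step there is the one you dispose of too quickly: verifying that the blow-down cone $E_\infty$ has a smooth trace on $\S^3$, which is an explicit hypothesis of Theorem~\ref{thmmain2}. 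Proposition~\ref{prop:C2a} (the $n=4$ version invoked in Remark~\ref{THRE}, resting on Proposition~\ref{wehtioehwoit2}) cannot deliver this: first, its $n=4$ form requires exact conical structure of the set, and the rescalings $E_{R_k}=R_k^{-1}(E-x_0)$ are not cones, so no uniform curvature or $C^{2,\alpha}$ estimate is available along the blow-down sequence (the paper's robust estimates are only for $n\in\{2,3\}$, or $n=4$ conical); second, Proposition~\ref{prop:C2a} is an a priori estimate whose hypotheses include that the boundary is already a $C^2$ submanifold with $|{\rm I\negthinspace I}|\le 1/4$, whereas $E_\infty$ is only an $L^1_{\rm loc}$ limit and could a priori be singular on $\S^3$. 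Upgrading such a limit to a cone with smooth trace (or, equivalently, bypassing the smoothness hypothesis) is exactly the nontrivial content of \cite{newprep}; recall that for $s=1$ the analogous ``cones flat $\Rightarrow$ stable hypersurfaces flat'' implication is a major open problem in dimensions $5$--$7$, so it cannot be dismissed as a routine bootstrap.

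Two further points, more minor. The compactness of the blow-down sequence and the finiteness of the monotone quantity at infinity are not free for stable (non-minimizing) sets: they rely on the universal perimeter/BV estimates for stable $s$-minimal sets without area bounds (e.g.\ \cite{MR3981295, MR4116635}), which should be invoked explicitly, and the monotone quantity in this setting is the extension energy $\phi_E(r,x_0)$ rather than $\Per_s(E,B_r)/r^{n-s}$. Also, passing ``stability'' to an $L^1_{\rm loc}$ limit whose boundary may be singular requires a weak notion of stability (as in \cite{newprep}), since the stability inequality of Proposition~\ref{prop:stabilityineqloc} is formulated on a $C^2$ boundary; this is the same regularity issue in another guise. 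Your final monotonicity-rigidity step (constant density forces $E$ to be a cone, smooth at the vertex, hence a hyperplane) is fine in spirit, but as written the argument does not close without supplying the regularity of $E_\infty$ on $\S^3$, i.e.\ without re-doing the work of Theorem~\ref{thm:whtiohwoi}.
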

The analogue of Corollary~\ref{cormain} for  $s=1$ was known as Schoen's conjecture (see \cite[Conjecture 2.12]{ColdMin}). It states that any complete, connected\footnote{Notice that the connectedness assumption is not needed for $s<1$ but it is necessary for $s=1$ (to rule out unions of parallel hyperplanes)} stable minimal hypersurface in $\R^4$ must be a hyperplane, and has been proven only very recently by Chodosh and Li in the groundbreaking paper \cite{ChoLi} , with a completely different argument to the one that
we employ here. See~\cite{MR4546104, MR4706440} for different approaches to the problem, and~\cite{CLMS,Mazet} for the striking application to the positive resolution to Schoen's conjecture in ambient dimensions 5 and 6. It is interesting to point out that the results for~$s\in(0,1)$ and for~$s=1$ do not seem to imply each other in any way.
In the next section, we explain why Theorem \ref{thmmain2} implies Corollary \ref{cormain}  (here, one can glimpse a crucial feature of the nonlocal theory that is in marked contrast with the classical minimal surfaces theory).

\subsection{Monotonicity formula  and the fundamental role of hypercones}\label{introsub-cones}
Nonlocal minimal hypersurfaces enjoy a monotonicity formula (which plays the role of Fleming's monotonicity formula for minimal surfaces).  This remarkable property,  established  by Caffarelli, Roquejoffre, and Savin \cite{MR2675483}, is formulated in terms of the {\em Caffarelli-Silvestre extension}~$U_E$
of~$\chi_{E^c}-\chi_E$. We recall that $U_E$ is defined as
the (unique) bounded, weak solution~$U$ of the problem
\[
\begin{cases}
{\rm div } ( z^{1-s}\nabla U) =0 \quad &\mbox{  in } \R^n \times (0, \infty),\\
U = \chi_{E^c}-\chi_{E}  &\mbox{  on } \R^n \times \{0\},
\end{cases}
\]
where $\,{\rm div}\,$  and $\, \nabla\,$ are taken with respect to all $n+1$ coordinates $(x_1, \dots, x_n,z)$ of $\R^{n}\times(0,\infty)$.

As proven in \cite{MR2675483}, if $\partial E$ is a minimizing  $s$-minimal hypersurface in $\Omega$,
then, for all~$x_
\circ\in\partial E\cap \Omega$, the function
\[
r \mapsto \phi_E(r,x_\circ) : = r^{s-n} \int_{\widetilde B_r^+(x_\circ)} z^{1-s} \,| \nabla U_E(x,z) |^2  \,dx\,dz
\]
is monotone nondecreasing in the interval $r\in(0, {\rm dist} (x_\circ, \Omega^c))$. Here above, $\widetilde B_r^+(x_\circ)$ denotes the upper
half-ball of radius~$r$ in~$\R^{n+1}_+$ centered at $(x_\circ,0)$. The proof in \cite{MR2675483} easily generalizes to the case where $\partial E$ is any smooth stable $s$-minimal hypersurface in $\Omega$.

Moreover, $\phi_E(r,x_\circ)$ is constant in $r$ in a neighbourhood of~$0$ if, and only if, $E$ (or equivalently~$\partial E$) is conical about $x$ ---i.e. $t(E-x_\circ) = E-x_\circ$ for all $t>0$.

A version of this monotonicity formula on Riemannian manifolds is found in~\cite{Yauforth}.

As in the classical theory, the monotonicity formula confers a central role to $s$-minimal hypercones: every converging {\em blow-up} sequence  ($R_k(E-x_\circ)$, with~$R_k\uparrow \infty$) or {\em blow-down} sequence ($r_k(E-x_\circ)$, with~$r_k \downarrow 0$) must have a conical limit.
Thus, classifying minimizing or stable minimal cones is a crucial step in the regularity theory.
Actually,  in the nonlocal scenario, the classification of stable hypercones entails much stronger consequences than in the local ($s=1$) case. 
Indeed, for example, the following implication holds:

\begin{theorem}[{\cite[Theorem 2.11]{newprep}}] \label{thm:whtiohwoi}
Assume that, for some pair $(n,s)$ with~$n\ge 3$ and~$s\in (0,1)$, hyperplanes are the only stable $s$-minimal hypercones in $\R^n\setminus\{0\}$.

Then, any stable $s$-minimal hypersurface $\partial E$ of class $C^2$ in $\R^n$  must be a hyperplane.
\end{theorem}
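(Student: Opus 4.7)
The plan is to use the Caffarelli--Silvestre monotonicity formula at every boundary point of $E$ to reduce, via the classification hypothesis, to the fact that $E$ is conical about every point of $\partial E$; a short geometric argument then forces $\partial E$ to be a hyperplane.

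Fix $x_\circ\in\partial E$ and set
\[
\phi_0(x_\circ):=\lim_{r\downarrow 0}\phi_E(r,x_\circ),\qquad \phi_\infty(x_\circ):=\lim_{r\uparrow \infty}\phi_E(r,x_\circ),
\]
both finite by monotonicity. Since $\partial E$ is $C^2$, the blow-up $R_k(E-x_\circ)$ converges as $R_k\uparrow\infty$ to the tangent half-space at $x_\circ$, so by scaling invariance of the monotonicity quantity, $\phi_0(x_\circ)$ equals the universal constant $\phi_H$ associated with any hyperplane. For the outer limit one considers a blow-down sequence $F_k:=r_k(E-x_\circ)$ with $r_k\downarrow 0$; standard $L^1_{\rm loc}$ compactness for $s$-minimal sets (together with the uniform energy bounds supplied by the monotonicity formula) yields a subsequential limit $F_\infty$, the equality case of the monotonicity formula in the limit shows that $F_\infty$ is a hypercone, and stability of $\partial F_k$ in $\R^n$ passes to stability of $\partial F_\infty$ in $\R^n\setminus\{0\}$. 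By the hypothesis, $\partial F_\infty$ must be a hyperplane, and scaling again gives $\phi_\infty(x_\circ)=\phi_H$.

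The monotone function $\phi_E(\,\cdot\,,x_\circ)$ therefore has equal limits at $0^+$ and $+\infty$ and is consequently constant. The equality case of the monotonicity formula recalled in the excerpt then implies that $E$ is conical about $x_\circ$. Since $x_\circ\in\partial E$ was arbitrary, $E$ is conical about every point of $\partial E$.

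To finish, observe that if $E$ is conical about two distinct points $x_\circ\ne y_\circ$, a direct computation (dilating $p\in E$ by $1/t$ around $x_\circ$ and then by $t$ around $y_\circ$ yields $p+(t-1)(x_\circ-y_\circ)\in E$) shows that $E$ is translation-invariant along the direction $x_\circ-y_\circ$. Were $\partial E$ not contained in any hyperplane, one could pick $n+1$ affinely independent points in $\partial E$ and obtain $n$ linearly independent translation directions, forcing $E$ to be translation-invariant in all of $\R^n$ and contradicting $\partial E\ne\varnothing$. Hence $\partial E$ lies in some hyperplane; being a closed $C^2$ $(n{-}1)$-submanifold without boundary of the same dimension as that hyperplane, it must coincide with it.

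The main technical hurdle is the blow-down step: one must (i) extract an $L^1_{\rm loc}$-limit of the blow-down sequence, (ii) show that stability in $\R^n\setminus\{0\}$ is inherited by this limit, and (iii) identify the limit of the monotonicity quantity on the original sequence with that of the limit cone. These ingredients are by now fairly standard in the non-local minimal surface theory descending from \cite{MR2675483}, but they constitute the main substance of the argument; the rest is a clean application of monotonicity together with the hypothesis on stable hypercones.
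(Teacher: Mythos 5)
First, note that the paper itself does not prove this statement: it is quoted from \cite{newprep}*{Theorem 2.11}, so there is no internal proof to compare with, and your blueprint (a Fleming-type argument: monotonicity formula, blow-down to a stable cone, the classification hypothesis, then rigidity) is indeed the strategy behind that result. However, as written there are genuine gaps, and they sit exactly where you declare things ``standard''. (i) The finiteness of $\phi_\infty(x_\circ)$ and the ``uniform energy bounds'' along the blow-down are \emph{not} supplied by the monotonicity formula: monotonicity bounds $\phi_E(r,x_\circ)$ only by its values at \emph{larger} radii, so it gives nothing as $r\uparrow\infty$, and for a general set the localized extension energy at scale $r$ need not be $O(r^{n-s})$. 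One must invoke the universal perimeter/energy estimates for \emph{stable} $s$-minimal sets (the BV-type estimates of \cite{MR3981295}, cf.\ also \cite{MR4116635}); without them $\phi_\infty=+\infty$ is not excluded and the blow-down compactness collapses. (ii) Passing stability, and the value of the monotonicity quantity, to the $L^1_{\rm loc}$ limit is routine for \emph{minimizers} but not for stable critical points: the limit is a priori only a set of locally finite $s$-perimeter, the stability inequality \eqref{stability} is formulated for $C^2$ boundaries, and one needs the strong (multiplicity-one) convergence of stable nonlocal surfaces together with the notion of \emph{weak stability} of \cite{newprep} (this is precisely what is used in Step~1 of the proof of Theorem~\ref{thmmain1} here). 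Relatedly, the identification $\phi_\infty(x_\circ)=\phi_H$ and the conicity of the limit require convergence of the extension energies, i.e.\ strong, not merely weak, convergence, which again relies on stability. (iii) To apply the hypothesis one must know the blow-down is an honest stable hyper\emph{cone} to which the classification applies (in the intended application, Theorem~\ref{thmmain2} classifies cones with smooth trace on $\S^3$), so either a regularity statement for the limit cone or a weak reformulation of the hypothesis is needed. Points (i)--(iii) are the actual substance of \cite{newprep}*{Theorem 2.11}, not routine adaptations of \cite{MR2675483}.

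Two smaller remarks. Your opening claim that $\phi_0$ and $\phi_\infty$ are ``both finite by monotonicity'' is wrong for $\phi_\infty$, for the reason in (i). And the endgame can be shortened: once $E$ is conical about a single point $x_\circ\in\partial E$, the set $E-x_\circ$ coincides with its own blow-up, which by the $C^2$ assumption is the tangent half-space; hence $\partial E$ is a hyperplane directly, and the two-point translation-invariance argument (which is correct) is redundant.
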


We notice that our Corollary~\ref{cormain} follows from a direct application of Theorems~\ref{thmmain2} and~\ref{thm:whtiohwoi}.
\medskip

 Theorem~\ref{thm:whtiohwoi} can be informally stated as:  
 \begin{quote}
 {\em ``Whenever one can prove flatness of stable  $s$-minimal (hyper)cones, one can also prove flatness of any  stable $s$-minimal hypersurface''.}
 \end{quote} 

So, the question is now: when can we prove that stable $s$-minimal cones in $\R^n$ must be flat?

For $s=1$, the answer to this question has been known since the late 1960s. Indeed,  by the results of Almgren \cite{Almgren}, for $n=4$, and Simons \cite{MR233295}, for $5\le n\le 7$, we know that stable minimal cones are flat if $n\le 7$.  This is false for  $n\ge 8$,  ``Simons' cone'' 
$\{x_1^2+x_2^2+x_3^2+x_4^2= x_5^2+x_6^2+x_7^2+x_8^2\}$ being a famous counterexample (as proven
by Bombieri, De Giorgi, and Giusti~\cite{MR250205}).

By analogy to the $s=1$ case,  for $s\in(0,1)$ it is natural to conjecture  the following:
\begin{conjecture}  \label{conjCONES}
For all $3\le n\le 7$, there exists  $s_*\in (0,1)$ such that, for all $s\in (s_*,1)$,  hyperplanes are the only stable $s$-minimal hypercones in $\R^n\setminus\{0\}$.
\end{conjecture}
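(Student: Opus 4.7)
The plan is to implement, for each $n\in\{3,4,5,6,7\}$, a contradiction argument parallel to the proof of Theorem~\ref{thmmain2} (the case $n=4$ already being settled by that theorem, and $n=3$ being essentially one-dimensional via the link). Suppose the conjecture fails in some dimension~$n$: then there exist sequences $s_k\uparrow 1$ and stable $s_k$-minimal hypercones $\partial E_k\subset\R^n\setminus\{0\}$ whose traces on $\mathbb S^{n-1}$ are smooth and non-totally-geodesic (i.e., non-flat). The goal is to extract a subsequential limit $\partial E_\infty$ as $s_k\uparrow 1$, show that it is a stable \emph{classical} minimal hypercone in $\R^n$, invoke the Almgren--Simons classification (valid for $n\le 7$) to conclude $\partial E_\infty$ is a hyperplane, and finally use rigidity together with the sheet separation bound to derive a contradiction with the non-flatness of the $\partial E_k$.

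The central technical input is a higher-dimensional version of Theorem~\ref{thmmain1} under the conical hypothesis of Remark~\ref{THRE}. For a hypercone, homogeneity should allow one to recast the regularity question as a problem on the link $\Sigma_k:=\partial E_k\cap\mathbb S^{n-1}$, viewed as a stable $s_k$-minimal hypersurface inside $\mathbb S^{n-1}$. Thus the conjecture reduces to establishing robust (i.e.\ uniform in $s$) $C^{2,\alpha}$ estimates and sheet separation estimates of order $(1-s)^{1/2}$ for stable nonlocal minimal hypersurfaces in the $(n-1)$-dimensional sphere. The natural path is iterative: using the cone structure to trade one ambient dimension for the radial variable, one reduces dimension by one at each step; one must exploit that cones enjoy automatic area bounds (perimeter in $B_R$ growing like $R^{n-1}$) and that the monotonicity formula from Section~\ref{introsub-cones} controls blow-ups at the singularity. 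Once the $C^{2,\alpha}$ bound on $\Sigma_k$ is available, compactness produces $\Sigma_\infty$ in $C^{2,\alpha}$. Its stability as a classical minimal surface in $\mathbb S^{n-1}$ follows by passing to the limit in the second variation, using $(1-s)\Per_s\to\Per$ together with smooth convergence away from the origin. Almgren's theorem (for $n=4$) and Simons' theorem (for $5\le n\le 7$) then force $\Sigma_\infty$ to be a totally geodesic subsphere.

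The final step is to upgrade this smooth convergence to the statement that $\partial E_k$ itself is eventually a hyperplane, producing the contradiction. For this one combines two ingredients: (a) the sheet separation estimate~\eqref{wtniowhoi2} extended to the conical setting, which rules out multiple sheets of $\partial E_k$ collapsing to the limit hyperplane at scales below $(1-s_k)^{1/2}$, so that for $k$ large, $\Sigma_k$ is a single connected $C^{2,\alpha}$-small graph over the totally geodesic subsphere; and (b) a rigidity/uniqueness statement for the $s_k$-minimal surface equation linearized at a hyperplane, showing that the only homogeneous solutions close to a hyperplane in $C^{2,\alpha}$ are hyperplanes themselves. Ingredient (b) should follow from a spectral analysis on the link, since the Jacobi-type operator for $\Per_{s_k}$ at a totally geodesic subsphere has a well-understood kernel consisting only of infinitesimal rigid motions, which correspond to tilted hyperplanes.

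The hard part is extending the robust curvature estimate of Theorem~\ref{thmmain1} to ambient dimensions $4,5,6$ under the conical assumption. The proof of Theorem~\ref{thmmain1} crucially uses $n\le 3$, where one-dimensional techniques from \cite{MR4045964,MR3935478} (adapted to the nonlocal setting) drive the quantitative stability arguments. Higher-dimensional analogues either require a genuinely multi-dimensional nonlocal ``improvement of flatness'' for stable sheets, or a careful iteration/reduction via the conical symmetry. The latter is delicate because the nonlocal second variation does not separate in cylindrical coordinates as cleanly as its local counterpart, and a new Hardy-type inequality along the radial direction adapted to the fractional Jacobi operator seems necessary. I expect this dimensional extension of the curvature/separation estimates---rather than the passage to the limit or the invocation of Almgren--Simons---to be the principal obstacle.
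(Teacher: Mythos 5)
The statement you are proving is a conjecture that the paper itself does \emph{not} establish in the full range $3\le n\le 7$: the paper proves it only for $n=4$ (Theorem~\ref{thmmain2}), the case $n=3$ being known from~\cite{MR4116635}, and it explicitly leaves $n=5,6,7$ open, noting that only the \emph{last} step of its argument (the Toda-type system plus the stability inequality, which needs $(n-2)>\tfrac{(n-3)^2}{4}$) works for all $n\le 7$. Your proposal founders at exactly the same point: you defer the ``robust $C^{2,\alpha}$ and $(1-s)^{1/2}$-separation estimates for stable $s$-minimal cones in ambient dimensions $5,6,7$'' to an unspecified iteration/reduction, and you acknowledge this is the principal obstacle. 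Since that ingredient is precisely what is missing in the literature (the paper's Step~1 for $n=4$ uses the classification of complete embedded stable minimal surfaces in $\R^3$, the area bound for stable minimal immersions into $\S^3$ from~\cite{MR2483369}, and Almgren's theorem---all tools tied to a $3$-dimensional cross-section), your text is a research program, not a proof of the conjecture.

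There is also a genuine error in your endgame, even restricted to $n=4$ where the paper does have a complete argument. You claim that the sheet separation estimate lets you conclude that, for large $k$, the link $\Sigma_k$ is a \emph{single} $C^{2,\alpha}$-small graph over the totally geodesic subsphere, after which a linearized rigidity/spectral argument finishes. But the separation estimate only gives $g_{i+1}-g_i\ge c\sqrt{1-s_k}$, and sheets sitting at mutual distance of exactly that order are perfectly admissible (cf.\ Remark~\ref{remexample}); Proposition~\ref{propKEY2} bounds the number $N$ of sheets by a constant, it does not force $N=1$. Ruling out multiplicity is the heart of the matter, and the paper does it not by rigidity of the linearized operator at a hyperplane (whose kernel analysis says nothing about several interacting sheets), but by extracting the limiting D\'avila--del Pino--Wei system on $\S^{n-2}$ (Corollary~\ref{cor:PRO-LI}) together with a limit stability inequality with the Hardy constant $\tfrac{(n-3)^2}{4}$ (Corollary~\ref{prop:FW02}), and then running a Farina-type integral estimate on the reciprocal of the sheet separation; the contradiction arises precisely when $(n-2)>\tfrac{(n-3)^2}{4}$, i.e.\ $3\le n\le 7$. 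Your steps (a)--(b) skip this mechanism entirely, so even the $n=4$ portion of your argument does not close as written.
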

Previously to this paper,  Conjecture~\ref{conjCONES} had only been proven in dimension $n=3$ \cite{MR4116635}. The same statement for {\em minimizers} is known in
the whole dimensional range $3\le n\le 7$ \cite{CaffVal}.  Unfortunately, this approach for minimizers cannot be extended to general stable critical points. Here we establish the conjecture for $n=4$.

We must emphasize that for $s=1$ the analogue of the conclusion of Theorem~\ref{thm:whtiohwoi} (i.e. the flatness of complete, connected embedded stable minimal hypersurfaces  in $\R^n$ for $n\le 7$) is a major open problem in dimension $n=7$. Actually, the cases  $n=4,5,6$  was proven only very recently in \cite{ChoLi,CLMS,Mazet} (for $n=3$ this has been known since the 1970's ~\cite{MR546314, MR562550}). 

\medskip

Heuristically, the fact that
the conclusion of Theorem~\ref{thm:whtiohwoi} is so strong should put us on alert, and we should expect difficulties in establishing the validity of its assumptions.
In order words,  the problem of classifying nonlocal stable minimal hypercones {\em must be} highly non-trivial.
Taking a naive approach, one would expect that sequences of stable $s_k$-minimal cones $\partial E_k$ should converge, as $s_k\uparrow 1$, towards some stable minimal hypercone (hence a hyperplane if~$n\le 7$ using Almgren and Simons' result). However,  turning this intuition into an actual mathematical proof poses several serious challenges; for instance: 
\begin{itemize} 
\item Nothing prevents, \emph{a priori}, the sets  $\partial E_k\cap\mathbb S^{n-1}$ from {\em converging towards some subset of~$\mathbb S^{n-1}$ with infinite $(n-2)$-perimeter!}
\item Even if we artificially added the assumption that  $\sup_k \mathcal H^{n-2} (\partial E_k\cap\mathbb S^{n-1}) <\infty$  (and even if we managed to use this
extra information to prove that $\partial E_k$ should then converge towards a conical stable minimal integral varifold), the classification problem would remain far from trivial. Indeed, one would still need to
rule out {\em multiplicity} of the limit surface: for instance~$\partial E_k\cap\mathbb S^{n-1}$ could consist of {\em several connected components} all converging smoothly towards the ``equator'' of~$\mathbb S^{n-1}$.
\end{itemize}

Remarkably, in this paper, we can rule out such scenarios in the $n=4$ case,
thus establishing Conjecture~\ref{conjCONES} up to dimension~$4$ (as stated in Theorem~\ref{thmmain2}).

\subsection{Multiplicity of the limit surface and the  D\'avila-del Pino-Wei system}

As said above, ruling out {\em multiplicity} of the limit minimal surface is a key aspect of
our proof of Conjecture~\ref{conjCONES} for $n=4$.  On the other hand, the fine analysis of
multiplicity situations is also critical to construct rich families of finite Morse index minimal surfaces: both using Allen-Cahn or nonlocal approximations.
For these purposes, the information about  ``nonlocal interactions between sheets'' becomes extremely valuable. 

The following is a summary of our findings in this direction:
\begin{itemize}
\item As $s_k\uparrow 1$,  multiple (even {\em infinitely many}) sheets of  a sequence of $s_k$-minimal surfaces may converge towards
{\em the same smooth minimal surface} (as explained in Remark \ref{remexample}, it is easy to construct examples where the number of sheets in a bounded region of space is of order $(1-s_k)^{-
\frac 1 2} \uparrow \infty$).
\item When multiple sheets ---at a critical distance of order $(1-s)^{\frac 12}$--- are collapsing onto the same surface,  crucial information on their nonlocal interactions ``survives'' in the limit~$s\uparrow 1$. This information turns out to be encoded as a nontrivial solution of a certain (local!) PDE system of the following type: 
\begin{equation}\label{eq:DdPW}
\Delta_{\R^{n-1}}\widetilde g_i
=2\sum_{{1\le j\le N}\atop{j\neq i}}
\frac{(-1)^{i-j}}{\widetilde g_j(x')-\widetilde g_i(x')},
\end{equation}
where $\widetilde g_1< \widetilde g_2 < \dots <\widetilde g_N$ are functions from~$\R^{n-1}$ to~$ \R$.

\item This information, which can only be revealed through careful analysis,   turns out to be essential in applications (e.g., in our proof of Conjecture~\ref{conjCONES} for $n=4$).
\end{itemize}

To the best of our knowledge, the system~\eqref{eq:DdPW}
was first considered in our context by D\'{a}vila, del Pino and Wei in~\cite{MR3798717}, where embedded $s$-minimal surfaces with $N\geq 2$ layers (as well as $s$-catenoids, with $N=1$) are constructed. This construction is done by perturbing a particular solution of \eqref{eq:DdPW}, as $s\uparrow 1$.
More precisely, the  ``Ansatz''
$$\widetilde{g}_i=a_i \widetilde{g},
\qquad i=1,\dots,N,$$
is made,
where $\widetilde{g}:\R^{n-1}\to \R$ solves the Lane-Emden equation with negative exponent
\begin{equation}\label{eq:LE-1}
\Delta_{\R^{n-1}} \widetilde{g}
=\dfrac{1}{\widetilde{g}},
\end{equation}
and $a_i\in\R$ satisfy the balancing condition\footnote{When $N=2$ one may simply replace~\eqref{BALA} with~$a_1=1$ and~$a_2=-1$.}
\begin{equation}\label{BALA}
a_i=2\sum_{{1\le j\le N}\atop{j\neq i}}
\frac{(-1)^{i-j}}{a_j-a_i},
\qquad i=1,\dots,N.
\end{equation}

\medskip
While the construction in \cite{MR3798717} is technically involved, no other properties of \eqref{eq:DdPW} are exploited.
The model equation \eqref{eq:LE-1} has 
a longer history and has been studied for its various applications.
The nonlinearity with negative exponent
dates back to the 1970s and
was studied in~\cite{Kawarada}. In geometrical contexts, this model has been used to construct singular minimal surfaces, see~\cite{Meadows,Simon}.
The same equation has also been used in models of micro-electromechanical systems  \cite{MEMS} and 
thin film coating \cite{Jiang-Ni}. 
Liouville-type results for \emph{entire} stable solutions of \eqref{eq:LE-1} in the spirit of Farina  \cite{Farina} were obtained in \cite{Ma-Wei}.
\medskip

Coming back to \eqref{eq:DdPW},  the formal computations from \cite{MR3798717}  strongly suggest that a system of the type \eqref{eq:DdPW} should be the ``right model'' for interactions between sheets of rather general $s$-minimal surfaces as $s\uparrow 1$. 
However, to our knowledge, there were no rigorous results in this direction since the estimates that would be needed to bound the ``errors'' in the formal computations are very delicate (in particular, it does not seem possible to follow this approach for general $s$-minimal surfaces).  Now, thanks to the strong  $C^{2,\alpha}$ and optimal sheet separation estimates for stable surfaces that
we obtain in this paper,  we can rigorously estimate these errors.

More precisely, an example of the type of new results that we can prove is the following (see Lemma~\ref{DdPWsys} for a more general statement): let $\Omega := B_1'\times (-1,1)\subset \R^3$, where $B_1'$ is the unit ball of $\R^2$. Suppose that~$\partial E_k$ is a sequence of stable $s_k$-minimal surfaces (in $\Omega$) with $s_k\uparrow 1$. Assume that  $\partial E_k\cap \Omega = \bigcup_{i=1}^{N_k} \Gamma_{k,i}$ where  $\Gamma_{k,i} :=  \{ x_3 =  g_{k,i}(x')\}$,  and $g_{k,1}< g_{k,2}<\dots < g_{k,N_k}$. Suppose in addition  that  $\sup_{1\le i \le N_k} \|g_{k,i} -g\|_{L^\infty(B_1')} \to 0$ for some  $g: B'_1\to ( -1 , 1 )$. In other words, suppose that all the sheets of the surface $\Gamma_{k,i}$ are converging towards the same graphical surface  $\Gamma : =\{x_3= g(x')\}$. We stress that, in this assumption,
the multiplicity (number of sheets converging to $\Gamma$) could be infinite as well.
We also observe that, by the estimates in Theorem~\ref{thmmain1}, we may assume that~$\|g_{k,i}\|_{C^{2,\alpha}(B_1')}$ is uniformly bounded for all $k$ and $i$  (and hence~$g$ is~$C^{2,\alpha}$).
Then, under the above hypotheses, the following system of PDEs holds:
\begin{equation}\label{whiorhwiohw}
H[\Gamma_{k,i}](x) = 2  \sqrt{ \sigma_k}  \left(\sum_{j> i} (-1)^{i-j} \frac{  \sqrt{ \sigma_k}}{d(x, \Gamma_{k,j})} - \sum_{j< i} (-1)^{i-j} \frac{  \sqrt{ \sigma_k}}{d(x, \Gamma_{k,j})} \right)   + O \big((\sqrt{ \sigma_k})^{1+\beta}\big),
\end{equation}
where $x\in \Gamma_{k,i}$,   $\sigma_k : = (1-s_k)\downarrow 0$ and $\beta>0$.
Here above, we have denoted by~$H[\Gamma_{k,i}](x)$ the mean curvature of the
surface  $\Gamma_{k,i}$ at its point~$x$, and by~$d(x,\Gamma_{k,j})$ the distance in $\R^3$
between the point $x$  and the surface $\Gamma_{k,j}$.

We stress that, since we know from  the optimal sheet separation estimate of Theorem~\ref{thmmain1}  that $d(x, \Gamma_{k,j}) \ge c \sqrt{ \sigma_k}$, for some $c>0$, the error term $O \big((\sqrt{ \sigma_k})^{1+\beta}\big)$
in~\eqref{whiorhwiohw} is an honest higher-order term.

We also remark that in the particular case in which $N_k$ remains bounded, $g\equiv 0$ and
\[
  \widetilde g_{k,i} := \frac{g_{k,i}}{\sqrt{\sigma_k}}   \to \widetilde g_i \in L^\infty(B_1')\quad \mbox{as $k\to+\infty$},
\]
the limit of  \eqref{whiorhwiohw} as $k\to \infty$  is \eqref{eq:DdPW}. Similarly, when multiple sheets converge towards some non-planar minimal surface, then a system like  \eqref{eq:DdPW}  is obtained, with the only difference that~$\Delta_{\R^{n-1}}$ is replaced by the Jacobi operator of the minimal surface.
\medskip

We also observe that the stability condition can be rewritten (see \Cref{prop:FW0}) in terms of solutions to the system~\eqref{eq:DdPW}, and one can use this to obtain  non-existence of solutions.
Additionally, we mention that
the system~\eqref{eq:DdPW} is analogous in many aspects to the Toda system for Allen-Cahn approximations, which plays a central role in~\cite{MR3935478,MR4021161,MR4045964}.


\subsection{Some advantages of the nonlocal approximation and future directions}\label{secadv}

As explained above,  the primary  motivation of this paper is to set the ground for the construction of minimal surfaces via nonlocal approximation.  Let us comment briefly on the advantages that we see in this new approach:
\begin{itemize}
\item Obtaining the curvature estimate and the analog of Toda system is significantly more straightforward and less technical than using the Allen-Cahn approach. Indeed, this is done in  Section \ref{SEC:2}  (which takes ca. 20 pages with an essentially self-contained presentation). Notice, however, that the most involved proof in Section \ref{SEC:2} is that of Proposition \ref{propL-2} (it takes almost 10 pages). But Proposition \ref{propL-2}  becomes almost trivial if one assumes area bounds (as the $L^{-2}$ estimate is then an immediate consequence of the stability inequality). Thus, if one is interested in the application to min-max,  one can always proceed like in Allen-Cahn and prove a weaker curvature estimate depending on area bounds.  Doing so, one can skip ca. 10 pages of the argument. Hence, in a fair comparison, one needs ca. 10 pages (of very detailed and largely self-contained proofs) to achieve results parallel to  Allen-Cahn.

\item Heuristically, one can think of nonlocal minimal surfaces as singular sets of critical points of the $H^{s/2}(M)$ energy among maps $M\to \mathbb S^0 :=\{-1,+1\}$. From such a point of view, 
nonlocal minimal surfaces of codimension $d$ could be defined as the singular sets of  critical points of  $H^{(d-1)+s/2}(M)$ among maps $M\to \mathbb S^{d-1}$.
First studied in \cite{MR2783309}, such maps are called fractional harmonic maps. They share several key properties with local and nonlocal minimal surfaces (e.g., they enjoy a monotonicity formula);  see  \cite{MR4331016, MR3900821 } and references therein.
The current theory of fractional harmonic maps will need to be further developed if one aims to look at their singular sets as nonlocal minimal surfaces with higher codimension.
This exciting direction will be pursued in future works.

\item An exciting new feature of nonlocal approximations in contrast  to Allen-Cahn
is scaling invariance (in the `blow-up' Euclidean setting). Indeed,  suppose that we have a sequence of $s_k$-minimal surfaces $\partial E_k$, with indexes bounded by $m$, and with $s_k \uparrow 1$ and some curvature of $\partial E_k$ blows-ups along the sequence. Then, one can consider a rescaled sequence converging towards a global minimal surface with finite Morse index in $\R^3$, and curvatures bounded by $1$. Such surfaces can be seen as ``bubbles'' that model ``neck-type'' singularities appearing when the approximating surfaces converge with multiplicity $>1$ to the same (degenerate) minimal surface. 
This type of analysis (which is not possible with the Allen-Cahn
approximation because scaling changes the value of the parameter $\epsilon$) can be very interesting to understand how exactly the index is lost in the presence of degenerate surfaces.
\end{itemize}

\subsection{Acknowledgments}

HC has received funding from the European Research Council under Grant Agreement No. 721675, grants CEX2019-000904-S funded by MCIN/AEI/ 10.13039/501100011033 and PID2020-113596GB-I00 by the Spanish government.

SD is supported by the Australian Research Council DECRA
DE180100957 {\em PDEs, free boundaries and applications}.

JS is supported by the
by the European Research Council under Grant Agreement No 948029.

EV is supported by the Australian Laureate Fellowship FL190100081
{\em Minimal surfaces, free boundaries, and partial differential equations}.

It is a pleasure to thank Alessandro Carlotto and Joaqu\'{\i}n P\'erez for interesting and helpful discussions concerning Lemma \ref{lem:embedded}.

\section{Overview of the proofs and organization of the paper}

The rest of this paper is organized as follows. 

\medskip

\begin{itemize}
\item $C^{2,\alpha}$ and optimal separation estimates (\Cref{SEC:2})
	\begin{itemize}
	\item Optimal separation in $L^{-2}$ (\Cref{propL-2})
	\item Decoupling of $s$-mean curvature equation (\Cref{prop:decouple-3D})
	\item Optimal $L^{-\infty}$-separation in dimension $n=3$ (\Cref{prop:decouple-3D}) \item Robust $C^{2,\alpha}$ estimates in dimension $n=3$ (\Cref{prop:C2a})
	\item The limit D\'{a}vila-del Pino-Wei system of Toda type (\Cref{DdPWsys})
	\item Proof of \Cref{thmmain1} \`{a} la B. White (\Cref{sec:proof-thm-1})
	\end{itemize}
\item Classification of stable $s$-minimal cones in $\R^4$ (\Cref{SEC:R4})
	\begin{itemize}
	\item Structure of embedded almost minimal surfaces on $\bS^3$ with bounded second fundamental form (\Cref{lem:embedded})
	\item Graphical representation of trace of $s$-minimal cones on $\bS^3$ (\Cref{propKEY2})
	\item Limit Toda-type system (\Cref{PRO-LI}) and stability inequality (\Cref{prop:FW0})
	\item Proof of \Cref{thmmain2} in the spirit of A. Farina's integral estimate (\Cref{sec:proof-thm-2})
	\end{itemize}
\end{itemize}

The paper ends with an appendix detailing some basic results about stability conditions for nonlocal minimal surfaces in bounded domains.

\medskip

\subsection{Main ideas in \Cref{SEC:2}}

Section~\ref{SEC:2} focuses on
$C^{2,\alpha}$ and optimal separation estimates.  We sketch next the main steps from the proofs in this part of the paper.
In all steps before {\bf Step 6},  a quantitative control on the second fundamental form of the stable $s$-minimal surface is assumed.

{\medskip \noindent \bf Step 1: Optimal separation in $L^{-2}$.} 
One first obtains optimal separation estimates in \Cref{propL-2},  namely the following $L^2$ bound for the reciprocal of the distance between two consecutive sheets, described by the graphs of $x_n= g_i(x')$ and $x_n=g_{i+1}(x')$,  of a stable $s$-minimal surface:
\begin{equation}\label{eq:intro-L-2}
(1-s)\int_{B'_{1/2}} \frac{dx'}{|g_{i+1}-g_i|^{2}} \le C.
\end{equation}
This bound holds in any dimension and  will be obtained through several bespoke integral estimates from the localized stability condition (precisely stated in \Cref{prop:stabilityineqloc})
\begin{multline}\label{wheiothwohw}
(1-s)\iint_{\Gamma\times\Gamma } \frac{ \big|\nu(x)-\nu(y)\big|^2 \eta^2(x)}{|x-y|^{n+s}} \,d\HH_x \,d\HH_y \\
\le  
(1-s)\iint_{\Gamma\times\Gamma} \negmedspace \frac{ \big(\eta(x)-\eta(y)\big)^2}{|x-y|^{n+s}} \,d\HH_x\, d\HH_y 
+ {\rm Ext}_\Omega^2,
\end{multline}
where $\Gamma=\partial E\cap \Omega$ and ${\rm Ext}_\Omega^2$ denotes the exterior term which accounts for the (nonlocal) interactions across $\partial \Omega$. Here $\Omega\subset B_1'\times (-1,1)$ is an appropriately chosen cylindrical neighborhood of one of the sheets of the surface containing, at most, a large (but fixed) number of other sheets. In \eqref{eq:intro-L-2} we see that consecutive sheets of the surface interact, giving a positive contribution to the left-hand side to the stability inequality as the difference of the normal vectors for consecutive layers is of order $1$. This  crucial information  allows us to bound the left-hand side in the stability inequality \eqref{wheiothwohw}  by below by a sum of the type
\[
\sum_i (1-s) \int_{B'_{1/2}} \frac{dx'}{|g_{i+1}-g_i|^{2}}.
\]
On the other hand, the integral on the right-hand side, and exterior term ${\rm Ext}_{\Omega}^2$,  are bounded by double sums of the type
\[
\sum_{i,j} (1-s)\int_{B'_{1}} \frac{dx'}{|g_{j}-g_i|^{2}}.
\]

In order to relate the left and right hand sides we write $g_j-g_i$ as a telescoping sum and use the elementary inequality 
\begin{align*}
\frac 1 {(g_j-g_i)^2}  
&\leq  \frac{1}{(j-i)^3}\sum_{k=i}^{j-1} \frac{1} {(g_{k+1}-g_{k})^2}.
\end{align*}
Doing so, and carefully using the smallness of certain constants appearing in the right hand side and appropriate covering arguments, one can close the estimate and prove \eqref{eq:intro-L-2}. The details are given in the proof of \Cref{propL-2}.

{\medskip \noindent \bf Step 2: Decoupling.} 
By singling out one layer from the others, the $s$-mean curvature equation can be decoupled and rewritten in terms of a quasilinear elliptic operator (\Cref{prop:decouple-3D}), namely
\begin{equation}\label{eq:decoupled}
H_s[\Gamma_{i}](x', g_{i}(x'))
=(1-s)\int_{B'_1} F\left(\frac{g_i(y')-g_i(x')}{|x'-y'|} \right)\,\frac{dy'}{|x'-y'|^{n-1+s}}
+O(1-s)
=f_i,
\end{equation}
where 
$
F(t)=c(n,s)
\int_{0}^{t} (1+\tau^2)^{-\frac{n+s}{2}} d\tau$.

Moreover, thanks to the separation estimate \eqref{eq:intro-L-2}, the ``remainders'' $f_i$ are small in $L^{2}$ : 
\[
\norm[L^2]{f_i} \leq C\sqrt{1-s}.
\]
As a matter of fact, \emph{a posteriori} a more careful inspection of the ``remainders'' leads to the Toda-type system \eqref{eq:DdPW}.




{\medskip \noindent \bf Step 3: Optimal $L^{-\infty}$-separation in dimension $n=3$.} We proceed to show the sharp lower bound for the infimum of the distance between two consecutive layers, namely
\begin{equation}\label{eq:intro-sep-inf}
\inf_{B_{1/2}'}(g_{i+1}-g_i)\geq c\sqrt{1-s}>0.
\end{equation}
To this end, by taking the difference of equations \eqref{eq:decoupled} for two consecutive layers, we see that the separation $g_{i+1}-g_i$ satisfies a linearized elliptic equation
\[
\mathcal{L}(g_{i+1}-g_i)
=H_s[\Gamma_{i+1}](x',g_{i+1}(x'))
	-H_s[\Gamma_i](x',g_i(x'))
=\bar{f}_i,
	\qquad
\norm[L^2]{\bar{f}_i}\leq C\sqrt{1-s}.
\]
The next step in \Cref{prop:decouple-3D} is to use Harnack inequality on the rescaled surfaces (zoom in by factor $1/r$) 
to obtain the linear growth bound on the separation:
\[
(g_{i+1}-g_i)(x_\circ')=\delta\sqrt{1-s}
	\quad \implies \quad
\sup_{B_{r/2}'(x_\circ')}(g_{i+1}-g_i)
\leq Cr\sqrt{1-s},	\quad \forall r\in[\delta,1].
\]
Inserting  into \eqref{eq:intro-L-2}, we easily obtain a contradiction in dimension $n=3$ if $\delta$ is small enough, using that the integral $\int_\delta^1 \frac{r^{n-2}dr}{r^2}$ diverges as $\delta\downarrow 0$. 
Hence \eqref{eq:intro-sep-inf} must hold.


{\medskip \noindent \bf Step 4: $C^{2,\alpha}$ regularity in any dimension $n$.} 
We observe that \eqref{eq:decoupled} can be seen as a uniformly nonlocal elliptic equation, where, by interpolation, the right-hand sides~$f_i$ are uniformly bounded in~$C^{\alpha}$ (\Cref{whtu292112-2}). Thus, the local smoothness of $s$-minimal surfaces which are locally graphs \cite{BFV} then gives robust $C^{2,\alpha}$ estimates (as $s\uparrow 1$) for
$g_i$ (\Cref{prop:C2a}).

{\medskip \noindent \bf Step 5: Limit Toda-type system.} 
As a consequence of these uniform~$C^{2,\alpha}$ estimates (and layer separation bounds), rigorous Taylor expansions can be made to precisely estimate the nonlocal integrals. Doing so in  \Cref{DdPWsys} we will derive the  elliptic system~\eqref{whiorhwiohw}.

{\medskip \noindent \bf Step 6: Robust $C^{2,\alpha}$ estimates without quantitative $C^2$ bound.} 
The preliminary results obtained in this way will thus be combined to
a scaling trick due to Brian White and lead to the proof of Theorem~\ref{thmmain1}.
Roughly speaking, one aims at a uniform bound on the classical curvature of the stable $s$-minimal surface under consideration,
and for that one argues by contradiction:
\begin{itemize}
\item If not, a suitable scaling $\Gamma_k$ of the sequence of surfaces with diverging curvature would maintain bounded (but nonzero) curvatures. Thus all estimates in {\bf Step 1} through {\bf Step~5}, in particular uniform $C^{2,\alpha}$-compactness, are applicable to these rescalings.
\item Any sub-sequential limit $\Gamma_\circ$ of $\Gamma_k$ is necessarily stable. If $\Gamma_\circ$ is isolated away from other limiting surfaces, one can pass the stability inequality to the limit. Otherwise, a positive Jacobi field can be constructed (from the normalized ``distance'' between the ordered surfaces, as in \cite{ColdMin}).
\item This would contradict the classical flatness of stable minimal surfaces in $\R^3$.
\end{itemize}
We stress that all these arguments vitally rely on the robust $C^{2,\alpha}$ estimates obtained in the previous steps.

\medskip

\subsection{Main ideas in \Cref{SEC:R4}}

Then, in Section~\ref{SEC:R4} we focus on
the classification of stable $s$-minimal cones in $\R^4$. To this end, one analyzes
the trace on the sphere $\bS^3$ of stable $s$-minimal cones when~$s$ is close to~$1$:

{\medskip \noindent \bf Step 1: Structure of trace on $\bS^3$ of $s$-minimal cones with bounded second fundamental form.} 
The key reduction is to represent the trace of $s$-minimal cones on $\bS^3$ as union of graphs over the equator (\Cref{propKEY2}) with maximal height $O(\sqrt{1-s})$. 
To do this we first use a  version for hypercones in $\R^4$ of the $C^{2,\alpha}$ estimate, combined with B. White's blow-up strategy described above, in order to prove that the second fundamental form of the trace on the sphere of any stable $s$-minimal surface must be bounded by a constant independent of $s$ (as $s\uparrow 1$). This argument relies on the classification of complete embedded stable minimal surfaces in $\R^3$. 

Also, in \Cref{lem:embedded}  we prove that any embedded $C^2$ almost minimal surface in $\bS^3$ with bounded second fundamental form (not necessarily stable) must be a  union of graphs over some closed minimal surfaces in $\bS^3$ (here $n-1=3$ is also used because we need a universal area estimate for stable minimal immersion into $\bS^3$  from \cite{MR2483369}). 

Finally, having shown that stable $s$-minimal hypercones in $\R^4$ are almost-minimal as $s\uparrow 1$
and have bounded second fundamental form, we deduce that their traces on $\S^4$ must accumulate towards the equator (up to rotation). Moreover, we prove that these spherical traces are a union of graphs over this equator with maximal height $O(\sqrt{1-s})$. Thanks to the optimal separation estimate we obtain, in addition, that the number of graphs is remains bounded as $s\uparrow 1$.

{\medskip \noindent \bf Step 2: Limiting Toda-type system and stability inequality.} 
Once we know  that, for~$s$ sufficiently close to~$1$, the trace of
$s$-minimal cones on $\bS^{n-1}$ is a union of graphs over its equator,  $\bigcup_{i=1}^N \{x_n = g_i(\vartheta)\}$, 
with~$\vartheta \in \mathbb S^{n-1}\cap\{x_n=0\} \cong \S^{n-2}$,
$|g_i| \le C\sqrt{1-s}$, and~$\|g_i \|_{C^{2,\alpha}} \le C$ (by the previous step we know it when  $n=4$, but interestingly the last step actually works for~$n\le7$),  we can consider the re-normalized functions
\[\widetilde g_i:= \frac{g_i}{\sqrt{1-s}}.\]
We prove that these functions $\tilde g_i$, up to small errors,
\begin{itemize}
\item solve the Toda-type system (\Cref{cor:PRO-LI}); and
\item satisfy a stability condition  (\Cref{prop:FW02}).
\end{itemize} 
These results can be stated respectively as:
\begin{equation}\label{eq:intro-R4-Toda}
\Delta_{\bS^{n-2}}\widetilde g_i (\vartheta)
+(n-2)\widetilde g_i(\vartheta)
=2\sum_{\substack{1\le j\le N \\ j\neq i}}
\frac{(-1)^{i-j}}{\widetilde g_{i+1}(\vartheta)-\widetilde g_i(\vartheta)}
+O\bigl((1-s)^{\gamma'}\bigr),
\end{equation}
\begin{equation}\label{eq:intro-R4-stab}
	\frac{2}{N}\sum_{\substack{1\leq i<j\leq N\\ j-i \text{ odd}}}
	\int_{ \S^{n-2}} \frac{ 4
	}{\big|\widetilde g_j(\vartheta)-\widetilde g_i(\vartheta)\big|^{2}}\,d{\mathcal{H}}^{n-2}_\vartheta 
	\le
	\frac{(n-3)^2+\eps}{4} {\mathcal{H}}^{n-2}\big(\S^{n-2}\big)
	+C_\eps(1-s)^{\gamma'}.
\end{equation}
The (small) parameter $\epsilon>0$ in stability condition and the ``numerology''  $\frac{(n-3)^2}{4}$ comes from the fact that we choose a radial test function similar to  $r^{-\frac{(n-1)-2}{2}}$ which  almost saturates the (classical) Hardy's inequality in $\R^{n-1}$.

{\medskip \noindent \bf Step 3: Classification of stable $s$-minimal cones in $\R^4$.}  
One argues by contradiction (in the spirit of Alberto Farina) by testing the difference of two suitably chosen equations \eqref{eq:intro-R4-Toda} against the reciprocal of the sheet separation and integrating by parts. This leads to a contradiction when compared to \eqref{eq:intro-R4-stab} as long as
\[
(n-2)>\frac{(n-3)^2}{4},
	\quad \text{ i.e. } \quad
3\leq n \leq 7.
\]
We point out that this dimensional range, which we believe to be optimal, is the same as in Simon's celebrated result for $s=1$ in \cite{MR233295}. 
This is interesting since, for Allen-Cahn approximations, the analogous dimensional range for the Toda system in \cite{MR4021161} is $2\le n-1 \le 9$,  which does not correspond to the ``natural'' geometric dimensional range of minimal surfaces (in some sense the extra dimensions $n=8,9,10$ obtained for Allen-Cahn approximations are useless because the ``bottleneck'' is elsewhere).

\section{$C^{2,\alpha}$ and optimal separation estimates}\label{SEC:2}

In this section, we prove that any stable $s$-minimal hypersurface such that the norm of its second fundamental form is bounded by $1$ in $B_1$ must satisfy a $C^{2,\alpha}$ estimate which is robust as $s\uparrow 1$.
In addition, we prove optimal interlayer separation estimates and, as a byproduct, optimal bounds for the classical perimeter of the set in $B_{1/2}$.

\subsection{Notation and preliminaries}

Throughout the paper, we denote
\[
\hspace{55mm} \sigma := 1-s \qquad \quad \quad \mbox{ (hence,    $\sigma\downarrow 0$ as $s\uparrow 1$)}.
\]

Given $\Omega\subset \R^n$ smooth and bounded,
we assume that $E$ is a critical point of ${\rm Per}_s$ inside $\Omega$ and that  $\partial E\cap \Omega$ is a
smooth submanifold of $\R^n$.
We define  the normal vector $\nu$ to $\partial E$ (inside~$\Omega$) as the map $\nu: \partial E\cap \Omega \to \mathbb S^{n-1}$ for which we have
\begin{equation}\label{normal123}
\nabla \chi_E    =  -\nu \mathcal H^{n-1} |_{ \partial E\cap\Omega} \quad \mbox{inside } \Omega.
\end{equation}
Also, we denote by $n_\Omega$ the outwards unit normal to $\Omega$ (we intentionally use very different notations for the two normal vectors~$\nu$ and~$n_\Omega$ since they play very different roles).

Given a smooth and oriented hypersurface $(\Gamma ,  \nu)$ in $\R^n$ and $x\in \Gamma$ we define the fractional mean curvature of $\Gamma$ at $x$ as
\[
H_s[\Gamma] (x)  : =  \sigma \int_{ \Gamma}    \nu(y)\cdot \frac{y-x}{|y-x|^{n+s}} \,d\HH_y.
\]

Recall that $\partial E$ is $s$-{\em minimal} in $\Omega\subset \R^n$ if, for all  $x\in\partial E\cap \Omega$, we have \eqref{eqsminimal-kern}.
By a simple integration by parts argument, using that
\[
\frac{1}{|x-y|^{n+s}} =
\frac{1}{s}\, {\rm div}_y\left( \frac{x-y}{|x-y|^{n+s}} \right),
\]
 we obtain the equivalent equation
\begin{equation} \label{hwitohwoihw}
H_s[\partial E\cap \Omega] (x)  = {\rm Ext}_1^\Omega (x) \quad \mbox{for all } x\in \partial E \cap \Omega,
\end{equation}
where
\[
{\rm Ext}_1^\Omega(x):  =  \sigma \left(
	\int_{E\cap \partial \Omega} n_\Omega(y) \cdot \frac{x-y}{|x-y|^{n+s}} \,d\HH_y
+s\int_{E\cap \Omega^c}\frac{ dy}{|x-y|^{n+s}}\right).
\]

In this context, we have the following characterization of stability:

\begin{proposition} \label{prop:stabilityineqloc}
The stability of $\partial E$  in $\Omega$ is equivalent to the validity, for all $\eta\in C^1_c (\Omega)$, of
\begin{equation}\label{stability}
\sigma \iint_{\Gamma\times\Gamma } \frac{ \big|\nu(x)-\nu(y)\big|^2 \eta^2(x)}{|x-y|^{n+s}} \,d\HH_x \,d\HH_y \le  \sigma \iint_{\Gamma\times\Gamma} \negmedspace \frac{ \big(\eta(x)-\eta(y)\big)^2}{|x-y|^{n+s}} \,d\HH_x\, d\HH_y +{\rm Ext}_2^\Omega,
\end{equation}
where $\Gamma :=\partial E\cap \Omega$ and
\[
{\rm Ext}_2^\Omega : = 2\sigma \int_{\Gamma} d\HH_x\, \eta^2(x)\nu(x)\cdot \left(
\int_{E\cap \Omega^c} \frac{ (n+s)(x-y)}{|x-y|^{n+s+2} }  \,dy
+\int_{E\cap \partial \Omega} \frac{n_\Omega(y)}{|x-y|^{n+s}}  \,d\HH_y\right).
\]
\end{proposition}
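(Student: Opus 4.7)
\emph{Proof proposal.} The plan is to derive \eqref{stability} as the second variation of $\Per_s(\cdot,\Omega)$ evaluated at the critical point $\partial E$. The first step is the standard reduction to \emph{normal} variations: since the first variation of $\Per_s(\cdot,\Omega)$ vanishes at $\partial E$, a routine argument (subtracting from $X$ a tangential vector field along $\Gamma$, suitably extended to $\Omega$, which generates a self-diffeomorphism of $\Gamma$) shows that $\tfrac{d^2}{dt^2}\big|_{t=0}\Per_s(\phi^X(E,t),\Omega)$ depends only on $\eta:=X\cdot\nu$ on $\Gamma$. Hence it suffices to take $X=\eta\,\widetilde\nu$, where $\eta\in C_c^1(\Omega)$ and $\widetilde\nu$ is any smooth extension of $\nu$ to a neighborhood of $\Gamma$.

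Next, I would split the domain of integration as $(\R^n\times\R^n)\setminus(\Omega^c\times\Omega^c)=(\Omega\times\Omega)\cup(\Omega\times\Omega^c)\cup(\Omega^c\times\Omega)$ and, since $\phi_t:=\phi^X(\cdot,t)$ is the identity outside the compact support of $X$, perform the change of variables $x=\phi_t(x')$, $y=\phi_t(y')$ in both slots on $\Omega\times\Omega$ and in only the first slot on $\Omega\times\Omega^c$. Using the Taylor expansions
\[
\phi_t(x)=x+t\,\eta(x)\widetilde\nu(x)+\tfrac{t^2}{2}Z(x)+O(t^3),\qquad J\phi_t(x)=1+t\,\mathrm{div}(\eta\widetilde\nu)(x)+O(t^2),
\]
expanding the kernel $|\phi_t(x')-\phi_t(y')|^{-(n+s)}$ to second order, and observing that $(\chi_{E_t}(\phi_t(x'))-\chi_{E_t}(\phi_t(y')))^2=(\chi_E(x')-\chi_E(y'))^2$ on $\Omega\times\Omega$, the $O(t)$ contribution reproduces the Euler-Lagrange equation \eqref{hwitohwoihw} and thus vanishes. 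Collecting the $O(t^2)$ terms, using \eqref{normal123} together with the coarea formula to pass the relevant volume integrals to surface integrals on $\Gamma\times\Gamma$, and symmetrizing in $(x,y)$, the bulk quadratic form simplifies to
\[
\sigma\iint_{\Gamma\times\Gamma}\frac{(\eta(x)-\eta(y))^2-|\widetilde\nu(x)-\widetilde\nu(y)|^2\,\eta^2(x)}{|x-y|^{n+s}}\,d\HH_x\,d\HH_y,
\]
which matches the bulk part of \eqref{stability} since $\widetilde\nu=\nu$ on $\Gamma$. The pieces arising from $\Omega\times\Omega^c$ and from the second-order vector $Z$ then assemble into ${\rm Ext}_2^\Omega$: the volume term on $E\cap\Omega^c$ with weight $(n+s)(x-y)/|x-y|^{n+s+2}$ comes from expanding the kernel in the first variable only (the second is frozen in $\Omega^c$), while the boundary term on $E\cap\partial\Omega$ is obtained by integration by parts in $y$ via the identity $|x-y|^{-(n+s)}=s^{-1}\,\mathrm{div}_y\bigl((x-y)\,|x-y|^{-(n+s)}\bigr)$ already used to derive \eqref{hwitohwoihw}. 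The converse implication is automatic by the reduction in the first step.

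I expect the main difficulty to be the algebraic identification of the $|\nu(x)-\nu(y)|^2\,\eta^2(x)$ term: it emerges only after combining the quadratic part of the kernel expansion with the divergence contribution from $J\phi_t$ and exploiting the Euler-Lagrange equation \eqref{hwitohwoihw} to eliminate the contribution of $Z$ (so that the second variation is well-defined on normal variations alone, as expected from Step 1). A secondary but purely technical point is the justification of Fubini and of the absolute convergence of all double integrals: on $\Gamma\times\Gamma$ near the diagonal one has $|\nu(x)-\nu(y)|,|\eta(x)-\eta(y)|=O(|x-y|)$, so the integrands are $O(|x-y|^{2-n-s})$ against an $(n-1)$-dimensional measure, giving integrability for $s<1$, with the prefactor $\sigma=1-s$ providing the correct scaling as $s\uparrow 1$. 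These are bookkeeping manipulations of the same nature as those yielding \eqref{hwitohwoihw}, and I would relegate them to the appendix on stability advertised in the outline.
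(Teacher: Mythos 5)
Your overall strategy is viable, and it is a genuinely different route from the paper's: you propose to expand the localized energy directly to second order along the flow (splitting $(\R^n\times\R^n)\setminus(\Omega^c\times\Omega^c)$ and changing variables slot by slot), whereas the paper never expands the double volume integral; it adapts the variation formulas of \cite{FFMMM} — working with a regularized, compactly supported kernel $K_\delta$, differentiating the first-variation surface integral $\int_{\partial E_t}{\rm H}_{K_\delta,\partial E_t}(X\cdot\nu_{E_t})\,d\HH$, and rewriting $\nabla {\rm H}_{K_\delta,\partial E}$ by Gauss--Green \emph{restricted to $\Omega$} so that only $E\setminus\Omega$ and $E\cap\partial\Omega$ appear — and then lets $\delta\downarrow 0$ (Proposition \ref{qWEKQqW}); criticality kills the ${\rm H}$-term and yields the equivalence. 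Several of your ingredients are correct and match the paper in spirit: the reduction to normal variations at a critical point, the cancellation of the first-order terms by \eqref{hwitohwoihw}, the divergence identity $|x-y|^{-(n+s)}=s^{-1}{\rm div}_y\bigl((x-y)|x-y|^{-(n+s)}\bigr)$ as the source of the $\partial\Omega$ term, and the near-diagonal integrability count on $\Gamma\times\Gamma$.

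The genuine gap is that the decisive step is asserted rather than carried out: the claim that the $O(t^2)$ terms, after passing volume integrals to surface integrals and symmetrizing, produce \emph{exactly} $\sigma\iint_{\Gamma\times\Gamma}\bigl[(\eta(x)-\eta(y))^2-|\nu(x)-\nu(y)|^2\eta^2(x)\bigr]|x-y|^{-n-s}$ plus \emph{exactly} ${\rm Ext}_2^\Omega$ is precisely the content of the paper's appendix (formula \eqref{eq:var-2nd-loc}), and nothing in your sketch shows how the second derivatives of the kernel (which carry weights $(n+s)(n+s+2)(x-y)\otimes(x-y)|x-y|^{-n-s-4}$, not present in the final formula), the Jacobian terms, and the unspecified acceleration field $Z$ recombine into the stated expression; calling this "algebraic identification" and the rest "bookkeeping" inverts the actual difficulty. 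Two specific points your plan must confront, and which dictate the paper's choices: first, $E$ is not assumed regular outside $\Omega$ (only $\Per_s(E,\Omega)<\infty$), so every integration by parts in $y$ must be confined to $E\cap\Omega$ — this is exactly why ${\rm Ext}_2^\Omega$ involves a \emph{volume} integral over $E\cap\Omega^c$ with the differentiated kernel rather than a surface integral over $\partial E\setminus\Omega$ (cf.\ Remark \ref{rmk:ext2}), and your phrase "integration by parts in $y$" leaves the region unspecified; second, these Gauss--Green manipulations and the differentiations under the integral sign are not innocent with the singular kernel and a merely $C^{1,1}$ (resp.\ measurable exterior) set, which is why the paper truncates to $K_\delta$ first and passes to the limit at the end. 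Until the localized second-variation identity is actually derived along your route with these points addressed, the proposal is a plausible roadmap rather than a proof of Proposition \ref{prop:stabilityineqloc}.
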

The proof of this proposition is essentially given in the computations of the second variation of the fractional perimeter from \cite{FFMMM}. We outline the modifications needed for our purposes in \Cref{sec:app-stab}.\medskip

To distinguish between vertical and horizontal components of a vector,
we use the following notations. For $x\in \R^n$, we set $x= (x',x_n)$, where $x'=(x_1, \dots, x_{n-1})$; also we denote by $B_1'$ the unit ball in $\R^{n-1}$.

In all this section we will assume that $n\ge 2$ and  that $E\subset \R^n$ is a stable $s$-minimal set in
\[\Omega_0 := B_1'\times(-1,1)\subset \R^n.\]

We will suppose, in addition, that
\begin{eqnarray}\label{whtoihwohw1}
&&  \partial E  \mbox{ coincides with } \ \bigcup_{i=1}^N \,\{x_n = g_i (x') \} \   \mbox{ inside }\Omega_0,\quad
{\mbox{for some }} g_i: B_1'\to \R\\
&&{\mbox{satisfying}}\quad
\label{whtoihwohw2}
g_1 < g_2< \cdots < g_N \qquad \mbox{and} \qquad \|\nabla g_i\|_{L^\infty(B_1')} + \|D^2 g_i\|_{L^\infty(B_1')} < \delta\,.
\end{eqnarray}
Here $N$ is some arbitrarily large positive integer. We will show that $N$ is bounded by a constant depending only on $n$ and $s$, but this constant blows up as $s\uparrow  1$ (in dimension $n=3$ we will actually show the optimal bound $N\le C (1-s)^{-\frac12}$).  We emphasize that {\em
we do not make any quantitative assumption on the size of $N$}, only a qualitative one  (namely, that~$N<+ \infty$).\medskip

Also, $\delta>0$ in~\eqref{whtoihwohw2}
will be chosen conveniently small in order to make some of the arguments easier. In latter applications we will always be able to assume that $\delta$ is small enough after zooming in around some point and making a suitable rotation.\medskip

We define $\Gamma_i : = \{x_n = g_i (x')\} \cap \Omega_0 $. We note that possibly  $g_i(x') \not\in (-1,1)$ for some $x' \in B_1'$, hence {\em $\Gamma_i$ is not necessarily a graph over $B_1'$}.

 We also denote
\begin{equation}\label{25BIS}
\Gamma: = \partial E \cap  \Omega_0 = \bigcup_{i=1}^N \Gamma_i.\end{equation}


We observe that, as a consequence of  \eqref{hwitohwoihw},  if $i_x$ is such that $x\in \Gamma_{i_x}$ we have, for $\Omega_0 = B_1' \times (-1,1)$,
\begin{equation} \label{hwitohwoihwbis}
H_s[\Gamma_{i_x}] (x)  + \sum_{j\neq i_x} H_s[\Gamma_j](x) = {\rm Ext}_1^{\Omega_0} (x) \quad \mbox{for all } x\in \Gamma.
\end{equation}

%
%

\subsection{Optimal separation estimate in $L^{-2}$}

The goal of this subsection is to establish the following
\begin{proposition}\label{propL-2}
There exists a dimensional constant $C>0$ such that the following holds true.
Let $E$ be stable $s$-minimal in $B'_1\times[-1,1]$ and assume that \eqref{whtoihwohw1} and~\eqref{whtoihwohw2} hold true, with~$(1-s)=\sigma \in (0,1/2)$ and~$\delta :=\frac 1 {100}$.

Then, for every $i$ such that \begin{equation}\label{ASSNI}\min_{\overline B_{1/2}'} |g_i| \le \frac12,\end{equation} we have that
\[
\sigma \int_{B'_{1/2}} \frac{dx'}{|g_{i+1}-g_i|^{2}} \le C.
\]
\end{proposition}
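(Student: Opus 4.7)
My plan is to deduce the estimate directly from the localized stability inequality of \Cref{prop:stabilityineqloc}, applied with a cutoff $\eta \in C^1_c(\Omega_0)$ that equals $1$ on a cylindrical slab containing the sheet $\Gamma_i$ selected by \eqref{ASSNI} and decays linearly to zero on a shell of fixed thickness. The overall strategy is: extract from the left-hand side a lower bound by the consecutive-sheet quantities $\sigma\int dx'/d_k^2$, estimate the right-hand side by cross-sheet analogues $\sigma\int dx'/(g_j-g_i)^2$, and then absorb the cross-sheet contributions back into the consecutive ones via a telescoping inequality combined with a rescaling/covering argument.

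For the lower bound, I would exploit the fact that, because $\partial E$ is coherently oriented and the graphs alternate in normal direction, consecutive sheets $\Gamma_j$ and $\Gamma_{j+1}$ carry essentially opposite normals: the gradient bound in \eqref{whtoihwohw2} ensures $|\nu(x)-\nu(y)|^2\ge 2$ for $x\in\Gamma_j$ and $y\in\Gamma_{j+1}$. Setting $d_j(x'):=g_{j+1}(x')-g_j(x')$, a direct scaling computation for $x=(x',g_j(x'))$ shows that the inner integral of $|x-y|^{-(n+s)}$ over $y\in\Gamma_{j+1}$ is bounded below by $c\,d_j(x')^{-(1+s)}$; since $d_j\le 2$, this can be further bounded below by $c\,d_j(x')^{-2}$ up to a universal factor for $s$ sufficiently close to~$1$. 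Summing over consecutive pairs $j$ yields
\[
\mathrm{LHS}\ \ge\ c\,\sigma\sum_j \int_{B'_{3/4}} \eta^2\bigl(x',g_j(x')\bigr)\,\frac{dx'}{d_j(x')^{2}}.
\]
A symmetric analysis of the right-hand side, using the Lipschitz bound $(\eta(x)-\eta(y))^2\le C|x-y|^2$, and of the exterior term ${\rm Ext}_\Omega^2$, gives
\[
\mathrm{RHS} + {\rm Ext}_\Omega^2\ \le\ C\sum_{i\neq j}\sigma\int_{B'_1} \frac{dx'}{(g_j(x')-g_i(x'))^{2}}\ +\ C\delta^2\ +\ (\text{harmless far-field}).
\]

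The decisive tool is then the elementary telescoping inequality
\[
\frac{1}{(g_j(x')-g_i(x'))^{2}}\ \le\ \frac{1}{|j-i|^3}\sum_{k=\min(i,j)}^{\max(i,j)-1}\frac{1}{(g_{k+1}(x')-g_k(x'))^{2}},
\]
which follows from Cauchy--Schwarz applied to $g_j-g_i=\sum_k(g_{k+1}-g_k)$. Summing the weights $|i-j|^{-3}$ collapses the double sum on the right into $C'\sum_k\sigma\int_{B'_1}dx'/d_k^{2}$, with a universal $C'=O\bigl(\sum_m m^{-2}\bigr)$. To manufacture the smallness needed to absorb $C'$ into the lower-bound constant $c$, I would rerun the whole argument on small balls $B'_\rho(x'_0)\times(-1,1)$ with cutoffs of scale $\rho$, so that the cross-sheet right-hand side picks up an extra factor $\rho^{\alpha}$, with $\alpha>0$, relative to the left-hand side; a finite covering of $B'_{1/2}$ then closes the estimate. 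Hypothesis \eqref{ASSNI} enters here to guarantee that the selected sheet $\Gamma_i$ actually crosses the cylinder $B'_{1/2}\times(-1/2,1/2)$, so that the chosen cutoff captures the right configuration.

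The main obstacle is precisely this absorption step. The universal prefactor $C'$ produced by the telescoping is not small in itself, and the smallness coming from \eqref{whtoihwohw2} only controls the diagonal $i=j$ contribution. Closing the estimate therefore requires the rescaling/covering argument described above, together with a careful bookkeeping of the exterior term ${\rm Ext}_\Omega^2$: the nonlocal kernel couples sheets inside $\Omega_0$ with pieces of $\partial E$ sitting outside, and one must verify that the resulting contributions can be reorganized into the same consecutive-sheet form $\sigma\int dx'/d_k^2$ before the telescoping bound is invoked. Getting the constants to match through all these steps is where the true work of the proof lies.
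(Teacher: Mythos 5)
Your skeleton (testing the localized stability inequality of \Cref{prop:stabilityineqloc}, extracting the consecutive-sheet terms from the left-hand side via the nearly opposite normals, and relating cross-sheet to consecutive-sheet quantities through the telescoping inequality) matches the paper's strategy, but the decisive absorption step is not correct as you propose it. The source of smallness you invoke does not exist: the consecutive-gap quantity $\sigma\int dx'/(g_{k+1}-g_k)^2$ and the cross-sheet quantity $\sigma\int dx'/(g_j-g_i)^2$ scale identically under the dilation $x\mapsto x/\rho$ (both transform with the factor $\rho^{\,n-3}$ after renormalizing the graphs), so rerunning the argument on $B'_\rho(x_0')$ with a cutoff at scale $\rho$ produces no relative gain $\rho^\alpha$ of the right-hand side over the left-hand side; shrinking the scale only lowers the constant $\delta$ in \eqref{whtoihwohw2}, which, as you admit, does not touch the telescoping prefactor. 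In the paper the smallness is manufactured in the \emph{layer-index} direction, not in the horizontal variable: the test function is $\eta=\xi(x')\zeta(x)$ with $\zeta=\overline\zeta_i$ on $\Gamma_i$ and $\zeta_i$ ramping down over $N_\circ$ indices as in \eqref{wowhoiwh28648t}, so every cross-sheet term carries the weight $(\zeta_j-\zeta_i)^2\le (j-i)^2/N_\circ^2$, and after telescoping one gets the small factor $\tfrac{\log N_\circ}{N_\circ}$ (and $N_\circ^{-2}$ for the exterior term, because $\partial\Omega$ is taken along the sheets $g_{i_0\pm 3N_\circ}$ so that at least $N_\circ$ consecutive gaps separate the support of $\eta$ from $\Omega^c$). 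Your purely spatial cutoff of fixed thickness cannot reproduce this: its support may meet an a priori unbounded number of sheets, and the Lipschitz bound $(\eta(x)-\eta(y))^2\le C|x-y|^2$ gives, per pair of sheets, a contribution of size $C\sigma\log\tfrac{2}{|j-i|\sqrt\sigma}$ (as in \Cref{lem:stab-RHS-ij}), not a reciprocal-square term, so your displayed bound for the right-hand side is not what that estimate yields, and the sum over the possibly huge number of sheets in the support is not controlled by a constant. (A smaller point: lower-bounding $d_j^{-(1+s)}$ by $c\,d_j^{-2}$ does not follow from $d_j\le 2$; it requires the rough a priori gap bound of \Cref{whtiohwoiwh} together with $\sigma^\sigma\to1$, as in \eqref{eq:power-2}.)

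Even granting a small prefactor in front of the cross-sheet sum, a finite covering of $B'_{1/2}$ does not close the estimate, because after telescoping the right-hand side involves the same consecutive-gap integrals on a larger ball and for neighbouring sheets: each local inequality has the shape $A(\text{ball})\le \varepsilon\,A(\text{larger region})+C$, and summing such inequalities over a cover does not yield $A\le C$. The paper closes this loop with a point-selection device that is absent from your proposal: one introduces the normalized quantity $\varphi(x)$, built from the rescaled sets $E_x$ with scale factor $r_x$, shows it is upper semicontinuous and vanishes on $\partial\Omega_0$, recenters at an interior maximum point, and only then does the covering argument leading to \eqref{whtiohwoithwho} dominate the post-telescoping sum by $C\varphi(0)$, so that the factor $\tfrac{C\log N_\circ}{N_\circ}+\tfrac{C}{N_\circ^2}$ can be absorbed by choosing $N_\circ$ large and then $\delta$ small (with the truncation indices $m,M$ at the first $2\delta$-gap used to organize the exterior term). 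Without the index cutoff of width $N_\circ$ and this maximal-point normalization, the absorption you describe cannot be carried out, so the proof as proposed has a genuine gap.
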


The result in Proposition~\ref{propL-2} can be read as an estimate for the $L^{-2}$ norm of the separation (in the $x_n$ direction) of two consecutive graphs, that is for the function $(g_{i+1}-g_i)$.
By the example in Remark~\ref{remexample}, the previous estimate is optimal.
Since our final goal will be to obtain an estimate for the infimum of $(g_{i+1}-g_i)$ (or equivalently for the $L^{-p}$ norm as $p\uparrow +\infty$), this can be seen as a first step in this direction.

To prove Proposition~\ref{propL-2},
we will need the following preliminary result.
\begin{lemma}\label{lem:kernel-comp}
Let~$1-s =\sigma \in (0, 1/2)$.
Assume that for some $i< j$   we have   $\min_{\overline {B_1'}}(g_j-g_i)\le \widetilde \delta$ for some~$\widetilde \delta>0$
and let\footnote{Note that, in terms of the standard Beta and Gamma functions,
\[
c_{n,s}
=\frac{\cH^{n-2}(\bS^{n-2})}{2}{\mathrm B}\Bigl(\frac{n-1}{2},\frac{1+s}{2}\Bigr)
=\frac{\cH^{n-2}(\bS^{n-2})\Gamma\bigl(\frac{n-1}{2}\bigr)\Gamma\bigl(\frac{1+s}{2}\bigr)}{2\Gamma\bigl(\frac{n+s}{2}\bigr)}
=\frac{\cH^{n-2}(\bS^{n-2})}{n-1}+O(\sigma)
\]
is universally comparable to $1$.
}
$$c_{n,s} : =  \cH^{n-2}(\bS^{n-2})\int_0^ \infty  (1 + t^2 )^{-\frac{n+s}{2}} \,t^{n-2}\,dt.$$

Then,
\[
\big(1-c_{\delta, \widetilde \delta}\big)  \frac{c_{n,s}\;\sigma}{|g_j(x')-g_i(x')|^{1+s}}   \le   \int_{\Gamma_j }  \frac{ \sigma }{|x-y|^{n+s} }  \,d\HH_y  \le \big(1+c_{\delta, \widetilde \delta}\big) \frac{c_{n,s}\;\sigma}{|g_j(x')-g_i(x')|^{1+s}}
\]
for all $x\in \Gamma_i \cap B_{3/4}'\times \R$, where $c_{\delta, \widetilde \delta} \leq C(\sqrt{\delta}+\widetilde{\delta})\downarrow 0$ as $\delta, \widetilde \delta\downarrow 0$.
\end{lemma}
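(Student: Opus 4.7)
The plan is a direct asymptotic expansion of the integral, comparing $\Gamma_j$ to the tangent hyperplane to it above $x=(x',g_i(x'))$. Set $h:=g_j(x')-g_i(x')>0$ (positive since $i<j$ and $g_i<g_j$). Since $\|\nabla(g_j-g_i)\|_{L^\infty}\le 2\delta$ and $\min_{\overline{B'_1}}(g_j-g_i)\le\widetilde\delta$, one immediately has $h\le\widetilde\delta+4\delta$, so $h$ is uniformly small as $\delta,\widetilde\delta\downarrow 0$.

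First I would parametrize $\Gamma_j$ as a graph $y=(y',g_j(y'))$ over $\widetilde D_j:=\{y'\in B_1':g_j(y')\in(-1,1)\}$, so $d\HH_y=\sqrt{1+|\nabla g_j(y')|^2}\,dy'=(1+O(\delta^2))\,dy'$. Changing variables $z:=y'-x'$ and Taylor-expanding using $|\nabla g_j|\le\delta$ and $|D^2 g_j|\le\delta$ gives
\begin{equation*}
|x-y|^2=|z|^2+\bigl(h+\nabla g_j(x')\cdot z+R(z)\bigr)^2,\qquad |R(z)|\le C\delta|z|^2.
\end{equation*}
Setting $D(z):=|z|^2+h^2$, the elementary bounds $h|z|\le D(z)/2$ and $|z|^2\le D(z)$ give $|x-y|^2=D(z)(1+\theta(z))$ with $|\theta(z)|\le C\delta$, hence $|x-y|^{-(n+s)}=D(z)^{-(n+s)/2}(1+O(\delta))$ uniformly on $\widetilde D_j-x'$. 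Rescaling $z=h\zeta$ and integrating in polar coordinates,
\begin{equation*}
\int_{\R^{n-1}}\frac{\sigma\,dz}{D(z)^{(n+s)/2}}=\frac{\sigma}{h^{1+s}}\,\cH^{n-2}(\bS^{n-2})\int_0^\infty\frac{t^{n-2}\,dt}{(1+t^2)^{(n+s)/2}}=\frac{c_{n,s}\,\sigma}{h^{1+s}},
\end{equation*}
which is precisely the claimed main term.

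Second, I would bound the tail coming from $\R^{n-1}\setminus(\widetilde D_j-x')$. Since $x'\in B'_{3/4}$ and $g_j$ is Lipschitz with constant $\le\delta$ while remaining close to $g_i$ (which, in the setting in which this lemma is applied, stays away from $\pm 1$ on $B_1'$ thanks to the ambient assumption \eqref{ASSNI}), the set $\widetilde D_j-x'$ contains a fixed ball $B'_{r_\circ}$. The tail is then dominated by $C\int_{r_\circ}^\infty r^{n-2}r^{-(n+s)}\,dr=O(r_\circ^{-(1+s)})$, which relative to the main term $c_{n,s}/h^{1+s}$ is an error of size $h^{1+s}\le C(\delta+\widetilde\delta)^{1+s}\le C(\delta+\widetilde\delta)$ (using $s<1$). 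Combining with the $O(\delta)$ integrand error from the previous paragraph yields $c_{\delta,\widetilde\delta}=O(\delta+\widetilde\delta)$, comfortably stronger than the stated $C(\sqrt\delta+\widetilde\delta)$. The only mild obstacle is securing the uniform lower bound on $r_\circ$: it requires $g_j$ to remain inside $(-1,1)$ on a neighborhood of $x'$, which is enforced by the $L^\infty$ control on $g_i$ inherited from the setting of \Cref{propL-2}.
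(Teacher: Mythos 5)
Your proof is correct, and at the key technical point it is genuinely simpler than the paper's. The paper decomposes $g_j(y')-g_i(x')$ as (separation at $y'$) plus (variation of $g_i$), and must therefore control the oscillation of the separation $h(z')=g_j-g_i$; since a priori only $|Dh|\le 2\delta$ is known, this forces a two-scale argument: the interpolation bound $|Dh(0)|\le 2\sqrt{h(0)\delta}$ (which exploits $h\ge 0$ and is the source of the $\sqrt{\delta}$ in $c_{\delta,\widetilde\delta}$), the restriction to the ball of radius $\varrho=\sqrt{h(0)}$ where $h\approx h(0)$, and a separate exterior estimate at that scale. You instead freeze the separation at the base point, writing $g_j(y')-g_i(x')=h+e(z)$ with $|e(z)|\le C\delta|z|$ coming only from the Lipschitz/$C^{1,1}$ bound on $g_j$, and absorb the cross term via $2h|e(z)|\le C\delta(h^2+|z|^2)$; this gives the uniform two-sided comparison $|x-y|^2=(|z|^2+h^2)(1+O(\delta))$ on the whole chart, with no scale decomposition and no use of the ordering beyond $h>0$, and it yields the sharper error $O(\delta+\widetilde\delta)$ in place of $O(\sqrt{\delta}+\widetilde\delta)$, which of course implies the stated bound. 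Two small remarks: (i) for the lower bound you need, as you note, a fixed ball $B'_{r_\circ}(x')\subset\widetilde D_j$, i.e. that $g_j$ stays in $(-1,1)$ near $x'$; the paper's proof needs exactly the same thing (its claim that $\Gamma_j\cap B_\varrho(x)$ contains the graph over $B'_{\varrho/2}(x')$ presupposes it), and in the applications this is guaranteed by \eqref{ASSNI} together with the Lipschitz bound, so your explicit flag matches what the paper leaves implicit rather than constituting a gap; (ii) the inequality $(\delta+\widetilde\delta)^{1+s}\le \delta+\widetilde\delta$ holds because $1+s\ge 1$ and the base is at most $1$, not ``using $s<1$'' --- a harmless slip.
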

\begin{proof}
In all the proof we may suppose without loss of generality that $\delta$ and $\widetilde \delta$ are conveniently small. Define
\[h(z') : = g_j(x'+z')-g_i(x' +z') \ge 0\]
and notice that, since $\|D^2 h\|_{L^\infty} \le  \|D^2 g_j\|_{L^\infty}  + \|D^2 g_i\|_{L^\infty}     \le 2\delta$,  we have
\begin{equation}\label{wniowhoithw1}
 0 \le h(z')= h(0) +  D h(0)\cdot z'  + \delta |z'|^2 \quad \mbox{for all $|z'|\le 1/4$}.
 \end{equation}
This yields that, for all $r\in (0,1/4)$,
\[|Dh(0)| = \frac 1 r \sup_{z' \in B_r} (-D h(0)\cdot z')  \le  \frac 1 r h(0) + \delta r\]
and hence, after optimizing in $r$,
\begin{equation}\label{wniowhoithw2}
|Dh(0)| \le 2 \sqrt {h(0) \delta} .
\end{equation}

Let  $\varrho := \sqrt{h(0)}$. Then, using \eqref{wniowhoithw1} and~\eqref{wniowhoithw2} we obtain
\[
|h(z') - h(0)| \le |D h(0)\cdot z'|  + \delta |z'|^2 \le (2\sqrt \delta + \delta )\, h(0)= c_\delta \,h(0)  \quad \mbox{for all } z'\in B'_{\varrho},
\]
where~$ c_\delta:=2\sqrt \delta + \delta $, and therefore
\[
(1-c_\delta) h(0)  \le |h(z')| \le  (1+c_\delta) h(0)  \quad \mbox{for all } z'\in B'_{\varrho}.
\]

Now we want to estimate the quantity
\[
\int_{\Gamma_j }  \frac{ \sigma }{|x-y|^{n+s} } \,d\HH_y    =   \int_{\Gamma_j \cap B_\varrho(x)  }  \frac{ \sigma }{|x-y|^{n+s} }  \,d\HH_y  + \int_{\Gamma_j \setminus  B_\varrho(x)  }  \frac{ \sigma }{|x-y|^{n+s} } \,d\HH_y .
\]
For this, let $z:= y-x$. On the one hand,  since $|g_i(y')-g_i(x')| \le \delta |y'-x'| = \delta |z'|$, we have
\begin{eqnarray*}&&
|y-x|^{2} = |y'-x'|^2 + \big(g_j(y')-g_i(x')\big)^2 \\&&\qquad\quad
= |z'|^2 +( h(z') + O(\delta) |z'| )^2 = (|z'|^2 + h(0)^2)(1+O(\delta)).\end{eqnarray*}
Using  also that $d\HH_x \le \sqrt{1+ \delta^2} \, dx'$ on $\Gamma_j = \{x_n = g_j(x'), |x'|\le 1\}$,
we obtain    (for some  constant $c_\delta$ that may vary from line to line and satisfies $c_\delta\downarrow 0$ as $\delta \downarrow 0$) that
\[
\begin{split}
\int_{\Gamma_j \cap B_\varrho(x)  }  \frac{ \sigma }{|x-y|^{n+s} }  \,d\HH_y  & \le (1+c_
\delta) \int_{\Gamma_j \cap B_\varrho(x)  }  \frac{ \sigma }{ \big( |x'-y'|^2 +  h(0)^2 \big)^{n+s} }  \,d\HH_y  \\
& \le  (1+c_
\delta)  \int_{B'_\varrho }  \frac{ \sigma }{ \big( |z'|^2 +  h(0)^2  \big)^{n+s} }  \,dz'\\
& \le  (1+c_\delta)\cH^{n-2}(\bS^{n-2})\sigma 
\int_0^\infty \frac{r^{n-2}\,dr}{(r^2 + h(0)^2)^{\frac{n+s}{2}}}
\\
&  \le    (1+c_\delta) \frac{\cH^{n-2}(\bS^{n-2})\sigma}{h(0)^{1+s}} \int_0^ \infty \frac{t^{n-2}\,dt}{(1 + t^2 )^{\frac{n+s}{2}}}.
\end{split}
\]

On the other hand,
$$h(0) = |g_j(x')-g_i(x')| \le  \min_{\overline {B_1'}}(g_j-g_i) + 2 ( \|\nabla g_j\|_{L^\infty} \|\nabla g_i\|_{L^\infty})  \le \widetilde \delta + 4\delta \le c_{\delta, \widetilde \delta}$$
and consequently~$\frac{\varrho}{2 h(0)} = \frac{\sqrt{h(0)}}{2 h(0)} \to+\infty$ as $\delta, \widetilde \delta  \downarrow 0$.

Additionally, if $\delta$ and~$\widetilde \delta$ are sufficiently small, then  $\Gamma_j \cap B_\varrho(x)$   will contain $\{x_n = g_j(x')\ : \ x'\in B_{\varrho/2}\}$. Therefore,
\[
\begin{split}
\int_{\Gamma_j \cap B_\varrho(x)  }  \frac{ \sigma }{|x-y|^{n+s} }  d\HH_y  
& \ge  (1-c_\delta)\int_{\Gamma_j \cap B_\varrho(x)  }  \frac{ \sigma }{ \big( |x'-y'|^2 +  h(0)^2 \big)^{n+s} }  \,d\HH_y  \\
& \ge  (1-c_\delta)\int_{B'_{\varrho/2} }  \frac{ \sigma }{ \big( |z'|^2 +  h(0)^2 \big)^{n+s} } \, dz'\\
& \ge (1-c_\delta)\cH^{n-2}(\bS^{n-2})\sigma \int_0^{\varrho/2}\frac{r^{n-2}\,dr}{(r^2 +  h(0)^2)^{\frac{n+s}{2}}}\\&
=  (1-c_\delta)\frac{\cH^{n-2}(\bS^{n-2})\sigma}{h(0)^{1+s}} \int_0^{\frac{\varrho}{2 h(0)}} \frac{\bar r^{n-2}d\bar r}{(1 +\bar r^2)^{\frac{n+s}{2}}} \\
& \ge (1-c_{\delta, \widetilde \delta}) \frac{\cH^{n-2}(\bS^{n-2})\sigma}{h(0)^{1+s}} \int_0^ \infty \frac{t^{n-2}\,dt}{(1 + t^2 )^{\frac{n+s}{2}}}.
\end{split}
\]

Finally, the contributions from ``the exterior of the ball'' can be bounded by
\[
\begin{split}
\left|  \int_{\Gamma_j \setminus  B_\varrho(x)  }  \frac{ \sigma }{|x-y|^{n+s} } \,d\HH_y  \right| &\le  \int_{B_2' \setminus  B'_\varrho(x)  }
 \sqrt{1+ \delta^2}   \frac{ \sigma }{| y'-x'|^{n+s} } \,dy' \le C\sigma   \int_\varrho^\infty \frac{\bar r^{n-1}d\bar r}{\bar r^{n+s}}\\
 & = C\frac{\sigma}{\varrho^{1+s} } = 2C\frac{\sigma \delta}{h(0)^{\frac{1+s}{2}} } \le   c_{\delta, \widetilde \delta} \frac{\sigma}{h(0)^{1+s}}.
 \qedhere
\end{split}
\]
\end{proof}

Before proving the optimal  $L^{-2}$ estimate of Proposition~\ref{propL-2}, we establish next a rough (but helpful)
estimate in $L^{-\infty}$.

\begin{lemma}\label{whtiohwoiwh}
With the same assumptions as in Proposition~\ref{propL-2} we have
\[
\inf_{B_{1/2}'} (g_{i+1}-g_i) \ge \frac{\sigma^4}{C} .
\]
\end{lemma}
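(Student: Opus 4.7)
\medskip

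The plan is to combine the $L^{-2}$ estimate of Proposition~\ref{propL-2} with the non-negativity and $C^2$ control of the difference $\varphi := g_{i+1}-g_i$. Since $\varphi\ge 0$ on $B_1'$ and $\|D^2\varphi\|_{L^\infty(B_1')}\le 2\delta$, the ``non-negative function with bounded Hessian'' trick already used in \eqref{wniowhoithw1}--\eqref{wniowhoithw2} yields the self-improving gradient bound
\[
|\nabla\varphi(y')|\le 2\sqrt{\delta\,\varphi(y')}\qquad\text{for all }y'\in\overline{B'_{3/4}},
\]
provided $\varphi(y')$ is small enough.

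First, I would pick $x_\circ'\in \overline{B'_{1/2}}$ with $\varphi(x_\circ')\le 2\eta$, where $\eta:=\inf_{B'_{1/2}}\varphi$, and assume (for contradiction) that $\eta$ is very small. A second order Taylor expansion at $x_\circ'$ combined with the above gradient bound gives
\[
\varphi(x_\circ'+z')\le \eta+2\sqrt{\delta\eta}\,|z'|+\delta|z'|^2 \le 4\eta
\qquad\text{for all }|z'|\le r:=\sqrt{\eta/\delta},
\]
provided $\eta\le \delta/16$ (which can be assumed, otherwise the conclusion is trivial). Since $x_\circ'\in\overline{B'_{1/2}}$, the set $B_r'(x_\circ')\cap B'_{1/2}$ has $(n-1)$-Lebesgue measure at least $c\,r^{n-1}$.

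Second, plugging this into Proposition~\ref{propL-2}, I obtain
\[
C \;\ge\; \sigma\int_{B'_{1/2}}\frac{dx'}{\varphi^2}
\;\ge\; \sigma\int_{B_r'(x_\circ')\cap B'_{1/2}}\frac{dx'}{(4\eta)^2}
\;\ge\; \frac{c\,\sigma\,r^{n-1}}{\eta^2}
\;=\;c(\delta)\,\sigma\,\eta^{(n-5)/2}.
\]
For $n\in\{2,3,4\}$, i.e.\ whenever $(n-5)/2<0$, rearranging gives
\[
\eta \;\ge\; c(\delta)\,\sigma^{\,2/(5-n)} \;\ge\; c\,\sigma^{2},
\]
and since $\sigma\in(0,1/2)$, in particular $\eta\ge \sigma^4/C$, proving the claim.

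\medskip

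There is essentially no hard step here: everything reduces to the two elementary ingredients (self-improving gradient bound from non-negativity, and a volume lower bound on the near-minimum sublevel set), and the only thing to check is that the exponent $4$ suffices in the relevant dimensions. The mild technical point is that $x_\circ'$ may lie on $\partial B'_{1/2}$, but since $\varphi$ remains non-negative on the larger ball $B'_1$, both the gradient bound and the Taylor estimate survive on $B'_r(x_\circ')\subset B'_{3/4}$, while $|B'_r(x_\circ')\cap B'_{1/2}|\ge c r^{n-1}$ still holds; the argument is thus insensitive to this boundary issue.
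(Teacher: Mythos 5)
Your argument has a genuine logical gap: it is circular within the structure of the paper. You derive the $L^{-\infty}$ bound from the conclusion of Proposition~\ref{propL-2}, but the phrase ``with the same assumptions as in Proposition~\ref{propL-2}'' only means that the lemma shares the \emph{hypotheses} of that proposition, not that its conclusion is available. In fact this lemma is stated and proved \emph{before} Proposition~\ref{propL-2} precisely because it is an ingredient of its proof: the rough lower bound $g_j-g_i\ge \sigma^4/C$ is what justifies \eqref{eq:power-2}, i.e.\ the comparability $|g_j-g_i|^{1+s}=\bigl(1+O(\sigma|\log\sigma|)\bigr)(g_j-g_i)^2$, which is used repeatedly (in the estimates of $I_1$, $I_{2,2}$ and $I_3$) to pass from the kernel bounds of Lemma~\ref{lem:kernel-comp}, which naturally produce the exponent $1+s$, to the exponent $2$ needed for the telescoping inequality \eqref{eq:AM-HM}. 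So an argument that presupposes the $L^{-2}$ estimate cannot serve as a proof of this lemma; you need an independent derivation.

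The paper's actual proof goes directly through stability, with no reference to Proposition~\ref{propL-2}: assuming $g_{i+1}-g_i=\tau^2\sigma^4$ at some point, one rescales $E$ about that point by $\varrho=\tau\sigma^2$, uses the universal perimeter estimate $\HH(\widetilde\Gamma)\le C/\sigma$ for stable $s$-minimal sets to bound the right-hand side of the stability inequality \eqref{stability} by $C/\sigma$, and bounds the left-hand side from below by $c\,\sigma/\varrho^{1+s}$ via Lemma~\ref{lem:kernel-comp} (consecutive sheets at distance $\le 2\varrho$ with nearly opposite normals); comparing the two gives $\sigma^{-2s}\le C\tau^{1+s}$, hence $\tau\ge c$. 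Your internal steps (the self-improving gradient bound $|\nabla\varphi|\le 2\sqrt{\delta\varphi}$ from nonnegativity and the Hessian bound, the volume lower bound for the near-minimum sublevel set, and the exponent bookkeeping for $n\le 4$) are fine as computations, and indeed they show how a pointwise lower bound \emph{would} follow once the $L^{-2}$ estimate is established --- but as a proof of Lemma~\ref{whtiohwoiwh} in its intended role, the proposal does not work.
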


\begin{proof}
Assume by contradiction  that there exists a point $z' \in B_{1/2}$ where $g_{i+1}-g_i = \tau^2 \sigma^4$  for some $\tau\in (0,1)$. Our goal will be to bound $\tau$ away from zero (hence we may assume without loss of generality that $\tau$ is  very small whenever needed).

By the assumption on $i$ in~\eqref{ASSNI},
we have that~$|g_i(z')|<3/4$. Let $z: = (z', g_i (z'))$ and $\varrho:= \tau \sigma^2$. Consider the rescaled set $\widetilde E : = (E-z)/\varrho$.
We note that $\widetilde E$ is a stable $s$-minimal set in $B_2$. We also observe that the ``layers'' $\{x_n = g_{j}(x')\}$
become, after scaling,
\[
\{x_n = \widetilde g_j(x')\} \qquad \mbox{where }  \| D^2 \widetilde g_j \|_{L^\infty} \le  \varrho \delta, \quad \mbox{for all } j=1,\dots, N.
\]
Besides, since the point $z$ is mapped to $0$, we have that
\[
\widetilde g_i(0) =0 \quad \mbox{and} \quad  \widetilde g_{i+1} (0) \le \varrho.
\]
It thereby follows, arguing similarly as in the proof of Lemma~\ref{lem:kernel-comp}, that, for $\delta, \widetilde \delta$ sufficiently small,
\[
0\le \widetilde g_{i+1}- \widetilde g_i \le  2\varrho \quad\mbox{in } B_1'.
\]

Now we use the stability inequality in Proposition~\ref{prop:stabilityineqloc} with  $\Omega := B_1$,  $E$ replaced by $\widetilde E$, $\Gamma$ replaced by $\widetilde \Gamma: = \partial \widetilde E \cap B_1$, and $\eta$ any radial smooth cutoff satisfying $\chi_{B_{1/2}} \le \eta\le\chi_{B_1}$.

Using that $\widetilde E$ is stable $s$-minimal in $B_2$, the perimeter estimate in  \cite[Theorem 3.5]{MR4116635}  (see also \cite{MR3981295, newprep}) gives that
\[
\HH(\widetilde \Gamma) \le \frac{C}{\sigma}.
\]
Hence, we obtain the following bound for the right hand side of the stability inequality (that we denote here simply by ``$\mbox{r-h-s}$''):
\[
\mbox{r-h-s} : =\sigma \iint_{\widetilde \Gamma\times \widetilde \Gamma} \negmedspace \frac{ \big(\eta(x)-\eta(y)\big)^2}{|x-y|^{n+s}} \,d\HH_x \,d\HH_y +{\rm Ext}_2^\Omega \le \frac  C \sigma
.\]

Now, the  left hand side in the stability inequality (that we denote by ``$\mbox{l-h-s}$'') is
\[
\mbox{l-h-s} : = \sigma \iint_{\widetilde \Gamma\times\widetilde \Gamma } \frac{ \big|\widetilde \nu(x)-\widetilde\nu(y)\big|^2 \eta^2(x)}{|x-y|^{n+s}} \,d\HH_x\, d\HH_y.
\]
Using Lemma~\ref{lem:kernel-comp} and noticing that if~$x\in\widetilde \Gamma_i$ and~$y\in\widetilde \Gamma_{i+1}$
then~$|\widetilde \nu(x) - \widetilde \nu(y)| \ge 2-c_\varrho \ge 1$, where~$c_\varrho\downarrow 0$ as $\varrho\downarrow 0$, we conclude that
\[
 \int_{B_{1/4}'} \frac{\sigma}{|g_{i+1}(x')-g_i(x')|^{1+s}}\, dx' \le    C\,\mbox{l-h-s} .
\]
{F}rom these considerations, and recalling that~$|g_{i+1}(x')-g_i(x')| \le 2\varrho$, we deduce
\[
\frac{\sigma}{\varrho^{1+s}} \le  C\, \mbox{l-h-s}  \le C\,\mbox{r-h-s}  \le \frac C {\sigma}.
\]
Since~$\varrho = \tau \sigma^2 $, from this we obtain that~$\sigma^{-2s} \le C\tau^{1+s}$. This yields
that~$\tau$ is bounded away from zero with a uniform bound for $\sigma \in (0,1/2)$ (actually the bound improves as $\sigma \downarrow 0$ since the quantity~$\sigma^{-2s} \uparrow \infty$). This finishes the proof
of Lemma~\ref{whtiohwoiwh}.
\end{proof}

\begin{proof}[Proof of Proposition~\ref{propL-2}]
This proof is divided into several steps.

{\medskip \noindent \bf Step~1: Normalization.}
Observe first that, if we zoom in and choose a new coordinate frame about a point of the surface, the constant $\delta$ in \eqref{whtoihwohw2} for the (suitably rotated) rescaled surface will became smaller. Hence,  if we can prove the
estimate in Proposition~\ref{propL-2} assuming that~\eqref{whtoihwohw2} holds with~$\delta>0$ tiny then the estimate in the case~$\delta=\frac 1 {100}$  follows by a simple scaling and covering argument (with a larger constant $C$).
Thus, let us assume without loss of generality that~\eqref{whtoihwohw2} holds  with some $\delta>0$ to be chosen later.


Let
\begin{align*}
M &: =  \min \big\{j\ge i_0: \ \min_{\overline {B'_{1}}} (g_{j+1}-g_{j})  > 2\delta\big\}\\{\mbox{and }}\quad
m &: =  \min \big\{j\le i_0: \ \max_{\overline {B'_{1}}} (g_{j}-g_{j-1})  > 2\delta\big\}.
\end{align*}
If the set of indices $j$ in the definition of $M$ (respectively $m$) is empty, we set $M:=N-1$ (respectively $m:=1$).

Recall that we denote $\Gamma = \bigcup_{1\le i\le N} \Gamma_i\subset \Omega_0= B_1'\times(-1,1)$. For $x \in \Gamma$ we define
\[
r_x := \min\big\{1 -|x'|,1 -|x_n|\big\}.
\]
Let us define the ``rescaled objects'' $E_x : = \frac{1}{r_x}(E-x)$,  $\Gamma_{i,x} : =  \frac{1}{r_x}(\Gamma_i-x) \cap \Omega_0$,  and $g_{i,x} : = \frac{1}{r_x}g_i(x'+r_x\,\cdot\,) |_{B_1'}$. Also, given $x\in \Gamma$ let $i_x$  be the index such that $x\in \Gamma_{i_x}$ and
\begin{align*}
M_{x,kN_\circ} &: = \min\big\{i_x +kN_\circ,M\big\}\\{\mbox{and }}\quad
m_{x,kN_\circ} &: = \max\big\{i_x +kN_\circ,m\big\},
\end{align*}
where $k\in\{1,3\}$ and $N_\circ$ is some (large) positive integer to be chosen later.
We stress that~$M$ and~$m$ record the indices of first $2\delta$-separation in the largest scale (as $r_x\in(0,1]$) and remain the same for all rescalings.

Also, let us define $\varphi: \Gamma\to [0,\infty)$ as
\[
\varphi(x) : =   \sum_{m_{x,N_\circ}  \le  i <M_{x,N_\circ}} \int_{B'_{1/2}}  \frac{c_{n,s} \, \sigma \, dx'}{|g_{i+1, x}-g_{i,x}|^{2}},
\]
where $c_{n,s}$ is the positive constant from  Lemma~\ref{lem:kernel-comp}. Our goal will be to prove a bound
of the type~$\varphi(x)\le C$ for all $x \in \Gamma$.

We first note that by definition $\varphi$ is upper-semicontinuous and $\varphi(x) \to 0$ as $x\to \partial \Omega_0$.
Indeed, the upper-semicontinuity easily follows using that whenever $x_k \to x$ we have $\limsup_k [\overline{m}_{x_k},\overline{M}_{x_k}]=[\overline{m}_x,\overline{M}_x]$,
while the vanishing property of $\varphi$ on $\partial \Omega_0$ follows from observing that
\[
\varphi(x)
=\cH^{n-1}(B_1')r_x^2\sum_{m_{x,N_\circ}  \le  i <M_{x,N_\circ}} \fint_{B'_{r_x/2}(x')}  \frac{c_{n,s} \, \sigma \, dy'}{|g_{i+1}-g_{i}|^{2}}
\]
and $r_x \to  0$ as  $x\to \partial \Omega_0$. 

Hence, $\varphi$ attains a maximum at a point $x_\circ$ in the interior of $\Omega_0$. Now, by replacing the set $E$ by $E_{x_\circ}$ (and accordingly also $\Gamma_{i}$ and $g_{i}$ by $\Gamma_{i, x_\circ}$ and $g_{i, x_\circ}$
respectively) we may and do assume without loss of generality that $\varphi$ attains its maximum at $x_\circ =0$.

{\medskip \noindent \bf Step~2: Testing stability.}
We will now test the stability inequality \eqref{stability} with a suitable test function. In order to define the test function let us set
\begin{equation}\label{wowhoiwh28648t}
\zeta_i : =
\begin{cases}
1 \quad &\mbox{for } |i-i_0| \le N_\circ,\\
2 -\frac{ |i-i_0|}{N_\circ}& \mbox{for } N_\circ \le |i-i_0| \le 2N_\circ,\\
0 &\mbox{for }  |i-i_0| \ge 2N_\circ
\end{cases}
\end{equation}
and
\[
\overline \zeta_i := \zeta_i \chi_{[m,  M]}(i).
\]
Also let $\zeta: B_1'\times [-1,1] \to [0,1]$ be any Lipschitz function such that $\zeta = \overline \zeta_i$  on $\Gamma_i$ and let~$\xi:\R^{n-1}\to \R$ some fixed smooth  radial cutoff satisfying $\chi_{B'_{1/2}} \le \xi \le \chi_{B'_{3/4}}$.

Let us test the stability inequality \eqref{stability} in the domain $\Omega$ defined as follows
\begin{align}
\label{hwioheoihtew1}
 \Omega&= \big\{g_{i_0 -3N_\circ}(x')\le x_n \le g_{i_0 +3N_\circ}(x')\,, \ |x'|\le \tfrac78\big\} \quad &\mbox{if $ m\le  i_0-3N_\circ$ and $M \ge i_0+3N_\circ$}
 \\
 \label{hwioheoihtew2}
\Omega&= \big\{g_{m_0}(x')-\delta \le x_n \le g_{i_0 + 3N_\circ}(x')\,, \  |x'|\le \tfrac78\big\}  &\mbox{if $ m> i_0-3N_\circ$ and $M\ge i_0+3N_\circ$}
\\
\label{hwioheoihtew3}
\Omega&= \big\{g_{i_0 -3N_\circ}(x')\le x_n \le g_{M_0}(x')+ \delta\,, \ |x'|\le \tfrac78\big\}  &\mbox{if $m\le  i_0-3N_\circ$ and $M <i_0+3N_\circ$}
\\
\label{hwioheoihtew4}
\Omega&= \big\{g_{m_0}(x')- \delta\le x_n \le g_{M_0}(x')+\delta\, , \ |x'|\le \tfrac78\big\}  &\mbox{if $m> i_0-3N_\circ$ and $M <i_0+3N_\circ$}
\end{align}
and with the test function  (which is by construction compactly supported in $\Omega$)
\[
\eta(x) : = \xi(x') \zeta(x).
\]

Suppose now that $N_\circ$ and $\delta$ are chosen satisfying
\begin{equation} \label{hwetiehwohw}
	8 N_\circ \delta < \tfrac 34.
\end{equation}
Then, we have that~$x\in \Gamma$ and~$\eta(x)>0 $ imply
\[
|x'|<3/4 \quad \mbox{and} \quad |x_n| \le 3N_\circ \Big( \delta   + {\rm diam}(B'_{3/4}) \|\nabla g_i\|_{L^\infty} \Big) \le 8N_\circ \delta <\tfrac 34.
\]

The stability inequality reads
\[
I_1 = \mbox{l-h-s} \le \mbox{r-h-s}= I_2 + I_3,
\]
where
\[
I_1 : =  \sigma \iint_{(\Gamma\cap \Omega)\times(\Gamma\cap \Omega)} \frac{ \big|\nu(x)-\nu(y)\big|^2 \eta^2(x)}{|x-y|^{n+s}} \,
d\HH_x \,d\HH_y,
\]
\[
I_2 : =  \sigma \iint_{(\Gamma\cap \Omega)\times(\Gamma\cap \Omega)}  \frac{ \big(\eta(x)-\eta(y)\big)^2}{|x-y|^{n+s}} \,d\HH_x \,d\HH_y,
\]
and
\begin{equation}\label{whtiogfyfvdhs}
I_3 := {\rm Ext}_2^\Omega = 2\sigma \int_{\Gamma\cap \Omega} \,d\HH_x \eta^2(x)\nu(x)\cdot \left(
	\int_{E\cap \Omega^c} \frac{(n+s) (x-y)\, dy}{|x-y|^{n+s+2} }
	+ \int_{E\cap \partial \Omega} \frac{n_\Omega(y)\,d\HH_y}{|x-y|^{n+s}}  \right).
\end{equation}

{\medskip \noindent \bf Step~3: Estimate of $I_1$.}
We observe first that
\begin{equation}\label{217BIS}
2\varphi(0) \le I_1.
\end{equation}
Indeed,  on the one hand if $(j-i)\in \{1,3,5,7 \dots\}$ we have $|\nu_j(y) -\nu_i(x) |^2 \ge 4(1-2\delta^2)^2\ge 2$
(for~$\delta$ sufficiently small) for all~$i, j$.
On the other hand, using Lemma~\ref{lem:kernel-comp} we obtain (also for $\delta$ sufficiently small)
\begin{equation}\label{whioewhoihw}
\frac {c_{n,s}} 2  \frac{\sigma\, \xi(x')^2}{|g_i(x')-g_j(x')|^{1+s}}   \le   \int_{\Gamma_j }  \frac{ \sigma\, \xi(x')^2}{|y-x|^{n+s} }  \,d\HH_y  \le 2 c_{n,s} \frac{\sigma\, \xi(x')^2}{|g_i(x')-g_j(x')|^{1+s}}
\end{equation}
for all $x\in \Gamma_i$ with $x'\in B_{3/4}'$ and $ m_0\le i<j\le M_0$.
Furthermore, by Lemma~\ref{whtiohwoiwh}, we have $ (g_j-g_i) \in (\sigma^4/C,1)$ for $j>i$ and hence ---since $\lim_{\sigma \downarrow 0} \sigma^\sigma =1$--- we find that $(g_j-g_i)^2$ and $|g_j-g_i|^{1+s}$ are comparable with dimensional  constants close to $1$, namely
\begin{equation}\label{eq:power-2}
	|g_j-g_i|^{1+s}=\bigl(1+O(\sigma|\log\sigma|)\bigr)	(g_j-g_i)^2.
\end{equation}
Therefore,
\[
\begin{split}
I_1 &=  \sigma \iint_{\Gamma\times\Gamma } \frac{ \big|\nu(y)-\nu(x)\big|^2 \eta^2(x)}{|y-x|^{n+s}} \,d\HH_x \,d\HH_y\\
& \ge  2 \sum_{\substack{m_{0,N_\circ} \le i<j\le M_{0,N_\circ} \\ (j-i)\text{ odd}}} \int_{B_{1}'}   \frac{c_{n,s}\,\sigma\,\xi(x')^2 \,dx'} {|g_j(x')-g_i(x')|^{2}}
\ge 2\sum_{m_{0,N_\circ} \le i<  M_{0,N_\circ}} \int_{B_{1}'} \frac{c_{n,s} \,\sigma\,\xi^2 \,dx' }{|g_{i+1}-g_i|^{2}}.
\\
\end{split}
\]
Hence, \eqref{217BIS} follows.

{\medskip \noindent \bf Step~4: Estimate of $I_2$.}
Our next goal is to bound from above $I_2$ and $I_3$. Let us start with~$I_2$.
Note that if $x\in \Gamma_i$  and $y\in \Gamma_j$ we have
\[\big(\eta(y)-\eta(x)\big)^2 = (\xi(y')\overline\zeta_j -  \xi(x')\overline\zeta_i)^2  \le 2(\xi(y')- \xi(x'))^2\overline\zeta_j^2 + 2(\overline\zeta_j -\overline\zeta_i)^2 \xi^2(x').\]
Using this we can bound
\[
I_2 \le I_{2,1}+ I_{2,2}
\]
where
\[
 I_{2,1} : = 4\sigma \sum_{ \substack{i\le j \\ \overline\zeta_i  >0}}  \iint_{\Gamma_i\times\Gamma_j} \frac{ \big(\xi(y')-\xi(x')\big)^2}{|y-x|^{n+s}} \,d\HH_x \,d\HH_y
\]
and
\[
I_{2,2} : = 4 \sigma \sum_{\substack{i< j \\ \Gamma_i \subset \Omega, \Gamma_j \subset \Omega}}  (\overline \zeta_j -\overline \zeta_i)^2  \iint_{\Gamma_i\times\Gamma_j} \frac{\xi(x')^2}{|y-x|^{n+s}} \,d\HH_x\, d\HH_y.
\]

Now, on the one hand, by the definition of $\zeta_i$ , we notice that $\#\{ i\ : \ \overline \zeta_i>0\} \le 4N_\circ$. Hence, using that $|\xi(x')-\xi(y')|\le C|x-y|$, one easily obtains
\[
I_{2,1} \le C N_\circ ^2.
\]

On the other hand, 
noticing that $\Gamma_i\subset \Omega$ if and only if $i \in [m_{0,3N_\circ}, M_{0,3N_\circ}]$ and  that $\overline \zeta_i = \zeta_i$ for all $i \in [m_{0,3N_\circ}, M_{0,3N_\circ}]$, we obtain
\begin{equation*} 
I_{2,2}  \le  16  c_{n,s}\sigma \sum_{m_{0,3N_\circ} \le i<j\le M_{0,3N_\circ}} (\zeta_j-\zeta_i)^2  \int_{B_{3/4}' } \frac{\xi(x')^2\,  dx'}{(g_j-g_i)^{1+s}}.
\end{equation*}
In particular, by \eqref{eq:power-2},
\begin{equation} \label{eq:I22-1}
	I_{2,2}  \le  16  \sum_{m_* \le i<j\le M_*} (\zeta_j-\zeta_i)^2  \int_{B_{3/4}' } \frac{c_{n,s}\,\sigma\,  dx'}{(g_j-g_i)^{2}}.
\end{equation}

Now setting  $h_i : = g_{i+1}-g_i$ and noticing that the inequality between the harmonic and arithmetic means\footnote{Alternatively, by generalized H\"{o}lder's inequality,
\[
(j-i)^3
=\left (\sum_{k=i}^{j-1}1\right )^3
=\left (\sum_{k=i}^{j-1}h_k^{\frac13} h_k^{\frac13}h_k^{-\frac23}\right )^3
\leq
\left (\sum_{k=i}^{j-1} h_k\right )
\left (\sum_{k=i}^{j-1} h_k\right )
\left (\sum_{k=i}^{j-1} \frac{1}{h_k^2}\right )
=(g_j-g_i)^2
\left (\sum_{k=i}^{j-1} \frac{1}{h_k^2}\right ).
\]
} gives, at each $x'\in B_{3/4}$,
\begin{equation}\label{eq:AM-HM}
\begin{split}
\frac 1 {(g_j-g_i)^2}  & = \frac{1}{\left(\sum_{k=i}^{j-1} h_k\right)^2} = \frac{1}{(j-i)^2}\frac{ (j-i)^2}{\sum_{k,\ell=i}^{j-1} h_kh_\ell} \le \frac{1}{(j-i)^2} \frac{\sum_{k,\ell=i}^{j-1} \frac 1 {h_kh_\ell} }{(j-i)^2}
\\& \le \frac{1}{(j-i)^3}\sum_{k=i}^{j-1}\frac{1} {h_k^2} =  \frac{1}{(j-i)^3}\sum_{k=i}^{j-1} \frac{1} {(g_{k+1}-g_{k})^2}.
\end{split}
\end{equation}
%
Therefore,  using \eqref{eq:I22-1} and
recalling that $|M_{0,3N_\circ}- M_{0,3N_\circ}|\le 6 N_\circ$ and $|\zeta_j-\zeta_i| \le \frac{|j-i|}{N_\circ}$,  we obtain

\begin{equation}\label{shetiohweiohw}
\begin{split}
I_{2,2}  &\le
16\sum_{m_{0,3N_\circ} \le  i<j \le  M_{0,3N_\circ}} \frac{(j-i)^2}{N_\circ^2}  \int_{B_{3/4}' } \frac{c_{n,s}\,\sigma\,  dx'}{(g_j-g_i)^{2}}
\\
&\le 16\sum_{m_{0,3N_\circ} \le  i<j \le  M_{0,3N_\circ}} \frac{1}{N_\circ^2}  \frac{1}{j-i}
\sum_{k=i}^{j-1}
\int_{B_{3/4}' }
\frac{c_{n,s}\,\sigma\, dx'} {(g_{k+1}-g_{k})^2}
\\
&\le  \frac{C\log N_\circ}{N_\circ} \sum_{m_{0,3N_\circ}\le k < M_{0,3N_\circ}} \int_{B_{3/4}' } \frac{c_{n,s}\, \sigma\, dx'} {(g_{k+1}-g_{k})^2}.
\end{split}
\end{equation}

{\medskip \noindent \bf Step~5: Estimate of $I_3$.}
It only remains to bound $I_3 = {\rm Ext}_2^\Omega$. We need to consider 4 cases depending on which case in \eqref{hwioheoihtew1}-\eqref{hwioheoihtew4} applies.
Consider first the case \eqref{hwioheoihtew1}.

In this case we notice the test function $\eta$ is supported in
\begin{equation}\label{eq:I3-supp-x}
(B_{3/4}'\times \R) \cap \left (\bigcup_{|i-i_0| \le 2N_\circ} \Gamma_i \right ).
\end{equation}
Now the dummy variable $x$ in the outer integral in \eqref{whtiogfyfvdhs} runs over a subset of the support of~$\eta$, while in the inner integrals with respect to $y$ are integrated over a subset of the exterior of the domain    $\Omega=\{ g_{i_0 -3N_\circ}(y') \le y_n \le g_{i_0 +3N_\circ}(y'), |y'|\leq \tfrac78\}$.

We consider the (scalar) inner $y$-integral in the bulk\footnote{Notice that by \Cref{rmk:ext2}, the total $y$-integral can be rewritten as
\[
\int_{\Gamma\cap \Omega^c}
\dfrac{
	\nu_E(x)\cdot \nu_E(y)
}{
	|y-x|^{n+s}
}
\,d\cH^{n-1}_y.
\]
Unfortunately, it is not clear how the layers (if any) of $\Gamma$ are distributed outside $\Omega$. The cancellations due to the alternating signs of the normal would not be easily seen from this boundary integral.
}
\begin{equation}
	\begin{split}
I_{3,1}
:\!\!&=
(n+s)\int_{E\cap \Omega^c}
\frac{
	\nu(x)\cdot (x-y)
}{
	|x-y|^{n+s+2}
}\,dy.
	\end{split}
\end{equation}
Since $x$ lies in the support \eqref{eq:I3-supp-x}, the kernel is uniformly bounded away from $0$ whenever~$|y'|\geq \tfrac 78$, giving
\begin{align*}
I_{3,1}
&=\int_{E\cap \Omega^c \cap B_{7/8}'}
(n+s)
\frac{
	\nu(x)\cdot (x-y)
}{
	|x-y|^{n+s+2}
}\,dy
+O(1).
\end{align*}
In the vertical strip $|y'| \leq \tfrac78$, we decompose the exterior into
\[
\Omega^c \cap B_{7/8}'=\Omega^c_+ \cup \Omega^c_-,
	\quad \text{ where } \quad
\Omega^c_{\pm}
:=\{y\in\Omega^c\cap B_{7/8}':  \pm \nu(x)\cdot(x-y)>0\}.
\]
Here one of $\Omega^c_+$, $\Omega^c_-$ lies above $\Omega$ (with respect to the last coordinate) and the other below, depending on the orientation of $\nu(x)$. Hence,
\begin{align*}
|I_{3,1}|
&\leq
C\left (
	(n+s)\int_{E\cap \Omega^c_+}
	\dfrac{
		\nu(x)\cdot(x-y)
	}{
		|x-y|^{n+s+2}
	}
	\,dy
	-(n+s)\int_{E\cap \Omega^c_-}
	\dfrac{
		\nu(x)\cdot(x-y)
	}{
		|x-y|^{n+s+2}
	}
	\,dy
	+1
\right )\\
&\leq
C\left (
(n+s)\int_{\Omega^c_+}
\dfrac{
	\nu(x)\cdot(x-y)
}{
	|x-y|^{n+s+2}
}
\,dy
-(n+s)\int_{\Omega^c_-}
\dfrac{
	\nu(x)\cdot(x-y)
}{
	|x-y|^{n+s+2}
}
\,dy
+1
\right )\\
&=
C\left (
\nu(x)\cdot
\int_{\Omega^c_+}
\nabla \left  (\frac{1}{|x-y|^{n+s}}\right )
\,dy
-\nu(x)\cdot\int_{\Omega^c_-}
\nabla\left (
\dfrac{
	1
}{
	|x-y|^{n+s}
}
\right )
\,dy
+1
\right )\\
&=C\left (
\int_{\partial\Omega^c_+}
\frac{
	\nu(x)\cdot n_{\Omega^c_+}(y)
}{
	|x-y|^{n+s}
}
\,d\cH^{n-1}_y
-\int_{\partial\Omega^c_-}
\frac{
	\nu(x)\cdot n_{\Omega^c_-}(y)
}{
	|x-y|^{n+s}
}
\,d\cH^{n-1}_y
+1
\right )\\
&\leq
C\left (
	\int_{\Gamma_{i_0+3N_\circ}}
		\dfrac{1}{|x-y|^{n+s}}
	\,d\cH^{n-1}_y
	+\int_{\Gamma_{i_0-3N_\circ}}
		\dfrac{1}{|x-y|^{n+s}}
	\,d\cH^{n-1}_y
	+1
\right ).
\end{align*}
We observe that this upper bound also controls the other inner $y$-integral on the boundary:
\begin{align*}
I_{3,2}
:=\int_{E\cap \partial \Omega} \frac{n_\Omega(y)\,d\HH_y}{|x-y|^{n+s}}
&\leq
C\left (
\int_{\Gamma_{i_0+3N_\circ}}
\dfrac{1}{|x-y|^{n+s}}
\,d\cH^{n-1}_y
+\int_{\Gamma_{i_0-3N_\circ}}
\dfrac{1}{|x-y|^{n+s}}
\,d\cH^{n-1}_y
+1
\right ).
\end{align*}
Using the previous observations we can easily bound
---by a computation similar to the one in Lemma~\ref{lem:kernel-comp}, and using again the observation \eqref{eq:power-2}---
\begin{align*}
I_3= {\rm Ext}_2^\Omega
&\le C \sum_{|i-i_0| <2N_\circ } \left(  \int_{B_{3/4} '}  \frac{c_{n,s} \, \sigma \, dx'} {(g_{i_0 + 3N_\circ}-g_{i})^2}  +  \int_{B_{3/4}' } \frac{c_{n,s} \, \sigma \, dx'} {(g_{i}-g_{i_0-3N_\circ})^2} \right)
+CN_\circ \sigma\\
&\leq
C\sum_{i=i_0-2N_\circ}^{i_0+2N_\circ-1}
\int_{B_{3/4}'}
	\dfrac{
		c_{n,s}\, \sigma\, dx'
	}{
		(g_{i+1}-g_i)^2
	}
+CN_\circ\sigma.
\end{align*}
Thus, making again use of \eqref{eq:AM-HM} we obtain
\[
I_3
\le
C N_\circ
\frac{1}{N_\circ^3}
\sum_{k=i_0 - 3N_\circ}^{i_0 +3N_\circ-1}
\int_{B_{3/4}' } \frac{c_{n,s}\,\sigma\,dx'} {(g_{k+1}-g_{k})^2}
+CN_\circ \sigma.
\]
%

In the case \eqref{hwioheoihtew4} the estimate is actually simpler, because the top and bottom layers $\Gamma_{M}$ and $\Gamma_{m}$  are separated by a distance $\ge \delta$ from the top and bottom boundaries of the domain $\Omega$. Hence in this case it easily follows
\[
I_3  \le C(\delta)N_0 \sigma.
\]

The two other cases \eqref{hwioheoihtew2} and \eqref{hwioheoihtew3} are a mixture of \eqref{hwioheoihtew1} and \eqref{hwioheoihtew4}. In any case, for all the four cases we obtain the estimate
\begin{equation}\label{eq:I3-est}
I_3 \leq
\frac{C}{N_\circ^2}
\sum_{m_{0,3N_\circ} \leq k < M_{0,3N_\circ}}
\int_{B_{3/4}' } \frac{c_{n,s}\,\sigma\,dx'} {(g_{k+1}-g_{k})^2}
+C(\delta)N_\circ \sigma.
\end{equation}

{\medskip \noindent \bf Step~6: Covering and conclusion.}
Putting together all of our previous estimates we obtain
\begin{align*}
2\varphi(0) &\le I_1
\le I_{2,1} + I_{2,2} +  I _3\\
&\le C N_\circ^2
+\left (\frac{C \log N_\circ}{N_\circ}
+\frac{C}{N_\circ^2}\right )
\sum_{m_{0,3N_\circ} \leq k < M_{0,3N_\circ}}
\int_{B_{3/4}' } \frac{c_{n,s}\,\sigma\,dx'} {(g_{k+1}-g_{k})^2}
+ C(\delta) N_\circ \sigma.
\end{align*}
Now, using the definition of  $\varphi$ and the fact that it attains its maximum at $0$, a simple covering argument\footnote{
	Say, by a finite sum $\sum_k\varphi(x_k)$ where $\bigcup_{k}B_{1/2}'(x_k') \supset B_{3/4}'$ and $x_{k,n}=g_{i(k)}(x_k')$ for  $|i(k)-i_0|\in \{0,N_\circ,2N_\circ\}$.
} gives
\begin{equation} \label{whtiohwoithwho}
	\sum_{m_{0,3N_\circ}\le k< M_{0,3N_\circ}} \int_{B_{3/4}' } \frac{c_{n,s}\,\sigma \, dx'} {(g_{k+1}-g_{k})^2}  \le C\varphi(0) + C(\delta) N_\circ \sigma.
\end{equation}

In conclusion, we have
\[
2\varphi(0) \le C N_\circ^2
+\left (\frac{C \log N_\circ}{N_\circ} +\frac{C}{N_\circ^2}\right ) \varphi(0)  + C(\delta) N_\circ \sigma,
\]
provided that~\eqref{hwetiehwohw} holds.

Finally, we can choose first $N_\circ$ large enough in order to ``absorb'' $\frac{C\log N_\circ }{N_\circ} \varphi(0)$ and $\frac{C}{N_\circ^2} \varphi(0)$ in the left hand side and then choose $\delta>0$ sufficiently small so that  \eqref{hwetiehwohw} holds, obtaining
\[
\varphi(0)  \le C(N_\circ, \delta) =: C,
\]
where~$C$ is a constant depending only on $n$,  as desired.
\end{proof}

\subsection{Decoupling the equations: small right hand side in  $L^{2}$}

We now get back to the $s$-minimal surface equation \eqref{hwitohwoihw}. Using \eqref{hwitohwoihwbis}
and recalling the notation in~\eqref{25BIS}, for fixed $i$ we will write it as
\[
H_s[\Gamma_{i}] (x) =   - \sum_{j\neq {i}} H_s[\Gamma_j](x) + {\rm Ext}_1^{\Omega_0} (x) =: f_{i}(x) , \quad \mbox{for all }x\in \Gamma_{i}.
\]

While $H_s[\Gamma_{i}] (x)$ only depends on ``the geometry''  of  the layer (connected component of~$\partial E$ in~$\Omega_0$) $\Gamma_{i}$, since the $s$-minimal surface equation is nonlocal,  all the different ``layers'' (i.e. different connected components of the boundary) interact in the equation through the right hand sides~$f_i$.
Surprisingly, as shown in the following proposition,
the stability assumption yields that the interaction between layers is very small and the ``system" becomes decoupled up to very small errors:

\begin{proposition}\label{prop:decouple-3D}
Let $s\in \left(\tfrac12 ,1\right)$ and $E$ be a stable $s$-minimal set in $B_1'\times (-1,1)$.
In the notation of~\eqref{whtoihwohw1},
assume that \begin{equation}\label{LIP:BO}
\|\nabla g_i\|_{L^\infty(B_1')}  + \|D^2 g_i\|_{L^\infty(B_1')}\le \frac{1}{100}.\end{equation}

Then for every $i$ such that  $\min_{\overline B_{1/2}} |g_i| \le 1/2$ we have that
\begin{equation} \label{wiowhoih1}
\|f_i\|_{L^2(\Gamma_i \cap B'_{1/2}\times(-1,1))} \le C\sqrt \sigma.
\end{equation}

Moreover, in dimension $n= 3$ we have the stronger estimates
\begin{equation} \label{wiowhoih2}
\inf_{B'_{1/2}} (g_{i+1}-g_i) \ge c \sigma^{1/2} >0 \quad \mbox{and} \quad \|f_i\|_{L^\infty(\Gamma_i \cap B'_{1/2}\times(-1,1))}  \le C\sqrt{\sigma}.
\end{equation}
\end{proposition}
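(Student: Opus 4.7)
The plan is to prove (a) by expanding $f_i=-\sum_{j\neq i}H_s[\Gamma_j]+\mathrm{Ext}_1^{\Omega_0}$ and exploiting an alternating-series cancellation among the layer contributions, then to deduce (b) in dimension $n=3$ by differencing consecutive decoupled equations and combining a Harnack-type rescaling with the $L^{-2}$ separation estimate of Proposition~\ref{propL-2}.

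For part~(a), I would parametrize each $\Gamma_j$ as a graph over $B_1'$. Because the outward normal to $E$ alternates orientation from one layer to the next, a computation in the spirit of Lemma~\ref{lem:kernel-comp} shows that, for $x\in\Gamma_i\cap(B_{1/2}'\times(-1,1))$,
\[
H_s[\Gamma_j](x)=(-1)^{j-i}\,\mathrm{sign}(j-i)\,\frac{c_{n,s}\,\sigma}{|g_j(x')-g_i(x')|^{s}}\bigl(1+O(\sqrt{\delta})\bigr)+r_j(x),
\]
with tilt/curvature remainders $r_j$ controlled by the $C^2$ smallness $\delta=\tfrac{1}{100}$. Since $|g_j(x')-g_i(x')|$ is monotone in $|j-i|$, the principal parts form two alternating series with decreasing magnitudes (one for $j>i$, one for $j<i$); the Leibniz criterion, together with the absorption of the $r_j$'s into the leading terms, yields
\[
|f_i(x)|\le\frac{C\sigma}{(g_{i+1}(x')-g_i(x'))^{s}}+\frac{C\sigma}{(g_i(x')-g_{i-1}(x'))^{s}}+C\sigma,
\]
where the last summand controls $|\mathrm{Ext}_1^{\Omega_0}(x)|$ (uniformly $O(\sigma)$ at distance $\ge\tfrac12$ from $\partial\Omega_0$). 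Since $s\ge\tfrac12$ and $|g_{i\pm1}-g_i|\le1$, one has $|g_{i\pm1}-g_i|^{-2s}\le|g_{i\pm1}-g_i|^{-2}$; squaring, integrating over $B_{1/2}'$, and invoking Proposition~\ref{propL-2} gives $\|f_i\|_{L^2}^{2}\le C\sigma$, proving~\eqref{wiowhoih1}.

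For part~(b), the difference of \eqref{eq:decoupled} at layers $i+1$ and $i$, together with the linearization of the quasilinear operator (uniformly elliptic as $s\uparrow1$), produces the equation $\mathcal{L}w=\bar f_i:=f_{i+1}-f_i$ for the separation $w:=g_{i+1}-g_i\ge0$, with $\|\bar f_i\|_{L^{2}}\le C\sqrt{\sigma}$ by~(a). Suppose for contradiction that $w(x_\circ')=\tau\sqrt{\sigma}$ at some $x_\circ'\in B_{1/2}'$ with $\tau>0$ arbitrarily small. Rescaling at scale $r$ and applying Harnack to $w/(r\sqrt{\sigma})$ (the rescaled right-hand side has the correct scaling to be small) gives the linear growth
\[
w\le C\,r\sqrt{\sigma}\qquad\text{on }B_{r/2}'(x_\circ')\text{ for every }r\in[\tau,\tfrac14].
\]
Plugging this into Proposition~\ref{propL-2} and summing over dyadic annuli around $x_\circ'$,
\[
C\ge\sigma\int_{B_{1/2}'}\frac{dy'}{w(y')^{2}}\ge c\int_{\tau}^{1/4}r^{n-3}\,dr,
\]
which diverges as $\tau\downarrow0$ exactly when $n=3$, contradicting the uniform bound. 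Hence $w\ge c\sqrt{\sigma}$ on $B_{1/2}'$; substituting this pointwise lower bound into the estimate of part~(a) upgrades it to $\|f_i\|_{L^\infty}\le C\sqrt{\sigma}$, completing~\eqref{wiowhoih2}.

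The delicate step is the alternating cancellation in~(a). Since individually $|H_s[\Gamma_j](x)|$ can be as large as $\sigma/|g_j-g_i|^{s}$ and there may be up to $\sigma^{-1/2}$ layers, a naive triangle inequality would be useless; only a genuine Leibniz-type cancellation keeps the whole sum of the same order as the nearest-neighbor contributions. The crude lower bound $g_{i+1}-g_i\ge\sigma^{4}/C$ from Lemma~\ref{whtiohwoiwh} ensures that the exponents $s$ and $1$ are comparable (so $|g_j-g_i|^{-2s}\approx|g_j-g_i|^{-2}$ up to absolute constants) and that the tails of the alternating series are dominated by their first terms. A secondary subtlety is the construction of $\mathcal{L}$ in part~(b) with ellipticity constants uniform as $s\uparrow1$, which relies on the explicit form of the kernel $F$ in~\eqref{eq:decoupled}.
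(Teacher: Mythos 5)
Your overall architecture is the paper's (a pointwise bound on $f_i$ in terms of the two \emph{adjacent} separations, fed into Proposition~\ref{propL-2} for the $L^2$ bound; then differencing, Harnack with linear growth, and Proposition~\ref{propL-2} again for the $n=3$ separation and $L^\infty$ bounds), but the step on which everything rests --- the pointwise bound in part~(a) --- has a genuine gap as you argue it. You want to expand $f_i=-\sum_{j\neq i}H_s[\Gamma_j]+{\rm Ext}_1^{\Omega_0}$ termwise and invoke the Leibniz criterion. At this stage of the argument the only available information is the crude separation $g_{j+1}-g_j\ge \sigma^4/C$ (Lemma~\ref{whtiohwoiwh}) and a bound on the number of layers of order $\sigma^{-1}$ at best, and the termwise asymptotics $H_s[\Gamma_j](x)\approx \pm c_{n,s}\sigma\,|g_j(x')-g_i(x')|^{-s}$ only hold up to relative errors that vary from layer to layer and are \emph{not} small for far layers (in Lemma~\ref{lem:kernel-comp} the error $c_{\delta,\widetilde\delta}$ grows with the separation, so for layers at distance of order one it is $O(1)$). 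Consequently (i) the monotonicity hypothesis of the Leibniz criterion, $|H_s[\Gamma_{j}](x)|\ge|H_s[\Gamma_{j+1}](x)|$, cannot be verified --- consecutive distances may differ by far less than the multiplicative errors --- and (ii) the ``absorption of the $r_j$'s into the leading terms'' is exactly the missing step: pairing consecutive terms leaves an error of size $\sum_j \sigma\,\varepsilon_j\,|g_j-g_i|^{-s}$, which with the information available here can be enormously larger than $\sigma/|g_{i\pm1}-g_i|$ (and is not $O(\sqrt\sigma)$ even under the optimal, not-yet-proved, $\sqrt\sigma$ separation, because of the $\log$ in the harmonic sum). The paper avoids any termwise expansion: it sandwiches $\underline F_i\subset E\cap\Omega_0\subset\overline F_i$, where $\underline F_i$ is the slab between $\Gamma_{i-1}$ and $\Gamma_i$ and $\overline F_i$ removes only the slab between $\Gamma_i$ and $\Gamma_{i+1}$, and uses positivity of the kernel, so the whole multi-layer sum cancels by set inclusion and one gets $-C\sigma+H_s[\Gamma_{i-1}]\le H_s[\Gamma_i]\le H_s[\Gamma_{i+1}]+C\sigma$, i.e.\ \eqref{huitehuieh}, with only $O(\sigma)$ error and no control on far layers needed. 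If you want to keep a cancellation argument in your style, it has to be structural (grouping consecutive layers into slabs and integrating the kernel over the slabs, which is the same comparison in disguise), not an asymptotic Leibniz estimate.

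Part~(b) is essentially the paper's proof, except for a slip in the dyadic bookkeeping: the annular measure in $\R^{n-1}$ is $r^{n-2}\,dr$, so the bound $w\le Cr\sqrt\sigma$ gives $\sigma\int_{B'_{1/4}(x_\circ')}w^{-2}\gtrsim \int_\tau^{1/4}r^{n-4}\,dr$, not $\int_\tau^{1/4}r^{n-3}\,dr$; with your exponent the integral does not diverge at $n=3$ and the contradiction evaporates, whereas with $r^{n-4}$ one gets the logarithmic divergence $|\log\tau|$ exactly as in \eqref{nwoihtoiwh}. Also note that the final $L^\infty$ bound requires the \emph{pointwise} nearest-neighbor estimate on $f_i$ (the paper's \eqref{huitehuieh}), so it inherits the gap from part~(a) until that estimate is established by the comparison argument.
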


\begin{proof}
The proof consists of two steps.

\vspace{3pt}

\noindent{\bf Step~1}. Let us show \eqref{wiowhoih1}. Set $\Omega: = \Omega_0 = B_1'\times (-1,1)$.
By \eqref{eqsminimal-kern}, for all $x \in \Gamma_i$ we have that
\[
{\rm p.v.}\int_{\R^n} \frac{(\chi_{E^c} -\chi_E)(y)}{|x-y|^{n+s}} \,dy= 0.
\]
Equivalently,
\[
{\rm p.v.}\int_{\Omega} \frac{ (\chi_{E^c} -\chi_E)(y)}{|x-y|^{n+s}} \, dy=  - \int_{\Omega^c} \frac{(\chi_{E^c} -\chi_E)(y)}{|x-y|^{n+s}} \,dy.
\]
and therefore, for~$|x'|\le \tfrac 1 2$,
\[
\left| \sigma \, {\rm p.v.} \int_{\Omega} \frac{(\chi_{E^c} -\chi_E)(y)}{|x-y|^{n+s}} \,dy\right| \le \sigma \int_{\Omega^c} \frac{2}{|x-y|^{n+s}} \,dy \le C \sigma.
\]

In order to proceed with the estimate notice that in $\Omega$ we have that
\[E = \displaystyle\bigcup_{j\in 2\mathbb Z} \{ g_{j-1}(x')< x_n< g_{j}(x')\}\]
and define the sets (for $i$ even; otherwise one changes the definitions accordingly)
\[
\underline F_i :=  \Omega \cap \{ g_{i-1}(x')< x_n< g_{i}(x')\}   \qquad \mbox{and} \quad \quad  \overline F_i : =  \Omega \setminus  \{ g_{i}(x')< x_n< g_{i+1}(x')\}
\]
so that  $\underline F_i \subset E\cap \Omega \subset \overline F_i $.
We have
\[
{\rm p.v.} \int_{\Omega} \frac{ (\chi_{{\overline F_i}^c} -\chi_{\overline F_i})(y)}{|x-y|^{n+s}} \, dy \le  {\rm p.v.} \int_{\Omega} \frac{(\chi_{E^c} -\chi_E)(y)}{|x-y|^{n+s}} \, dy
 \le  {\rm p.v.} \int_{\Omega} \frac{(\chi_{{\underline F_i}^c} -\chi_{\underline F_i})(y)}{|x-y|^{n+s}} \, dy.
 \]

We now use that
\[
 \frac{1}{|x-y|^{n+s}} =  \frac 1s {\rm div}_y \left(\frac{x-y}{|x-y|^{n+s}}\right)
\]
and hence for $x\in \Gamma_i$ with $|x'|\le \tfrac 12$,
\[
\begin{split}
&\quad\;
	\sigma\,  {\rm p.v.}\int_{\Omega} \frac{(\chi_{{\overline F_i}^c} -\chi_{\overline F_i})(y)}{|x-y|^{n+s}} \, dy \\
&=\frac{\sigma}{s}
\left (
	2\int_{\Omega\cap \partial \overline{F}_i}
		\dfrac{\nu(y)\cdot(x-y)}{|x-y|^{n+s}}
	\,d\cH^{n-1}_y
	-\int_{\partial\Omega}
		\dfrac{
			(\chi_{\overline{F}_i^c}-\chi_{\overline{F}_i})(y)
			n_\Omega(y)
			\cdot(x-y)
		}{
			|x-y|^{n+s}
		}
	\,d\cH^{n-1}_y
\right )
\\
 &= \frac{2}{s} H_s[\Gamma_i] (x)+ \frac{2}{s} H_s[\Gamma_{i+1}](x)  + O(\sigma)
 \end{split}
\]
and similarly
\[
\begin{split}
&\quad\;
	\sigma\, {\rm p.v.} \int_{\Omega} \frac{(\chi_{{\underline F_i}^c} -\chi_{\underline F_i})(y)}{|x-y|^{n+s}}\, dy\\
&=\frac{\sigma}{s}
\left (
2\int_{\Omega\cap \partial \underline{F}_i}
\dfrac{\nu(y)\cdot(x-y)}{|x-y|^{n+s}}
\,d\cH^{n-1}_y
-\int_{\partial\Omega}
\dfrac{
	(\chi_{\underline{F}_i^c}-\chi_{\underline{F}_i})(y)
	n_\Omega(y)
	\cdot(x-y)
}{
	|x-y|^{n+s}
}
\,d\cH^{n-1}_y
\right )\\
&= \frac{2}{s} H_s[\Gamma_i] (x)+ \frac{2}{s} H_s[\Gamma_{i-1}](x) + O(\sigma).
\end{split}
\]
Therefore, for $x\in \Gamma_i$ with $|x'|\le \tfrac 12$ we have
\begin{equation}\label{whiothwohtw}
-C\sigma+ H_s[\Gamma_{i-1}](x)  \le H_s[\Gamma_i] (x) \le   H_s[\Gamma_{i+1}](x)  + C\sigma.
\end{equation}

Now, as in \Cref{lem:kernel-comp} we observe that
\[
\begin{split}
\big| H_s[\Gamma_{i+1}] (x) \big|
&=  \left| \sigma \int_{\Gamma_{i+1}} \nu(y)\cdot \frac{x-y}{|x-y|^{n+s}}\right|
\le   \left| \sigma\int_{\Gamma_j}  \frac{1}{|x-y|^{n+s-1}}\right|
\\
&\le \frac{C\sigma}{ \big|g_{i+1} (x')-g_i  (x')\big|^s}
\le \frac{C\sigma}{ \big|g_{i+1} (x')-g_i (x') \big|}
.\end{split}
\]
Likewise,
\[
\big| H_s[\Gamma_{i-1}] (x) \big|  \le  \frac{C\sigma}{ \big| g_{i} (x')-g_{i-1} (x') \big|}.
\]
Hence, using \eqref{whiothwohtw},
\begin{equation} \label{huitehuieh}
\big| H_s[\Gamma_{i}](x', g_i(x')) \big|  \le  C \left(\frac{\sigma}{ \big| g_{i+1} (x')-g_{i} (x') \big|} +\frac{\sigma}{ \big| g_{i} (x')
-g_{i-1} (x') \big| } \right).
\end{equation}

Now we recall that  by Proposition~\ref{propL-2},
\[
\big\|   H_s[\Gamma_{i+1}]\big\|_{L^2(\Gamma_i \cap B'_{1/2}\times(-1,1))}   \le C \left( \int_{B'_{1/2}} \frac{\sigma^2 \,dx'}{ \big|g_{i+1} (x')-g_i (x') \big|^2}  \right)^{1/2} \le C \big( C\sigma\big)^{1/2} \le C\sqrt \sigma
\]
and similarly,
\[
\big\|   H_s[\Gamma_{i-1}]\big\|_{L^2(\Gamma_i \cap B'_{1/2}\times(-1,1))}  \le C\sqrt \sigma.
\]
Hence using \eqref{whiothwohtw} we conclude that
\begin{equation}\label{whoiehtoewh}
\big\|   H_s[\Gamma_{i}]\big\|_{L^2(\Gamma_i \cap B'_{1/2}\times(-1,1))}  \le C\sqrt \sigma.
\end{equation}

\vspace{3pt}

\noindent{\bf Step~2}. Let us show \eqref{wiowhoih2}.
For this, we claim that, for $x'\in B'_{3/4}$ and $v:= (g_{i+1}-g_i)\chi_{B_1'}$, we have that
\begin{equation}\label{LKMS:DA}
H_s[\Gamma_{i+1}](x', g_{i+1}(x'))  - H_s[\Gamma_{i}](x' ,g_{i}(x'))  = \sigma \int_{B'_1} a_i(x,y) \frac{v(y')-v(x')}{|x'-y'|^{n-1+(1+s)}} \,dy' + E(x'),
\end{equation}
where $\|E\|_{L^\infty(B'_{3/4})}\le C\sigma$ and~$a_i$ is bounded from below away from zero and bounded from above.

To check this, we use equation~(49) in~\cite{BFV} to see that
$$ H_s[\Gamma_{i}](x', g_{i}(x'))=\sigma \int_{B'_1} F\left(\frac{g_i(y')-g_i(x')}{|x'-y'|} \right)\,\frac{dy'}{|x'-y'|^{n-1+s}}+O(\sigma),$$
where~$F\in C^1(\R)$ is such that $F(0)=0$ and
\begin{equation}\label{LIP:BOx}
F'(r)=\frac{c}{(1+r^2)^{\frac{n+s}2}},\end{equation} for some~$c>0$.

Therefore,
\begin{eqnarray*} &&H_s[\Gamma_{i+1}](x', g_{i+1}(x'))-H_s[\Gamma_{i}](x', g_{i}(x'))\\&=&\sigma \int_{B'_1}
\left[ F\left(\frac{g_{i+1}(y')-g_{i+1}(x')}{|x'-y'|} \right)-
 F\left(\frac{g_i(y')-g_i(x')}{|x'-y'|} \right)\right]\,\frac{dy'}{|x'-y'|^{n-1+s}}+O(\sigma)
\\&=&
\sigma \int_{B'_1}
F'\big(\xi_i(x',y')\big)\Big(\big(
g_{i+1}(y')-g_{i+1}(x')\big)-\big(g_{i}(y')-g_{i}(x')\big)\Big)
\,\frac{dy'}{|x'-y'|^{n-1+(1+s)}}+O(\sigma),\end{eqnarray*}
for some~$\xi_i(x',y')$ lying on the segment joining~$P_i:=\frac{g_i(y')-g_i(x')}{|x'-y'|}$
to~$P_{i+1}:=\frac{g_{i+1}(y')-g_{i+1}(x')}{|x'-y'|}$. We stress that~$|P_i|$ and~$|P_{i+1}|$
are bounded uniformly, due to~\eqref{LIP:BO}, therefore~$a_i(x',y'):=
F'\big(\xi(x',y')\big)$ is bounded from below away from zero and bounded from above, thanks to~\eqref{LIP:BOx}, and the proof of~\eqref{LKMS:DA} is complete.

{F}rom \eqref{whoiehtoewh} and~\eqref{LKMS:DA} it follows that
\[
|\mathcal L v| \le h \quad \mbox{in }B'_{3/4},
\]
where $$\mathcal L v  : = \sigma \int_{B'_1} a_i(x',y') \frac{v(y')-v(x')}{|x'-y'|^{n-1+(1+s)}} \,dy'$$ and  $\|h\|_{L^2(B'_{3/4})} \le C\sqrt \sigma$.
Note that the order of the operator is $1+s$, which is arbitrarily close to~$ 2$.

Hence by the Harnack inequality for nonlocal operators (see~\cite{MR1941020, MR2095633}),  if $q>n'/(1+s)$, with~$n':= n-1$,
we have that
\[
\sup_{B_{1/2}'} v  \le  C \inf_{B_{1/2}'} v+ C\|h\|_{L^q(B_{3/4}')}.
\]
For $n=3$ and $s\in \left(\tfrac 12 , 1\right)$, we may take $q=2$, thus obtaining that
\begin{equation}\label{wtihoiewhw}
\sup_{B_{1/2}'} (g_{i+1}-g_i) \le  C \inf_{B_{1/2}'}  (g_{i+1}-g_i)  + C\sqrt \sigma.
\end{equation}

Furthermore, in dimension $n=3$ (and thus $n'=2$), assume now that $ (g_{i+1}-g_i) (x_\circ')  = \delta \sqrt \sigma$ for some $x'_\circ\in B'_{1/2}$ and let us prove a lower bound for $\delta$.

For $r\in (0,1/4)$ we now dilate the set $E$ around $x_\circ = (x_\circ',g_i(x_\circ')) \in \Gamma_i$ and
we obtain new surfaces $\Gamma_{i,r} : = \frac 1 r (\Gamma_i-x_\circ)$
which have graphical expressions~$x_n = g_{i,r}(x')$ in~$B_1'\times (-1,1)$, where~$g_{i,r}(x') : = \frac 1 r g_i(x_\circ' + rx')$.

Since \eqref{wtihoiewhw} holds with $g_i$ replaced by $g_{i,r}$ and we have that~$(g_{i+1,r}-g_{i,r}) (0)  = \frac 1 r (g_{i+1}-g_i) (x_\circ')  =\frac{\delta \sqrt \sigma}{r}$, we obtain that
\[
\sup_{B_{r/2}'(x_\circ')} (g_{i+1}-g_i)   = r  \sup_{B_{1/2}'} (g_{i+1,r}-g_{i,r}) \le  Cr\left(\frac{\delta \sqrt \sigma}{r}  + \sqrt \sigma\right).
\]
In other words, for all $r\ge \delta$ we have that
\[
\sup_{B_{r/2}'(x_\circ')} (g_{i+1}-g_i) \le Cr\sqrt \sigma.
\]

But then using again Proposition~\ref{propL-2} and the fact that~$n'=2$, we obtain that
\begin{equation}\label{nwoihtoiwh}
C\ge \int_{B_{1/4}'(x_\circ') }  \frac{\sigma\,dx'}{(g_{i+1}-g_i)^2}  
\ge  \frac{\sigma}{C}
\int_\delta ^{1/4} \frac{r \,dr}{(Cr\sqrt \sigma)^2} \ge \frac{|\log \delta|}{C},
\end{equation}
which proves that~$\delta \ge c>0$.  This and~\eqref{huitehuieh} give that
\[
\|f_i\|_{L^\infty(\Gamma_i \cap B'_{1/2}\times(-1,1))}  \le C\sqrt{\sigma},
\]
and this completes the proof of~\eqref{wiowhoih2}.
\end{proof}

The following variation of Proposition~\ref{prop:decouple-3D} will be used in the sequel.
\begin{proposition}\label{wehtioehwoit2}
Let  $n=4$ and $E$ be a stable $s$-minimal set in $B_1'\times (-1,1)$,   and assume that $\|\nabla g_i\|_{L^\infty(B_1')}  + \|D^2 g_i\|_{L^\infty(B_1')}\le \frac{1}{100}$.
Assume that for some $x_\circ$ with $|x_\circ|\ge 100$ we have~$t(E-x_\circ) = E-x_\circ$ for all $t>0$ (i.e. $E$ is conical with respect to $x_\circ$).  Then for every $i$ such that  $\min_{\overline B_{1/2}} |g_i| \le 1/2$ we have
\[
\inf_{B'_{1/2}} (g_{i+1}-g_i) \ge c \sigma^{1/2} >0 \quad \mbox{and} \quad \|f_i\|_{L^\infty(\Gamma_i \cap B'_{1/2}\times(-1,1))}  \le C\sqrt{\sigma}.
\]
\end{proposition}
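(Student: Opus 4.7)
The plan is to follow the strategy of Step~2 in the proof of \Cref{prop:decouple-3D}, using the conical structure of $E$ to bypass the dimensional obstruction that appears when $n'=n-1=3$. All preliminary ingredients carry over verbatim: the global $L^2$ bound on $H_s[\Gamma_i]$ (which comes from \Cref{propL-2} and is dimension-independent), the pointwise estimate \eqref{huitehuieh}, and the decoupling \eqref{LKMS:DA}. Setting $v:=g_{i+1}-g_i>0$, one obtains $|\mathcal{L}v|\le h$ in $B_{3/4}'$ with $\|h\|_{L^2(B_{3/4}')}\le C\sqrt{\sigma}$, where $\mathcal{L}$ is a nonlocal elliptic operator of order $1+s$. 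Since $\sigma<1/2$ forces $s>1/2$, one has $2 > n'/(1+s) = 3/(1+s)$, so the nonlocal Harnack inequality of~\cite{MR1941020,MR2095633} is applicable with $q=2$. Repeating the rescaling argument of \Cref{prop:decouple-3D}, whenever $v(p_0')=\delta\sqrt{\sigma}$ at some $p_0'\in B_{1/2}'$, we derive
\[
\sup_{B_{r/2}'(p_0')} v \le Cr\sqrt{\sigma} \qquad \text{for all } r \in [\delta, 1/4].
\]
The step that fails in $n'=3$ is the concluding inequality~\eqref{nwoihtoiwh}, since $\int_\delta^{1/4} r^{n'-3}\,dr$ stays bounded as $\delta\downarrow 0$.

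The new ingredient is the use of the conical structure to lower the effective dimension. A short computation shows that the graphicality assumption $\|\nabla g_i\|_{L^\infty}\le 1/100$, combined with $|x_\circ|\ge 100$, forces $|x_\circ'|\ge c|x_\circ|$: indeed, each ray from $x_\circ$ through a point of $\Gamma_i$ must be tangent to $\Gamma_i$, so the vertical-to-horizontal slope of any such ray is at most $\|\nabla g_i\|_{L^\infty}$. The relation $t(E-x_\circ)=E-x_\circ$ translates into
\[
g_i(x_\circ' + t(z'-x_\circ'))-x_{\circ,n} = t\,(g_i(z')-x_{\circ,n}),
\]
and in particular yields $v(y') = t\,v(z')$ whenever $y'=x_\circ'+t(z'-x_\circ')$. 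Introducing polar coordinates $x' = x_\circ' + \rho\omega$ centered at $x_\circ'$, with $\omega\in \mathbb S^{n-2}=\mathbb S^2$, this becomes $v(x')=\rho V(\omega)$ for some function $V$ defined on the angular patch $\Omega_0 \subset \mathbb S^2$ subtended by $B_{1/2}'$ from $x_\circ'$. Since $dx'=\rho^{n-2}\,d\rho\,d\omega=\rho^2\,d\rho\,d\omega$ and $n-4=0$, one finds
\[
\int_{B_{1/4}'(p_0')} \frac{\sigma\,dx'}{v^2}
= \int \frac{\sigma}{V(\omega)^2}\,d\rho\,d\omega
\gtrsim \int_{\Omega_0^*} \frac{\sigma\,d\omega}{V(\omega)^2},
\]
where $\Omega_0^*\subset\Omega_0$ is a 2D angular patch of diameter $\sim 1/\rho_0$ about the direction $\omega_0:=(p_0'-x_\circ')/\rho_0$, with $\rho_0:=|p_0'-x_\circ'|\sim|x_\circ'|$.

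To close the argument, the scaling $V(\omega) = v(x_\circ'+\rho_0\omega)/\rho_0$ and the relation $|x'-p_0'|\sim \rho_0|\omega-\omega_0|$ allow us to transfer the Harnack--scaling bound into $V(\omega)\le C|\omega-\omega_0|\sqrt{\sigma}$ for $|\omega-\omega_0|\in [\delta/\rho_0,\,c/\rho_0]$. With the area element $\sim s\,ds$ on $\mathbb S^2$ near $\omega_0$, this gives
\[
\int_{\Omega_0^*} \frac{\sigma\,d\omega}{V(\omega)^2}
\gtrsim \int_{\delta/\rho_0}^{c/\rho_0} \frac{\sigma\cdot s\,ds}{(C s\sqrt{\sigma})^2}
= \frac{1}{C^2}\log\frac{c}{\delta},
\]
which diverges as $\delta\downarrow 0$ and contradicts the uniform bound of \Cref{propL-2}. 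Hence $\delta\ge c>0$, proving $\inf_{B_{1/2}'}(g_{i+1}-g_i)\ge c\sqrt{\sigma}$; the $L^\infty$ bound on $f_i$ then follows at once from \eqref{huitehuieh}. The main difficulty will be the careful verification of the geometric facts about the cone---in particular that $|x_\circ'|\ge c|x_\circ|$, that polar coordinates centered at $x_\circ'$ are well posed on the region of interest, and that the 2D angular patch subtended by $B_{1/4}'(p_0')$ has area comparable to $1/\rho_0^2$---together with the bookkeeping needed to convert the Euclidean Harnack--scaling estimate into a bound on the angular function $V$.
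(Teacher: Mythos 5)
Your proposal is correct and takes essentially the same route as the paper: the paper's own proof is a brief remark that one reruns Step~2 of \Cref{prop:decouple-3D} (Harnack with $q=2$ being available since $3/(1+s)<2$ for $s>1/2$) and uses the conical structure so that the integral in \eqref{nwoihtoiwh} again diverges ``as in one dimension less.'' Your polar-coordinate computation around $x_\circ'$, together with the tangency argument giving $|x_\circ'|\ge c|x_\circ|$ and the homogeneity $v(x')=\rho V(\omega)$, is precisely a fleshed-out implementation of that dimension-reduction, and the remaining points you flag are genuinely routine bookkeeping.
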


\begin{proof}
It is a small modification of the proof of Proposition~\ref{prop:decouple-3D}.
The point where the conical structure of $E$ is crucial is in order to obtain a divergent integral  in \eqref{nwoihtoiwh}. Indeed, without it, that integral could be convergent for $n=4$ (that is, $n'=3$) since the Jacobian is $r^{n-2} =r^2$. However, due to the conical structure of $E$ the proof carries over as in one dimension less, that is~$n=3$.
\end{proof}

\subsection{ $C^{2,\alpha}$ estimate in dimension $n=3$}

We now obtain the following consequence of Proposition~\ref{prop:decouple-3D}.

\begin{corollary}\label{whtu292112-2}
Assume that $E\subset \R^3$ is a stable s-minimal set in $B_1$
and that the norm of the second fundamental form of $\partial E$ is bounded by 1 in $B_1$. Assume that $0\in \Gamma_i \subset \partial E$ and that the hyperplane $\{x_n=0\}$ is tangent to $\Gamma_i$ at $0$.  If we define
\begin{equation}\label{whoithwoihtw}
 f_i : =   - \sum_{j\neq {i_x}} H_s[\Gamma_j](x) + {\rm Ext}_1^\Omega (x)  \qquad \mbox{ (so that $H_s[\Gamma_{i}] = f_i$)}.
\end{equation}
then we have, for  $\alpha=1/4$,
\[
\|f_i\|_{C^\alpha (\Gamma_i \cap B_{1/2})} \le C \sigma^\alpha.
\]
\end{corollary}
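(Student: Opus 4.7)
The plan is to combine the pointwise $L^\infty$ estimate provided by \Cref{prop:decouple-3D} with a uniform (in $\sigma$) Lipschitz bound for $f_i$, and then interpolate to recover the $C^\alpha$ estimate with the desired $\sigma^\alpha$ decay.

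\emph{Step 1 (reduction and $L^\infty$ bound).} Since $\partial E$ has second fundamental form bounded by $1$ in $B_1$ and $\{x_n=0\}$ is tangent to $\Gamma_i$ at $0$, a small fixed rescaling centered at the origin places us in the setting of \Cref{prop:decouple-3D}: every layer of $\partial E$ meeting the rescaled $B_1'\times(-1,1)$ is a graph $x_n=g_k(x')$ satisfying the smallness assumption \eqref{LIP:BO}. \Cref{prop:decouple-3D} then yields
\begin{equation*}
\|f_i\|_{L^\infty(\Gamma_i\cap B'_{1/2}\times(-1,1))}\le C\sqrt\sigma
\end{equation*}
together with the optimal pointwise separation $\inf_{B'_{1/2}}(g_{k+1}-g_k)\ge c\sqrt\sigma$ for every $k$ for which $\min_{\overline B'_{1/2}}|g_k|\le\tfrac12$. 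Telescoping gives $g_j(x')-g_i(x')\ge c\,|j-i|\sqrt\sigma$ at every $x'\in B'_{1/2}$, for all layers in a sufficiently large range around $i$.

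\emph{Step 2 (uniform Lipschitz bound on $f_i$).} Recalling \eqref{whoithwoihtw}, I estimate $\nabla_x f_i$ on $\Gamma_i\cap B_{1/2}$. Differentiating under the integral sign,
\begin{equation*}
|\nabla_x H_s[\Gamma_j](x)|\le C\sigma\int_{\Gamma_j}\frac{d\HH_y}{|y-x|^{n+s}}\le\frac{C\sigma}{|g_j(x')-g_i(x')|^{1+s}},
\end{equation*}
where the second inequality is \Cref{lem:kernel-comp}. Using the telescoping separation from Step 1,
\begin{equation*}
\sum_{j\ne i}|\nabla_x H_s[\Gamma_j](x)|\le C\sigma^{(1-s)/2}\sum_{k\ge1}\frac{1}{k^{1+s}}\le C,
\end{equation*}
since $\sigma^{(1-s)/2}=\sigma^{\sigma/2}$ stays bounded as $\sigma\downarrow0$ and $\zeta(1+s)$ is bounded for $s\ge\tfrac12$; the few layers that sit at universal distance from $\Gamma_i$ contribute only $O(\sqrt\sigma)$ by \Cref{propL-2}. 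The exterior term $\mathrm{Ext}_1^\Omega$ is evaluated at $x$ at distance $\ge c>0$ from $\partial\Omega$, so it is smooth in $x$ with $|\nabla_x\mathrm{Ext}_1^\Omega|\le C\sigma$. Hence $\|\nabla_x f_i\|_{L^\infty(\Gamma_i\cap B_{1/2})}\le C$ uniformly in $\sigma$, giving $[f_i]_{\mathrm{Lip}(\Gamma_i\cap B_{1/2})}\le C$.

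\emph{Step 3 (interpolation).} The elementary interpolation inequality $[u]_{C^\alpha}\le C\|u\|_{L^\infty}^{1-\alpha}[u]_{\mathrm{Lip}}^\alpha$ applied with $\alpha=1/4$ gives
\begin{equation*}
[f_i]_{C^{1/4}}\le C(\sqrt\sigma)^{3/4}\cdot C^{1/4}=C\sigma^{3/8}\le C\sigma^{1/4},
\end{equation*}
which, combined with $\|f_i\|_{L^\infty}\le C\sqrt\sigma\le C\sigma^{1/4}$, yields the desired $\|f_i\|_{C^{1/4}(\Gamma_i\cap B_{1/2})}\le C\sigma^{1/4}$.

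\emph{Main obstacle.} The delicate point is the uniform Lipschitz bound in Step 2. The number of sheets can be as large as $O(\sigma^{-1/2})$, and each layer contributes to $|\nabla_x f_i|$ at order $\sigma^{(1-s)/2}$, so naive summation would blow up. The saving comes from two simultaneous cancellations: the optimal $\sqrt\sigma$ separation between consecutive sheets forces $|g_j-g_i|\gtrsim|j-i|\sqrt\sigma$, which turns the sum into $\sigma^{(1-s)/2}\,\zeta(1+s)$, and both factors remain bounded as $\sigma\downarrow0$.
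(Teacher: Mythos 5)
Your argument is correct in substance and reaches the stated bound, but it takes a genuinely different route from the paper at the key step. The paper never differentiates the interaction terms: it uses only that $f_i=H_s[\Gamma_i]$, zooms in by the factor $r\sim\sqrt\sigma$ so that (by the separation estimate of \Cref{prop:decouple-3D}) consecutive sheets are at distance $\ge 2$ at unit scale, applies the interior $C^3$ regularity for graphical $s$-minimal surfaces from \cite{BFV} to the single rescaled sheet to bound the Lipschitz seminorm of the rescaled $\widetilde f_i=H_s[\widetilde\Gamma_i]$, and then scales back (obtaining the crude bound $[f_i]_{C^{0,1}}\le C/\sqrt\sigma$, which after interpolation with $\|f_i\|_{L^\infty}\le C\sqrt\sigma$ gives exactly $\sigma^{1/4}$ for $\alpha=1/4$). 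You instead bound $[f_i]_{\rm Lip}$ directly by differentiating $-\sum_{j\ne i}H_s[\Gamma_j]+{\rm Ext}_1^{\Omega}$ under the integral sign, using \Cref{lem:kernel-comp} and the telescoped optimal separation $|g_j-g_i|\ge c|j-i|\sqrt\sigma$ to sum the convergent series $\sigma^{(1-s)/2}\sum_k k^{-(1+s)}$. This is more elementary (no Schauder-type machinery for the layer itself), it exploits the optimal separation more strongly, and it yields the sharper seminorm bound $C\sigma^{3/8}$ (uniform Lipschitz constant instead of $C/\sqrt\sigma$); the paper's route, on the other hand, only needs the separation qualitatively (enough room to rescale one sheet) and no accounting of the other layers at all.

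Two points in your Step 2 need patching, though neither is fatal. First, the telescoping $|g_j-g_i|\ge c|j-i|\sqrt\sigma$ uses \eqref{wiowhoih2} for every intermediate index, which requires $\min_{\overline B_{1/2}'}|g_k|\le 1/2$ for each such $k$; once a sheet leaves this central range it lies at distance bounded below by a universal constant from $\Gamma_i$, so its gradient contribution is $O(\sigma)$ per sheet, and the telescoping should only be invoked within the central range. Second, your claim that the remaining far sheets "contribute only $O(\sqrt\sigma)$ by \Cref{propL-2}" is not justified as stated: \Cref{propL-2} controls consecutive separations in $L^{-2}$ and says nothing about the number of far sheets. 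What you actually need is only an $O(1)$ total, and this follows either from the universal perimeter bound $\mathcal H^{n-1}(\partial E\cap B_1)\le C/\sigma$ for stable sets (already invoked in the proof of \Cref{whtiohwoiwh}), which gives $\sigma\cdot C\cdot\mathcal H^{n-1}(\partial E\cap\Omega_0)\le C$ for the aggregate far contribution, or from a covering argument applying \Cref{prop:decouple-3D} in shifted cylinders to get $N\le C\sigma^{-1/2}$ and hence an $O(\sqrt\sigma)$ total. With either fix, your uniform Lipschitz bound and the interpolation in Step 3 go through.
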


\begin{proof}
Thanks to Proposition~\ref{prop:decouple-3D}, the separation between consecutive layers in $B_{1/2}$ is, at least,~$C^{-1}\sqrt \sigma$.
As a consequence, after zooming in enough (by the factor~$1/r=  C\sigma^{-1/2}$),
we obtain a new set $\widetilde E : = \tfrac 1 r  E$ such that the surfaces
$\widetilde \Gamma_{i} : = \tfrac 1 r \Gamma_i$ are $C^{1,1}$ graphs with second fundamental form bounded by $r$, and the separation between consecutive layers in $B_{1/(2r)}$ is  $\ge 2$.

For every  $z\in  \Gamma_{i} \cap B_{\frac 1{2}}$, calling  $\widetilde z: = \tfrac 1 r z$ the rescaled point, we have (in some appropriate coordinate frame depending on $z$)
\[
(\partial \widetilde E-z) \cap B'_1 \times (-1,1) = \{x_n = \widetilde g(x'), |x'|<1\},
\]
for some $\widetilde g: B_1'\to \R$ with $\|g\|_{C^{1,1}} \le \frac r C \le \frac 1 {10}$.
It then follows from the standard local smoothness of $s$-minimal surfaces which are locally graphs (see~\cite{BFV})
that $\|\widetilde g\|_{C^{3}(B_{1/2} )} \le C$, and hence, setting $\widetilde f_i (x):= r^{-s} f_i(rx)$ and noting that $\widetilde f_i = H_s[ \widetilde \Gamma_{i}]$, we have that
\begin{equation}\label{whoithwoihtw}
\Big[ \widetilde f_i \Big]_{C^{0,1} (\Gamma_i \cap B_{r/4} (z))}  \le C \|\widetilde g\|_{C^{3}(B_{1/2})}\le C.
\end{equation}
Rescaling (recalling that~$r = C\sqrt \sigma$)
and using the freedom to choose~$z\in  \Gamma_{i} \cap B_{\frac 1{2}}$, we obtain
\[
[ f_i ]_{C^{0,1} (\Gamma_i \cap B_{1/2})}  \le \frac 1 {r^s} \sup_{z} \, \Big[ \widetilde f_i \Big]_{C^{0,1} (\Gamma_i \cap B_{r/4} (z))} \le \frac C {\sqrt \sigma}.
\]

On the other hand, by Proposition~\ref{prop:decouple-3D} we have $\|f_i \|_{L^\infty (\Gamma_i \cap B_{1/2})} \le C \sigma^{1/2}$. Hence,  by interpolation, we obtain
\[
[f_i]_{C^{\alpha}(B'_{3/4})} \le C \sigma^{(1-\alpha)/2 -\alpha/2}.
\]
Finally,  choosing $\alpha=1/4$  we get  $(1-\alpha)/2 -\alpha/2 = \alpha$.
\end{proof}

Finally we can prove the following result:

\begin{proposition}\label{prop:C2a}
Let $s\in \left[\frac 7 8, 1\right)$. Assume that $E\subset \R^3$ is a stable s-minimal set in $B_2$
and that the norm of the second fundamental form of $\partial E$ is bounded by $1/4$ in $B_2$.
Assume that~$0\in  \partial E$ and that the hyperplane $\{x_n=0\}$ is tangent to $\partial E$ at $0$.  Let $\Gamma_\circ$ denote the connected component of~$\partial E \cap (B_{1}'\times (-1,1))$ containing $0$  and set $\Gamma_\circ = \{x_n = g(x'), |x'|<\tfrac 1 2\}$.
Then,
\[ \|g\|_{C^{2,\alpha}(B_{1/2}')}  \le C\]
where $\alpha=1/8$ and C is  robust as $\sigma \downarrow  0$.

The same estimate holds for  $n=4$ if we assume that $E$ has conical structure, namely that~$t(E-x_\circ) =    E-x_\circ$ for all $t>0$, for some $x_\circ$ with $|x_\circ|\ge 100$.
\end{proposition}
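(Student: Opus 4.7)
The plan is to combine the uniform $C^{1,1}$ control coming from the second fundamental form hypothesis with the Hölder decay of $H_s[\Gamma_\circ]$ provided by Corollary~\ref{whtu292112-2} (or, for $n=4$ with $E$ conical, by Proposition~\ref{wehtioehwoit2}), and then to invoke a nonlocal Schauder estimate for the graphical $s$-minimal surface equation that is robust as $s\uparrow 1$.

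First I would observe that $|II|\le 1/4$ together with $\nu(0)=e_n$ implies, via the identity $D^2 g_{ij} = II_{ij}\sqrt{1+|\nabla g|^2}$ and a standard implicit function argument, that $\Gamma_\circ$ is a $C^{1,1}$ graph $\{x_n=g(x')\}$ over $B'_{1/2}$ with $\|g\|_{C^{1,1}(B'_{1/2})}\le C$ uniformly in $\sigma$. Letting $i$ be the index of $\Gamma_\circ$ among the layers and setting $f:=-\sum_{j\ne i} H_s[\Gamma_j] + \mathrm{Ext}_1^{\Omega_0}$, so that $H_s[\Gamma_\circ] = f$, Corollary~\ref{whtu292112-2} (whose proof carries over to the $n=4$ conical case after replacing Proposition~\ref{prop:decouple-3D} by Proposition~\ref{wehtioehwoit2}) yields
\begin{equation*}
\|f\|_{C^{1/4}(\Gamma_\circ\cap B_{1/2})} \le C\sigma^{1/4}.
\end{equation*}

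Next I would rewrite $H_s[\Gamma_\circ] = f$ as a nonlocal quasilinear equation for $g$ using the graphical representation from equation~(49) of~\cite{BFV}:
\begin{equation*}
\sigma\int_{B'_1} F\!\left(\frac{g(y')-g(x')}{|y'-x'|}\right)\frac{dy'}{|y'-x'|^{n-1+s}} = f(x',g(x')) + O(\sigma),
\end{equation*}
where $F\in C^\infty(\R)$ satisfies $F'(r)=c(1+r^2)^{-(n+s)/2}$, which is bounded above and below on the bounded range $|r|\le \|\nabla g\|_{L^\infty}$. This is a uniformly elliptic nonlocal quasilinear equation of order $1+s\nearrow 2$; its linearization has kernel comparable to $|y-x|^{-(n-1)-(1+s)}$ with ellipticity constants independent of $s$. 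Applying the interior smoothness theory from~\cite{BFV} (uniform in $s$ as $s\uparrow 1$), with inputs the uniform $C^{1,1}$ bound on $g$ and the $C^{1/4}$-smallness of $f$, and interpolating so that the Hölder exponent of $D^2 g$ stays bounded away from the operator order $1+s$, one obtains $\|g\|_{C^{2,1/8}(B'_{1/2})}\le C$ independently of $\sigma$. The cushion $1/8 < 1/4$ is precisely what leaves room for this interpolation as $s\to 1$.

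The main obstacle is making the Schauder constant uniform in $s$, since the theory in~\cite{BFV} is developed at fixed $s$ and its constants may a priori degenerate as $s\to 1$. The cleanest way I see to handle this is by contradiction and compactness: if a sequence $s_k\in[7/8,1)$ with stable $s_k$-minimal sets $E_k$ satisfying the hypotheses produced graphs $g_k$ with $\|g_k\|_{C^{2,1/8}}\to\infty$, a renormalization (coupled with the uniform $C^{1,1}$ bound and $\|f_k\|_{C^{1/4}}\to 0$) would converge to a solution of either the classical minimal surface equation (if $s_k\to 1$) or an $s_\infty$-minimal equation (if $s_k\to s_\infty\in[7/8,1)$) with bounded $C^{1,1}$ norm and zero right-hand side; classical or~\cite{BFV}-interior Schauder applied to the limit would then contradict the divergence of the $C^{2,1/8}$ seminorms, closing the argument. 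The conical-$n=4$ half of the proposition follows by the same scheme, the role of Proposition~\ref{prop:decouple-3D} being played throughout by Proposition~\ref{wehtioehwoit2}.
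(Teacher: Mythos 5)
Your route coincides with the paper's up to the last step: the right-hand side is decoupled and bounded in $C^{1/4}$ by Corollary~\ref{whtu292112-2} (and by Proposition~\ref{wehtioehwoit2} in the conical $n=4$ case), the equation $H_s[\Gamma_\circ]=f$ is rewritten as a graphical quasilinear nonlocal equation of order $1+s$ as in equation~(49) of \cite{BFV}, and the conclusion is a Schauder-type gain of $1+s+\tfrac14\ge 2+\tfrac18$ for $s\ge\tfrac78$ (this arithmetic, rather than an ``interpolation cushion,'' is where $\alpha=\tfrac18$ comes from). The paper closes the argument by invoking Schauder estimates for nonlocal elliptic operators whose constants are \emph{uniform} as $s\uparrow1$, thanks to the $(1-s)$-normalization of the kernel (see \cite{Fall}); you correctly flag this uniformity as the main issue, but your proposed substitute does not work as written.

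The gap is in the compactness step. Suppose $\|g_k\|_{C^{2,1/8}}\to\infty$ while $\|g_k\|_{C^{1,1}}\le C$ and $\|f_k\|_{C^{1/4}}\le C\sigma_k^{1/4}$. From the uniform $C^{1,1}$ bound you only extract convergence of (a subsequence of) $g_k$ in $C^{1,\beta}$, $\beta<1$, to a limit solving the classical (or $s_\infty$-) minimal surface equation; the limit is indeed smooth, but this yields no contradiction: Hölder seminorms of second derivatives are lower semicontinuous, so $[g_\infty]_{C^{2,1/8}}\le\liminf_k[g_k]_{C^{2,1/8}}$, which is the wrong direction, and the blow-up of $[g_k]_{C^{2,1/8}}$ can occur at scales shrinking to zero, invisible in the $C^{1,\beta}$ limit. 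A compactness proof of a uniform Schauder estimate must instead blow up at points and radii where the (suitably weighted) seminorm is nearly attained, subtract second-order polynomials, normalize the seminorm to $1$ along the rescaled sequence, and conclude via a Liouville-type theorem for the limiting (local or nonlocal) operator --- none of which is set up in your sketch. (Also, when $s_k\to s_\infty<1$ the limiting right-hand side is only bounded in $C^{1/4}$, not zero, unless the renormalization kills it, which again requires specifying the scaling.) Either carry out such a blow-up argument in detail, or do as the paper does and quote a nonlocal Schauder theory whose constants are stated to be robust as the order tends to $2$.
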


\begin{proof}
By Corollary~\ref{whtu292112-2},
and proceeding similarly as in \cite[Section~2]{CaffVal} or \cite[Section~3]{BFV}, we see that~$g$
satisfies a nonlocal equation of the type
\[
\mathcal Lg = \bar f \quad \mbox{in } B'_{3/4}, \quad \mbox{with  }\big \|\bar{f} \big\|_{C^{1/4}(B_{3/4}')} \le C.
\]
Here $\mathcal L$ is a (nonlocal, nonlinear) elliptic operator of order $1+s$  of the form
\[
\mathcal L u(x) = {\rm p.v.} \int_{\R^n	} G\left( \frac{|u(x)-u(y)|}{|x-y|} \right) K(x-y) \,dx
\]
where
\[
G(t) := \int_0^t (1+ \tau^2)^{-\frac {n+s}{2}}d \tau \quad \mbox{and} \quad K(z) := |z|^{-(n-1)-1-s}\eta(|z|),
\]
with $\eta\in C^\infty_c( [0,1/4))$ being a nonnegative cut-off function  such that  $\eta \equiv 1$ in $[0,1/8]$.
It then follows from the standard Schauder-type regularity theory of nonlocal elliptic equations ---see  for instance \cite{Fall}---  that $\|g\|_{C^{1+s+1/4} (B_{1/2}')} \le C$. Since $s \ge \tfrac 78$ entails that  $1+s+\tfrac 1 4 \ge 2+ \tfrac 1 8$, our conclusion follows.

In the case in which $n=4$ and sets have a conical structure, we can use a similar argument thanks to Proposition~\ref{wehtioehwoit2}.
\end{proof}

\subsection{The  D\'avila-del Pino-Wei system}
We now prove that uniform $C^{2,\alpha}$  bounds in~$s$ and optimal separation estimates
suffice for the interactions in $\partial E$ to be governed by the elliptic system described in~\eqref{whiorhwiohw}.

\begin{lemma}\label{DdPWsys}
Suppose that  $\partial E$ is a  $s$-minimal surface in $B'_1\times (-1,1)$. Assume that $0 \in \partial E$ and that $\partial E\cap B'_1\times (-1/2,1/2) \subset \bigcup_{i=1}^{N} \Gamma_{i}$,
where $\Gamma_i = \{ x_n =  g_{i}(x')\}$, with $g_i: B_1'\to \R$, $g_{1}< g_{2}<\dots < g_{N}$, and $\|g_i\|_{C^{2,\alpha}(B'_1)}\le 1$ for all $i$. Assume that $g_{i+1}-g_i\ge c\sqrt \sigma$, for some universal constant $c>0$.

Let $i_0$ be such that $0\in \Gamma_{i_0}$ (hence $g_{i_0}(0) =0$) and suppose in addition that~$\nu_{\partial E}(0)= e_n$ (hence~$\nabla g_{i_0}(0) =0$).

Then, for some $\beta>0$ we have
\begin{equation}\label{whiorhwiohwBIS}
H[\Gamma_{i_0}](0) = 2 { \sigma}  \left(\sum_{j> i_0} \frac{ (-1)^{i_0-j} }{d(0, \Gamma_j)} - \sum_{j< i_0}  \frac{(-1)^{i_0-j}}{d(0, \Gamma_j)} \right)   + O \big((\sqrt{ \sigma})^{1+\beta}\big).
\end{equation}

In particular,  $\big| H[\Gamma_{i_0}](0) \big|\le C\sqrt \sigma$.
\end{lemma}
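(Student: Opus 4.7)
The plan is to start from the $s$-minimal surface equation at $x=0$ in the sheet-decomposed form~\eqref{hwitohwoihwbis},
\[
H_s[\Gamma_{i_0}](0)\;+\;\sum_{j\neq i_0}H_s[\Gamma_j](0)\;=\;\mathrm{Ext}_1^{\Omega_0}(0),
\]
where $\mathrm{Ext}_1^{\Omega_0}(0)=O(\sigma)$ by the integrability of $|x-y|^{-(n+s)}$ outside $\Omega_0$. The strategy is then twofold: (a)~relate the self-contribution $H_s[\Gamma_{i_0}](0)$ to the classical mean curvature $H[\Gamma_{i_0}](0)$ via the pointwise $s\uparrow 1$ asymptotic for $C^{2,\alpha}$ surfaces, and (b)~compute each interaction term $H_s[\Gamma_j](0)$ by a Taylor expansion in the graphical parameterization, exploiting the alternation of orientation between consecutive sheets.

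For step~(a), since $\nabla g_{i_0}(0)=0$ and $\|g_{i_0}\|_{C^{2,\alpha}(B_1')}\le 1$, I would compare $\Gamma_{i_0}$ with its tangent paraboloid and integrate the kernel against the resulting quadratic profile; this standard computation yields
\[
H_s[\Gamma_{i_0}](0)\;=\;c_n\,H[\Gamma_{i_0}](0)\;+\;O(\sigma^{\beta})
\]
for an explicit dimensional constant $c_n>0$ and some $\beta>0$, the error being controlled by the $C^\alpha$ modulus of $D^2 g_{i_0}$. For step~(b), I parameterize $\Gamma_j$ as $y=(y',g_j(y'))$ and note that the alternating structure of $E$ forces $\nu_j(y)=(-1)^{j-i_0}\nu_{i_0}(0)+O(|\nabla g_j|)$ on $\Gamma_j$. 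Inserting this into the definition of $H_s[\Gamma_j](0)$ gives
\[
H_s[\Gamma_j](0)\;=\;(-1)^{j-i_0}\sigma\int_{B_1'}\frac{g_j(y')-\nabla g_j(y')\cdot y'}{\bigl(|y'|^2+g_j(y')^2\bigr)^{(n+s)/2}}\,dy'\;+\;O(\sigma).
\]
Rescaling $y'=|g_j(0)|\,t'$, using the $C^{2,\alpha}$ bound and the lower separation $|g_j(0)|\ge c\sqrt{\sigma}$, the leading-order evaluation produces
\[
H_s[\Gamma_j](0)\;=\;2c_n\,(-1)^{j-i_0}\,\mathrm{sgn}\bigl(g_j(0)\bigr)\,\sigma\,|g_j(0)|^{-s}\bigl(1+O((\sqrt\sigma)^{\beta'})\bigr),
\]
where the constant $2c_n$ is forced by matching the limiting kernel normalization against the constant from step~(a).

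I would then sum over $j\neq i_0$. Since $|g_j(0)|\ge c\sqrt\sigma$, one has $|g_j(0)|^{-s}=|g_j(0)|^{-1}(1+O(\sigma|\log\sigma|))$ and $|g_j(0)|=d(0,\Gamma_j)$. Splitting into $j>i_0$ (where $\mathrm{sgn}(g_j(0))=+1$) and $j<i_0$ (where it is $-1$), substituting into the identity of the first paragraph and dividing by $c_n$, one obtains the stated formula after using $(-1)^{j-i_0}=(-1)^{i_0-j}$. The error control hinges on the fact that $\sum_{j\neq i_0}\frac{(-1)^{i_0-j}}{d(0,\Gamma_j)}$ is only of order $1/\sqrt\sigma$ (rather than larger), thanks to the alternating series estimate combined with the monotonicity $d(0,\Gamma_{j+1})\ge d(0,\Gamma_j)+c\sqrt\sigma$ inherited from the separation bound in Proposition~\ref{prop:decouple-3D}; the multiplicative per-sheet correction $1+O((\sqrt\sigma)^{\beta'})$ therefore accumulates to a total discrepancy of order $(\sqrt\sigma)^{1+\beta}$, and the ``in particular'' estimate $|H[\Gamma_{i_0}](0)|\le C\sqrt\sigma$ follows immediately.

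The main obstacle I anticipate is the exact matching of dimensional constants across the two expansions: both the self-contribution and each cross-term arise from integrals of the kernel $(|t'|^2+1)^{-(n+s)/2}$, so the factor $2$ in the claimed identity is forced by careful bookkeeping of the Taylor-expansion normalizations and of the orientation signs $(-1)^{j-i_0}\mathrm{sgn}(g_j(0))$. A secondary issue is that the sum $\sum_j|g_j(0)|^{-1}$ cannot be bounded term by term but only after invoking the alternating cancellation together with the optimal separation estimate.
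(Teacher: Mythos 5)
Your outline follows the same skeleton as the paper's proof: start from the sheet-decomposed equation \eqref{hwitohwoihwbis} with $\mathrm{Ext}_1^{\Omega_0}(0)=O(\sigma)$, identify the self-term $H_s[\Gamma_{i_0}](0)$ with $c_\circ H[\Gamma_{i_0}](0)$ up to a small error, and expand each cross term around a flat sheet at height $\approx d(0,\Gamma_j)$ to extract $2c_\circ(-1)^{i_0-j}\sigma/d(0,\Gamma_j)$ (your constant matching and sign bookkeeping are consistent with this). The genuine gap is in the error accounting for the sum over sheets. Nothing in the hypotheses bounds $N$: the separation $g_{i+1}-g_i\ge c\sqrt\sigma$ allows $N\sim\sigma^{-1/2}$ sheets. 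Your per-sheet cross-term formula carries an additive $O(\sigma)$ error, and your leading evaluation claims a \emph{uniform multiplicative} error $1+O((\sqrt\sigma)^{\beta'})$ for every $j$. Neither survives summation: additive $O(\sigma)$ errors over $\sim\sigma^{-1/2}$ sheets accumulate to $O(\sqrt\sigma)$, the same size as the main term (and as the ``in particular'' bound), not $O((\sqrt\sigma)^{1+\beta})$; and for a sheet at distance $d_j$ of order one the deviation of $\Gamma_j$ from the flat model over all of $B_1'$ contributes an absolute error of order $\sigma(1+|\log d_j|)$, which is comparable to the main term $\sigma/d_j$ itself, so no uniform relative bound of the form you assert can hold. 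The alternating cancellation you invoke controls the model sum $\sum_j(-1)^{i_0-j}/d(0,\Gamma_j)$, but the error terms carry no sign structure, so it cannot be used to tame their accumulation.

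The paper closes exactly this gap by treating the far field \emph{collectively} rather than sheet by sheet: writing the fractional mean curvature at $0$ as $\sigma\,\mathrm{p.v.}\int_{\R^n}(\chi_{E^c}-\chi_E)(y)\,|y|^{-n-s}\,dy$, everything outside $B_\varrho$ with $\varrho=(\sqrt\sigma)^{\frac{1-\beta}{s}}$ contributes at most $C\sigma\varrho^{-s}=C(\sqrt\sigma)^{1+\beta}$, no matter how many sheets lie there, while the corresponding tail of the alternating model sum is controlled by its first omitted term $\le 2\sigma/\varrho$. Only for sheets with $2d(0,\Gamma_j)\le\varrho$ is the local expansion performed, around the nearest point $p_j$ in coordinates adapted to $\nu(p_j)$ and over a ball of radius $R=d^{3/4}$, which produces errors of size $O(\sigma|\log\sigma|)+\sqrt\sigma\,O(d^{1/2})$ that remain summable over the (few) nearby sheets. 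Without such a truncation (or an equivalent collective far-field bound) your argument only yields \eqref{whiorhwiohwBIS} with an $O(\sqrt\sigma)$ error, which is useless for the intended application since the main term is itself of size $\sqrt\sigma$. Two secondary points: you should justify replacing $|g_j(0)|$ by $d(0,\Gamma_j)$ (this uses $|\nabla g_j(0)|\le C\sqrt{d_j}$, which follows from disjointness from $\Gamma_{i_0}$ together with the $C^{1,1}$ bound, and is only a small relative correction for \emph{nearby} sheets), and the separation bound $g_{i+1}-g_i\ge c\sqrt\sigma$ is a hypothesis of the lemma here, not something to import from Proposition~\ref{prop:decouple-3D}.
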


We point out that, in particular, we can take~$\beta=\frac 1 4$ in~\eqref{whiorhwiohwBIS}.

\begin{proof}[Proof of Lemma~\ref{DdPWsys}]
Let $\Omega : = \Omega_0=B'_1\times (-1,1)$. Since $\partial E$ is minimal  in $\Omega$, by \eqref{hwitohwoihw} we have that
\[
H_s[\partial E] (0)   = H_s[\Gamma_{i_0}](0) + \sum_{j\neq i_0} H_s[\Gamma_j](0) + O(\sigma),
\]
where the error $O(\sigma)$ comes from the bound for $\big|{\rm Ext}_1^\Omega (0)\big|$.

Now the proof follows putting together the bounds obtained in the next two steps.

\vspace{3pt}

\noindent{\bf Step~1}.
Let us show that for $c_\circ: = \frac{\cH^{n-2}(\partial B_1')}{2(n-1)}$,
\begin{equation}\label{djeutv8b75657b9874309}
\big| H_s[\Gamma_{i_0}](0)  - c_\circ H[\Gamma_{i_0}](0)\big| \le  O \big( \sigma |\log\sigma|\big).
\end{equation}
To prove it, let us write $g: = g_{i_0}$ and recall that by assumption we have
\begin{equation}\label{78uih76t}
g(0)=|\nabla g(0)|= 0 \qquad \mbox{ and} \qquad \|g\|_{C^{2,\alpha}}\le 1.
\end{equation}
Hence, since $\nu(0) = e_n$ we have
\[
H[\Gamma_{i_0}](0) = - \Delta g(0).
\]
Thus our goal is to relate $H_s[\Gamma_{i_0}](0)$ with $\Delta g(0)$.

Recall that
\[
H_s[\Gamma_{i_0}](0) = \sigma \int_{\Gamma_{i_0}}  \frac{\nu(y)\cdot y}{|y|^{n+s}}\, d\HH(y)  =  \sigma \int_{B_1'}  \frac{(- \nabla g(y'),1)\cdot (y',g(y'))}{(|y'|^2 + g(y')^2)^{\frac{n+s}{2}}}  \,dy'.
\]

Now, on the one hand, we have
\[
I_1: =\sigma \int_{B_\varrho'}  \frac{(- \nabla g(y'),1)\cdot (y',g(y'))}{(|y'|^2 + g(y')^2)^{\frac{n+s}{2}}} \,dy' = (1+ O(\varrho^2))\sigma \int_{B_\varrho'}  \frac{g(y')- \nabla g(y')\cdot y')}{|y' |^{n+s}}\,dy',
\]
where we have used that $|y'|^2 + g(y')^2 = |y'|^2(1 + g(y')^2/|y'|^2) = |y|^2(1 + O(\varrho^2))$ for $y'\in B_\varrho'$,
thanks to~\eqref{78uih76t}.

Now, we note that \eqref{78uih76t} gives the Taylor expansions
\[ \big|g(y')- \tfrac1 2 y\cdot  D^2g(0) y\big|  \le |y'|^{2+\alpha} \quad \mbox{and}\quad   \big|\nabla g(y') - D^2 g(0)y'\big|\le |y'|^{2+\alpha}.
\]
Hence,
choosing  $\varrho =\sigma$, and using that~$\sigma^\sigma = \exp(\sigma \log \sigma) = 1 + O(\sigma |\log \sigma|)$ as $\sigma\downarrow 0$, we obtain
\[
\begin{split}
I_1 &= (1+O(\varrho^2))\sigma\left( \int_{B_\varrho'}  \frac{- \tfrac 1 2 y'\cdot D^2 g(0)y')}{|y'|^{n+s}}\,dy'  +  \int_{B_\varrho'}  \frac{2 |y'|^{2+\alpha}}{|y'|^{n+s}}\,dy' \right)\\
& = (1+O(\varrho^2))\left( \frac{{\rm trace}(D^2 g(0))}{2(n-1)} \sigma\int_{B_\varrho'}  \frac{|y'|^2}{|y'|^{n+s}}\,dy'  + \sigma O(\varrho^{\alpha +1-s}) \right)\\
& = (1+O(\varrho^2))\left(-\Delta g(0) \frac{\cH^{n-2}(\partial B_1')}{2(n-1)} \sigma\frac{\varrho^{1-s}}{1-s} + \sigma O(\varrho^{\alpha +1-s}) \right)\\
& =  -c_\circ\Delta g(0)  + O( \sigma |\log\sigma| + \sigma^{1+\alpha}).
\end{split}
\]

On the other hand, using that $|g(y')| + |y'\cdot\nabla g(y')|  \le \frac 1 2 |y'|^2 + |y'|^2$ (thanks again to~\eqref{78uih76t})
we obtain that
\[
I_2: =\sigma \int_{B_1'\setminus B_\varrho'}    \frac{(- \nabla g(y'),1)\cdot (y',g(y'))}{(|y'|^2 + g(y')^2)^{\frac{n+s}{2}}}  \,dy',
\]
which gives that
\[
\big| I_2\big| \le \sigma \int_{B_1'\setminus B_\varrho'}  \frac{\frac 3 2 |y'|^2}{|y'|^{n+s}}\, dy'
= C \sigma \int_\varrho^1 r^{-s}\, dr= C(1-\sigma^\sigma) = O(\sigma |\log \sigma|).
\]
Since $H_s[\Gamma_{i_0}](0) = I_1 + I_2$, these considerations complete the proof of~\eqref{djeutv8b75657b9874309}.

\vspace{3pt}

\noindent{\bf Step~2}. We now show that for any sheet $\Gamma_j$ with $j>i_\circ$ we have
 \begin{equation}\label{i9384v9070987wertyuikjhgfdsxdcfgnbvc}
\left| H_s[\Gamma_{j}](0)  -   2c_\circ(-1)^{i_\circ-j} \frac{  \sigma}{d(0, \Gamma_j)}\right| \le  O \big( (\sqrt \sigma)^{1+\beta}\big),
\end{equation}
for $\beta=\frac 1 4$.

To prove~\eqref{i9384v9070987wertyuikjhgfdsxdcfgnbvc}, we first observe that,
by symmetry, the contributions of sheets with  $j<i_\circ$ should be exactly the same but with opposite sign.

Now, fix~$j>i_\circ$ and set~$d:= d(0, \Gamma_j)$. Notice that  we may assume without loss of generality that
\[
2d\le \varrho: = (\sqrt \sigma)^{\frac{1-\beta}{s}}
\]
since the fractional mean curvature at~$0$, which equals~$
\sigma {\rm p.v.}\int_{\R^n} \frac{\chi_{E^c}(y) -\chi_E(y) }{|y|^{n+s}} \,dy$, contributes
from outside of~$B_\varrho$ at most by
\[
\sigma \int_{\R^n\setminus B_\varrho} \frac{dy}{|y|^{n+s}}
\le \frac{\cH^{n-1}(\partial B_1)}{s}\sigma \varrho^{-s}
= C (\sqrt\sigma)^{1+\beta}.
\]

Therefore, we now assume that~$2d\le \varrho$ (which is tiny as $\sigma\downarrow 0$) and let~$p_j$ be the
point on $\Gamma_j$ with minimal distance to $0$ (recall that by assumption all the surfaces $\Gamma_j$ satisfy $C^{2,\alpha}$ bounds so there is indeed a unique point if $\sigma$ is small enough).

Observe that
\[
p_j\cdot\nu(p_j) =    (-1)^{i_\circ-j} d.
\]

Choose Euclidean coordinates~$z_1,\dots, z_n$ centered on $p_j$ and such that the $z_n$ axis points in the direction of $\nu$ and let $z_n = h(z')$ be a parametrization of $\Gamma_j$ in a $2\varrho$-neighborhood of $p_j$.

Let~$R:= d^{\frac{1+\gamma}{2}}$. Recall that we may assume that~$d\in \big(\sqrt \sigma,  (\sqrt\sigma)^{\frac{1-\beta}{s}}\big)$. Hence, $\frac{1+\gamma}{2} \le \frac{1-\beta}{s}$ ensures that~$R\ge \varrho$.
For instance let us take~$\gamma =\frac 12$ and~$\beta = \frac 1 4$. Now,
\[
\begin{split}
	&\quad\;
H_s[ \Gamma_j\cap  \{|z'|\le R\}] (0)  \\ &= \sigma \int_{\Gamma_{j}\cap  \{|z'|\le R\}}  \frac{\nu(y)\cdot y}{|y|^{n+s}} \,d\HH_y  \\
&=  \sigma \int_{\Gamma_{j}\cap  \{|z'|\le R\}} \frac{(-1)^{i_\circ-j} d+ O(|z'|^2)}{( |z'|^2+  (d+O(|z'|^2))^2)^{\frac{n+s}{2}}}  \sqrt{1+ O(R^2)}\,dz' \\
& =  \frac{\sigma}{d^s} \int_{\Gamma_{j}\cap  \{|\zeta'|\le R/d\}} \frac{(-1)^{i_\circ-j} + O(d^{\gamma})}{( |\zeta|^2+  (1+O(d^\gamma))^2)^{\frac{n+s}{2}}} \, d \zeta'\\
& = (-1)^{i_\circ-j} \frac{\sigma}{d^s} \cH^{n-2}(\partial B_1') \int_0^{R/d} \frac{r^{n-2}\,dr}{(1+r^2) ^\frac{n+s}{2}}  + \sqrt{\sigma}\,O( d^\gamma) \\
& =  (-1)^{i_\circ-j} \frac{\sigma}{d} \cH^{n-2}(\partial B_1') \int_0^{\infty} \frac{r^{n-2}\,dr}{(1+r^2) ^\frac{n+1}{2}}  + O(\sigma |\log\sigma|)+ \sqrt{\sigma}\, O ( (R/d)^{-1-s} + d^\gamma)\\
& = (-1)^{i_\circ-j} \frac{\sigma}{d} \frac{\cH^{n-2}(\partial B_1')}{n-1} + O(\sigma |\log\sigma|)+ \sqrt{\sigma}\,O ( d^{1-\gamma} + d^\gamma)\\
& = (-1)^{i_\circ-j} \frac{\sigma}{d} 2c_\circ +  O\left((\sqrt{\sigma})^{1+\frac 1 4}  \right),
\end{split}
\]
which gives the desired estimate in Step~2.
\end{proof}

\subsection{Brian White's scaling trick and proof of Theorem~\ref{thmmain1}}

We need the following preliminary results:

\begin{lemma}\label{wiowhithwoh}
	Let $\Gamma_\circ\subset \R^n$ be an oriented $(n-1)$-submanifold of class~$C^2$ with normal vector~$\nu_\circ$. Assume that $\eta\in C^{1,\alpha}_{c} (\R^n)$ is such that the support~$K  : =  {\rm spt}(\eta |_{\Gamma_\circ})$ of the restriction
	of~$\eta$ on~$\Gamma_\circ$  is compact.
	Suppose that $\sigma=1-s<\dist(K,\partial \Gamma_\circ)$ in case $\partial \Gamma_\circ \neq \emptyset$.
	Let
	\[
	\Gamma_u : = \{ x+ u(x)\nu_\circ(x)\ : \  x\in\Gamma_\circ \},
	\]
	where $u \in C^{2,\alpha}_c(\widetilde{K})$, for some compact subset $\widetilde K\subset \Gamma_\circ$ such that $\overline K\subset\widetilde K$.
	
	Then, as $\sigma=1-s\downarrow 0$ and $\norm[C^{2}(\widetilde{K})]{u}\downarrow 0$,
	$$
	\sigma\iint_{\Gamma_u\times\Gamma_u}
		\dfrac{
			(\eta(\overline x)-\eta(\overline y))^2
		}{
			|\overline x-\overline y|^{n+s}
		}
	\,d\cH^{n-1}_{\overline x}\,d\cH^{n-1}_{\overline y}
	=c_\star\int_{\Gamma_\circ}
		|\nabla_T \eta|^2
	\,d\cH^{n-1}
	+O\left (
		\sigma|\log\sigma|
		+\norm[C^1(\widetilde{K})]{u}
	\right )$$ and $$
	\sigma\iint_{\Gamma_u \times \Gamma_u}
		\dfrac{
			\big|\nu_u(\overline x)-\nu_u(\overline y)\big|^2
			\eta^2(\overline x)
		}{
			|\overline x-\overline y|^{n+s}
		}
	\,d\cH^{n-1}_{\overline x}\,d\cH^{n-1}_{\overline y}
	=c_\star \int_{\Gamma_\circ}   |{\rm I\negthinspace I}_{\Gamma_\circ} |^2\eta^2 \,d\HH
	+O\left (
		\sigma|\log\sigma|
		+\norm[C^{2,\alpha}(\widetilde{K})]{u}
	\right ),
	$$
	where $\nabla_T$ denotes the tangential gradient to $\Gamma_\circ$, $\nu_u$ is the normal vector to $\Gamma_u$,  $|{\rm I\negthinspace I}_{\Gamma_\circ}|^2$ is the sum of the squares of the principal curvatures of $\Gamma_\circ$, and $c_\star = \cH^{n-2}(\partial B_1')$.
\end{lemma}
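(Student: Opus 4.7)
The plan is to treat these integrals as Bourgain--Brezis--Mironescu type asymptotics for fractional seminorms on a surface: multiplied by $\sigma=1-s$, the kernel $|x-y|^{-n-s}$ concentrates as $s\uparrow 1$ and produces the local Dirichlet-type energies on the right-hand sides. The argument proceeds in three stages.

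\textbf{Stage 1 (reduction to $u\equiv 0$).} The normal graph map $\Phi(x):=x+u(x)\nu_\circ(x)$ is a $C^{2,\alpha}$-diffeomorphism of $\Gamma_\circ$ onto $\Gamma_u$, equal to the identity outside $\widetilde K$, as soon as $\|u\|_{C^1(\widetilde K)}$ is small. Pulling back each integral via $\Phi\times\Phi$ produces multiplicative factors $1+O(\|u\|_{C^1(\widetilde K)})$ coming from the two surface Jacobians and from $|\Phi(x)-\Phi(y)|^{-n-s}=|x-y|^{-n-s}(1+O(\|u\|_{C^1(\widetilde K)}))$; in addition, $\eta\circ\Phi-\eta=O(\|u\|_{L^\infty})$ and (for the second identity) $\nu_u\circ\Phi-\nu_\circ=O(\|u\|_{C^{2,\alpha}(\widetilde K)})$, the stronger norm being necessary because $\nu_u$ is built from the first derivatives of the parametrization and must be controlled both pointwise and in a H\"older sense in order to absorb the difference into the Taylor remainders below. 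Since the main terms are bounded, a multiplicative error $1+O(\|u\|)$ on a bounded quantity becomes an additive error of the same order, so it suffices to prove both identities in the case $u\equiv 0$.

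\textbf{Stage 2 (localization via tangent-plane charts).} Fix $\bar x\in K$ and parametrize a neighborhood of $\bar x$ in $\Gamma_\circ$ by the exponential map $\Psi_{\bar x}:B'_{\delta_0}\subset T_{\bar x}\Gamma_\circ\to\Gamma_\circ$, with $\delta_0>0$ small but fixed (taken less than $\dist(K,\partial\Gamma_\circ)$ if the latter is nonempty). In this chart, with $z\in B'_{\delta_0}$,
\[
|\Psi_{\bar x}(z)-\bar x|^2=|z|^2\bigl(1+O(|z|^2)\bigr),\qquad d\cH^{n-1}_{\bar y}=\bigl(1+O(|z|^2)\bigr)\,dz,
\]
and Taylor expansion gives
\[
\eta(\Psi_{\bar x}(z))-\eta(\bar x)=\nabla_T\eta(\bar x)\cdot z+R_1(z),\quad |R_1(z)|\le C|z|^{1+\alpha},
\]
\[
\nu_\circ(\Psi_{\bar x}(z))-\nu_\circ(\bar x)=-\II_{\bar x}\,z+R_2(z),\quad |R_2(z)|\le C|z|^{1+\alpha},
\]
where $\II_{\bar x}$ acts on $z\in T_{\bar x}\Gamma_\circ\cong\R^{n-1}$. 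The contribution to the inner integral coming from $\bar y$ outside the chart is $O(\sigma)$ because the kernel is bounded on that set and the integrands are uniformly bounded with $\eta$ of compact support.

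\textbf{Stage 3 (the key one-point integral).} Combining the expansions above, the leading contribution to the inner integral over $|z|<\delta_0$ reduces to
\[
\sigma\int_{|z|<\delta_0}\frac{(w\cdot z)^2}{|z|^{n+s}}\,dz
=\delta_0^{\sigma}\,\frac{\cH^{n-2}(\bS^{n-2})}{n-1}\,|w|^2
=\bigl(1+O(\sigma|\log\sigma|)\bigr)\,c_\star\,|w|^2,
\]
by separating radial and angular parts, using $\int_{\bS^{n-2}}\omega_i\omega_j\,d\omega=\frac{\delta_{ij}}{n-1}\cH^{n-2}(\bS^{n-2})$ and $\int_0^{\delta_0}r^{-s}\,dr=\delta_0^{\sigma}/\sigma$. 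Choosing $w=\nabla_T\eta(\bar x)$ yields the first identity pointwise; for the second, summing the analogous contributions for $w$ ranging over the columns of $\II_{\bar x}$ uses $\sum_i|\II_{\bar x}e_i|^2=|\II_{\bar x}|^2$. The cross terms between the linear and remainder parts contribute $\sigma\int_0^{\delta_0}r^{\alpha-s}\,dr=O(\sigma)$, the remainder-squared terms and the $O(|z|^2)$ corrections to area and distance contribute $O(\sigma)$ as well (since $\alpha>0$), and integrating in $\bar x$ over $\Gamma_\circ$ closes the argument.

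\textbf{Main obstacle.} The principal technical difficulty is the bookkeeping of three superimposed error sources --- the perturbation error in $u$, the higher-order geometric corrections from the area element and intrinsic distance on $\Gamma_\circ$, and the Taylor remainders of $\eta$ and $\nu_\circ$ --- and checking that each is absorbed in $O(\sigma|\log\sigma|+\|u\|_{C^1(\widetilde K)})$ (respectively $O(\sigma|\log\sigma|+\|u\|_{C^{2,\alpha}(\widetilde K)})$). In particular, one must verify that the $C^{2,\alpha}$ norm, and not merely $C^2$, is what is actually needed in Stage 1 to guarantee uniform control of $\nu_u\circ\Phi-\nu_\circ$ inside the fractional seminorm, where differences are weighted by the singular kernel.
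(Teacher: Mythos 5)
Your argument follows essentially the same route as the paper's proof: first reduce to $u\equiv 0$ by comparing surface measures, distances and normals on $\Gamma_u$ with those on $\Gamma_\circ$ (multiplicative errors $1+O(\|u\|_{C^1(\widetilde K)})$, and $O(\|u\|_{C^{2,\alpha}(\widetilde K)})$ for the normal), then Taylor-expand $\eta$ and $\nu_\circ$ in local graphical charts and evaluate the model kernel integral by a radial--angular splitting, the radial factor $\int_0^{\delta_0}r^{-s}\,dr$ producing the $1+O(\sigma|\log\sigma|)$ correction and the far field contributing $O(\sigma)$ (both you and the paper implicitly use here that $\Gamma_\circ$ has locally finite area growth so that the tail integrals are finite).

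Two remarks. First, your (correct) anisotropic average $\int_{\bS^{n-2}}(w\cdot\omega)^2\,d\cH^{n-2}_\omega=\tfrac{\cH^{n-2}(\bS^{n-2})}{n-1}|w|^2$ yields the constant $\tfrac{\cH^{n-2}(\bS^{n-2})}{n-1}$, which you then silently identify with $c_\star$, whereas the lemma sets $c_\star=\cH^{n-2}(\partial B_1')$; the paper's own proof reaches the stated $c_\star$ only by replacing $(\eta(\bar x)-\eta(\bar y))^2$ with $|\nabla\eta(\bar x)|^2|\bar x-\bar y|^2$ (and $|D\nu_\circ(x)(y-x)|^2$ with $|D\nu_\circ(x)|^2|y-x|^2$), which is not a genuine Taylor expansion, while your computation matches the angular averages used later in Proposition~\ref{prop:FW0}. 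Since the same constant multiplies both displays and cancels when the stability inequality is passed to the limit in the proof of Theorem~\ref{thmmain1}, the discrepancy is harmless, but you should state the constant you actually obtain rather than equate it with the $c_\star$ of the statement. Second, your bound $|R_2(z)|\le C|z|^{1+\alpha}$ for the remainder of $\nu_\circ$ presupposes $C^{2,\alpha}$ regularity of $\Gamma_\circ$, while the lemma assumes only $C^2$ (with plain $C^2$ this term gives an error $o(1)$ rather than $O(\sigma|\log\sigma|)$); the paper's proof glosses over the same point, and in the application $\Gamma_\circ$ is a smooth minimal surface, so nothing is lost.
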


\begin{proof}
We parameterize $\Gamma_\circ$ locally within $\widetilde{K}$ by $\{x=X({\tt x}):{\tt x}\in U\subset \R^{n-1}\}$. From $\bar{x}=x+u(x)\nu_\circ(x)$ we immediately have $D_{\tt x}\bar{x}=D_{{\tt x}}X+u(X)D_{{\tt x}}\nu_\circ(X)+\nu_\circ(X) D_{{\tt x}}u(X)$ and
\begin{align*}
d\cH^{n-1}_{\overline x}
&=\sqrt{
	\det\left (
		(D_{\tt x}\overline{x})^{T}
		(D_{\tt x}\overline{x})
	\right )
}
\,d{\tt x}
\\
&=\sqrt{\det
	\left (
		(D_{\tt x}X)^T (D_{\tt x}X)
		+O(\norm[C^1(\widetilde{K})]{u})
	\right )
}\,d{\tt x}
=\left (
	1+O(\norm[C^1(\widetilde{K})]{u})
\right )
\,d\cH^{n-1}_{x}.
\end{align*}
Moreover,
\begin{align*}
|\overline{x}-\overline{y}|
&=|x-y+(u\nu_\circ)(x)-(u\nu_\circ)(y)|\\
&=\bigl|
	x-y+O(\norm[L^\infty(\widetilde{K})]{D(u\nu_\circ)}|x-y|)
\bigr|
=\left (
	1+O(\norm[C^1(\widetilde{K})]{u})
\right )|x-y|.
\end{align*}
Thus,
\begin{align*}
&\quad\;
	\sigma\iint_{\Gamma_u\times\Gamma_u}
\dfrac{
	(\eta(\overline x)-\eta(\overline y))^2
}{
	|\overline x-\overline y|^{n+s}
}
\,d\cH^{n-1}_{\overline x}\,d\cH^{n-1}_{\overline y}\\
&=	\sigma\iint_{\Gamma_u\times\Gamma_u}
\dfrac{
	|\nabla\eta(\overline{x})|^2|\overline{y}-\overline{x}|^2
	+O\left (
		|\overline{y}-\overline{x}|^{2+\alpha}
	\right )
}{
	|\overline y-\overline x|^{n+s}
}
\,d\cH^{n-1}_{\overline y}\,d\cH^{n-1}_{\overline x}\\
&=\left (
	1+O(\norm[C^1(\widetilde{K})]{u})
\right )\sigma
\iint_{\Gamma_\circ\times\Gamma_\circ}
	\dfrac{
		|\nabla\eta(x)|^2
		|y-x|^2
		+O(|y-x|^{2+\alpha})
	}{
		|y-x|^{n+s}
	}
\,d\cH^{n-1}_y\,d\cH^{n-1}_x.
\end{align*}
Around each $x\in K$, we parameterize $\Gamma_\circ-x$ by a graph $\{z=(z',f_x(z')\}$ such that $f_x(0)=0$, $\nabla f_x(0)=0$, and $|D^2 f_x|\leq C$. Then $|z|=\sqrt{|z'|^2+f_x(z')^2}=(1+O(|z'|^2))|z'|$. Let $\beta\in[2,2+\alpha]$. For $\varrho=\sigma$,
\begin{align*}
&\quad\;
\int_{\Gamma_\circ}
	\dfrac{
		|y-x|^{\beta}
	}{
		|y-x|^{n+s}
	}
\,d\cH^{n-1}_y\\
&=\int_{(\Gamma_\circ-x)\cap B_{\varrho}}
	\dfrac{
		|z|^{\beta}
	}{
		|z|^{n+s}
	}
\,d\cH^{n-1}_z
+\int_{(\Gamma_\circ-x)\cap (B_1\setminus B_{\varrho})}
\dfrac{
	|z|^{\beta}
}{
	|z|^{n+s}
}
\,d\cH^{n-1}_z
+\int_{(\Gamma_\circ-x) \setminus B_1}
\dfrac{
	|z|^{\beta}
}{
	|z|^{n+s}
}
\,d\cH^{n-1}_z\\
&=
(1+O(\varrho^2))
\int_{B_\varrho'}
	\dfrac{
		|z'|^{\beta}
	}{
		|z'|^{n+s}
	}
\,dz'
+O(1)\int_{B_1'\setminus B_\varrho'}
\dfrac{
	|z'|^{\beta}
}{
	|z'|^{n+s}
}
\,dz'
+\cH^{n-1}((\Gamma_\circ-y)\setminus B_1)\\
&=(1+O(\varrho^2))\cH^{n-2}(\partial B_1')
	\dfrac{
		\varrho^{\beta-1-s}
	}{
		\beta-1-s
	}
+O(1)\dfrac{
		1-\varrho^{\beta-1-s}
	}{
		\beta-1-s
	}
+O(1).
\end{align*}
Consequently,
\begin{eqnarray*}
\sigma\int_{\Gamma_\circ}
\dfrac{
	|y-x|^{2}
}{
	|y-x|^{n+s}
}
\,d\cH^{n-1}_y
&=&\cH^{n-2}(\partial B_1')
+O(\sigma|\log\sigma|)\\{\mbox{and }}\quad
\sigma\int_{\Gamma_\circ}
\dfrac{
	|y-x|^{2+\alpha}
}{
	|y-x|^{n+s}
}
\,d\cH^{n-1}_y
&=&O(\sigma).
\end{eqnarray*}
We conclude that
\begin{align*}
&\quad\;
\sigma\iint_{\Gamma_u\times\Gamma_u}
\dfrac{
	(\eta(\overline x)-\eta(\overline y))^2
}{
	|\overline x-\overline y|^{n+s}
}
\,d\cH^{n-1}_{\overline x}\,d\cH^{n-1}_{\overline y}\\
&=\left (
1+O(\norm[C^1(\widetilde{K})]{u})
\right )
\int_{\Gamma_\circ}
	\left (
		\cH^{n-2}(\partial B_1')
		|\nabla_T \eta(x)|^2
		+O(\sigma|\log\sigma|)
	\right )
\,d\cH^{n-1}_x\\
&=\cH^{n-2}(\partial B_1')
\int_{\Gamma_\circ}
	|\nabla_T \eta|^2
\,d\cH^{n-1}
+O\left (
	\sigma|\log\sigma|
	+\norm[C^1(\widetilde{K})]{u}
\right ).
\end{align*}

Similarly, since $\nu_u(\overline{x})=\nu_\circ(x)+O(\norm[C^1(\widetilde{K})]{u})$ together with derivatives\footnote{
Indeed, representing $\Gamma_\circ$ by a graph $f$, we can write $\overline{x}=(x'+u(x)\nu'(x'),f(x')+u(x)\nu_{n}(x'))$ so that $\nu_u$, being orthogonal to $\partial_{x_j}\overline{x}$ for $j=1,\dots,n-1$, is parallel to
\[
\det\begin{pmatrix}
	(e_1,\dots, e_{n-1}) & e_n \\
	{\rm Id}_{n-1}+O(\norm[C^1(\widetilde{K})]{u}) & \nabla_{x'}f+O(\norm[C^1(\widetilde{K})]{u})
\end{pmatrix}
=\det\begin{pmatrix}
	(e_1,\dots, e_{n-1}) & e_n \\
	{\rm Id}_{n-1} & \nabla_{x'}f
\end{pmatrix}
+O(\norm[C^1(\widetilde{K})]{u}),
\]
where each $O(\norm[C^1(\widetilde{K})]{u})$ is multi-linear in $u$ and linear in $\nabla u$. This yields that $\nu_u=\nu_\circ+O(\norm[C^1(\widetilde{K})]{u})$, with an error that ``can be differentiated'', that is, $D\nu_u=D\nu_\circ+O(\norm[C^2(\widetilde{K})]{u})$.
}, 
\begin{align*}
&\quad\;
\sigma\iint_{\Gamma_u \times \Gamma_u}
\dfrac{
	\big|\nu_u(\overline x)-\nu_u(\overline y)\big|^2
	\eta^2(\overline x)
}{
	|\overline x-\overline y|^{n+s}
}
\,d\cH^{n-1}_{\overline x}\,d\cH^{n-1}_{\overline y}\\
&=\left (
1+O(\norm[C^1(\widetilde{K})]{u})
\right )\\
&\quad\cdot
\int_{\Gamma_\circ}
	\left (
		\sigma\int_{\Gamma_\circ}
			\dfrac{
				(|D\nu_\circ(x)|^2
				+O(\norm[C^2(\widetilde{K})]{u}))
				|y-x|^2
				+O(
				\norm[C^{2,\alpha}(\widetilde{K})]{u}
				|y-x|^{2+\alpha})
			}{
				|y-x|^{n+s}
			}
		\,d\cH^{n-1}_y
	\right )
	\eta^2(x)
\,d\cH^{n-1}_x\\
&=\cH^{n-2}(\partial B_1')
\int_{\Gamma_\circ}
	|{\rm I\negthinspace I}_{\Gamma_\circ} |^2
	\eta^2
\,d\cH^{n-1}
+O\left (
	\sigma|\log\sigma|
	+\norm[C^{2,\alpha}(\widetilde{K})]{u}
\right ).
    \end{align*}
This completes the proof.
\end{proof}

\begin{lemma}\label{lem:stab-RHS-ij}
Suppose for $i=1,\dots,N$, that~$\Gamma_i=\{|x'|<1,\,x_n=g_i(x')\}$, with $g_i:B_1'\to\R$, $g_1<g_2<\cdots<g_N$ and $\norm[L^\infty(B_1')]{\nabla g_i}\leq \delta$, $\norm[C^{2,\alpha}(B_1')]{g_i} \leq
1$ for all $i$. If
\[
\min_{1\leq i\leq N-1} \inf_{B_1'}(g_{i+1}-g_i) \geq c\sqrt{\sigma},
\]
then for $\eta \in C_c^{1}(\Omega)$, for any $j\neq i$,
\begin{equation}\label{eq:stab-RHS-ij-1}
\sigma\iint_{\Gamma_i\times\Gamma_j}
	\dfrac{
		(\eta(x)-\eta(y))^2
	}{
		|x-y|^{n+s}
	}
\,d\cH^{n-1}_x\,d\cH^{n-1}_y
\leq C\sigma
\log\frac{2}{
	|j-i|\sqrt{\sigma}
}.
\end{equation}
In particular, if $\Gamma=\bigcup_{i=1}^{N}\Gamma_i$, then
\begin{equation}\label{eq:stab-RHS-ij-2}
\frac{\sigma}{N}\iint_{\Gamma\times\Gamma}
\dfrac{
	(\eta(x)-\eta(y))^2
}{
	|x-y|^{n+s}
}
\,d\cH^{n-1}_x\,d\cH^{n-1}_y
\leq C\sqrt{\sigma}\log\frac{1}{\sqrt{\sigma}}.
\end{equation}
\end{lemma}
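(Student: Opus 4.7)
The plan is to parametrize $\Gamma_i$ and $\Gamma_j$ as graphs over $B_1'$, use the layer-separation hypothesis to bound $|x-y|^2$ from below by a quantity of the form $|x'-y'|^2+d_{ij}^2$ with $d_{ij}=c(j-i)\sqrt{\sigma}$, and then reduce the double integral to a weighted radial integral whose borderline singular behaviour as $s\uparrow 1$ can be converted into a logarithm via the Taylor expansion $d_{ij}^{\pm\sigma}=1\pm\sigma\log d_{ij}+O(\sigma^{2}\log^{2}d_{ij})$.

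First I would use $\eta\in C_c^{1}(\Omega)$ to bound $(\eta(x)-\eta(y))^{2}\le C\min(|x-y|^{2},1)$, with $C$ depending only on $\|\eta\|_{C^{1}}$. Writing $x=(x',g_i(x'))$ and $y=(y',g_j(y'))$ for $i<j$, the key geometric bound to establish is
\[
|x-y|^{2}\ \ge\ \tfrac{1}{2}\bigl(|x'-y'|^{2}+d_{ij}^{\,2}\bigr).
\]
To verify it, I would combine the separation hypothesis $g_j(x')-g_i(x')\ge d_{ij}$ with the Lipschitz bound $|g_j(y')-g_j(x')|\le\delta|x'-y'|$, splitting into the regimes $|x'-y'|\lesssim d_{ij}/\delta$ (in which the vertical separation dominates and is at least $d_{ij}/2$) and $|x'-y'|\gtrsim d_{ij}/\delta$ (in which the horizontal part dominates).

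Since $d\mathcal{H}^{n-1}_x\sim dx'$ and $d\mathcal{H}^{n-1}_y\sim dy'$ with Jacobian close to $1$, the double integral in \eqref{eq:stab-RHS-ij-1} is then controlled by
\[
C\sigma\int_{0}^{C}\frac{r^{n-2}\,dr}{(r^{2}+d_{ij}^{\,2})^{(n+s-2)/2}}\ +\ C\sigma,
\]
where the second summand accounts for the long-range contribution $|x-y|\ge 1$. Substituting $r=d_{ij}t$ reduces the radial integral to $d_{ij}^{\,1-s}\int_{0}^{C/d_{ij}}t^{n-2}(1+t^{2})^{-(n+s-2)/2}\,dt$; since the integrand decays like $t^{-s}$ at infinity, the integral equals $(d_{ij}^{-\sigma}-1)/\sigma+O(1)$. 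The delicate step is to convert this into a logarithm: the expansion $d_{ij}^{-\sigma}=1-\sigma\log d_{ij}+O(\sigma^{2}\log^{2}d_{ij})$ gives $(d_{ij}^{-\sigma}-1)/\sigma=\log(1/d_{ij})+O(\sigma\log^{2}d_{ij})$, and the prefactor $d_{ij}^{\,1-s}=d_{ij}^{\,\sigma}$ is likewise $1+O(\sigma|\log d_{ij}|)$. Multiplying by $\sigma$ outside yields precisely the claimed bound $C\sigma\log\bigl(2/((j-i)\sqrt{\sigma})\bigr)$.

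Finally, for \eqref{eq:stab-RHS-ij-2}, I would decompose $\iint_{\Gamma\times\Gamma}=\sum_{i,j}\iint_{\Gamma_i\times\Gamma_j}$ and sum \eqref{eq:stab-RHS-ij-1} over off-diagonal pairs, using $\sum_{k=1}^{N-1}\log(2/(k\sqrt{\sigma}))\lesssim N\log(2/(N\sqrt{\sigma}))+N$ (Stirling) to obtain an off-diagonal contribution of order $\sigma N\log(1/(N\sqrt{\sigma}))$, which is at most $C\sqrt{\sigma}\log(1/\sqrt{\sigma})$ thanks to the optimal separation bound $N\sqrt{\sigma}\le C$ from \Cref{prop:decouple-3D}. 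The diagonal terms $\sigma\iint_{\Gamma_i\times\Gamma_i}$ are handled by \Cref{wiowhithwoh}, whose main contribution $c_\star\int_{\Gamma_i}|\nabla_T\eta|^{2}$ is supported on the $O(1/\sqrt{\sigma})$ layers meeting $\mathrm{spt}\,\eta$ and is absorbed by the same normalisation. The main obstacle is precisely the radial computation: the naive bound $\int t^{-s}\,dt\sim 1/\sigma$ looks divergent as $s\uparrow 1$, and the whole estimate hinges on the cancellation provided by $d_{ij}^{\pm\sigma}=1\pm\sigma\log d_{ij}+\cdots$, valid exactly because the layer separation $d_{ij}$ is at the critical scale $\sqrt{\sigma}$.
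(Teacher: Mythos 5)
Your proof of \eqref{eq:stab-RHS-ij-1} is correct and is essentially the paper's own argument: bound $(\eta(x)-\eta(y))^2\le C|x-y|^2$, write both sheets as graphs, use the separation hypothesis together with the Lipschitz smallness to bound the kernel by $C\bigl(|x'-y'|^2+|j-i|^2\sigma\bigr)^{-\frac{n-2+s}{2}}$, and evaluate the resulting radial integral, converting $\bigl((2/d_{ij})^{\sigma}-1\bigr)/\sigma$ into $\log\bigl(2/(|j-i|\sqrt{\sigma})\bigr)$ exactly as you do. Your off-diagonal summation for \eqref{eq:stab-RHS-ij-2} also matches the paper, which simply sums over $i<j$ and divides by $N=O(\sigma^{-1/2})$; note that $N\lesssim\sigma^{-1/2}$ already follows from the hypotheses of this lemma ($g_{i+1}-g_i\ge c\sqrt{\sigma}$ and $|g_i|\le \|g_i\|_{C^{2,\alpha}}\le 1$), so there is no need to invoke \Cref{prop:decouple-3D}.

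The genuine gap is your final claim about the diagonal terms. By your own appeal to \Cref{wiowhithwoh} (with $u\equiv 0$), each diagonal contribution $\sigma\iint_{\Gamma_i\times\Gamma_i}(\eta(x)-\eta(y))^2|x-y|^{-n-s}\,d\cH^{n-1}\,d\cH^{n-1}$ equals $c_\star\int_{\Gamma_i}|\nabla_T\eta|^2\,d\cH^{n-1}+O(\sigma|\log\sigma|)$, which is of size $O(1)$, and up to $N$ sheets can meet ${\rm spt}\,\eta$; hence after dividing by $N$ the diagonal part remains of order $1$ and is \emph{not} ``absorbed by the normalisation'', i.e.\ it is not bounded by $C\sqrt{\sigma}\log(1/\sqrt{\sigma})$ (take for instance $\eta=\psi(x')$ nontrivial on many sheets). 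So your argument establishes \eqref{eq:stab-RHS-ij-2} only for the off-diagonal pairs $i\ne j$. This is in fact all that the paper's own one-line proof establishes, and all that is used later: in \Cref{prop:FW0} and in the proof of \Cref{thmmain1} the diagonal terms are kept separate and produce the Dirichlet-energy term via \Cref{wiowhithwoh}, while only the off-diagonal sum is estimated by $O(N^2\sigma|\log\sigma|)$. You should therefore either drop the sentence claiming control of the diagonal or state and prove \eqref{eq:stab-RHS-ij-2} for the off-diagonal portion of the double integral only.
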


\begin{proof}
Since $|\eta(x)-\eta(y)| \leq C|x-y|$,
\begin{align*}
&\quad\;
\sigma\iint_{\Gamma_i\times\Gamma_j}
\dfrac{
	(\eta(x)-\eta(y))^2
}{
	|x-y|^{n+s}
}
\,d\cH^{n-1}_x\,d\cH^{n-1}_y\\
&\leq
(1+C\delta)
\sigma\iint_{B_1'\times B_1'}
	\dfrac{
		dx'\,dy'
	}{
		\left (
			|x'-y'|^2+|g_j(x')-g_i(y')|^2
		\right )^{\frac{n-2+s}{2}}
	}\\
&\leq
(1+C\delta)
\sigma\int_{B_1'}
\left (
	\int_{B_2'(y')}
	\dfrac{
		dx'
	}{
		\left (
		|x'-y'|^2+|j-i|^2\sigma
		\right )^{\frac{n-2+s}{2}}
	}
\right )
\,dy'\\
&\leq
(1+C\delta)\cH^{n-2}(\partial B_1') \sigma
\int_{0}^{2}
	\dfrac{
		\rho^{n-2}\,d\rho
	}{
		\left (
			\rho^2+|j-i|^2\sigma
		\right )^{\frac{n-2+s}{2}}
	}\\
&\leq
(1+C\delta)\cH^{n-2}(\partial B_1') \sigma
\int_{0}^{\frac{2}{|j-i|\sqrt{\sigma}}}
\dfrac{
	t^{n-2}\,dt
}{
	\left (
		t^2+1
	\right )^{\frac{n-2+s}{2}}
}\\
&\leq C\sigma
\left (
	1+\frac{(\frac{2}{|j-i|\sqrt{\sigma}})^\sigma-1}{\sigma}
\right )
\leq C\sigma\log\frac{2}{|j-i|\sqrt{\sigma}}.
\end{align*}
This proves \eqref{eq:stab-RHS-ij-1}. To prove \eqref{eq:stab-RHS-ij-2}, we simply sum over $1\leq i<j\leq N$ and divide by~$N=O(1/\sqrt{\sigma})$.
\end{proof}

\subsection{Completion of proof of \Cref{thmmain1}}

\label{sec:proof-thm-1}

We can now use the scaling trick of Brian White to complete the proof of Theorem~\ref{thmmain1}.

\begin{proof} [Proof of Theorem~\ref{thmmain1}]
We perform the proof for $n=3$ (the proof for $n=2$ being analogous).
Thanks to Propositions~\ref{prop:decouple-3D} and~\ref{prop:C2a},
we only need to show that $|{\rm I\negthinspace I}_{\partial E}|  \le C$ in~$B_{3/2}$, where~$C$ is a dimensional constant.
We will actually prove that
\begin{equation}\label{whioghohw657rt3}
|{\rm I\negthinspace I}_{\partial E} (x)| \left(\tfrac 7 4-|x|\right)  \le C \quad \mbox{for all $x\in B_{7/4}$.}
\end{equation}

To prove \eqref{whioghohw657rt3} we employ a contradiction argument \` a la  B. White (see \cite{White}): assume by
contradiction that
there exist sequences~$s_k$, $E_k$  (satisfying the assumptions of Theorem~\ref{thmmain1}) such that
\[\max_{x \in\partial E_k \cap \overline B_{7/4} }  |{\rm I\negthinspace I}_{\partial E_k} (x)| \left(\tfrac 7 4-|x|\right) =: M_k \uparrow \infty,\]
and let $x_k$ denote a sequence of points where the previous maxima are attained.

Let us consider
\[
R_k : =  \frac{M_k}{\tfrac 7 4 -|x_k|} \qquad{\mbox{and}}\qquad \widetilde E_k : = R_k( E_k-x_k) .
\]
By construction, $\widetilde E_k$ satisfies $|{\rm I\negthinspace I}_{\partial \widetilde E_k}|  \le 2$ in~$B_{M_k/2}$
and $|{\rm I\negthinspace I}_{\partial \widetilde E_k}(0)|  =1$.

We divide the proof into several steps.
\vspace{3pt}

\noindent{\bf Step~1}. Let us show first that necessarily~$s_k \to 1$. Indeed, suppose (up to extracting a subsequence)
that~$s_k\to s \in[s_*,1)$.
Then, thanks to (uniform, since $s_k$ is bounded away from~1)  layer separation estimates  ---here we can even use some
rough layer separation estimate as in Lemma~\ref{whtiohwoiwh}--- and the standard $C^3$ regularity of fractional minimal
surfaces (e.g. arguing as in the proof of Corollary~\ref{whtu292112-2})  we would find that~$\partial \widetilde E_k$
converges locally in~$C^2$ (as submanifolds of $\R^3$)
to an embedded hypersurface $\partial \widetilde E$ of class $C^2$.
It is not difficult to show that since $\partial \widetilde E_k$ are stable then~$\partial \widetilde E$ must be weakly stable as in \cite[Definition 2.9]{newprep} (note that weak stability easily passes in the present setting: see e.g.  Step~2 of the Proof of Proposition 6.1 in \cite{newprep} for a similar argument).
Hence by  \cite[Corollary 2.12]{newprep} $E$ must be a half-space.
On the other hand, passing to the limit the equation  $|{\rm I\negthinspace I}_{\partial \widetilde E_k}(0)|  = 1$ we obtain, thanks to the~$C^2$ convergence, that~$|{\rm I\negthinspace I}_{\partial E}(0)| = 1$, which is a contradiction.

\vspace{3pt}

\noindent{\bf Step~2}. In light of Step~1, from now on we assume that~$s_k\to 1$ (or equivalently that~$\sigma_k :=(1-s_k) \to 0$).
Let~$R \ge 1$ large to be chosen and let~$\Gamma_{k,\ell}$, with~$1\le \ell\le N_k$,  be the connected components
of the~$C^{1,1}$ hypersurfaces~$\partial \widetilde E _k  \cap B_{2R}$.

On the one hand, the $C^{2,\alpha}$ estimate in Proposition~\ref{prop:C2a}
gives that every sequence $\Gamma_{k,\ell_k}$  has a subsequence converging  (as $C^2$ submanifolds inside $B_{2R}$) towards some minimal surface $\Gamma$ satisfying $|{\rm I\negthinspace I}_{\Gamma} | \le 1$ (recall that we proved in Corollary~\ref{whtu292112-2} that the fractional mean curvature of ${\Gamma_k}$ is bounded by $\sigma_k^\alpha$ and thanks to $C^2$ convergence this yields that the (classical) mean curvature of $\Gamma$ is zero).
We denote by $\mathcal A$ the set of all such ``accumulation points'' $ \Gamma$. Note that (being the $C^2$ limit of embedded submanifolds and being minimal surfaces) any two different surfaces $\Gamma, \Gamma' \in \mathcal A$ must be disjoint. Also
being the smooth limit of connected and oriented surfaces, every $\Gamma\in \mathcal A$ must be connected and orientable\footnote{Alternatively (even if we do not use that $\Gamma$ is the limit of oriented surfaces), since $\Gamma\subset B_{2R}$ is a relatively closed and connected co-dimension one smooth surface (without boundaries) inside $B_{2R}$,  then it must split $B_{2R}$ into exactly two connected components and  hence $\Gamma$ must be orientable.}.

Choose now $\ell_k$ so that  $0 \in \Gamma_{k,\ell_k}$ for all $k$, and suppose that, for some subsequence
$k= k_m$, $\Gamma_{k,\ell_k}$ convergence towards $\Gamma_\circ$.  As all surfaces in $\mathcal A$, we have that  $\Gamma_\circ$ is a minimal surface  satisfying  $|{\rm I\negthinspace I}_ {\Gamma_\circ} (0)| =1$.
We now claim (this will be shown in the next step) that $\Gamma_\circ$  is a stable minimal surface in~$B_R$  containing the origin.  Then (since $n=3$)  it must satisfy  (see for instance \cite[Corollary 2.11]{ColdMin}) the curvature estimate
\[
 |{\rm I\negthinspace I}_{\Gamma_\circ} (0)|  \le \frac {C} R,
\]
where $C$ is a  dimensional constant. Hence, choosing $R>C$ we have that~$\frac C R <1$ and this gives a contradiction.

\vspace{3pt}

\noindent{\bf Step~3}. It only remains to show that $\Gamma_\circ$ is stable. To this end, we distinguish two cases:
\begin{itemize}
\item[(i)] there exists a $\delta$-neighbourhood  $U_\delta :=  \{ z+ (-\delta, \delta )\nu(z) \ : \ z\in\Gamma_\circ \}$ of  $\Gamma_\circ$ such that the only~$\Gamma\in \mathcal A$ satisfying
$\Gamma\cap B_{3R/2}\cap U_\delta\neq \varnothing$ is $\Gamma_\circ$;
\item[(ii)] there exists a sequence $\Gamma_k\in \mathcal A$ such that $\Gamma_k \to \Gamma$ in  (locally $C^2$ inside $B_{2R}$). \end{itemize}
We will show that in both cases (i) and (ii) $\Gamma_\circ$ must be stable.

We give first an argument for case (ii), as this case is simpler and does not need to use the stability of $\partial \widetilde E_k$. Indeed, in this case we can write $\Gamma_k = \{ x+u_k (x) \nu_\circ(x)\, : \, x\in \Gamma_\circ\}$ (inside compact subsets of~$ B_{2R}$),  where $\nu_\circ$ is the normal to $\Gamma_\circ$,  $u_k$ does not vanish, and $|u_k|\to 0$.
This provides the existence of a positive (or negative) solution to the Jacobi equation on $\Gamma_\circ$,
 which gives that $\Gamma_\circ$ must be stable (see e.g. \cite[Proposition 2.25]{ColdMin} for details).

Let us now give an argument in case (i). Recall that by definition of $\Gamma_\circ$ we have
that~$\Gamma_{k_m,\ell_{k_m}}\to \Gamma_\circ$.
Other connected components of $\partial \widetilde E_{k_m}$ could converge towards $\Gamma_\circ$ as well. More precisely,
there exists a sequence $\{h_m\}_m\subset \N\setminus \{0\}$ (satisfying $\sqrt {\sigma_m} h_m \to 0$) such that
\[
\partial \widetilde E_{k_m} \cap  B_{3R/2}  \cap U_{\delta/2} \subset  \bigcup_{i=1}^{h_m} \big\{ x+u_{m,i} (x) \nu_\circ(x)\  : \  x\in \Gamma_\circ\big\} = :   \bigcup_{i=1}^{h_m} \Gamma_{m,i},
\]
where $u_{m,1} < u_{m,1} < u_{m,2} < \dots < u_{m, N_m}$ and $\sup_{1\le i\le h_m} |u_{m, i}|\downarrow 0$ (as $m\to+\infty$). Set~$\sigma_m : = \sigma_{k_m} = (1-s_{k_m})$.

Let now $\eta \in C^{0,1}_c(B_R)$ and $K : =  {\rm spt}(\eta |_{\Gamma_\circ})$, and take
a compact set~$\widetilde K\subset \Gamma_\circ$ such that $\overline K\subset\widetilde K$.
Note that thanks to the $C^{2,\alpha}$ estimates we have  $\| u_{m, i}\|_{C^{2,\alpha}}  \le C$ for all $m$ and $i$ and hence  (e.g. using interpolation)
$\sup_{1\le i\le h_m} \|u_{m, i}\|_{C^2(\widetilde K)}\downarrow 0$  as $m \to+\infty$.
We now use the stability inequality of Proposition~\ref{prop:stabilityineqloc}, with $\Omega := B_{3R/2}\cap U_\delta$.
In this way, we have that
\begin{equation}\label{98iojkiu87-1}\begin{split}&
\sigma_m \iint_{\Gamma_m\times\Gamma_m } \frac{ \big|\nu(x)-\nu(y)\big|^2 \eta^2(x)}{|x-y|^{n+s}}\, d\HH_x \,d\HH_y \\
&\qquad\qquad\le  \sigma_m \iint_{\Gamma_m\times\Gamma_m} \negmedspace \frac{ \big(\eta(x)-\eta(y)\big)^2}{|x-y|^{n+s}} \,d\HH_x \,d\HH_y +{\rm Ext}_2^\Omega,\end{split}
\end{equation}
where $\Gamma_m : = \bigcup_{i=1}^{h_m} \Gamma_{m,i}$.

Noticing that ${\rm Ext}_2^\Omega = O(h_m\sigma_m)$, after dividing by $h_m$ and computing the limit ---using Lemmata~\ref{wiowhithwoh} and~\ref{lem:stab-RHS-ij}, and noticing that interactions  between different layers
with ``opposite normal vectors''  (i.e. $(i-j)\in 2\Z+1$) in the left hand side, which are the only interactions that may not converge to zero,  will only  improve the inequality\footnote{Using \Cref{lem:kernel-comp}, one can easily show that the extra contributions that we discard are bounded from below by a multiple of $\frac{\sigma}{\norm[L^\infty(B_1')]{g_j-g_i}}$, which can be arbitrarily close to zero as the layers become more
and more separated from each other.}--- we obtain:
\begin{equation}\label{98iojkiu87-2}
c_\star \int_{\Gamma_\circ}   |{\rm I\negthinspace I}_{\Gamma_\circ} |^2\eta^2 \,d\HH\le c_\star \int_{\Gamma_\circ} |\nabla_T \eta| ^2 \,d\HH.
\end{equation}
Hence, $ \Gamma_\circ$ is stable, as desired.
\end{proof}

\section{The classification of stable $s$-minimal cones in $\R^4$}\label{SEC:R4}

\subsection{Preliminaries}

We will need the following lemma on sequences of  almost-minimal embedded surfaces in $\mathbb S^3$ with uniformly bounded second fundamental forms and $C^{2,\alpha}$ norms (in the sense of the following definition).
\begin{definition}\label{defUGL}
Let $S\subset \mathbb S^3$ be an oriented, embedded,  compact $C^{2}$ surface of codimension~$1$ in~$\S^3$.

For~$p\in S$, we denote by~$T_p\S^3$
the tangent space to the sphere~$\S^3$ at $p$ and by~$T\S^3$ the tangent bundle of~$\S^3$.

Let $\nu: S\to T\S^3$ be an intrinsic unit normal to $S$.
For~$p\in S$, let also~$TS_p\subset T\S^3_p \subset \R^4$ denote the (affine) intrinsic tangent space to $S$ in $\mathbb{S}^3$ at $p$ ---i.e. the two-dimensional affine subspace passing through $p \in S\subset  \mathbb{S}^3 \subset \R^4$ and orthogonal to both $p,\nu(p)\in \R^4$.

We say that the $C^{2,\alpha}$ norm of $S$ is bounded by $C_\circ$, where $C_\circ \ge 1$, whenever the following statements
hold true:
\begin{itemize}
\item[{(i)}.] The principal curvatures at all points of $S$ are absolutely bounded by $C_\circ$.
\item[{(ii)}.] Let $r_\circ := \frac{1}{16 C_\circ}$. For any $p\in S$, setting $U_p: = \{z\in TS_p \,:\, |z-p|\le r_\circ \} \cong B_{r_\circ}\subset \R^2$, there exists~$f_p\in C^{2,\alpha}(U_{p})$ such that   $\|f_p\|_{C^{2,\alpha}(U_p)} \le C_\circ$ and
\begin{equation}\label{whioehwoihw}
 S_p := \left\{  \frac{z+ f_p(z) \nu(p)}{\sqrt {|z|^2 +f_p(z)^2 }} , \ z\in U_{p}\right\} \subset S.
\end{equation}
\end{itemize}
\end{definition}

\begin{remark}
{\rm We notice that  the principal curvatures bound in~(i) of Definition~\ref{defUGL} entails that~$\eqref{whioehwoihw}$ holds for some $C^{1,1}$ function $f_p: U_p \to \R$  with  $f_p(0) =|\nabla f_p(0)| =0$ and $[f_p]_{C^{1,1}(U_p)} \le 16C_\circ$.
Indeed, this follows using e.g.  the  ``uniform graph lemma''  in  \cite[Lemma 12.4]{Perez} applied
to the conical surface generated by $S$ (that is the surface $\R_+ S\subset \R^4$).
In this respect, (ii) merely adds a quantitative control of  the $C^{2,\alpha}$ norm of $f_p$ for all $U_p$.}
\end{remark}

\begin{remark}
{\rm By the differentiable Jordan-Brouwer Theorem in $\R^3$ (and the
stereographic projection) if $S\subset \mathbb S^3$ is any embedded closed surface, then every connected component of~$S$ divides~$\mathbb S^3$ into exactly two open connected components. Hence $S$ is always orientable.
In any case, in the application of this paper, $S$ will be the boundary of a subset of the sphere and hence it will carry an orientation (e.g. the one given by the outwards unit normal).}
\end{remark}

 We will also need the following well-known result:

 \begin{lemma}\label{nonemptyinter}
 Any two compact embedded  minimal surfaces in $\S^3$ have nonempty intersection.
 \end{lemma}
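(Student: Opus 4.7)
The plan is to invoke Frankel's theorem: in a Riemannian manifold with positive Ricci curvature, any two compact immersed minimal hypersurfaces must intersect. Since $\S^3$ has constant sectional curvature one and thus Ricci tensor equal to $2g$, the hypothesis is satisfied, and the conclusion follows. I would argue by contradiction, assuming $\Sigma_1 \cap \Sigma_2 = \emptyset$.

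By compactness, the distance $L := \dist(\Sigma_1, \Sigma_2) > 0$ is realized by a pair $p_1 \in \Sigma_1$, $p_2 \in \Sigma_2$, joined by a minimizing unit-speed geodesic $\gamma \colon [0, L] \to \S^3$ which meets each $\Sigma_i$ orthogonally at its endpoint (first-order optimality with endpoints constrained to the surfaces). Taking an orthonormal basis $\{e_1, e_2\}$ of $T_{p_1}\Sigma_1$ and parallel-transporting it along $\gamma$ yields vector fields $E_i(t)$ with $E_i \perp \gamma'$ for all $t$; in particular, $\{E_1(L), E_2(L)\}$ forms an orthonormal basis of $T_{p_2}\Sigma_2$.

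Next, I would consider a variation $\Gamma_i(s, \cdot)$ of $\gamma$ whose endpoints move along $\Sigma_1$ and $\Sigma_2$ with initial velocities $E_i(0)$ and $E_i(L)$ respectively, and whose variation field along $\gamma$ is $E_i$. The second variation of arc length, together with $\nabla_{\gamma'} E_i \equiv 0$, gives
\begin{equation*}
\frac{d^2}{ds^2}\bigg|_{s=0} L\bigl(\Gamma_i(s, \cdot)\bigr)
= -\int_0^L K\bigl(E_i(t), \gamma'(t)\bigr)\, dt
+ \bigl\langle \nabla_{E_i} E_i,\, \gamma'\bigr\rangle\Big|_{p_1}^{p_2}.
\end{equation*}
Summing over $i = 1, 2$, the bulk contribution becomes $-\int_0^L \mathrm{Ric}(\gamma', \gamma')\, dt = -2L$, while the boundary contributions at $p_2$ and $p_1$ coincide with the mean curvatures of $\Sigma_2$ and $\Sigma_1$ with respect to the unit normals $\gamma'(L)$ and $\gamma'(0)$, both vanishing by minimality. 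Hence $\sum_i L_i''(0) = -2L < 0$, contradicting the non-negativity of each $L_i''(0)$ that follows from the fact that $\gamma$ minimizes length among curves joining $\Sigma_1$ to $\Sigma_2$.

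The main obstacle, although modest, is the correct bookkeeping of signs in the boundary term and the construction of admissible variations whose variation field along $\gamma$ is parallel and tangent to $\Sigma_1$, $\Sigma_2$ at the respective endpoints. Both steps are standard; once they are carried out, the argument reduces to a one-line application of Frankel's theorem.
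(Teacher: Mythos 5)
Your argument is correct, but it takes a genuinely different route from the paper. You prove the lemma by the classical Frankel intersection theorem: take a length-minimizing geodesic $\gamma$ between the two (assumed disjoint) surfaces, which meets both orthogonally, parallel-transport an orthonormal tangent frame, and sum the second variations of length; the bulk term produces $-\int_0^L \mathrm{Ric}(\gamma',\gamma')\,dt=-2L<0$ while the boundary terms are traces of the second fundamental forms in the normal direction $\gamma'$ and vanish by minimality, contradicting the nonnegativity of each second variation at a minimizing geodesic. This is a complete and standard argument; it uses only that $\S^3$ is complete with positive Ricci curvature, needs no embeddedness (immersed and compact suffices), and is self-contained. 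The paper instead argues by citation: it invokes \cite[Theorem 9.1 (1)]{MR2483369}, whose hypothesis is the nonexistence of compact embedded stable minimal surfaces in the ambient manifold, and verifies that hypothesis for $\S^3$ via Simons (or directly, by plugging the constant test function into the stability inequality, using $\mathrm{Ric}_{\S^3}(\nu,\nu)=2$). So the paper's route goes through stability theory and an existing structural result, while yours bypasses stability altogether via the second variation of arc length; both are valid, and your version is arguably more elementary, at the cost of reproving (rather than quoting) a known theorem. If you write it up in full, the only bookkeeping to settle is the sign convention in the boundary term $\langle\nabla_{E_i}E_i,\gamma'\rangle\big|_{p_1}^{p_2}$ and the construction of admissible variations with endpoints constrained to the surfaces, both of which are standard and, as you note, immaterial here since the mean curvatures vanish.
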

 \begin{proof}
It follows,  for instance, from  \cite[Theorem 9.1 (1)]{MR2483369} whose hypothesis is satisfied since  ---as proven in \cite{MR233295}*{Theorem 5.1.1}--- $\S^3$ does not admit any compact, embedded, stable minimal surface\footnote{As a direct way to see this fact, the constant test function would violate the stability inequality 
\[
\int_{\Sigma}
(2+|A|^2)\eta^2
\leq \int_{\Sigma}|\nabla_{\Sigma}\eta|^2,
\qquad {\mbox{for all }} \eta \in C_c^{0,1}(\Sigma),
\] (see e.g.~\cite{ColdMin}*{Lemma 1.32})
if $\Sigma^2 \subset \bS^3$ were a compact, embedded, stable minimal surface. Here we used that ${\rm Ric}_{\bS^3}(\nu,\nu)=2$ for all unit vectors $\nu\in T\bS^{3}$.
}.
 \end{proof}

\begin{lemma}\label{lem:embedded}
Let $\alpha\in(0,1)$ and~$S_k\subset \mathbb S^3$ be a sequence of oriented, embedded,  closed, $C^{2}$ surfaces.

Assume that the~$C^{2,\alpha}$ norm of~$S_k$ is bounded by  a constant $C_\circ\in[1, \infty)$, uniformly in $k$.

Assume in addition that  $\sup_{z\in S_k}  H[S_k](z)  \to 0$ as~$k\to+\infty$, where $H[S_k](z)$ denotes the mean curvature of the surface $S_k$ at the point $z$.


Then, there exist a subsequence $k_\ell$ and a compact, embedded minimal submanifold $\Sigma \subset \mathbb S^3$, with unit normal vector field $\nu: \Sigma\to  T\mathbb S^{3}$, such that, for all $\ell$ sufficiently large, we have that
\begin{equation}\label{wwhihiw}
S_{k_\ell} =  \bigcup_{j=1}^{N_\ell} \left\{  \frac{z+ g_{\ell,j}(z) \nu(z)}{\sqrt {1+ g_{\ell,j}(z)^2}} , \ z\in \Sigma \right\},
\end{equation}
where $g_{\ell,j} \in C^2(\Sigma)$ for all~$j\in\{1,\dots, N_\ell\}$,
$g_{\ell,1}<g_{\ell,2}< \cdots< g_{\ell,N_\ell}$, and~$\sup_j \|g_{\ell,j}\|_{C^2(\Sigma)}\to 0$ as~$\ell \to+\infty$.
\end{lemma}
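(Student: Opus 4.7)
The strategy is to extract a Hausdorff-convergent subsequence of $\{S_k\}$, identify the limit with a single smooth compact embedded minimal submanifold $\Sigma\subset \bS^{3}$, and express each approximating $S_{k_\ell}$ as an ordered disjoint union of spherical normal graphs over $\Sigma$ whose heights tend to zero in $C^2$.

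First, by compactness of $\bS^{3}$ and the Blaschke selection theorem, I would pass to a subsequence $\{S_{k_\ell}\}$ converging in Hausdorff distance to a nonempty closed set $\Sigma \subset \bS^{3}$. For any $p \in \Sigma$ pick $p_\ell \in S_{k_\ell}$ with $p_\ell \to p$. By Definition~\ref{defUGL}(ii), $S_{k_\ell}$ contains a local $C^{2,\alpha}$ graph $S_{p_\ell}$ over the intrinsic tangent plane $TS_{p_\ell}$ with norm bounded by $C_\circ$; as $\ell \to\infty$ these planes converge to a $2$-plane through $p$ and, by Arzel\`a--Ascoli, the graphs $S_{p_\ell}$ converge in $C^2$ to a $C^{2,\alpha}$ graph $\Sigma^{(p)}$ through $p$. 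Since $H[S_{k_\ell}]\to 0$ uniformly, $\Sigma^{(p)}$ has vanishing mean curvature. A diagonal argument over a countable dense subset of $\Sigma$ shows that, at every point, $\Sigma$ is locally a finite union of $C^{2,\alpha}$ embedded minimal graphs; each such local sheet then extends by analytic continuation inside $\Sigma$ (uniqueness of a minimal surface through a given $2$-plane, guaranteed by the strong maximum principle) to a compact embedded minimal submanifold of $\bS^{3}$.

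The main step is to rule out that $\Sigma$ has more than one sheet. Suppose for contradiction that $\Sigma$ contains two distinct compact embedded minimal sheets $\Sigma^{(1)} \neq \Sigma^{(2)}$. By Lemma~\ref{nonemptyinter} they meet at some common point $q$. If $T_q\Sigma^{(1)} = T_q\Sigma^{(2)}$, the strong maximum principle for the minimal surface equation forces $\Sigma^{(1)} \equiv \Sigma^{(2)}$ near $q$ and hence everywhere by unique continuation, contradicting distinctness. Otherwise $\Sigma^{(1)}$ and $\Sigma^{(2)}$ meet transversally at $q$; since the corresponding nearby sheets of $S_{k_\ell}$ converge in $C^2$ to $\Sigma^{(1)}$ and $\Sigma^{(2)}$ respectively, transversality is $C^1$-stable and these two distinct sheets of $S_{k_\ell}$ must also intersect in a small ball around $q$ for $\ell$ large, contradicting the fact that $S_{k_\ell}$ is an embedded submanifold of $\bS^{3}$. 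Hence $\Sigma$ is a single smooth compact embedded minimal submanifold, and it is connected because two disjoint compact embedded minimal surfaces in $\bS^{3}$ would again contradict Lemma~\ref{nonemptyinter}.

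Finally, for $\ell$ large $S_{k_\ell}$ lies in an arbitrarily thin tubular neighbourhood of $\Sigma$ in $\bS^{3}$. Using the spherical normal chart $(z,g) \mapsto (z + g\,\nu(z))/\sqrt{1+g^2}$ on this tube, each $S_{k_\ell}$ is parametrised as a disjoint union of graphs with heights $g_{\ell,j} \in C^{2,\alpha}(\Sigma)$, which we order as $g_{\ell,1} < \dots < g_{\ell,N_\ell}$. The uniform $C^{2,\alpha}$-bound from Definition~\ref{defUGL}(ii) transfers, via the smooth change of chart between the intrinsic tangent-plane graphs of $S_{k_\ell}$ and the spherical normal graphs over $\Sigma$, to $\|g_{\ell,j}\|_{C^{2,\alpha}(\Sigma)} \le C$, and the $C^2$-convergence from the first paragraph yields $\sup_j \|g_{\ell,j}\|_{C^2(\Sigma)}\to 0$ as $\ell\to\infty$. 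The crux of the argument is the second paragraph: carefully combining the strong maximum principle, the embeddedness of $S_{k_\ell}$, and Lemma~\ref{nonemptyinter} to rule out all multi-sheet scenarios for the Hausdorff limit.
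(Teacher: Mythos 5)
There is a genuine gap, and it sits exactly where you declare victory too quickly: the assertion in your first paragraph that the Hausdorff limit $\Sigma$ is ``locally a finite union of $C^{2,\alpha}$ embedded minimal graphs'', each of which closes up to a \emph{compact embedded} minimal submanifold. The uniform $C^{2,\alpha}$ bound of Definition~\ref{defUGL} controls each local sheet but gives no control whatsoever on the \emph{number} of sheets of $S_{k_\ell}$ passing through a fixed small ball (think of more and more parallel, almost totally geodesic sheets accumulating: each has tiny curvature and perfect graph bounds). So a priori the limit is only a minimal \emph{lamination}, possibly with infinitely many leaves, or with a single complete leaf of bounded curvature that is non-compact and accumulates onto itself or onto another leaf; neither Arzel\`a--Ascoli, a diagonal argument, nor ``analytic continuation'' rules this out, and completeness plus bounded curvature does not imply compactness. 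This is precisely the content of Step~2 of the paper's proof, which you have no substitute for: there, pairs of disjoint, almost parallel nearby disks (which must exist if the leaves accumulate) produce a positive Jacobi field on metric balls of arbitrarily large radius $R$, hence a \emph{stable} minimal immersion; stability is then played against the universal area estimate for stable minimal surfaces in $\mathbb{S}^3$ (\cite{MR2483369}, Theorem 2.9, after rewriting the Jacobi operator in terms of the Gauss curvature) and a logarithmic cut-off, giving an area bound of order $R^{-2}$ on $\mathcal B_{R/4}$ that contradicts the curvature-induced lower area bound when $R$ is large. Only after this does one know that the limit is a single compact embedded minimal surface over which the sheets of $S_{k_\ell}$ can be written as finitely many ordered graphs. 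Your second and third paragraphs (uniqueness of the limit surface via Lemma~\ref{nonemptyinter} and the graphical decomposition with small $C^2$ norms) correspond to the paper's Step~3 and are essentially fine, but they presuppose the compactness, embeddedness and local finiteness that the missing argument must supply.

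A secondary, more minor point: in the tangential-contact case of your second paragraph, the strong maximum principle does not by itself force two minimal surfaces tangent at a point to coincide (tangential crossings are possible); what makes it work here is that the two limit leaves are locally ordered graphs, being limits of \emph{disjoint} ordered sheets of the embedded surfaces $S_{k_\ell}$, so their difference is a signed solution of a linear elliptic equation. You should state this ordering explicitly before invoking the maximum principle.
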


Lemma~\ref{lem:embedded} is very similar in spirit to some results in the theory of minimal laminations (see~\cite[Section~3]{MR2483369}). Since we were not able to find the result  we needed in the literature, we provide a full proof  here below:

\begin{proof}[Proof of Lemma~\ref{lem:embedded}]
We  split the argument into three steps.

\vspace{3pt}

\noindent{\bf Step~1}. Let $S_k$ be as in the statement of Lemma~\ref{lem:embedded}. Let also~$\Sigma_k$
be a connected component of~$S_k$ and consider a  point $o_k\in \Sigma_k$.
Then, up to a subsequence, we have that~$\Sigma_k$  converges ---in an appropriate $C^2_{\rm loc}$ sense ``centered at $o_k$'' described below---  towards a minimal immersion~$\imath: \Sigma\hookrightarrow  \S^3$.
Moreover, the principal curvatures of the minimal immersion $\imath$ are bounded by $C_\circ$.

Indeed, let $\mathcal B_{k,R}$ denote the {\em metric  ball} of $\Sigma_k$  with radius $R$ and center at $o_k$\footnote{Namely,  the set of points on $\Sigma_k$ which are at distance less than~$r$ from $p_k$, where the distance  between two points $p$ and $q$ in $\Sigma_k$ is defined as the infimum of the length of curves contained on $\Sigma_k$  and joining  $p$ and $q$.}.
Observe first that ---thanks to the assumed uniform bound on the second fundamental form of $S_k$--- all  curvatures (extrinsic and intrinsic) of $\Sigma_k$ are bounded everywhere and independently of $k$.  Hence, by the Bishop-Gromov Volume Comparison Theorem,  we have
that~$|\mathcal B_{k,R}|\le C(R)$, where here $|\,\cdot\,|$ denotes the volume (or area) on $\Sigma_k$.

On the other hand, if we let $S_{k,p}$ be as in \eqref{whioehwoihw} (with $S$ replaced by $S_k$), we notice that the previous volume estimate for $\mathcal B_{k,R}$ gives that
\[
\mathcal B_{k,R} \subset \bigcup_{\ell=1}^{N_{k,R}}  S_{k,p_{k,\ell}} \subset \mathbb S^3,
\]
where~$p_{k,\ell} \in \mathcal B_{k,R}$ whenever $\ell \le N_{k,R}$, and where $N_{k,R} \le C(R)$.

Now, we recall that, for all $p\in\Sigma_k$,
\[
S_{k,p}= \left\{  \frac{z+ f_{k,p}(z) \nu(p)}{\sqrt {|z|^2+f_{k,p}(z)^2}} , \ z\in U_{k,p}\right\}
\]
where
$U_{k,p}: = \{z\in (TS_k)_p \,:\, |z-p|\le r_\circ \} \subset \R^4$ and~$f_{k,p}$ satisfies $\|f_{k,p}\|_{C^{2,\alpha}(U_p)} \le C_\circ$ and~$f_{k,p}(0) = |\nabla f_{k,p}|(0) =0$.

Hence, for all $\ell\ge1$, it follows from the Arzel\`a-Ascoli Theorem that~$S_{k,p_{k,\ell}} \to S_{p_\ell}$ as~$k\to+\infty$,
with $C^2$ convergence, up to subsequences.
Here $S_{p_\ell}$ is again a surface of the form
\[
S_{p_\ell} = \left\{  \frac{z+ f_{p_\ell}(z) \nu(p_\ell)}{\sqrt { |z|^2 + f_{p_\ell}(z)^2}} , \ z\in U_{p_\ell}\right\},
\]
with~$f_{p_\ell}$ satisfying~$\|f_{p_\ell}\|_{C^{2,\alpha}(U_p)} \le C_\circ$ and~$f_{p_\ell}(0) = |\nabla f_{p_\ell}|(0) =0$.

Thus, since by assumption $\sup_{z\in S_k}  H_{S_k}(z)  \to 0$ as~$k\to+\infty$, we obtain, by $C^2$ convergence, that $S_{p_\ell}$ has zero mean curvature.

Let us now consider the ``disjoint union''
\[
\widetilde \Sigma: = \bigcup_{\ell \ge 1} S_{p_\ell} \times \{\ell\}
\]
and the quotient manifold
\[
\Sigma : = \widetilde  \Sigma / \sim ,\]
where  $(q, \ell) \sim (q',\ell')$ whenever  $q'=q$  and there is a sequence $q_k \in S_{k,p_{k,\ell}} \cap  S_{k,p_{k,\ell'}}$ such that~$q_k\to q$.

With the natural inclusion $ \imath: [ (q,\ell) ]_{\sim} \mapsto q\in \mathbb S^3$, we have that~$\Sigma$
is a minimal immersion.
Actually, it is a very particular immersion since, by construction, it {\em never} intersects itself transversally: more precisely,
every time  that  $\imath(Q ) =\imath( Q' )$  for some $Q =[(q,\ell) ]_{\sim}$ and~$Q'= [(q',\ell') ]_{\sim}$ then
for any open neighbourhood $U\subset \Sigma$ of $Q$  there exists an (isometric) open neighbourhood~$U'$
of~$Q'$ such that ~$\imath(U)\equiv \imath (U')$ (by unique continuation of minimal surfaces).

Recall now that  by assumption $\Sigma_k$ is oriented and let $\nu_k : \Sigma_k \to T\mathbb S^3$ be the unit normal. Passing $\nu_k$ to the limit (recalling that we have $C^2$ convergence) we define a smooth  unit normal~$\nu :\widetilde \Sigma \to T\mathbb S^3$.
Additionally, the fact that~$q\sim q'$ leads to~$\nu(q)= \nu(q')$, whence $\nu$ defines an orientation of the immersed minimal surface $\Sigma \hookrightarrow \S^3$.
Also, it easily follows by construction that $\Sigma$ is complete, connected and with principal curvatures bounded by $C_\circ$ (a property inherited from~$\Sigma_k$).
\vspace{3pt}

\noindent{\bf Step~2}.
Let us now prove that the minimal immersion $\imath: \Sigma \hookrightarrow \S^3$ that we constructed  in Step~1 satisfies
\begin{equation}\label{whwioethowh}
\Sigma_*: = \imath (\Sigma) \subset \mathbb S^3 \mbox{ is a compact (and connected) embedded minimal surface.}
\end{equation}
Recalling how $\Sigma$ was constructed, this will prove
in the forthcoming Step~3,
that $\imath$ is an embedding\footnote{This could not be the case in general. Indeed, consider e.g.  the ``minimal  immersion'' $\imath: (x, y) \mapsto \frac{1}{\sqrt 2} (\cos x, \sin x, \cos y, \sin y)$  of $\R^2$ in $\S^3$. Its image is the Clifford torus: a closed, embedded, minimal submanifold of $\mathbb S^3$.  But  $\imath$ is not an embedding.}.

To prove \eqref{whwioethowh} we will show that $\imath (\Sigma) \subset \bigcup_{i\in I} \imath\big(\mathcal B_{r_\circ} (P_i)\big)$ with $I$ finite,  for a certain finite set of points $P_i= [p_i,\ell_i]_{\sim}\in \Sigma$, where $r_\circ$ is that of Definition~\ref{defUGL} and $\mathcal B_{r_\circ} (P_i)$ denotes now the metric ball (of $\Sigma$) centered at $P_i$ and with radius $r_\circ$. Let  $\{ P_i \}_{i\in I'} \subset \Sigma$ be a maximal  set with the property that the sets  $\imath\big(\mathcal B_{r_\circ} (P_i)\big)$ are pairwise disjoint.
Let us assume by contradiction that this family is infinite\footnote{If $I'$ is finite then the claim follows from a simple covering argument:  by the maximality of $I'$, for any point~$q \in \imath(\Sigma)$ we must have $\bigcup_{i\in I'}\imath\big(\mathcal B_{r_\circ}(P_{i})\big) \cap \bigcap_{Q\in \imath^{-1}(\{q\})}  \imath\big(\mathcal B_{r_\circ}(Q)\big) \neq \varnothing$. Observe that, by unique continuation of minimal surfaces, all the balls $\mathcal B_{r_\circ}(Q)$  for $Q\in \imath^{-1}(\{q\})$  must be isometric and their images  $\imath\big(\mathcal B_{r_\circ}(Q)\big)$ must give exactly the same minimal disk.
Choose any $Q\in \imath^{-1}(\{q\}) $ and take  $i_q\in I'$ such that $\imath\big(\mathcal B_{r_\circ}(P_{i_q})\big) \cap \imath\big(\mathcal B_{r_\circ}(Q)\big) \ni \bar q$. Take also~$\bar Q\in \imath^{-1}(\{\bar q\}) \cap \mathcal B_{r_\circ}(P_{i_q})$ and $\bar Q_*\in \imath^{-1}(\{\bar q\})\cap\mathcal B_{r_\circ}(Q)$. Recall that, by unique continuation, for every~$r>0$ the immersed minimal surfaces  $\mathcal B_{r}(\bar Q)$ and $\mathcal B_{r}(\bar Q_*)$ must be isometric and their images under $\imath$ identical (as subsets of $\mathbb S^3$).
Hence, we obtain that  $\imath\big(\mathcal B_{r_\circ}(Q)\big)$ is covered by $\imath\big(\mathcal B_{2r_\circ}(\bar Q)\big)$ and hence by $\imath\big(\mathcal B_{3r_\circ}(P_{i_q})\big)$. This proves that~$\imath (\Sigma) \subset \bigcup_{i\in I'} \imath\big(\mathcal B_{3r_\circ} (P_i)\big)$.
Finally, since by uniform curvature bounds every metric ball of radius~$3r_\circ$ on $\Sigma$ can be covered by a certain number (depending only on $C_\circ$) of balls of radius $r_\circ$ we obtain a finite covering of $\imath(\Sigma)$ with balls of radius $r_\circ$.
}.

Since $\{\imath\big(\mathcal B_{r_\circ} (P_i)\big)\}_{i\ge 1}$  is a sequence of pairwise disjoint minimal disks in $\S^3$ with uniform curvature bounds (as in Definition~\ref{defUGL}), there exist pairs of disks which are ``almost parallel'' and arbitrarily close (e.g. in Hausdorff distance as subsets of $\mathbb S^3$). We will use next these properties in order to construct a positive Jacobi field in a sufficiently large ball to reach a contradiction.

To this end, let $R\ge 1$ be chosen sufficiently large in what follows. Similarly as in Step~1, the sequence of disjoint (by unique continuation) immersed minimal immersions $\imath: \mathcal B_{R} (P_i) \hookrightarrow \mathbb S^3$ converges (up to a subsequence and in $C^2$ fashion) to a certain minimal immersion~$\jmath: \widetilde{\mathcal B_{R}} (P) \hookrightarrow \mathbb S^3$. Note that $P$ does not necessarily belong to~$\Sigma_*$.
As in Step~1, $\widetilde{\mathcal B_{R}} (P)$ is orientable. Let $\nu$ be the unit normal vector (along $\jmath$).

Now, since the immersed surfaces $\imath: \mathcal B_{R} (P_i) \hookrightarrow \mathbb S^3$ converge to $\jmath: \widetilde {\mathcal B_{R}} (P) \hookrightarrow \mathbb S^3$, we can consider $h_i \in C^2(\widetilde{\mathcal B_{R}})$
such that
\[
Q \mapsto  \frac{Q+ h_i(Q)\nu(Q)}{\sqrt{1+h_i(Q)^2}}
\]
is a parametrization of (a piece of)  $\imath\big(\mathcal B_{R+1} (P_{i})\big)$. Since $\|h_i\|_{C^{2,\alpha}(\widetilde{\mathcal B_{R}})}$ is uniformly bounded and~$\|h_i\|_{L^\infty(\widetilde{\mathcal B_{R}})}\to 0$, we obtain that~$\|h_i\|_{C^2(\widetilde{\mathcal B_{R}})}\to 0$ by interpolation.
Recalling that  $\imath: \mathcal B_{R} (P_i) \hookrightarrow \mathbb S^3$  were pairwise disjoint,
it follows that $h_i$ are never zero.
Then, using an immediate modification (replacing $\R^3$ by $\S^3$) of  \cite[Proposition 2.25]{ColdMin}, we obtain that $\jmath: \widetilde {\mathcal B_{R}} (P) \hookrightarrow \mathbb S^3$ is stable.

In other words,
denoting by $\kappa_i$ the two principal curvatures, we find that
the Jacobi operator~$- \Delta  - \kappa^2_1- \kappa^2_2 -2 $ of the immersion $\jmath$ is nonnegative definite (i.e., it
has  nonnegative first eigenvalue).

Since the  Gauss curvature $K$ of $\widetilde {\mathcal B_{R}} (P)$ satisfies $2K = 2+2\kappa_1\kappa_2  \ge 2- \kappa^2_1- \kappa^2_2 $, we see that
\[
-(\Delta - 2K +4) = -\Delta + 2K -4 \ge -\Delta - \kappa^2_1- \kappa^2_2 -2 \ge 0.
\]
Accordingly, it follows from a straightforward application of \cite{MR2483369}*{Theorem 2.9} (with $M= \Sigma$, $a=2$, and $q\equiv 4$)  that
\begin{equation}\label{wjtiohoiwhow}
|\widetilde{\mathcal B_{R/2}} (P)|  = \frac 1 4 \int_{\widetilde{\mathcal B_{R/2}} (P)} 4 \le 4\pi \left(\frac12\right)^{-\frac{2}{7}}\leq 2^6 \pi.
\end{equation}

Finally, we observe that the operator~$-\Delta - (\kappa_1^2+ \kappa_2^2)   -2$ is nonnegative definite in~$\widetilde {\mathcal B_{R}} (P)$ if and only if
\begin{equation}\label{wjtiohoiwhow-b}
  \int_{\widetilde {\mathcal B_{R}} (P)}(\kappa_1^2+ \kappa_2^2)\xi^2  + 2\xi^2 \le \int_{\widetilde {\mathcal B_{R}} (P)} |\nabla \xi|^2  \quad \mbox{for all } \xi \in C^{0,1}_0\big(\widetilde {\mathcal B_{R}} (P)\big).
\end{equation}
Let us choose a function~$\xi := \eta_R\circ d_{P}$ , where $\eta_R(t) := \chi_{[0,R/4]}(t) +(2-4t/R)\chi_{[R/4,R/2]} (t)$ and~$d_{P} (Q)$ is the metric distance from $P$ to $Q$ on $\widetilde {\mathcal B_{R}} (P)$. Plugging~$\xi$ into~\eqref{wjtiohoiwhow-b}
and disregarding the first nonnegative term, we obtain
\[
|\widetilde {\mathcal B_{R/4}} (P)|   \le \frac {16}{R^2}  |\widetilde {\mathcal B_{R/2}} (P) | \le \frac{2^{10}\pi}{R^2}.
\]
Since, by construction, $\widetilde {\mathcal B_{R}} (P)$ has curvatures bounded by $C_\circ$ (recall that it is the limit of immersions with curvatures bounded by $C_\circ$), the ball of radius $r_\circ$ centered at $P$  must have volume (area) comparable to~$1$.
As a result, taking a large enough value of $R$  (depending only on $C_\circ$) in the previous inequality yields a contradiction.

We have thus shown that $\Sigma_* = \imath(\Sigma)$ is a compact embedded minimal surface in $\S^4$, with principal  curvatures bounded by $C_\circ$ at every point, and the proof of~\eqref{whwioethowh} is complete.
\vspace{3pt}

\noindent{\bf Step~3}.  We now complete the proof of Lemma~\ref{lem:embedded}.
We consider sequences of connected components $\Sigma_k\subset S_k$ and of points  $o_k\in \Sigma_k$.  Then, by Step~1, there exists a partial subsequence (which we still denote by~$S_k$) such that, for all $R\ge 1$, the metric balls  of $\Sigma_k$ with radius $R$ and centred at $\Sigma$ converge (in a $C^2$ fashion) towards an immersed minimal  submanifold $\imath: \Sigma\hookrightarrow \S^3$. Moreover, we have that $\Sigma_* =\imath(\Sigma)$  is a compact, connected, embedded, minimal  surface in $\mathbb S^4$.

Let us now show the following claim: for any partial subsequence  $S_{k_m}$ (of the previous partial subsequence $S_k$) and for any selection of connected components $\Sigma'_m\in S_{k_m}$ and points $o'_m\in \Sigma'_m$, we have
(up to extracting a further partial subsequence) that~$\Sigma'_m \to \Sigma'$   and $\Sigma'_* : =\imath(\Sigma') = \imath(\Sigma)= \Sigma'_*$.

Indeed, on the one hand by applying Step~1 to $(S_{k_m}, \Sigma'_m, o'_m)$ we have that  (for all $R\ge 1$)  the metric balls  of $\Sigma'_m$ with radius $R$ and centred at $o_m'\in \Sigma'_m$ converge (in a $C^2$ fashion)
towards some immersed minimal surface $\imath : \Sigma' \to \S^3$. Moreover, $\Sigma'_*  =\imath(\Sigma')$ is a compact embedded minimal surface.

Now, since $S_k$ is embedded, the surfaces $\Sigma'_*$ and $\Sigma_*$ cannot intersect transversally. Therefore, either~$\Sigma'_* = \Sigma_*$ or they are two disjoint embedded compact minimal surfaces in $\S^3$. The latter alternative is ruled out thanks to Lemma~\ref{nonemptyinter}, completing the proof of the above claim.

Notice that the claim yields that,  for any $\delta>0$, there exists $k(\delta)$ such that for $k\ge k(\delta)$ every point of $S_k$ is at distance at most $\delta$ from $\Sigma_{*}$, where here the distance is the one of~$\S^3$.  Indeed, if for some $\delta>0$ there existed some sequence $k_m\to+\infty$ and $o'_m\in S_{k_m}$ such that~${\rm dist}_{\S^3}(o'_m, \Sigma_{*}) \ge \delta$ then a subsequence of $\imath(\mathcal B_{k,1}(o'_k))$ would converge towards  (a subset of) $\Sigma_{*}$, reaching a contradiction.

Thus, thanks to the uniform curvature and $C^{2,\gamma}$ bounds on $S_k$, and using that $S_k$ is embedded, we obtain (for $k$ sufficiently large so that $\delta$ is small enough) that every connected component~$\Sigma_{k,j}$ of $S_k$ can be written as a graph over $\Sigma_{*}$ of the form
 \[
\left\{  \frac{z+ f_{k,j}(z) \nu(z)}{\sqrt {1+ f_{k,j}(z)^2}} , \ z\in \Sigma_{*}\right\},
\]
where  $\nu: \Sigma_* \to T\S^3$ is a unit normal, and where $\sup_j \|f_{k,j}\|_{C^{2,\gamma}} \le C$ as $k\to+\infty$.

In particular, we have that $\imath: \Sigma\to \Sigma_*$  is a diffeomorphism and hence $\Sigma_*$ is embedded and compact. Notice that this shows, a posteriori, that for any sequence of ``centers'' $o_k\in \Sigma_k$ the limit embedded manifold $\Sigma\hookrightarrow \S^3$ that one obtains is always the same.
\end{proof}

\subsection{A key reduction} We next prove the following key intermediate  result regarding the structure of the trace of the cone on $\S^3$ for $s$ close to $1$.
\begin{proposition}\label{propKEY2}
There exists $s_*\in (0,1)$ such that the following holds true.  Suppose that  $\partial E\subset \R^4$ is a stable $s$-minimal cone with $s\in (s_*,1)$, and such that $S: = \partial E \cap \mathbb S^3\subset \R^4$  is a  $C^2$ submanifold of the sphere.
Then, up to a rotation, $S$ has the following structure
\begin{equation}\label{whtiowhoiwhw}
S  = \bigcup_{i =1}^N  \{x_4 = g_i(x') \},
\end{equation}
where $g_i : \R^3\cap \{x_4=0\} \to (-1/2,1/2)$, with $i\in\{1,\dots, N\}$, are finitely many graphs over the ``equator'',  with $g_1< g_2< \dots <g_N$.

Moreover  $N$  and $\|g_i\|_{C^{2,\gamma}}$ remain bounded (by a dimensional constant) as $\sigma \downarrow 0$, and (if~$N\ge 2$) we have
\[
g_{i+1} -g_i \ge  c\sqrt \sigma   \quad {\mbox{ and }}  \quad \max_{\S^{n-2} }\|g_i\| _{L^\infty} \le C\sqrt\sigma \quad \mbox{for all $i\in\{1,\dots, N\}$}.
\]
\end{proposition}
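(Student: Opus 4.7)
The plan is to establish the graphical decomposition in four steps: (i) a robust curvature bound for $S=\partial E\cap\S^3$ as $s\uparrow 1$; (ii) the application of Lemma~\ref{lem:embedded} to realize $S$ as a union of graphs over a closed embedded minimal surface $\Sigma\subset\S^3$; (iii) the identification of $\Sigma$ with an equator via the classification of stable minimal cones in $\R^4$; and (iv) the extraction of the quantitative separation and height bounds. All arguments are carried out along a hypothetical sequence $s_k\uparrow 1$ of candidate counterexamples.

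For (i) I would perform the Brian White scaling trick exactly as in the proof of Theorem~\ref{thmmain1} in Section~\ref{sec:proof-thm-1}: supposing $\sup_{S_k}|\II_{S_k}|\to\infty$, rescaling the cone around a point where the suitably rescaled curvature attains its maximum and invoking the conical $\R^4$ version of Proposition~\ref{prop:C2a} yields $C^2_{\mathrm{loc}}$-convergence to a complete embedded stable minimal hypersurface in $\R^3$ with unit curvature at the origin (the cone locally flattening into a cylinder over a surface of $\R^3$ under the blow-up). This contradicts the Fischer-Colbrie--Schoen/do~Carmo--Peng classification. Standard interior regularity for $s$-minimal graphs from~\cite{BFV} then upgrades the curvature bound to a uniform $C^{2,\alpha}$ bound on $S_k$. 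Combined with the fact that the classical mean curvature of $S_k$ differs from its (vanishing) $s$-mean curvature by $O(\sigma_k|\log\sigma_k|)$ ---this is essentially the content of~\eqref{djeutv8b75657b9874309}--- this places us in position to invoke Lemma~\ref{lem:embedded}, which gives step (ii): up to a subsequence, $S_k$ is a union of ordered graphs with vanishing $C^2$-norm over a compact embedded minimal surface $\Sigma\subset\S^3$.

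For step (iii), the induced cones $\R_+\cdot S_k$ converge (as sets, in $C^2_{\mathrm{loc}}$ on $\R^4\setminus\{0\}$) to $\R_+\cdot\Sigma$. I would pass the stability inequality~\eqref{stability} to the limit ---directly when the multiplicity of convergence is one, and otherwise by constructing a positive Jacobi field on $\R_+\cdot\Sigma$ out of the normalized distance between two ordered sheets, as in \cite[Proposition~2.25]{ColdMin}--- to obtain that $\R_+\cdot\Sigma$ is a classical stable minimal cone in $\R^4$. Almgren's theorem then forces it to be a hyperplane, so $\Sigma$ is a totally geodesic $\S^2$; after an orthogonal rotation we may assume $\Sigma=\S^3\cap\{x_4=0\}$, producing the claimed graphical structure~\eqref{whtiowhoiwhw} with uniformly bounded $C^{2,\alpha}$-norms.

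Step (iv) then fills in the quantitative rates. The layer separation bound $g_{i+1}-g_i\ge c\sqrt\sigma$ is obtained by applying the conical $n=4$ decoupling and separation estimate (Proposition~\ref{wehtioehwoit2}) in a small ball centered on any point of $\Sigma$ (taking advantage of the fact that the cone is assumed centered far from the ball, as required by that statement). For the sharp height bound $\max_i\|g_i\|_{L^\infty}\le C\sqrt\sigma$ ---from which $N\le C$ follows immediately by dividing the height by the separation--- I plan to argue by compactness once more: if $W_k:=\max_i\|g_{k,i}\|_{L^\infty}$ satisfied $W_k/\sqrt{\sigma_k}\to\infty$, the uniform $C^{2,\alpha}$ estimate would produce, from $W_k^{-1}g_{k,i}$, bounded subsequential limits solving the Jacobi equation $\Delta_{\S^2}h+2h=0$ on the equator; since bounded Jacobi fields on $\S^2$ are precisely the linear functions $a_1 x_1+a_2 x_2+a_3 x_3$, they can be absorbed by an infinitesimal refinement of the rotation that preserves $\{x_4=0\}$, contradicting $W_k\gg\sqrt{\sigma_k}$. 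I expect this final compactness argument to be the main technical obstacle, since one must verify that a single rotation suffices to simultaneously straighten all $N$ rescaled sheets while respecting their ordering and the overall cone structure.
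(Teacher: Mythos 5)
Your outline reproduces the paper's proof almost step for step: White's scaling trick plus the conical version of \Cref{prop:C2a} for the uniform curvature and $C^{2,\alpha}$ bounds, \Cref{lem:embedded} to write $S_k$ as ordered graphs over a closed embedded minimal surface $\Sigma\subset\S^3$, passage of stability to the limit cone and Almgren's theorem to identify $\Sigma$ with an equator, and \Cref{wehtioehwoit2} for the $c\sqrt\sigma$ separation. The only genuinely different ingredient is your treatment of the height bound: the paper proves a standalone compactness lemma (\Cref{lemnNEW}) for conical graphs with small mean curvature, blowing up to a $1$-homogeneous harmonic (hence linear) function in $\R^3$, while you linearize directly on the sphere and use that bounded solutions of $\Delta_{\S^2}h+2h=0$ are degree-one harmonics. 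These are equivalent (degree-one spherical harmonics are exactly the traces of $1$-homogeneous harmonic functions), so your route is viable; the obstacle you flag --- that one rotation must straighten all $N$ sheets simultaneously --- is in fact harmless, because the ordering $g_{k,1}<\dots<g_{k,N}$ forces the normalized limits $a_i\cdot\vartheta$ to be ordered on all of $\S^2$, hence all $a_i$ coincide.

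Two points need repair. First, in your step (ii) the smallness of the classical mean curvature of $S_k$ is \emph{not} a consequence of \eqref{djeutv8b75657b9874309} alone: that estimate only compares $H_s[\Gamma_i]$ with $H[\Gamma_i]$ for a single sheet, whereas $H_s[\Gamma_i]=f_i$ is nonzero and is controlled only through the interactions with the other sheets, which are of size $\sigma/d(\cdot,\Gamma_j)$. You therefore must invoke the optimal separation of \Cref{wehtioehwoit2} \emph{before} applying \Cref{lem:embedded}, and then \Cref{DdPWsys} gives $\max_{S_k}|H[S_k]|\le C\sqrt{\sigma_k}$ (not $O(\sigma|\log\sigma|)$); as written, your step (iv) input is needed already in step (ii), exactly as in the paper's Step~2. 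Second, in the final compactness argument the limiting linear Jacobi field $a\cdot\vartheta$ is generated by rotations that \emph{tilt} the equator, i.e.\ rotations in the $(x_j,x_4)$-planes; a rotation preserving $\{x_4=0\}$ produces only tangential deformations and cannot absorb it. Moreover, absorbing the field only yields a better rotation and is not by itself a contradiction with $W_k\gg\sqrt{\sigma_k}$: you must set the argument up with a (quasi-)optimal choice of the rotation --- e.g.\ define $W_k$ as the minimal max-height over all rotations, or choose $e_k$ attaining the min-max as in \Cref{lemnNEW} --- and contradict that optimality. With these adjustments, and a correct bookkeeping of the quadratic error in the expansion of the mean curvature (it is controlled by $\|g_{k,i}\|_{C^1}^2$, which after division by $W_k$ vanishes by interpolation against the uniform $C^{2,\alpha}$ bound), your plan delivers the proposition.
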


\begin{remark}
We observe that the robust bound $N\leq C$ holds only for the trace on $\bS^3$ of stable $s$-minimal cones in $\R^4$, but not for $s$-minimal surfaces in $\R^3$ (where $N\leq C\sigma^{-1/2}$ as a consequence of \eqref{wiowhoih2}, and this is optimal in view of \Cref{remexample}). Indeed, it is impossible to have a union of ``parallel equators'' to be a minimal surface in $\bS^3$.
\end{remark}

Before we give the proof of Proposition~\ref{propKEY2}, we set forth a preliminary result:

\begin{lemma}\label{lemnNEW}
Given $C_\circ>0$ there exist~$\eta_\circ>0$ and $C>0$, depending only on $n$ and $C_\circ$, such that the following holds true.

Assume that $\Gamma = \{x_n = g(x')\}$ is conical (i.e. $g(tx') = tx'$ for all $t>0$), of class $C^2$ away from zero, and satisfying $\|g\|_{L^\infty (\partial B_1')} \le \eta_\circ$.
Suppose in addition that the principal curvatures of~$\Gamma\cap \S^{n-1}$ are absolutely bounded by~$C_\circ$ and that
\[
\max_{x\in \Gamma\cap \S^{n-1}}  \big| H[\Gamma](x)\big| \le \delta.
\]

Then,  for some $e\in \S^{n-1}$ we have
\[
\Gamma\cap \S^{n-1} \subset \{x\in \R^n \ :\   |e\cdot x|<C\delta\}.
\]
\end{lemma}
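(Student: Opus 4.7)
The strategy is to pass from the cone $\Gamma$ to its link $L:=\Gamma\cap\S^{n-1}$, write a linear elliptic equation on $L$ for the ambient coordinate functions $x_1,\ldots,x_n$, and then pick the slab normal $e$ by a dimension count inside the almost-kernel of the Jacobi-type operator $\Delta^L+(n-2)$. First, since $\Gamma$ is a $C^2$-cone with apex at the origin, the classical mean-curvature formula for cones gives that, for $x\in L$, the Euclidean mean curvature of $\Gamma$ at $x$ equals the spherical mean curvature $H_L(x)$ of $L\subset \S^{n-1}$; hence the hypothesis reads $\|H_L\|_{L^\infty(L)}\le \delta$. Setting $u_i:=x_i|_L$ and applying the submanifold identity
\[
\Delta^L(u|_L)=\Delta^{\S^{n-1}}u-\mathrm{Hess}^{\S^{n-1}}u(\nu_L,\nu_L)-H_L\,(\nabla^{\S^{n-1}}u)\cdot \nu_L,
\]
together with $\Delta^{\S^{n-1}}x_i=-(n-1)x_i$ and $\mathrm{Hess}^{\S^{n-1}}x_i=-x_i\,g_{\S^{n-1}}$ (standard identities for first spherical harmonics), I obtain on $L$ the pointwise identity
\[
(\Delta^L+(n-2))\,u_i \;=\; -H_L\,(\nu_L)_i.
\]
Consequently, for any $a\in\R^n$, the function $\xi_a:=(a\cdot x)|_L=\sum_i a_i u_i$ satisfies $\|(\Delta^L+(n-2))\xi_a\|_{L^\infty(L)}\le \delta|a|$.

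The next step is to isolate the almost-kernel of $\Delta^L+(n-2)$. The hypothesis $\|g\|_{L^\infty(\partial B_1')}\le \eta_\circ$ combined with $1$-homogeneity of $g$ forces $L$ into the spherical slab $\{|x_n|\le \eta_\circ\}$, while the bound on the principal curvatures of $L$ by $C_\circ$ makes $L$ a $C^{1,1}$-graph over the equator $E:=\S^{n-1}\cap\{x_n=0\}\cong \S^{n-2}$, with $C^{1,1}$-norm controlled by $C(n,C_\circ)$. Now the spectrum of $-\Delta^E-(n-2)$ on $E$ is $\{-(n-2)\}\cup\{0\}^{(n-1)}\cup [n,+\infty)$, the $0$-eigenspace being $\mathrm{span}\{y_1,\dots,y_{n-1}\}$ (the degree-one spherical harmonics). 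For $\eta_\circ=\eta_\circ(n,C_\circ)$ small enough, standard self-adjoint spectral stability under $C^{1,1}$-perturbations of the metric produces an $(n-1)$-dimensional ``almost-kernel'' $K_L\subset L^2(L)$, spanned by the eigenfunctions of $-\Delta^L$ whose eigenvalues are the perturbations of $n-2$, together with a uniform inverse bound $\|(\Delta^L+(n-2))^{-1}\|_{L^2(L)\to L^2(L)}\le C(n,C_\circ)$ on $K_L^\perp$.

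With this in hand the direction $e$ is selected by a dimension count: the linear map $\R^n\to K_L$, $a\mapsto \pi_{K_L}(\xi_a)$, sends an $n$-dimensional space into an $(n-1)$-dimensional one, hence has a nontrivial kernel, inside which I pick a unit vector $e\in \S^{n-1}$. Then $\xi_e\perp K_L$ in $L^2(L)$, so inverting $\Delta^L+(n-2)$ on $K_L^\perp$ yields $\|\xi_e\|_{L^2(L)}\le C\delta$; a standard elliptic $L^2\to L^\infty$ estimate for the Laplace-Beltrami operator on the compact $C^{1,1}$-manifold $L$ (with constants depending only on $(n,C_\circ)$) then upgrades this to $\|\xi_e\|_{L^\infty(L)}\le C\delta$. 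Unwinding the definition of $\xi_e$ gives $|e\cdot x|\le C\delta$ for every $x\in L=\Gamma\cap\S^{n-1}$, which is the claimed slab containment.

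The main obstacle is the quantitative spectral step: one needs to certify both that \emph{exactly} $n-1$ eigenvalues of $-\Delta^L$ cluster near $n-2$ (so that $K_L$ has the right dimension, with no contamination from the constant eigenfunction or from degree-$\ge 2$ harmonics) and that the distance from $0$ to the rest of the spectrum of $-\Delta^L-(n-2)$ is bounded below by a constant depending only on $(n,C_\circ)$. Both facts reduce to a Lipschitz-closeness estimate (with a small constant) between the pulled-back metric of $L$ on $E$ and the round metric of $E$, combined with the Courant--Fischer min-max characterization; pinning down how small $\eta_\circ$ must be chosen relative to $(n,C_\circ)$ is the one truly quantitative ingredient, after which the argument is essentially self-adjoint functional calculus on an elliptic operator with a spectral gap.
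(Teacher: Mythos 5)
Your proposal is correct, but it follows a genuinely different route from the paper. You work directly on the link $L=\Gamma\cap\S^{n-1}$, observe that the coordinate functions satisfy $(\Delta^L+(n-2))\,(e\cdot x)|_L=O(\delta)$ (which is indeed the correct Jacobi-type identity for a cone, equivalent to $\Delta_\Gamma x=\vec H$ together with $1$-homogeneity), and then select $e$ by a dimension count against the $(n-1)$-dimensional almost-kernel of $\Delta^L+(n-2)$, using spectral stability of the round $\S^{n-2}$ under the small Lipschitz perturbation of the metric and a uniform inverse on the orthogonal complement, followed by an $L^2\to L^\infty$ elliptic estimate. The paper instead argues by compactness and contradiction: it takes the \emph{optimal} slab direction $e_k$, writes $\Gamma_k$ as a graph $y_n=h_k(y')$ over that direction, uses the interpolation $\|\nabla h_k\|_{L^\infty}^2\le\|h_k\|_{L^\infty}\|D^2h_k\|_{L^\infty}$ and Cordes--Nirenberg $C^{1,\alpha}$ estimates on an annulus to show that $h_k/\bar\delta_k$ converges to a $1$-homogeneous harmonic, hence linear, function, contradicting the optimality of $e_k$ when $\bar\delta_k\gg\delta_k$. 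What each approach buys: the paper's argument is short, needs no spectral theory, and only uses that $1$-homogeneous harmonic functions are linear, at the price of a non-constructive constant $C$; yours is quantitative and makes the Jacobi-operator structure explicit (consistent with the limit system derived later in the paper), at the price of the spectral-stability step, which is standard but is exactly where you must use the same interpolation trick (small height plus the $C_\circ$ curvature bound giving a graph over the equator with \emph{small} $C^1$ norm) to make the pulled-back metric Lipschitz-close to the round one, and where the final $L^\infty$ upgrade should be run via De Giorgi--Nash--Moser type local boundedness so that the zero-order term $(n-2)\xi_e$ on the right can be absorbed using the $L^2$ smallness already obtained (a naive $W^{2,2}$--Sobolev embedding only closes for low $n$ without a bootstrap). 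These are implementation details rather than gaps; the plan is sound.
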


\begin{proof}[Proof of \Cref{lemnNEW}]
Suppose by contradiction that the result is false. For some  $C_\circ$ (given) and $\eta_\circ>0$  (which can be chosen at our convenience) we would find sequences $\Gamma_k$ and $g_k$, with
\begin{equation}\label{u908oijklnkyt6-1}
\max_{x\in \Gamma\cap \S^{n-1}}  \big| H[\Gamma_k](x)\big| \le \delta_k
\end{equation}
and
\begin{equation}\label{u908oijklnkyt6-2}
\min_{e\in \S^{n-1}}  \max_{x\in \Gamma\cap \S^{n-1}}   |e\cdot x|  =: \bar \delta_k \gg \delta_k \quad \mbox{as $k \to+\infty$}.
\end{equation}

Let $e_k$ be a vector attaning the minimum in \eqref{u908oijklnkyt6-2}. Note that (if $\eta_\circ$ is chosen small enough)   we can write $\Gamma_k$ as
\[
y_n =  h_k(y'),
\]
where $y=(y',y_n)$ is a system of Euclidean coordinates with origin at $0$ and such that $y_n$ points in the $e_k$ direction.

In this setting, the equation for $h_k(y')$ in the annulus  $A: = \{1\le |y '| \le 2\}$ becomes
\[
\left| {\rm div }   \left( \frac{ \nabla h_k  }{\sqrt{1+|\nabla h_k|^2}} \right) \right|\le \delta_k.
\]

Hence, since by interpolation $ \| \nabla h_k \|^2_{L^\infty} \le  \| h_k \|_{L^\infty} \|D^2 h_k \|_{L^\infty} \le C\bar\delta_k$ we see that the 1-homogeneous functions $\widetilde h_k : = h_k/\bar \delta_k$ satisfy, as~$k\to+\infty$,
\[
\big| {\rm div }   \big(  (  I+ \bar \delta_k B(x))  \nabla \widetilde h_k \big) \big|  \le \frac{\delta_k }{\bar \delta_k} \to 0,
\]
where
\[
\| \widetilde h_k\|_{L^\infty(A)}  = 2 \qquad \mbox{ and } \qquad {\textstyle \sup_A}  |B_{ij}| \le  C.
\]

Thus, it follows from standard elliptic regularity theory
(using, for instance, Cordes-Nirenberg
$C^{1,\alpha}$ estimates)  that, up to a subsequence, $\widetilde h_k \to \widetilde h_\infty$ as~$k\to+\infty$ in $C^{1,\alpha}(A )$ where $h_\infty$ is a 1-homogeneous harmonic function. Therefore $h_\infty = a'\cdot y'$ for some $a'\in \R^{n-1}$. Since~$\| \widetilde h_\infty\|_{L^\infty(A)}  = 2$, we find that~$|a'|=1$.
That is, we obtained, that, for some  $a'\in \R^{n-1}$ with $|a'|=1$,
\[
\Gamma_k\cap \S^{n-1}  \subseteq  \{ y_n = h_k(y')\}  = \{ y_n =  \bar \delta_k \, a'\cdot y'\, +  o(\bar \delta_k )  \}.
\]
For $k$ large, this contradicts the fact that~$e_k$ attains the minimum in \eqref{u908oijklnkyt6-2}.
\end{proof}

With this preliminary work, we can now address the proof of the structural result stated in Proposition~\ref{propKEY2}.

\begin{proof}[Proof of Proposition~\ref{propKEY2}]
We assume that $\partial E_k\subset \R^4$ is a sequence of stable $s_k$-minimal cones as $s_k\to 1$ satisfying the assumptions of Proposition~\ref{propKEY2} and define $S_k := \partial E_k \cap \mathbb S^3$.
We divide the proof into three steps.
\vspace{3pt}

\noindent{\bf Step~1}. We first show that the second fundamental form of $S_k$ must remain bounded in all of its points by a  dimensional constant (hence independent of $k$).
To prove this we employ an argument
\` a la  B.~White as in the proof of Theorem~\ref{thmmain1}.
Let $x_k\in S_k$  be point where the second fundamental form of $S_k\hookrightarrow \S^3$  has maximal norm. Assume by contradiction that~$M_ k := |{\rm I\negthinspace I}_{\Gamma}(x_k)| \to+\infty$. Let us consider
\[
\widetilde E_k : = M_k ( {E}_k\cap \mathbb S^3-x_k) \quad \mbox{and} \quad \widetilde{\Gamma}_k : =  M_k(S_k-x_k).
\]
By construction we have each  $\widetilde{\Gamma}_k $ is a $C^2$ sub-manifold in $\mathbb X_k : = M_k (\mathbb S^3 -x_k) $. Also,
\[
|{\rm I\negthinspace I}_{\widetilde\Gamma_k}(0)| = 1 \quad \mbox{and}\quad   |{\rm I\negthinspace I}_{\widetilde\Gamma_k}|  \le 1 \quad \mbox{at all points}.
\]
Thus, by an analogous argument as in the  proof of Theorem~\ref{thmmain1}, we find that the connected component of $\widetilde{\Gamma}_k \cap B_{2R}$ through the origin converges in a $C^2$ fashion towards a minimal surface $\widetilde \Gamma_\circ$  which is stable in $B_R\subset\R^3$. As a result, taking $R$ large enough, we have that~$ |{\rm I\negthinspace I}_{\widetilde \Gamma_\circ} (0)|  \le \frac {C} R <1$,  contradicting  that~$1= |{\rm I\negthinspace I}_{\widetilde{\Gamma}_k}  (0)| \to   |{\rm I\negthinspace I}_{\widetilde\Gamma_\circ}  (0)|$.
\vspace{3pt}

\noindent{\bf Step~2}. Let us now establish that, up to a subsequence, $S_k \to \{e\cdot x=0 \} \cap \S^3$ in the
Hausdorff distance and that ---after a rotation so that $e = (0,0,0,1)$---  $S_k$ is of the form \eqref{whtiowhoiwhw}, for some~$N$ possibly depending on $k$.

To this end, thanks to Step~1 and the $C^{2,\alpha}$ estimates in Proposition~\ref{prop:C2a}, we see that~$S_k$ is the union of $C^{2,\alpha}$ submanifolds with uniformly bounded $C^{2,\alpha}$ norms and, by Proposition~\ref{wehtioehwoit2}, with sheet separation bounded from below by~$c\sqrt{\sigma_k}$.

Consequently, applying Lemma~\ref{DdPWsys}, we obtain that
\begin{equation}\label{wh8iy6}
\max_{x\in S_k\cap \mathbb S^3} \big|  H [S_k](x) \big|  \le C\sqrt{\sigma_k}.
\end{equation}
Hence, $\partial S_k$  satisfies the assumptions of Lemma~\ref{lem:embedded}. Therefore, up to a subsequence, the connected components of $S_k$ are graphs over a compact embedded minimal submanifold $\Sigma\subset \S^3$ and they possess a small $C^2$ norm.

By passing to the limit the stability of $\partial E_k$ ---similarly as we did in \eqref{98iojkiu87-1}-\eqref{98iojkiu87-2}---we obtain that the cone generated by $\Sigma$ is stable in $\R^4\setminus \{0\}$. Accordingly, it follows from \cite{Almgren} that~$\Sigma = \{e\cdot x=0 \} \cap \S^3$.

Now, the fact that $S_k$ is of the form \eqref{whtiowhoiwhw} follows from Lemma~\ref{lem:embedded} as well.
\vspace{3pt}

\noindent{\bf Step~3}. By Steps 1 and 2 (and the $C^{2,\alpha}$ uniform estimates in Proposition~\ref{prop:C2a}) for $s$ sufficiently close to~$1$ we can write
\begin{equation}\label{whtiow78y533}
S  = \bigcup_{i =1}^{N}  \{x_4 =\widetilde  g_{i}(x') \},
\end{equation}
where $\widetilde g_{i} : \R^3 \to \R$,  $i=1,2,\dots, N$ are finitely many graphs over the ``equator'',  with $\widetilde g_1< \widetilde g_2< \dots <\widetilde g_N$ and
 $\|\widetilde g_i\|_{L^\infty}\le \eta_\circ$. We stress that
we can choose the constant $\eta_\circ$ conveniently small by assuming $s$  sufficiently close to $1$.

We  also  know  that $\|\widetilde g_i\|_{C^{2,\gamma}}$ remains bounded by a dimensional constant. Also, as in  \eqref{wh8iy6} we have that~$\max_{x\in S\cap \mathbb S^3} \big|  H [S](x)\big||\le C\sqrt \sigma.$
Therefore, Lemma~\ref{lemnNEW} entails that, up to choosing a slightly rotated Euclidean coordinate system~$y$, we have
\[
S  = \bigcup_{i =1}^{N}  \{y_4 =g_{i}(y') \} \quad \mbox{ and } \quad
\|g_i\|_{L^\infty(\S^{n-2})}\le C\sqrt \sigma \; \mbox{ for all } i.
\]
As a consequence, using the optimal sheet separation lower bound
\[
g_{i+1} -g_i \ge  c\sqrt \sigma,
\]
we obtain that
\[
 cN \sqrt\sigma \le \min_{\S^{n-2}} g_{N}-g_1 \le \|g_1\|_{L^\infty(\S^{n-2})} +  \|g_N\|_{L^\infty(\S^{n-2})} \le C \sqrt \sigma
.\]
This gives a bound on $N$ which is independent of $\sqrt\sigma$. The proof of Proposition~\ref{propKEY2} is thereby complete.
\end{proof}

\subsection{A limit system}

We now relate the layers of a stable $s$-minimal surface to
the Toda-type system~\eqref{eq:DdPW}.  
The result that we obtain has actually a general flavor and mostly relies
on the uniform regularity and separation estimates presented so far.

\begin{proposition}\label{PRO-LI}
Let~$\gamma\in\left(0,\frac1{16}\right)$ and~$C>c>0$.
Let $\partial E = \bigcup_{i=1}^N \{x_n = g_i(x')\}$ inside~$B_1'\times (-1,1)$ be a $s$-minimal with $g_1<g_2<\dots<g_N$ satisfying 
\begin{equation}\label{HYP56-00}
\sup_{i\in\{1,\dots,N\}}\| g_i\|_{C^{2,\gamma}(B'_{1})}
+\frac{\| g_i\|_{L^\infty(B'_{1})}}{\sqrt\sigma}
\le C
\end{equation}
and
\begin{equation}\label{HYP56-01} \min_{1\le i\le N-1} \inf_{B_1'}  \big(g_{i+1}-g_i\big)\ge c\sqrt\sigma. \end{equation}
Let~$\widetilde g_i:= {g_i}/{\sqrt\sigma}$.

Then, for each~$i\in\{1,\dots,N\}$
$$ \Delta_{\R^{n-1}}\widetilde g_i 
=2\sum_{{1\le j\le N}\atop{j\neq i}}
\frac{(-1)^{i-j}}{\widetilde g_j-\widetilde g_i}
+O(\sigma^{\gamma'})\quad \mbox{in }B_{1/2}'
$$
as~$\sigma\downarrow0$, where $\gamma' = \frac{\gamma}{2(2+\gamma)}$.
\end{proposition}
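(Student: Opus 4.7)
The plan is to derive the system by applying \Cref{DdPWsys} pointwise along each sheet, then transferring the resulting identity for the classical mean curvature $H[\Gamma_i]$ into one for $\Delta\widetilde g_i$ via the divergence-form expansion of $H$ and a first-order expansion of the distance $d(x,\Gamma_j)$ in terms of $|g_j-g_i|$.

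Fix $i\in\{1,\dots,N\}$ and $x'_\circ\in B'_{1/2}$. After translating $(x'_\circ,g_i(x'_\circ))$ to the origin and performing a small rotation that brings $\nu_{\Gamma_i}(x'_\circ,g_i(x'_\circ))$ to $e_n$, the standard interpolation inequality
\[
\|\nabla g_i\|_{L^\infty(B'_1)}
\leq C\,\|g_i\|_{L^\infty(B'_1)}^{\frac{1+\gamma}{2+\gamma}}
    \|g_i\|_{C^{2,\gamma}(B'_1)}^{\frac{1}{2+\gamma}}
\leq C\,\sigma^{\frac{1+\gamma}{2(2+\gamma)}}
\]
ensures that the rotation angle is infinitesimal as $\sigma\downarrow 0$, so that (after a harmless rescaling) the rotated sheets still satisfy the $C^{2,\gamma}$ bound and the separation hypothesis of \Cref{DdPWsys}. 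The lemma then yields, at the new origin,
\[
H[\Gamma_i]\bigl(x'_\circ,g_i(x'_\circ)\bigr)
=2\sigma\left(
    \sum_{j>i}\frac{(-1)^{i-j}}{d(x_\circ,\Gamma_j)}
    -\sum_{j<i}\frac{(-1)^{i-j}}{d(x_\circ,\Gamma_j)}
\right)
+O\bigl(\sigma^{(1+\beta)/2}\bigr),
\qquad \beta=\tfrac14.
\]

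Next I would identify the left-hand side with $-\Delta g_i$ up to a controlled error. Expanding the graphical mean curvature,
\[
H[\Gamma_i]
=-\operatorname{div}\!\left(\frac{\nabla g_i}{\sqrt{1+|\nabla g_i|^2}}\right)
=-\Delta g_i+O(|\nabla g_i|^2\,|D^2g_i|)
=-\Delta g_i+O\bigl(\sigma^{\frac{1+\gamma}{2+\gamma}}\bigr),
\]
and since $\tfrac{1+\gamma}{2+\gamma}-\tfrac12=\gamma'$, dividing by $\sqrt\sigma$ turns this into a left-hand-side contribution $-\Delta\widetilde g_i+O(\sigma^{\gamma'})$. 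For the right-hand side, the first-order optimality condition for the nearest-point projection of $x_\circ$ onto $\Gamma_j$ forces the minimizer $y'_*$ to satisfy $|x'_\circ-y'_*|=O(|\nabla g_j|\,|g_i-g_j|)$, and a Taylor expansion then yields
\[
d(x_\circ,\Gamma_j)
=|g_j(x'_\circ)-g_i(x'_\circ)|\bigl(1+O(|\nabla g_j|^2)\bigr).
\]
Using $g_j>g_i$ for $j>i$ and $g_j<g_i$ for $j<i$, the signed split $\sum_{j>i}-\sum_{j<i}$ reorganises (after inserting the sign of $g_j-g_i$) into the single sum $2\sqrt\sigma\sum_{j\neq i}(-1)^{i-j}/(\widetilde g_j-\widetilde g_i)$ appearing in the proposition.

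Finally, the hypotheses $\|g_i\|_{L^\infty}\leq C\sqrt\sigma$ and $g_{i+1}-g_i\geq c\sqrt\sigma$ immediately force $N\leq 2C/c+1$ and hence $\sum_{j\neq i}|\widetilde g_j-\widetilde g_i|^{-1}=O(1)$, so the multiplicative corrections $O(|\nabla g_j|^2)=O(\sigma^{(1+\gamma)/(2+\gamma)})$ on the right-hand side collapse to additive errors of order $\sigma^{\gamma'}$. Collecting everything, and noting that $\sigma^{\beta/2}=\sigma^{1/8}$ is dominated by $\sigma^{\gamma'}$ since $\gamma<\tfrac{1}{16}$ gives $\gamma'<\tfrac18$, yields the identity with error $O(\sigma^{\gamma'})$. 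The main technical burden will be the careful sign bookkeeping in the reorganisation (where the alternating factor $(-1)^{i-j}$ combines with the sign of $g_j-g_i$), together with verifying that the rotations and harmless rescalings performed at each point $x'_\circ$ produce constants that are uniform in the base point, so that the resulting estimate is truly pointwise on $B'_{1/2}$.
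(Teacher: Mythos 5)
Your overall route is the same as the paper's: apply \Cref{DdPWsys} at each base point after normalizing the normal to be vertical, replace $H[\Gamma_i]$ by the graph Laplacian up to an interpolation error $O(\sigma^{\frac{1+\gamma}{2+\gamma}})$ (whose excess over $\sqrt\sigma$ is exactly $\sigma^{\gamma'}$), convert $d(x,\Gamma_j)$ into the vertical gap $|g_j-g_i|$ with relative error $O(|\nabla g_j|^2)$, and use $\|g_i\|_{L^\infty}\le C\sqrt\sigma$ plus the separation bound to get $N=O(1)$ and absorb everything into $O(\sigma^{\gamma'})$. The exponent bookkeeping ($\frac{1+\gamma}{2+\gamma}-\frac12=\gamma'$, and $\sigma^{1/8}=O(\sigma^{\gamma'})$ since $\gamma'<\frac18$) is correct, and your treatment of the pointwise rotation and of the nearest-point projection is in fact more detailed than the paper's half-page proof.

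There is, however, one genuine unresolved point, and it is exactly the one you defer as ``sign bookkeeping'': with your conventions the assembled identity comes out with the opposite sign. You expand $H[\Gamma_i]=-\operatorname{div}\bigl(\nabla g_i/\sqrt{1+|\nabla g_i|^2}\bigr)=-\Delta g_i+\dots$ (curvature with respect to the upward normal, i.e.\ the normal normalized in \Cref{DdPWsys}), and you take \eqref{whiorhwiohwBIS} at face value. Your reorganization of the right-hand side is correct: $\sum_{j>i}\frac{(-1)^{i-j}}{d(\cdot,\Gamma_j)}-\sum_{j<i}\frac{(-1)^{i-j}}{d(\cdot,\Gamma_j)}=\frac1{\sqrt\sigma}\sum_{j\neq i}\frac{(-1)^{i-j}}{\widetilde g_j-\widetilde g_i}+\dots$, so after dividing by $\sqrt\sigma$ your chain gives $-\Delta\widetilde g_i=2\sum_{j\neq i}\frac{(-1)^{i-j}}{\widetilde g_j-\widetilde g_i}+O(\sigma^{\gamma'})$, the negative of the statement; and the negated system is genuinely wrong (for two sheets it would force the upper sheet to be concave, whereas a direct computation with $E$ a slab, or the D\'avila--del Pino--Wei ansatz $\Delta\widetilde g=1/\widetilde g$ with $a_2=1$, shows it is convex). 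The clash comes from orientation: formula \eqref{whiorhwiohwBIS} is correct under the convention $H[\Gamma_i]\approx+\Delta g_i$, which is how the paper's proof uses it (it writes $H(\Gamma_i)=\operatorname{div}\bigl(\nabla g_i/\sqrt{1+|\nabla g_i|^2}\bigr)$), because the alternating orientation of $\nu$ along consecutive sheets is already encoded in the two signed sums; note the proof of \Cref{DdPWsys} itself contains the line $H[\Gamma_{i_0}](0)=-\Delta g(0)$, so the convention must be pinned down once and used consistently. To complete your argument you must either verify (e.g.\ on the two-sheet test configuration) which orientation makes \eqref{whiorhwiohwBIS} literally true and use that same orientation in your graph expansion, or redo the interaction computation of Step~2 of \Cref{DdPWsys} with your upward normal; as written, ``collecting everything'' yields the claimed identity only up to this unresolved sign, which is precisely the nontrivial content of the proposition.
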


\begin{corollary}\label{cor:PRO-LI}
Under the same assumptions as in Proposition \ref{propKEY2}, let $g_1<g_2<\cdots<g_N:B_1' \to \R$ be the functions obtained in the conclusion of \Cref{propKEY2} and define $\tilde g_i: = g_i/\sqrt \sigma$. 

Then, for each~$i\in\{1,\dots,N\}$
\begin{equation*}
\Delta_{\bS^{n-2}}\widetilde g_i (\vartheta)
+(n-2)\widetilde g_i(\vartheta)
=2\sum_{\substack{1\le j\le N \\ j\neq i}}
\frac{(-1)^{i-j}}{\widetilde g_{i+1}(\vartheta)-\widetilde g_i(\vartheta)}
+O(\sigma^{\gamma'}),
\end{equation*}
for $\vartheta =\frac{x'}{|x'|} \in \bS^{n-2}$
and $\gamma' = \frac{\gamma}{2(2+\gamma)}$, as~$\sigma\downarrow0$.
\end{corollary}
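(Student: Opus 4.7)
The plan is to reduce \Cref{cor:PRO-LI} to \Cref{PRO-LI} by exploiting the conical structure of~$\partial E$: this allows us to extend the sphere-graphs $g_i:\S^{n-2}\to\R$ to $1$-homogeneous flat graphs on $\R^{n-1}\setminus\{0\}$ and thereby use the Euclidean Toda system pointwise at any $\vartheta\in\S^{n-2}$. The standard polar decomposition of the Laplacian on $\R^{n-1}$ will then convert the flat $\Delta_{\R^{n-1}}$ into $\Delta_{\S^{n-2}}+(n-2)$ on the sphere trace, producing the extra first-order term in the corollary.

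Since $\partial E$ is a cone with apex $0$ and $S=\partial E\cap \S^{n-1}=\bigcup_i\{x_n=g_i(x')\}$ by \Cref{propKEY2}, each sphere-layer extends uniquely to a $1$-homogeneous graph on $\R^{n-1}\setminus\{0\}$ via
\[
G_i(x'):=|x'|\,g_i\bigl(x'/|x'|\bigr),
\]
so that $\partial E\cap \bigl((\R^{n-1}\setminus\{0\})\times\R\bigr)=\bigcup_i\{x_n=G_i(x')\}$. Setting $\widetilde G_i:=G_i/\sqrt\sigma$, which restricts to $\widetilde g_i$ on $\S^{n-2}$, the $1$-homogeneity combined with the polar form $\Delta_{\R^{n-1}}=\partial_r^2+\tfrac{n-2}{r}\partial_r+\tfrac{1}{r^2}\Delta_{\S^{n-2}}$ applied to $\widetilde G_i(r\vartheta)=r\,\widetilde g_i(\vartheta)$ at $r=1$ yields the pointwise identity
\[
\Delta_{\R^{n-1}}\widetilde G_i(\vartheta)=\Delta_{\S^{n-2}}\widetilde g_i(\vartheta)+(n-2)\,\widetilde g_i(\vartheta), \qquad \vartheta\in\S^{n-2}.
\]

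Next, fix $\vartheta\in\S^{n-2}$ and pass to a translated--rescaled setup centered near~$\vartheta$. Set $\hat E:=4\bigl(E-(\vartheta,0)\bigr)$; this is a stable $s$-minimal cone whose apex lies outside $B_1'\times(-1,1)$ (as $|\vartheta|=1$), and it is represented therein by the graphs $\hat G_i(\hat x'):=4\,G_i\bigl(\vartheta+\hat x'/4\bigr)$ for $\hat x'\in B_1'$. Since $|\vartheta+\hat x'/4|\in[3/4,5/4]$ throughout $B_1'$, the $1$-homogeneity of $G_i$ together with the bounds from \Cref{propKEY2} directly yield $\|\hat G_i\|_{C^{2,\gamma}(B_1')}\le C$, $\|\hat G_i\|_{L^\infty(B_1')}\le C\sqrt\sigma$, and the sheet separation $\hat G_{i+1}-\hat G_i\ge c\sqrt\sigma$ on $B_1'$. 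Thus the hypotheses \eqref{HYP56-00}--\eqref{HYP56-01} are satisfied, and \Cref{PRO-LI} applies to $\hat E$.

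Evaluating the conclusion of \Cref{PRO-LI} at $\hat x'=0$ (the pre-image of $\vartheta$) gives, for $\widehat{\widetilde G}_i:=\hat G_i/\sqrt\sigma$,
\[
\Delta_{\R^{n-1}}\widehat{\widetilde G}_i(0)=2\sum_{\substack{1\le j\le N\\ j\neq i}}\frac{(-1)^{i-j}}{\widehat{\widetilde G}_j(0)-\widehat{\widetilde G}_i(0)}+O(\sigma^{\gamma'}).
\]
Since $\widehat{\widetilde G}_i(\hat x')=4\,\widetilde G_i(\vartheta+\hat x'/4)$, the chain rule gives $\Delta_{\R^{n-1}}\widehat{\widetilde G}_i(0)=\tfrac14\Delta_{\R^{n-1}}\widetilde G_i(\vartheta)$, while the differences rescale as $\widehat{\widetilde G}_j(0)-\widehat{\widetilde G}_i(0)=4\bigl(\widetilde G_j(\vartheta)-\widetilde G_i(\vartheta)\bigr)$; the factors of $4$ cancel across the identity, and using $\widetilde G_j(\vartheta)=\widetilde g_j(\vartheta)$ together with the polar identity above produces precisely the claimed equation. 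The work is essentially the bookkeeping of scaling factors; the substantive content sits in \Cref{PRO-LI} and in the uniform bounds of \Cref{propKEY2}, which dovetail to supply exactly the hypotheses needed, so no genuinely new obstacle arises in the corollary itself.
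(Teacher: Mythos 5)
Your proposal is correct and follows exactly the route the paper intends: the paper states this corollary without a separate proof, treating the $g_i$ as the $1$-homogeneous graph functions of the cone (as it does again in the proof of \Cref{prop:FW02}), so that \Cref{PRO-LI} plus the identity $\Delta_{\R^{n-1}}(r\,\widetilde g_i(\vartheta))\big|_{r=1}=\Delta_{\S^{n-2}}\widetilde g_i+(n-2)\widetilde g_i$ gives the statement. Your re-centering at a point $\vartheta$ of the equator (so that the hypotheses \eqref{HYP56-00}--\eqref{HYP56-01} hold in the cylinder, which they would not in a cylinder containing the apex where all sheets meet) and the scaling bookkeeping are precisely the details the paper leaves implicit, and they check out.
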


\begin{proof}[Proof of \Cref{PRO-LI}] We use Lemma~\ref{DdPWsys}. Indeed, by~\eqref{HYP56-00} and a standard interpolation (such as \eqref{eq:interpol-C2} and~\eqref{eq:interpol-C1} below),
\begin{eqnarray*}&& \Delta\widetilde g_i (x')
=\frac1{\sqrt\sigma}\Delta g_i (x')
=\frac1{\sqrt\sigma}\left(
{\rm div}\left(\frac{\nabla g_i(x')}{\sqrt{1+|\nabla g_i(x)|^2}}\right)
+O(\sigma^{\frac12+\frac{\gamma}{2(2+\gamma)}})\right)\\&&\qquad\qquad
=\frac{H(\Gamma_i)(x')}{\sqrt\sigma}
+O(\sigma^{\frac{\gamma}{2(2+\gamma)}}),
\end{eqnarray*}
which, together with~\eqref{whiorhwiohwBIS}, leads to
\begin{equation*}\begin{split}
\Delta\widetilde g_i (x')=2 \sqrt\sigma\left(\sum_{j> i} \frac{ (-1)^{i-j} }{d\big((x',g_i(x')), \Gamma_j\big)} -
\sum_{j< i}  \frac{(-1)^{i-j}}{d\big((x',g_i(x')), \Gamma_j\big)} \right)   + O(\sigma^{\frac{\gamma}{2(2+\gamma)}})
\end{split}\end{equation*}
and the desired result follows from~\eqref{HYP56-00} and~\eqref{HYP56-01}.
\end{proof}

\subsection{A limit stability inequality}

We next show:

\begin{proposition}\label{prop:FW0}
Under the hypothesis of \Cref{PRO-LI},
suppose that~$\partial E$ is stable in $B_1' \times(-1,1)$.

Then, for any $\psi \in C^{2}_c (B_1')$,
\[
	\frac{8}{N}
	\sum_{\substack{1\leq i<j\leq N\\ j-i \text{ odd}}}
	\int_{ B_1'} \frac{ \psi^2}{\big|\widetilde g_j-\widetilde g_i\big|^{2}}\,dx'
	\le
	\int_{B_1'} |\nabla \psi|^2
	\,dx' +O(\sigma^{\gamma'}),
\] 
where $\gamma' = \frac{\gamma}{4(2+\gamma)}$.
Here above, the quantity $O(\sigma^{\gamma'})$ depends also on $\psi$.
\end{proposition}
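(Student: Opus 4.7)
The plan is to apply the stability inequality of \Cref{prop:stabilityineqloc} on the cylinder $\Omega := B_1' \times (-1,1)$ with the test function $\eta(x',x_n) := \psi(x')\,\chi(x_n)$, where $\chi \in C_c^\infty((-1,1))$ satisfies $\chi \equiv 1$ on $[-1/2,1/2]$, and then to divide the resulting inequality by $N c_\star$, with $c_\star$ the limit constant from \Cref{wiowhithwoh}. Since $\|g_i\|_{L^\infty(B_1')} \le C\sqrt{\sigma}$ by \eqref{HYP56-00}, every sheet $\Gamma_i$ lies in the region where $\chi \equiv 1$ once $\sigma$ is small enough, so $\eta \equiv \psi$ on $\Gamma$. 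The announced factor $8/N$ will emerge from the key asymptotic $c_{n,s}/c_\star \to 1$ as $s\uparrow 1$, which can be read off from the explicit formulas displayed just before \Cref{lem:kernel-comp} together with the proof of \Cref{wiowhithwoh}.

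To estimate the left-hand side of \eqref{stability}, I will decompose the double integral over $\Gamma \times \Gamma$ according to the pair $(i,j)$ of sheets involved. The diagonal contribution $i=j$ is nonnegative (asymptotically $c_\star \int_{\Gamma_i} |{\rm I\negthinspace I}|^2 \psi^2$ by \Cref{wiowhithwoh}) and will simply be discarded to extract an upper bound on the cross terms. For a cross term $i\ne j$, \Cref{lem:kernel-comp} (applied with $h = g_j - g_i$, using the sharp lower bound \eqref{HYP56-01}) together with the identity $|g_j - g_i|^{1+s} = (1 + O(\sigma|\log\sigma|))(g_j - g_i)^2$ yields
\[
\sigma \iint_{\Gamma_i \times \Gamma_j}  \frac{|\nu(x)-\nu(y)|^2\,\psi^2(x')}{|x-y|^{n+s}}\, d\cH^{n-1}_x\, d\cH^{n-1}_y = \alpha_{ij}\, c_{n,s} \int_{B_1'} \frac{\psi^2}{|\widetilde g_j - \widetilde g_i|^2}\,dx' + O(\sigma^{\gamma'}),
\]
where $\alpha_{ij} \to 4$ when $j-i$ is odd (the normals along $\partial E$ alternate) and $\alpha_{ij} \le C\|g_j - g_i\|_{L^\infty}^{1/2} = O(\sigma^{1/4})$ when $j-i$ is even (by $C^1$-interpolation between the $L^\infty$ and $C^{2,\gamma}$ bounds in \eqref{HYP56-00}). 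Summing the odd-parity terms over ordered pairs (and absorbing the even-parity ones into the error) therefore produces $8\,c_{n,s}\!\sum_{i<j,\,j-i\text{ odd}} \int_{B_1'} \psi^2/|\widetilde g_j - \widetilde g_i|^2\,dx' + O(\sigma^{\gamma'})$ as the leading contribution.

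For the right-hand side of \eqref{stability}, the exterior term ${\rm Ext}_2^\Omega$ is $O(\sigma)$ thanks to the compact support of $\chi$ and the bounded $(n-1)$-area of $\Gamma$ inside $\Omega$. The self-interactions $i=j$ will be controlled by \Cref{wiowhithwoh} applied to each sheet, yielding a total of $N c_\star \int_{B_1'} |\nabla\psi|^2\,dx' + O(\sigma^{\gamma'})$. The off-diagonal contributions $i \ne j$ are negligible: a minor adaptation of \Cref{lem:stab-RHS-ij} to Lipschitz (rather than uniformly bounded) test functions gives $C\|\nabla\psi\|_{L^\infty}^2\, \sigma|\log\sigma|$ per pair, so $O(\sigma|\log\sigma|)$ in total since $N$ is bounded. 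Dividing by $N c_\star$, dropping the nonnegative diagonal on the left, and invoking $c_{n,s}/c_\star \to 1$ yields the announced inequality. The main technical obstacle is ensuring the kernel conversion $|g_j - g_i|^{1+s} = (1 + O(\sigma|\log\sigma|))(g_j - g_i)^2$ uniformly in $(i,j)$, which is legitimate precisely thanks to the sharp separation \eqref{HYP56-01}; the exponent $\gamma' = \gamma/(4(2+\gamma))$ is the slowest rate at which the various interpolation, exterior, and kernel errors decay uniformly.
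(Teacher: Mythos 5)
Your overall strategy is the same as the paper's: test the stability inequality of \Cref{prop:stabilityineqloc} with $\eta=\psi(x')$ (times a vertical cutoff), decompose $\Gamma\times\Gamma$ by sheets, extract the odd-parity cross terms on the left via \Cref{lem:kernel-comp} together with $|g_j-g_i|^{1+s}=(1+O(\sigma|\log\sigma|))(g_j-g_i)^2$, control the off-diagonal terms on the right via \Cref{lem:stab-RHS-ij}, bound ${\rm Ext}_2^\Omega=O(\sigma)$, and divide by $N$ times a normalizing constant. However, there is a genuine gap in your constant bookkeeping for the diagonal (Dirichlet) term on the right-hand side, and the sharp constant is the whole point of this proposition (it feeds directly into the dimension count $3\le n\le 7$ in the proof of Theorem~\ref{thmmain2}). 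You take the per-sheet energy from \Cref{wiowhithwoh} with $c_\star=\cH^{n-2}(\partial B_1')=\cH^{n-2}(\bS^{n-2})$ and simultaneously assert that $c_{n,s}/c_\star\to 1$; but from the explicit formula, $c_{n,s}\to \frac{\cH^{n-2}(\bS^{n-2})}{n-1}$, so in fact $c_{n,s}/c_\star\to\frac{1}{n-1}$. With the constants as you state them, your chain of inequalities only yields $\frac{8}{N(n-1)}\sum_{j-i\ \mathrm{odd}}\int\psi^2|\widetilde g_j-\widetilde g_i|^{-2}\le \int|\nabla\psi|^2+O(\sigma^{\gamma'})$, which is weaker than the statement by the factor $n-1$ and would not suffice for the application.

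The source of the mismatch is that the correct asymptotic per-sheet Dirichlet energy is $\frac{\cH^{n-2}(\bS^{n-2})}{n-1}\int|\nabla\psi|^2$, not $\cH^{n-2}(\bS^{n-2})\int|\nabla\psi|^2$: one must expand $(\psi(x')-\psi(y'))^2\approx(\nabla\psi(x')\cdot(y'-x'))^2$ and use the angular average $\int_{\bS^{n-2}}(V\cdot\omega)^2\,d\cH^{n-2}_\omega=\frac{\cH^{n-2}(\bS^{n-2})}{n-1}|V|^2$, which is exactly how the paper computes this term (Step~2 of its proof); the expansion written in the proof of \Cref{wiowhithwoh} replaces $(\nabla\eta\cdot(y-x))^2$ by $|\nabla\eta|^2|y-x|^2$ and so overstates the constant by the factor $n-1$ (harmless where that lemma is used, since the same constant appears on both sides there, but fatal here where it is compared against $c_{n,s}$). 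The fix is simply to compute the diagonal term directly, as the paper does: then the common constant on both sides is $\frac{\cH^{n-2}(\bS^{n-2})}{n-1}=\lim_{s\uparrow1}c_{n,s}$, it cancels upon division by $N\cdot\frac{\cH^{n-2}(\bS^{n-2})}{n-1}$, and the sharp factor $\frac{8}{N}$ emerges. The rest of your argument (discarding the nonnegative diagonal and even-parity terms on the left, the $O(N^2\sigma|\log\sigma|)$ cross terms on the right with $N=O(1)$ under \eqref{HYP56-00}--\eqref{HYP56-01}, and the exterior bound) is sound and parallels the paper.
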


We remark that the result that
we actually need is a variant of Proposition~\ref{prop:FW0}
on an annulus, see \Cref{prop:FW01} below. Still, we first state and prove \Cref{prop:FW0} as this is technically less involved and their proofs are very similar.


\begin{proof}[Proof of \Cref{prop:FW0}]
The main idea is to pass to the limit $s\uparrow 1$
the stability condition in~\eqref{stability}, which we restate here for convenience:
\[
\sigma \iint_{\Gamma\times\Gamma } \frac{ \big|\nu(x)-\nu(y)\big|^2 \eta^2(x)}{|x-y|^{n+s}} \,d\HH_x \,d\HH_y \le  \sigma \iint_{\Gamma\times\Gamma} \negmedspace \frac{ \big(\eta(x)-\eta(y)\big)^2}{|x-y|^{n+s}} \,d\HH_x\, d\HH_y
+{\rm Ext}_2^\Omega,
\]
with ${\rm Ext}_2^\Omega=O(\sigma)$ with a constant depending on the (support of the) test function.
\vspace{3pt}

\noindent{\bf Step~1: Estimate of the left hand side of~\eqref{stability}}. To start with, we focus on the first term in~\eqref{stability}. We have that, if~$x\in\Gamma_i$ and~$y\in\Gamma_j$,
\begin{eqnarray*}
|\nu(y)-(-1)^{i-j}\nu(x)|&\le&
\sqrt\sigma\big|\nabla\widetilde g_i(x')-\nabla\widetilde g_j(y')\big|+
C\sigma\big| |\nabla\widetilde g_j(y')|^2-|\nabla\widetilde g_i(x')|^2\big|\\&\le&
C\big|\nabla g_i(x')-\nabla g_j(y')\big|.
\end{eqnarray*}
In particular, by~\eqref{HYP56-00} and the interpolation inequality, if~$x$, $y\in\Gamma_i$ then
\begin{equation}\label{eq:interpol-C2}
\frac{|\nu(y)-\nu(x)|}{|x'-y'|}
\le C[ g_i]_{C^{2}(B'_{1})}
\le C\| g_i\|_{C^{2,\gamma}(B'_{1})}^{\frac2{2+\gamma}}\| g_i\|_{L^\infty(B'_{1})}^{\frac\gamma{2+\gamma}}\le C\sigma^{\frac\gamma{2(2+\gamma)}}.
\end{equation}
Also,
\begin{equation}\label{eq:interpol-C1}
[ g_i]_{C^{1}(B'_{1})}
\le C\| g_i\|_{C^{2,\gamma}(B'_{1})}^{\frac{1}{2+\gamma}}\| g_i\|_{L^\infty(B'_{1})}^{\frac{1+\gamma}{2+\gamma}}
\le C\sigma^{\frac{1+\gamma}{2(2+\gamma)}}
= C\sigma^{\frac14+\frac{\gamma}{4(2+\gamma)}}.
\end{equation}
As a result, 
\begin{equation}\label{LISGB:21}
\sigma \iint_{\Gamma_i\times\Gamma_i} \frac{ \big|\nu(x)-\nu(y)\big|^2 }{|x-y|^{n+s}} \,d\HH_x \,d\HH_y\le C\sigma^{\frac{\gamma}{2+\gamma}}.
\end{equation}
Furthermore, in view of~\eqref{HYP56-01},
if~$i\ne j$ and~$i-j$ is even then
\begin{equation}\label{LISGB:22}\begin{split}
\sigma \iint_{\Gamma_i\times\Gamma_j}
\frac{ \big|\nu(x)-\nu(y)\big|^2 }{|x-y|^{n+s}} \,d\HH_x\, d\HH_y
\le \;&C\sigma^{1+\frac12+\frac{\gamma}{2(2+\gamma)}} \iint_{\Gamma_i\times\Gamma_j} \frac{
\,d\HH_x\, d\HH_y}{|x-y|^{n+s}}
\\ 
\le\;& C\sigma^{1+\frac12+\frac{\gamma}{2(2+\gamma)}} \iint_{B_1'\times B_1'} \frac{
dx'\,dy'}{\big( |x'-y'|^2+\sigma \big)^{\frac{n+s}2}}\\
\le \;&C\sigma^{1+\frac12+\frac{\gamma}{2(2+\gamma)}-\frac{2-\sigma}2} \int_{\R^{n-1}} \frac{
d\eta'}{\big( |\eta'|^2+1 \big)^{\frac{n+s}2}}\\
\le\;& C\sigma^{\frac12+\frac{\gamma}{2(2+\gamma)}}.
\end{split}\end{equation}

Now, given~$\psi\in C^c_c(B_1')$, we define
\[
\eta(x):=\psi(x')
\]
and exploit \Cref{lem:kernel-comp} (with $\delta\leq C\sigma^{\frac{\gamma}{2(2+\gamma)}}$ and $\widetilde{\delta}\leq C\sqrt{\sigma}$ so that $c_{\delta,\widetilde{\delta}}\leq C\sigma^{\frac{\gamma}{4(2+\gamma)}}$) and \Cref{whtiohwoiwh} to see that
\begin{equation}\label{eq:stab-step-1}\begin{split}
&\quad\;
	\sigma \iint_{\Gamma_i\times\Gamma_j} \frac{ \big|\nu(x)-\nu(y)\big|^2 \;\eta^2(x)}{|x-y|^{n+s}} \,d\HH_x \,d\HH_y\\
&=
\left (
	1+O(\sigma^{\frac14+\frac{\gamma}{4(2+\gamma)}})
\right )
\sigma \iint_{\Gamma_i\times\Gamma_j} \frac{ 4\psi^2(x')}{|x-y|^{n+s}} \,d\HH_x \,d\HH_y
\\
&=
\left (
	1+O(\sigma^{\frac{\gamma}{4(2+\gamma)}})
\right )
\frac{\cH^{n-2}(\bS^{n-2})}{n-1}
\sigma \int_{\Gamma_i} \frac{ 4\psi^2(x')}{|g_j(x')-g_i(x')|^{1+s}} \,d\HH_x
\\
&=
\left (
	1+O(\sigma^{\frac{\gamma}{4(2+\gamma)}})
\right )
\frac{4\cH^{n-2}(\bS^{n-2})}{n-1}
\sigma \int_{B_1'} \frac{\psi^2}{|g_j-g_i|^{2}} \,dx'.
\\
\end{split}\end{equation}

By summing up \eqref{eq:stab-step-1}, 
we conclude that
\begin{equation}\label{LISGB:23}
\begin{split}&
\sigma \iint_{\Gamma\times\Gamma} \frac{ \big|\nu(x)-\nu(y)\big|^2 \eta^2(x)}{|x-y|^{n+s}} \,d\HH_x\, d\HH_y\\&\qquad
\geq \sum_{\substack{1\leq i,j\leq N\\ j-i \text{ odd}}}\sigma \iint_{\Gamma_i\times\Gamma_j} \frac{ \big|\nu(x)-\nu(y)\big|^2 \eta^2(x)}{|x-y|^{n+s}} \,d\HH_x\, d\HH_y
\\&\qquad=
\left (
	1+O(\sigma^{\frac{\gamma}{4(2+\gamma)}})
\right )
\frac{8\cH^{n-2}(\bS^{n-2})}{n-1}
\sigma 
\sum_{\substack{1\leq i<j\leq N\\ j-i \text{ odd}}}
\int_{B_1'} \frac{\psi^2}{|g_j-g_i|^{2}} \,dx'.
\end{split}
\end{equation}
This computation will take care of the first term in~\eqref{stability}.\vspace{3pt}

\noindent{\bf Step~2: Estimate of the main term on the right hand side of~\eqref{stability}}.
Now we deal with the second term in~\eqref{stability}. To this end, we point out that by \Cref{lem:stab-RHS-ij}, if~$i\ne j$,
we see that
\begin{align*}
\sigma \iint_{\Gamma_i\times\Gamma_j} \frac{ \big(\eta(x)-\eta(y)\big)^2}{|x-y|^{n+s}} \,d\HH_x\, d\HH_y
\leq C\sigma \log\frac{2}{|j-i|\sqrt{\sigma}}.
\end{align*}
Consequently, from the choice $\eta(x)=\psi(x')$,
\begin{equation}\label{PO-09o}
\begin{split}
&\sigma \iint_{\Gamma\times\Gamma} \frac{ \big(\eta(x)-\eta(y)\big)^2}{|x-y|^{n+s}} \,d\HH_x\, d\HH_y\\
=\,&
\sum_{i=1}^N\sigma \iint_{\Gamma_i\times\Gamma_i} \frac{ \big(\psi(x')-\psi(y')\big)^2}{|x-y|^{n+s}} \,d\HH_x \,d\HH_y
+O(N^2\sigma|\log\sigma|)\\
=\,&\sum_{i=1}^N\sigma \iint_{\Gamma_i\times\Gamma_i} \frac{ \big(\nabla\psi(x')\cdot(y'-x')\big)^2}{|y-x|^{n+s}} \,d\HH_x\, d\HH_y
+O(N^2\sigma|\log\sigma|).
\end{split}\end{equation}
Also, since $\psi \in C_c^2(B_1')$, $\delta_\circ:=\frac12\dist({\rm supp}\,\psi,\partial B_1')\in (0,1)$ which yields 
\begin{eqnarray*}&&
\sigma \iint_{\Gamma_i\times\Gamma_i} \frac{ \big(\nabla \psi(x')\cdot(y'-x')\big)^2}{|y-x|^{n+s}} \,d\HH_x\, d\HH_y
\\&&\qquad=
\sigma \Big(1+O(\sigma^{\frac12+\frac{\gamma}{2(2+\gamma)}})\Big)
\iint_{B_{1}'\times B_{1}'} \frac{ \big(\nabla\psi(x')\cdot(y'-x')\big)^2}{\big( |y'-x'|^2+|g_i(y')-g_i(x')|^2\big)^{\frac{n+s}2}} \,dx'\,dy'\\&&\qquad=
\sigma \Big(1+O(\sigma^{\frac12+\frac{\gamma}{2(2+\gamma)}})\Big)
\iint_{B_{1}'\times B_{1}'} \frac{ \big(\nabla\eta(x')\cdot(y'-x')\big)^2}{\Big(
\big(1+O(\sigma^{\frac12+\frac{\gamma}{2(2+\gamma)}})\big)
|y'-x'|^2\Big)^{\frac{n+s}2}} \,dx'\,dy'
\\&&\qquad=
\sigma \Big(1+O(\sigma^{\frac12+\frac{\gamma}{2(2+\gamma)}})\Big)
\int_{B_{1}'}
\left (
	\int_{B_{\delta_\circ}'(x')}
	\frac{
		\big(\nabla\psi(x')\cdot(y'-x')\big)^2
	}{
		|y'-x'|^{n+s}
	}
	\,dy'
	+O(\delta_\circ^{-(n-2+s)})
\right )
\,dx'
\\
&&\qquad=
\sigma \Big(1+O(\sigma^{\frac12+\frac{\gamma}{2(2+\gamma)}})\Big)
\iiint_{B_{1}'\times (0,\delta_\circ)\times\S^{n-2}} \frac{ \rho^{n-2}\big(\rho\nabla\psi(x')\cdot\omega\big)^2}{\rho^{n+s}}  \,d\rho\,d{\mathcal{H}}^{n-2}_\omega
\,dx'
+O(\delta_\circ^{-(n-2+s)}\sigma)
\\&&\qquad=
\Big(
	1+O(\sigma^{\frac12+\frac{\gamma}{2(2+\gamma)}})
	+O(\sigma|\log\delta_\circ|)
\Big)
\iint_{B_{1}'\times\S^{n-2}}\big(\nabla\psi(x')\cdot\omega\big)^2  \,d{\mathcal{H}}^{n-2}_\omega \,dx'
+O(\delta_\circ^{-(n-2+s)}\sigma).
\end{eqnarray*}
Accordingly, since, for every~$V\in\R^{n-1}$,
\begin{eqnarray*}&&
\int_{\S^{n-2}} \big(V\cdot\omega\big)^2 \,d{\mathcal{H}}^{n-2}_\omega
=\frac{{\mathcal{H}}^{n-2}(\S^{n-2})}{n-1}|V|^2,
\end{eqnarray*}
recalling also~\eqref{PO-09o} we obtain that
\begin{equation}\label{0po-pKS-1o4r}
\begin{split}
&
\sigma \iint_{\Gamma\times\Gamma} \frac{ \big(\eta(x)-\eta(y)\big)^2}{|x-y|^{n+s}} \,d\HH_x\, d\HH_y
\\&\qquad=
\frac{N\,{\mathcal{H}}^{n-2}(\S^{n-2})}{n-1}
\Big(1+O(\sigma^{\frac12+\frac{\gamma}{2(2+\gamma)}})\Big)
\int_{B'_{1}}
\big|\nabla\psi(x')\big|^2 \,dx'  +O(N^2\sigma|\log\sigma|),
\end{split}\end{equation}
where the errors also depend on $\psi$. 
This controls the second term in~\eqref{stability}.\vspace{3pt}

\noindent{\bf Step~3: Estimate of exterior contribution}. We now focus on the quantity~${\rm Ext}_2^\Omega$
in Proposition~\ref{prop:stabilityineqloc}. For this, we observe that, if~$S$ is the support of~$\psi$,
\begin{equation}\label{0-p324P}
|{\rm Ext}_2^\Omega |\le C\sigma \int_{\Gamma\cap S} d\HH_x \left(\int_{E\cap \Omega^c} \frac{ dy}{|y-x|^{n+s+1} } +\int_{E\cap \partial \Omega} \frac{ d\HH_y}{|y-x|^{n+s}} \right)\le C\bigl(\dist(S,\partial B_1')\bigr)\sigma.
\end{equation}
\vspace{3pt}

\noindent{\bf Step~4: Conclusion.} We now put together the pieces of information obtained in Steps 1, 2 and~3. Namely,
we use 
\eqref{LISGB:23}, \eqref{0po-pKS-1o4r} and~\eqref{0-p324P}, and we conclude from the stability inequality
in Proposition~\ref{prop:stabilityineqloc} that
\begin{align*}
&\quad\;
\left (
	1+O(\sigma^{\frac{\gamma}{4(2+\gamma)}})
\right )
\frac{8\cH^{n-2}(\bS^{n-2})}{n-1}
\sigma
\sum_{\substack{1\leq i<j\leq N \\ j-i \text{ odd}}}
\int_{B_1'} \frac{\psi^2}{|g_j-g_i|^{2}} \,dx'\\
&\leq
\frac{N\,{\mathcal{H}}^{n-2}(\S^{n-2})}{n-1}
\Big(1+O(\sigma^{\frac12+\frac{\gamma}{2(2+\gamma)}})\Big)
\int_{B'_{1}}
\big|\nabla\psi(x')\big|^2 \,dx'  +O(N^2\sigma|\log\sigma|)
+O(\sigma),
\end{align*}
%
from which the desired result plainly follows recalling \eqref{HYP56-01} (and its consequence $N\leq \frac{C}{\sqrt{\sigma}}$).
\end{proof}

The following lemma is a variant of \Cref{prop:FW0}, where we consider only radial test functions supported away from the origin. 

\begin{lemma}\label{prop:FW01}
Under the same assumptions as in Proposition \ref{propKEY2}, let $g_1<g_2<\cdots<g_N:B_1' \to \R$ be the functions obtained in the conclusion of \Cref{propKEY2} and define $\tilde g_i: = g_i/\sqrt \sigma$. 

Then, for every radial function $\psi \in C^2_c(B_1'\setminus \{0\})$,
\begin{equation}\label{eq:stab-FW01}
	\frac{8}{N}
	\sum_{\substack{1\leq i<j\leq N\\ j-i \text{ odd}}}
	\int_{ B_1'\setminus \{0\}} \frac{ \psi^2}{\big|\widetilde g_j-\widetilde g_i\big|^{2}}\,dx'
	\le
	\int_{B'_1\setminus \{0\}} |\partial _r\psi|^2
	\,dx' +O(\sigma^{\gamma'}),
\end{equation}
where $\gamma' = \frac{\gamma}{4(2+\gamma)}$.
Here above, the quantity $O(\sigma^{\gamma'})$ depends also on $\psi$.
\end{lemma}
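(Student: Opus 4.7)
The plan is to adapt the proof of \Cref{prop:FW0} to the conical setting, exploiting the radial symmetry of $\psi$ to reduce the Dirichlet energy to its radial part. The functions $g_i$ from \Cref{propKEY2} are understood throughout as their $1$-homogeneous extensions to $B_1'\setminus\{0\}$, so that the sheets of $\partial E$ are graphs $\{x_n = g_i(x')\}$ with $g_i(x') = |x'|\cdot g_i(x'/|x'|)$. On any annulus $\{r_1 \le |x'| \le r_2\}$ with $0 < r_1 < r_2 < 1$, the extended $g_i$ inherit uniform $C^{2,\gamma}$ bounds, the optimal layer separation $g_{i+1} - g_i \ge c r_1 \sqrt{\sigma}$, and the bound $\|g_i\|_{L^\infty} \le C \sqrt{\sigma}$, which are precisely the hypotheses of \Cref{prop:FW0}.

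Given a radial $\psi$ with $\mathrm{supp}(\psi) \subset \{r_1 \le |x'| \le r_2\}$, I would apply the stability inequality \eqref{stability} to $\partial E$ on the cylindrical annulus $\Omega := \{r_1/2 < |x'| < (1+r_2)/2\} \times (-1,1) \subset \R^n \setminus \{0\}$, using the test function $\eta(x) := \psi(|x'|)\chi(x_n)$, where $\chi \in C^\infty_c((-1,1))$ satisfies $\chi \equiv 1$ on $[-1/2,1/2]$. For $\sigma$ small, all sheets of $\Gamma = \partial E \cap \Omega$ lie in $\{|x_n| \le C\sqrt{\sigma}\} \subset \{|x_n| < 1/2\}$, so $\chi \equiv 1$ on $\Gamma$; consequently $\eta(x) - \eta(y) = \psi(|x'|) - \psi(|y'|)$ for all $x,y\in\Gamma$, exactly matching the integrand computed in \Cref{prop:FW0}. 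Running the three-step argument of that proof verbatim yields: the LHS is bounded below by $(1+O(\sigma^{\gamma'}))\tfrac{8\cH^{n-2}(\S^{n-2})}{n-1}\sigma\sum_{i<j,\,j-i\text{ odd}} \int \psi^2/|g_j-g_i|^{1+s}\,dx'$; the main RHS term equals $\tfrac{N\cH^{n-2}(\S^{n-2})}{n-1}\int_{B_1'}|\nabla\psi|^2\,dx' + O(N^2\sigma|\log\sigma|)$, which for radial $\psi$ coincides with the analogous expression having $|\partial_r\psi|^2$ in place of $|\nabla\psi|^2$; and the exterior term is $O(\sigma)$. Combining, cancelling $\cH^{n-2}(\S^{n-2})/(n-1)$, using $|g_j-g_i|^{1+s} = (1+O(\sigma|\log\sigma|))(g_j-g_i)^2$ as in \eqref{eq:power-2}, and dividing by $N$ (which is bounded uniformly by \Cref{propKEY2}) produces \eqref{eq:stab-FW01}.

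The main potential obstacle is that the cutoff $\chi(x_n)$ could in principle pollute the Dirichlet-energy computation; this is immediately resolved by noting that $\chi \equiv 1$ on $\Gamma$, so the cutoff enters only through the exterior term, which is controlled as in Step~5 of \Cref{prop:FW0}'s proof. A related subtlety is that the domain $\Omega$ is annular rather than simply connected (to avoid the cone's apex), enlarging $\partial\Omega$ relative to \Cref{prop:FW0}'s cylinder; however, since $\mathrm{supp}(\psi)$ is compactly contained in $B_1'\setminus\{0\}$ by hypothesis, the inner and outer boundaries of $\Omega$ stay a positive distance from the support of $\eta$, and the exterior estimates go through with the usual $O(\sigma)$ bound.
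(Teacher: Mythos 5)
Your proposal is correct and takes essentially the same approach as the paper: the paper's proof of \Cref{prop:FW01} consists precisely of rerunning the argument of \Cref{prop:FW0} with all ingredients estimated on an annulus that covers ${\rm supp}\,\psi$ and is compactly contained in $B_1'$. Your additional details (the vertical cutoff $\chi(x_n)$, the fact that the $1$-homogeneous extensions of the $g_i$ satisfy the hypotheses of \Cref{prop:FW0} on such annuli, and the $O(\sigma)$ control of the exterior term away from $\partial\Omega$) are exactly the points the paper leaves implicit.
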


\begin{proof}
The proof is very similar to \Cref{prop:FW0}. One simply needs to gather the corresponding ingredients with estimates on an annulus (which covers the support of $\psi$ and is compactly contained in $B_1'$).
\end{proof}

Now we specialize to the almost minimizer of the Hardy's inequality, which is essentially~$\psi(r)=r^{-\frac{(n-1)-2}{2}}$ cut-off with a parameter $\eps>0$. We recall that the best constant in $\R^{n-1}$ is given by~$\frac{(n-3)^2}{4}$.

\begin{corollary}\label{prop:FW02}
Under the same assumptions as in Proposition \ref{propKEY2}, let $g_1<g_2<\cdots<g_N:B_1' \to \R$ be the functions obtained in the conclusion of \Cref{propKEY2} and define $\tilde g_i: = g_i/\sqrt \sigma$. 

Then, for any given $\epsilon>0$,
\[
	\frac{2}{N}\sum_{\substack{1\leq i<j\leq N\\ j-i \text{ odd}}}
	\int_{ \S^{n-2}} \frac{ 4
	}{\big|\widetilde g_j(\vartheta)-\widetilde g_i(\vartheta)\big|^{2}}\,d{\mathcal{H}}^{n-2}_\vartheta 
	\le
	\frac{(n-3)^2+\eps}{4} {\mathcal{H}}^{n-2}\big(\S^{n-2}\big)+O(\sigma^{\gamma'}),
\]
where $\gamma' = \frac{\gamma}{4(2+\gamma)}$, for $\vartheta =\frac{x'}{|x'|} \in \bS^{n-2}$.
Here above, the quantity $O(\sigma^{\gamma'})$ depends also on $\eps$.
\end{corollary}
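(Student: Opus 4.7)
The plan is to apply the almost-stability inequality of \Cref{prop:FW01} to a radial test function $\psi=\psi(r)$ and exploit the conical structure of $\partial E$ to decouple the integrals over $B_1'\setminus\{0\}$ into a product of an angular integral over $\bS^{n-2}$ and a one-dimensional radial integral in $r=|x'|$. This reduces the corollary to the sharpness of the radial Hardy inequality in $\R^{n-1}$.

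First, I would make the separation of variables explicit. Since $\partial E\subset \R^n$ is a cone, the graph functions extend $1$-homogeneously, so $\widetilde g_i(x')=r\,\widetilde g_i(\vartheta)$ with $r=|x'|$ and $\vartheta=x'/|x'|\in\bS^{n-2}$; consequently $|\widetilde g_j(x')-\widetilde g_i(x')|^2=r^2\,|\widetilde g_j(\vartheta)-\widetilde g_i(\vartheta)|^2$. For radial $\psi\in C_c^2(B_1'\setminus\{0\})$, the polar decomposition $dx'=r^{n-2}\,dr\,d\cH^{n-2}_\vartheta$ together with $|\partial_r\psi(|x'|)|^2=|\psi'(r)|^2$ turn \eqref{eq:stab-FW01} into
\[
\left (\int_0^1 \psi^2(r)\,r^{n-4}\,dr\right )\frac{8}{N}\sum_{\substack{1\le i<j\le N\\ j-i\text{ odd}}}\int_{\bS^{n-2}}\frac{d\cH^{n-2}_\vartheta}{|\widetilde g_j(\vartheta)-\widetilde g_i(\vartheta)|^2}
\le \cH^{n-2}(\bS^{n-2})\int_0^1|\psi'(r)|^2\,r^{n-2}\,dr +O_\psi(\sigma^{\gamma'}).
\]
Dividing through by the positive quantity $\int_0^1\psi^2(r)\,r^{n-4}\,dr$ and rewriting $\tfrac{8}{N}\sum\int \tfrac{1}{|\cdot|^2}=\tfrac{2}{N}\sum\int \tfrac{4}{|\cdot|^2}$, the left-hand side of the corollary gets bounded by $\cH^{n-2}(\bS^{n-2})\,R[\psi]+O_\psi(\sigma^{\gamma'})$, where $R[\psi]:=\int_0^1|\psi'|^2\,r^{n-2}\,dr\big/\int_0^1\psi^2\,r^{n-4}\,dr$ is the radial Hardy ratio.

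Next I would choose $\psi$ as an almost-minimizer of $R$. By the classical Hardy inequality in $\R^{n-1}$ (with $n\ge 4$, so $n-1\ge 3$), $\inf_\psi R[\psi]=(n-3)^2/4$, formally saturated by the non-admissible function $r^{-(n-3)/2}$. A standard construction is $\psi_\eps(r)=\phi(r)\,r^{-(n-3)/2+\tau}$, with $\tau=\tau(\eps)>0$ small and $\phi\in C_c^\infty((0,1))$ a smooth cutoff equal to $1$ on $[\eta,1/2]$; integrating by parts the cross term in $|\psi_\eps'|^2$ produces the ``bulk'' contribution $\alpha(n-3-\alpha)=\tfrac{(n-3)^2}{4}-\tau^2$ with $\alpha=\tfrac{n-3}{2}-\tau$, plus a $\phi'$-term that is $O(1)$ in $\eta,\tau$. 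Since the bulk denominator $\int_\eta^{1/2} r^{-1+2\tau}\,dr\sim \log(1/(2\eta))$ diverges as $\eta\downarrow 0$, the $\phi'$-error becomes $o(1)$ after sending $\eta\downarrow 0$ first and then $\tau\downarrow 0$, yielding $R[\psi_\eps]\le (n-3)^2/4+\eps/4$. Plugging into the previous display gives exactly the claim, with the $O(\sigma^{\gamma'})$ constant depending on $\eps$, as permitted.

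The only genuine technical point is the last step: the ordering of the two limits in the Hardy optimization, together with the verification that the cutoff contribution of $\phi'$ is of lower order than the bulk. This is a standard (if careful) computation; everything else is a mechanical consequence of \Cref{prop:FW01} combined with the $1$-homogeneity inherited from the cone structure.
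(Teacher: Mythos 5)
Your proposal is correct and follows essentially the same route as the paper: apply \Cref{prop:FW01} to a radial test function, use the $1$-homogeneity of the $\widetilde g_i$ coming from the cone structure to factor the integrals into an angular part over $\S^{n-2}$ and a radial Hardy ratio, and then pick a near-optimal radial Hardy test function to reach the constant $\frac{(n-3)^2+\eps}{4}$. The only (harmless) imprecision is in your justification of the near-optimizer: for fixed $\tau>0$ the bulk integral $\int_\eta^{1/2}r^{-1+2\tau}\,dr$ stays bounded as $\eta\downarrow0$ (of size $\sim\tau^{-1}$), so the cutoff errors are absorbed because this quantity blows up as $\tau\downarrow0$, not as $\eta\downarrow0$ — the conclusion $R[\psi_\eps]\le\frac{(n-3)^2+\eps}{4}$ is unaffected.
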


\begin{proof}
We simplify \eqref{eq:stab-FW01} using the fact that $\widetilde{g}_i$ are homogeneous of degree $1$. Indeed, writing~$x'=r\vartheta$ for $r=|x'|$ and $\vartheta=\frac{x'}{|x'|}$, we have that~$\widetilde{g}_i(x')=\widetilde{g}_i(r\vartheta)=r\widetilde{g}_i(\vartheta)$, and so
\begin{align*}
\int_{ B_1'\setminus \{0\}}
\frac{ \psi(x')^2}{\big|\widetilde g_i(x')-\widetilde g_j(x')\big|^{2}}\,dx'
&=
\left (
\int_{0}^{1}
	\frac{\psi(r)^2}{r^2}
	r^{n-2}
\,dr
\right )
\int_{\bS^{n-2}}
	\frac{1}{\big|\widetilde{g}_i(\vartheta)-\widetilde{g}_j(\vartheta)\big|^2}
\,d\cH^{n-2}_{\vartheta}.
\end{align*}
On the other hand,
\begin{align*}
\int_{B_1'\setminus\{0\}}
	|\partial_r \psi|^2
\,dx'
&=
\cH^{n-2}(\bS^{n-2})
\int_{0}^{1}
	\psi'(r)^2
	r^{n-2}
\,dr.
\end{align*}
By saturating Hardy's inequality for radial functions in $\R^{n-1}$, for any $\eps>0$ we can choose~$\psi\in C_c^2(B_1')$ such that
\[
\int_{0}^{1}
	\psi'(r)^2
	r^{n-2}
\,dr
\leq
\frac{(n-3)^2+\eps}{4}
\int_{0}^{1}
	\frac{\psi(r)^2}{r^2}
	r^{n-2}
\,dr.
\]
Hence, the assertion follows after a simple rearrangement.
\end{proof}

\subsection{Completion of proof of  Theorem~\ref{thmmain2}}

\label{sec:proof-thm-2}

We can now classify stable $s$-minimal cones in~$\R^4$ for $s$ close to $1$,
as stated in Theorem~\ref{thmmain2}.

\begin{proof} [Proof of Theorem~\ref{thmmain2}]
Our computation is inspired by an integral estimate by A. Farina, see~\cite{Farina, Ma-Wei}. In all the argument below one could set $n=4$, although we write the final part of the argument for general  $n$ because we believe it is interesting to point out that the main computation works in dimension $n\le7$ (curiously the same dimensions as in Simon's  celebrated result for $s=1$ in \cite{MR233295}).

We suppose, by contradiction, that~$\partial E$ is not a hyperplane and we employ
Theorem~\ref{thmmain1} (in the version for cones in dimension~$n=4$, as put forth in Remark~\ref{THRE}).
More specifically, for $N\geq 2$ and any~$i\in\{1,\dots, N\}$, we know that~$\widetilde g_i:={g_i}/{\sqrt\sigma}$ satisfies the equation established in \Cref{cor:PRO-LI},  
namely
\begin{equation*}
\Delta_{\bS^{n-2}}\widetilde g_i (\vartheta)
+(n-2)\widetilde g_i(\vartheta)
=2\sum_{\substack{1\le j\le N \\ j\neq i}}
\frac{(-1)^{i-j}}{\widetilde g_{i+1}(\vartheta)-\widetilde g_i(\vartheta)}
+O(\sigma^{\gamma'}),
\end{equation*}
for some $\gamma'>0$.
 By taking a difference with the equation for $\widetilde{g}_{i+1}$ and discarding the tails in the alternate summation in $j$, we have that, for any $1\leq i \leq N-1$,
\begin{equation*}
\Delta_{\bS^{n-2}}(\widetilde g_{i+1}-\widetilde g_i) (\vartheta)
+(n-2)(\widetilde g_{i+1}-\widetilde g_i) (\vartheta)
\leq 
\frac{4}{\widetilde g_{i+1}(\vartheta)-\widetilde g_i(\vartheta)}
+O(\sigma^{\gamma'}).
\end{equation*}
Accordingly, defining~$v_i(\vartheta):=\widetilde{g}_{i+1}(\vartheta)-\widetilde{g}_{i}(\vartheta)$, we infer that
\begin{equation*}
\Delta_{\bS^{n-2}}v_i + (n-2) v_i \leq \dfrac{4}{v_i} + O(\sigma^{\gamma'}).
\end{equation*}
Multiplying both sides of this inequality by $v_i^{-1}$ and integrating by parts, we deduce that
\begin{equation}\label{Ha:1}\begin{split}
\int_{\bS^{n-2}}
    |\nabla_{\S^{n-2}} \log v_i|^2
\,d\cH^{n-2}
+(n-2)\cH^{n-2}(\bS^{n-2})
\leq
    4\int_{\bS^{n-2}}
        v_i^{-2}
    \,d\cH^{n-2}+ O(\sigma^{\gamma'}).
\end{split}\end{equation}

Furthermore, by \Cref{prop:FW02},
\begin{align*}
	\frac{2(N-1)}{N}
	\cdot\frac{1}{N-1}\sum_{1\leq i \leq N-1}
	\int_{ \S^{n-2}} \frac{ 4
	}{\big|\widetilde g_{i+1}(\vartheta)-\widetilde g_i(\vartheta)\big|^{2}}\,d{\mathcal{H}}^{n-2}_\vartheta 
	\le
	\frac{(n-3)^2+\eps}{4} {\mathcal{H}}^{n-2}\big(\S^{n-2}\big)+O(\sigma^{\gamma'}).
\end{align*}
Since $\frac{2(N-1)}{N}\geq 1$,
there exists $i\in\{1,\dots,N-1\}$ such that
%
%
%
\begin{equation}\label{Ha:2}\begin{split}
    4\int_{\bS^{n-2}}
        v_i^{-2}
    \,d\cH^{n-2}
\leq
    \dfrac{(n-3)^2+\e}{4}
    \cH^{n-2}(\bS^{n-2}) + O(\sigma^{\gamma'}).
\end{split}\end{equation}
When~$3\leq n\leq 7$, we have that~$n-2 >\frac{(n-3)^2}{4}$, and
consequently we infer from~\eqref{Ha:1} and~\eqref{Ha:2} that
\[
\int_{\bS^{n-2}}
    |\nabla_{\bS^{n-2}} \log v_i|^2
\,d\cH^{n-2}
+c_0\,\cH^{n-2}(\bS^{n-2})
\leq O(\sigma^{\gamma'}),
\]
for some~$c_0=(n-2)-\frac{(n-3)^2+\eps}{4}>0$. We obtain a contradiction by taking~$\eps$ and then~$\sigma$ conveniently small (recall that~$\gamma'>0$).
\end{proof}

\appendix

\section{Stability in a bounded domain}
\label{sec:app-stab}

In this appendix we borrow the notations of \cite[Section 6]{FFMMM}. Recall that \cite[Theorem 6.1]{FFMMM} states that the first and second variations of the nonlocal perimeter $P_K(\cdot ;\Omega)$ with respect to a $C^1$ kernel $K(z)=K(-z)=O(|z|^{-(n+s)})$ in a given open set $\Omega \subset \R^n$ along a given vector field $X\in C_c^\infty(\Omega;\R^n)$ at a $C^{1,1}$ set $E$ are given by
\begin{align}
	\label{eq:var-1st}
\dfrac{d}{dt}P_K(E_t;\Omega)\bigg|_{t=0}
&=\int_{\partial E}
	{\rm H}_{K,\partial E}(x)
	\,\zeta(x)
\,d\cH^{n-1}_x,\\
\nonumber
\dfrac{d^2}{dt^2}P_K(E_t;\Omega)\bigg|_{t=0}
&=\iint_{\partial E\times \partial E}
	K(x-y)|\zeta(x)-\zeta(y)|^2
\,d\cH^{n-1}_x\,d\cH^{n-1}_y
-\int_{\partial E}
{\rm c}_{K,\partial E}^2(x)
\,\zeta^2(x)
\,d\cH^{n-1}_x
\\
	\label{eq:var-2nd}
&\qquad
+\int_{\partial E}
	{\rm H}_{K,\partial E}(x)
	\Big(
		({\rm div} X(x))\zeta(x)
		-{\rm div}_{\tau}(\zeta (x)X_\tau(x))
	\Big)
\,d\cH^{n-1}_x.
\end{align}
Here above, $E_t=\Phi_t(E)$, where $\{\Phi_t\}_{t\in\R}$ is the flow induced by $X$ solving
\[\begin{cases}
\partial_t \Phi_t(x)=X(\Phi_t(x)),
	& t\in \R,\\
\Phi_0(x)=x.
\end{cases}\] Moreover~$\zeta=X\cdot \nu_E$
and the subscript $\tau$ denotes tangential components (orthogonal to $\nu_E$).

Also,
\[
{\rm H}_{K,\partial E}(x)
={\rm p.v.}\int_{\R^n}
	\bigl(
		\chi_{E^c}(y)
		-\chi_E(y)
	\bigr)
	K(x-y)
\,dy,
	\qquad
x\in \partial E,
\]
is the $K$-nonlocal mean curvature and
\[
{\rm c}_{K,\partial E}^2(x)
=\int_{\partial E}
	|\nu_{E}(x)-\nu_{E}(y)|^2
	\,K(x-y)
\,d\cH^{n-1}_y,
	\qquad
x\in \partial E,
\]
is the $K$-nonlocal second fundamental form squared.

While the smoothness of $E$ ensures the convergence of the first and second integrals on the right hand side of \eqref{eq:var-2nd}, we point out that an alternative expression replacing \eqref{eq:var-2nd} makes sense whenever $\partial E\cap \Omega$ is of class $C^{1,1}$. More precisely, we have the following result which immediately gives \Cref{prop:stabilityineqloc}.

\begin{proposition}\label{qWEKQqW}
Let $\Omega$, $K$, $X$, $\Phi_t$, $E_t$, $\zeta$ be as above. Let $E\subset \R^n$ be an open set such that $\partial E\cap \Omega$ is a $C^{1,1}$ hypersurface. If $P_K(E;\Omega)<\infty$, then
\begin{align}\nonumber
&\quad\;
\dfrac{d^2}{dt^2}P_K(E_t;\Omega)\bigg|_{t=0}\\
\nonumber
&=\iint_{(\partial E \cap \Omega)\times (\partial E \cap \Omega)}
	K(x-y)|\zeta(x)-\zeta(y)|^2
	\,d\cH^{n-1}_x\,d\cH^{n-1}_y
-\int_{\partial E \cap \Omega}
	{\rm c}_{K,\partial E\cap \Omega}^2
	\,\zeta^2
	\,d\cH^{n-1}
\\
\nonumber
&\qquad
+2\int_{\partial E\cap \Omega}\,
	\zeta^2(x) \nu_E(x)\cdot
	\left(
	\int_{E\setminus\Omega}
		\nabla K(y-x)
	\,dy
	+\int_{E\cap \partial\Omega}
		K(y-x)
		n_\Omega(y)
	\,d\cH^{n-1}_y
	\right)\,d\cH^{n-1}_x\\
	\label{eq:var-2nd-loc}
&\qquad
	+\int_{\partial E \cap \Omega}
	{\rm H}_{K,\partial E}
	\Big(
	({\rm div} X)\zeta
	-{\rm div}_{\tau}(\zeta X_\tau)
	\Big)
	\,d\cH^{n-1},
\end{align}
where $n_\Omega$ denotes the outward unit normal of $\Omega$.
\end{proposition}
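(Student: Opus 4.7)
\emph{Strategy.} The plan is to rewrite each term on the right hand side of \eqref{eq:var-2nd} by exploiting that $\zeta = X \cdot \nu_E$ is compactly supported inside $\Omega$ (since $X \in C_c^\infty(\Omega;\R^n)$), so that every global integral either restricts directly to $\partial E \cap \Omega$ or produces a clean ``exterior residue'' that can then be absorbed via the divergence theorem.

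\emph{Localization.} The third (mean-curvature) integral in \eqref{eq:var-2nd} restricts automatically to $\partial E \cap \Omega$, reproducing the last line of \eqref{eq:var-2nd-loc}. For the double integral, I would split $\partial E = (\partial E \cap \Omega) \cup (\partial E \setminus \Omega)$ and observe that the diagonal block produces the first term of \eqref{eq:var-2nd-loc}, while the two off-diagonal blocks collapse, thanks to $\zeta \equiv 0$ on $\partial E \setminus \Omega$, into $2 \int_{\partial E \cap \Omega} \zeta^2(x) \big(\int_{\partial E \setminus \Omega} K(x-y) \, d\cH^{n-1}_y\big)\, d\cH^{n-1}_x$. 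In parallel, for $x \in \partial E \cap \Omega$ I would decompose
\[
c^2_{K, \partial E}(x) = c^2_{K, \partial E \cap \Omega}(x) + \int_{\partial E \setminus \Omega} |\nu_E(x) - \nu_E(y)|^2 K(x-y) \, d\cH^{n-1}_y,
\]
isolating the desired local $c^2_{K, \partial E \cap \Omega}$ term together with an exterior residue.

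\emph{Algebraic collapse and divergence theorem.} Combining these two exterior residues via the identity $2 - |\nu_E(x) - \nu_E(y)|^2 = 2\,\nu_E(x) \cdot \nu_E(y)$ and the symmetry $K(x-y) = K(y-x)$ packages them into
\[
2 \int_{\partial E \cap \Omega} \zeta^2(x)\, \nu_E(x) \cdot \left(\int_{\partial E \setminus \overline\Omega} \nu_E(y)\, K(y-x)\, d\cH^{n-1}_y\right) d\cH^{n-1}_x.
\]
I would then apply Gauss's theorem componentwise to the $C^1$ vector field $y \mapsto K(y-x) e_i$ on the set $E \setminus \Omega$, whose boundary splits as $\partial E \setminus \overline\Omega$ (with outer normal $\nu_E$) and $E \cap \partial\Omega$ (with outer normal $-n_\Omega$, since at these points exiting $E \setminus \Omega$ means entering $\Omega$). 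This rewrites the inner surface integral as $\int_{E \setminus \Omega} \nabla K(y-x)\, dy + \int_{E \cap \partial\Omega} K(y-x)\, n_\Omega(y)\, d\cH^{n-1}_y$, which is exactly the exterior term appearing in \eqref{eq:var-2nd-loc}.

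\emph{Main obstacle.} The delicate point is justifying Gauss's theorem, since $\partial E$ is only assumed $C^{1,1}$ inside $\Omega$ and the kernel $K$ is singular on the diagonal. The fix is twofold. First, I invoke the divergence theorem in the sense of sets of finite perimeter: the assumption $P_K(E;\Omega) < \infty$ together with the $C^{1,1}$ regularity of $\partial E$ near the compact set $\mathrm{supp}(X) \subset \Omega$ ensures that $E$ is a Caccioppoli set in a neighborhood of $\mathrm{supp}(\zeta)$. Second, all the integrals in the rearrangement are absolutely convergent thanks to $\dist(\mathrm{supp}(\zeta), \partial\Omega) > 0$, which keeps $|y-x|$ bounded away from $0$, combined with the decay $K(z) = O(|z|^{-(n+s)})$ and $|\nabla K(z)| = O(|z|^{-(n+s+1)})$ at infinity. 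Once absolute convergence is in hand, Fubini legitimizes all the preceding rearrangements and the proof is complete.
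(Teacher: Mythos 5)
There is a genuine gap, and it sits at the very first step: you take the global second--variation formula \eqref{eq:var-2nd} as your starting point and then ``localize'' it. But \eqref{eq:var-2nd} is the FFMMM formula for a set whose boundary is $C^{1,1}$ \emph{globally}, whereas the proposition only assumes that $\partial E\cap\Omega$ is $C^{1,1}$ and that $P_K(E;\Omega)<\infty$; as Remark \ref{rmk:ext2} stresses, $E$ may be arbitrarily rough outside $\Omega$. Under the stated hypotheses the objects you manipulate need not exist: the off-diagonal blocks $\int_{\partial E\setminus\Omega}K(x-y)\,d\cH^{n-1}_y$, the exterior part of ${\rm c}^2_{K,\partial E}$, and the surface integral $\int_{\partial E\setminus\overline\Omega}K(y-x)\,\nu_E(y)\,d\cH^{n-1}_y$ all presuppose that $\partial E\setminus\Omega$ is $(n-1)$-rectifiable, carries a normal $\cH^{n-1}$-a.e., and has controlled area growth, none of which follows from $P_K(E;\Omega)<\infty$ (a fractional perimeter bound does not imply locally finite classical perimeter, let alone rectifiability of the topological boundary). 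Your proposed fix does not address this: the $C^{1,1}$ regularity near ${\rm supp}\,X$ and the resulting Caccioppoli property hold \emph{inside} $\Omega$, while the Gauss--Green theorem you need is applied on $E\setminus\Omega$, exactly where no regularity is available. So your argument proves the identity only under the additional assumption that $\partial E$ is globally $C^{1,1}$ (in which case your final step is precisely the reverse of the rewriting in Remark \ref{rmk:ext2}), not the proposition as stated.

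The paper avoids this by never writing down any integral over $\partial E\setminus\Omega$. It redoes the FFMMM computation with a regularized, compactly supported kernel $K_\delta$, and the key point is an alternative formula for $\nabla{\rm H}_{K_\delta,\partial E}(x)$: the gradient is split into bulk integrals over $E\cap\Omega$, $E^c\cap\Omega$, $E\setminus\Omega$, $E^c\setminus\Omega$; Gauss--Green is applied only to the two pieces inside $\Omega$ (where $\partial E\cap\Omega$ is $C^{1,1}$), and the two exterior bulk pieces are simplified using the trivial identity $\int_{\Omega}\nabla K_\delta(y-x)\,dy+\int_{\Omega^c}\nabla K_\delta(y-x)\,dy=0$, which produces exactly the terms $\int_{E\setminus\Omega}\nabla K_\delta(y-x)\,dy$ and $\int_{E\cap\partial\Omega}K_\delta(y-x)\,n_\Omega(y)\,d\cH^{n-1}_y$ without touching $\partial E$ outside $\Omega$; one then lets $\delta\downarrow0$. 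If you want to salvage your route, you would have to either add the global regularity hypothesis (weakening the statement) or rework the variation computation itself along these lines, keeping the exterior contribution as a bulk integral from the start. Your algebraic identity $2-|\nu_E(x)-\nu_E(y)|^2=2\,\nu_E(x)\cdot\nu_E(y)$ and the orientation bookkeeping for $n_\Omega$ are fine as far as they go; the problem is the hypotheses they require.
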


\begin{remark}\label{rmk:ext2}{\rm
We emphasize that in 	\eqref{eq:var-2nd-loc}, the set~$E$ can be arbitrarily rough outside $\Omega$. When $\partial E$ is $C^{1,1}$ globally in $\R^n$, the integral on the second line (exterior contribution) can be rewritten via Gauss-Green theorem as
\begin{align*}
2\int_{\partial E\cap \Omega}\,
\zeta^2(x) \nu_E(x)\cdot
\left(
\int_{\partial E\cap \Omega^c}
	K(y-x)
	\nu_E(y)
\,d\cH^{n-1}_y
\right)\,d\cH^{n-1}_x.
\end{align*}
}
\end{remark}

\begin{proof}[Proof of Proposition~\ref{qWEKQqW}]
We follow closely the proof of \cite[Theorem 6.1]{FFMMM}.
\vspace{3pt}

\noindent{\bf Step~1}.
We show \eqref{eq:var-2nd-loc} with $K$ replaced by a regular even kernel $K_\delta$ with compact support (such that in particular $K_\delta(z)=K(z)$ for $2\delta\leq |z|<(2\delta)^{-1}$ and $K_\delta(z)=0$ for $|z|<\delta$ or $|z|>\delta^{-1}$; for a precise definition see \cite[Proof of Proposition 6.3]{FFMMM}). Without any modifications of~\cite{FFMMM},
we obtain the first variation 
\[
\dfrac{d}{dt}
P_{K_\delta}(E_t;\Omega)
=\int_{\partial E_t}
	{\rm H}_{K_\delta,\partial E_t}
	\, (X\cdot \nu_{E_t})
\,d\cH^{n-1}
=\int_{\partial E}
	{\rm H}_{K_\delta,\partial E_t}(\Phi_t)
	\,
	(X(\Phi_t)\cdot\nu_{E_t}(\Phi_t))
	J_{\Phi_t}^{\partial E}
\,d\cH^{n-1},
\]
where $J_{\Phi_t}^{\partial E}$ denotes the tangential Jacobian of $\Phi_t$ with respect to $\partial E$. Indeed, all integrals over $\partial E_t$ (as a result of the Divergence Theorem) carry the vector field $X$ which is compactly supported in $\Omega$. The same reasoning applies partly to the second variation
\begin{align*}
\dfrac{d^2}{dt^2}
	P_{K_\delta}(E_t;\Omega)
\bigg|_{t=0}
&=\int_{\partial E}
	\dfrac{d}{dt}
	\bigl(
		{\rm H}_{K_{\delta},\partial E_t}
		(\Phi_t)
	\bigr)
	\bigg|_{t=0}
	\,(X\cdot \nu_E)
\,d\cH^{n-1}\\
&\qquad
+\int_{\partial E}
	{\rm H}_{K_{\delta},\partial E_t}
	(\Phi_t)
	\dfrac{d}{dt}
	\bigl(
		(X(\Phi_t)\cdot \nu_{E_t}(\Phi_t))
		J_{\Phi_t}^{\partial E_t}
	\bigr)
\,d\cH^{n-1}\\
&=J_1+J_2.
\end{align*}
In fact, by inspecting the computations in \cite[Proof of Theorem 6.1]{FFMMM}, we have
\[
J_2
=\int_{\partial E}
	{\rm H}_{K,\partial E}
	\Big(
		({\rm div} X)\zeta
		-{\rm div}_{\tau}(\zeta X_\tau)
	\Big)
\,d\cH^{n-1}
-\int_{\partial E}
	\bigl(
		\nabla_\tau
		{\rm H}_{K_\delta,\partial E}
		\cdot X_\tau
	\bigr)
	\zeta
\,d\cH^{n-1},
\]
and
\begin{align*}
	\dfrac{d}{dt}
	\bigl(
	{\rm H}_{K_{\delta},\partial E_t}
	(\Phi_t(x))
	\bigr)
	\bigg|_{t=0}
&=
-2\int_{\partial E}
	K_\delta(x-y)
	X(y) \cdot \nu_E(y)
\,d\cH^{n-1}_y
+\nabla {\rm H}_{K_\delta,\partial E}(x)
\cdot X(x)\\
&=
-2\int_{\partial E}
	K_\delta(x-y)
	\zeta(y)
\,d\cH^{n-1}_y
+\nabla {\rm H}_{K_\delta,\partial E}(x)
\cdot \nu_E(x) \, \zeta(x)\\
&\qquad
+\nabla_\tau {\rm H}_{K_\delta,\partial E}(x)
\cdot X_\tau(x),
\end{align*}
where we have used the relation $X=\zeta \nu_E+X_\tau$.

In contrast to \cite{FFMMM}, we need alternative formulae for $\nabla {\rm H}_{K_\delta,\partial E}$.
We have that
\begin{align*}
\nabla {\rm H}_{K_\delta,\partial E}(x)
&=\int_{E^c}
	\nabla K_\delta(x-y)
\,dy
-\int_{E}
	\nabla K_\delta(x-y)
\,dy\\
&=-\int_{E^c\cap \Omega}
\nabla K_\delta(y-x)
\,dy
+\int_{E\cap \Omega}
\nabla K_\delta(y-x)
\,dy\\
&\qquad-\int_{E^c\setminus\Omega}
\nabla K_\delta(y-x)
\,dy
+\int_{E\setminus\Omega}
\nabla K_\delta(y-x)
\,dy.
\end{align*}
Using the Gauss-Green Theorem,
\begin{align*}
-\int_{E^c\cap \Omega}
\nabla K_\delta(y-x)
\,dy
&=\int_{\partial E\cap \Omega}
	K_\delta(y-x)
	\nu_E(y)
\,d\cH^{n-1}_y
-\int_{E^c \cap \partial \Omega}
	K_\delta(y-x)
	n_{\Omega}(y)
\,d\cH^{n-1}_y.
\end{align*}
Similarly,
\begin{align*}
\int_{E\cap \Omega}
	\nabla K_\delta(y-x)
\,dy
&=\int_{\partial E\cap \Omega}
	K_\delta(y-x)
	\nu_E(y)
\,d\cH^{n-1}_y
+\int_{E\cap \partial \Omega}
	K_\delta(y-x)
	n_\Omega(y)
\,d\cH^{n-1}_y.
\end{align*}
Thus,
\begin{align*}
	\nabla {\rm H}_{K_\delta,\partial E}(x)
&=2\int_{\partial E\cap \Omega}
	K_\delta(y-x)
	\nu_E(y)
	\,d\cH^{n-1}_y
-\int_{E^c\setminus\Omega}
	\nabla K_\delta(y-x)
	\,dy
+\int_{E\setminus\Omega}
	\nabla K_\delta(y-x)
	\,dy\\
&\qquad
	-\int_{E^c \cap \partial \Omega}
	K_\delta(y-x)
	n_{\Omega}(y)
	\,d\cH^{n-1}_y
+\int_{E\cap \partial \Omega}
	K_\delta(y-x)
	n_\Omega(y)
	\,d\cH^{n-1}_y.
\end{align*}
On the other hand, since $K_\delta$ has compact support, the following identity concerning the integrals on $\partial \Omega$ and $\Omega^c$ holds,
\begin{align*}
0&=\int_{\Omega^c}
	\nabla K_\delta(y-x)
\,dy
+\int_{\Omega}
	\nabla K_\delta(y-x)
\,dy\\
&=\int_{E^c\setminus\Omega}
\nabla K_\delta(y-x)
\,dy
+\int_{E\setminus\Omega}
\nabla K_\delta(y-x)
\,dy\\
&\qquad
+\int_{E^c \cap \partial \Omega}
K_\delta(y-x)
n_{\Omega}(y)
\,d\cH^{n-1}_y
+\int_{E \cap \partial \Omega}
K_\delta(y-x)
n_{\Omega}(y)
\,d\cH^{n-1}_y.
\end{align*}
Adding the last two equations, we obtain the simplified expression
\begin{align*}
	\nabla {\rm H}_{K_\delta,\partial E}(x)
&=2\int_{\partial E\cap \Omega}
	K_\delta(y-x)
	\nu_E(y)
	\,d\cH^{n-1}_y
+2\int_{E\setminus\Omega}
	\nabla K_\delta(y-x)
	\,dy
	\\
&\qquad
	+2\int_{E\cap \partial \Omega}
	K_\delta(y-x)
	n_\Omega(y)
	\,d\cH^{n-1}_y.
\end{align*}
Consequently,
\begin{align*}
&\quad\;
	\dfrac{d}{dt}
	\bigl(
	{\rm H}_{K_{\delta},\partial E_t}
	(\Phi_t(x))
	\bigr)
	\bigg|_{t=0}\\
&=
-2\int_{\partial E}
	K_\delta(x-y)
	\zeta(y)
	\,d\cH^{n-1}_y
+2\int_{\partial E\cap \Omega}
	K_\delta(y-x)
	\nu_E(y)\cdot \nu_E(x)
	\,\zeta(x)
\,d\cH^{n-1}_y\\
&\qquad
	+2\zeta(x)\nu_E(x)\cdot
	\left(
		\int_{E\setminus\Omega}
			\nabla K_\delta(y-x)
		\,dy
		+\int_{E\cap \partial\Omega}
			K_\delta(y-x)
			n_\Omega(y)
		\,d\cH^{n-1}_y
	\right)\\
&\qquad
	+\nabla_\tau {\rm H}_{K_\delta,\partial E}(x)
	\cdot X_\tau(x),
\end{align*}
and so
\begin{align*}
J_1
&=\iint_{(\partial E\cap\Omega) \times (\partial E\cap \Omega)}
	K_\delta(x-y)
	\Bigl(
		-2\zeta(x)\zeta(y)
		+2\nu_E(x)\cdot \nu_E(y)
		\,\zeta^2(x)
	\Bigr)
\,d\cH^{n-1}_y\,d\cH^{n-1}_x\\
&\qquad
+2\int_{\partial E}\,
	\zeta^2(x) \nu_E(x)\cdot
	\left(
		\int_{E\setminus\Omega}
		\nabla K_\delta(y-x)
		\,dy
		+\int_{E\cap \partial\Omega}
		K_\delta(y-x)
		n_\Omega(y)
		\,d\cH^{n-1}_y
	\right)\,d\cH^{n-1}_x\\
&\qquad
+\int_{\partial E}
	\bigl(
		\nabla_\tau {\rm H}_{K_\delta,\partial E}
		\cdot X_\tau
	\bigr)
	\zeta
\,d\cH^{n-1}.
\end{align*}
Completing the square in the last double integral (using the symmetry of $K_\delta$) and canceling the last term common to $J_1$ and $J_2$, we arrive at \eqref{eq:var-2nd-loc} with $K_\delta$ in place of $K$.
\vspace{3pt}

\noindent{\bf Step~2}. We show \eqref{eq:var-2nd-loc} by taking the limit $\delta\downarrow0$. All integrals except the ``new'' ones involving the boundary or exterior of $\Omega$ are dealt with in \cite[Proof of Theorem 6.1]{FFMMM}. The convergence of these remaining integrals follows simply from the Dominated Convergence Theorem.
\end{proof}

\begin{bibdiv}
\begin{biblist}


\bib{Almgren}{article}{
   author={Almgren, F. J., Jr.},
   title={Some interior regularity theorems for minimal surfaces and an
   extension of Bernstein's theorem},
   journal={Ann. of Math. (2)},
   volume={84},
   date={1966},
   pages={277--292},
   issn={0003-486X},
   review={\MR{200816}},
   doi={10.2307/1970520},
}

\bib{MR2765717}{article}{
   author={Ambrosio, Luigi},
   author={De Philippis, Guido},
   author={Martinazzi, Luca},
   title={Gamma-convergence of nonlocal perimeter functionals},
   journal={Manuscripta Math.},
   volume={134},
   date={2011},
   number={3-4},
   pages={377--403},
   issn={0025-2611},
   review={\MR{2765717}},
   doi={10.1007/s00229-010-0399-4},
}

\bib{BFV}{article}{
   author={Barrios, Bego\~{n}a},
   author={Figalli, Alessio},
   author={Valdinoci, Enrico},
   title={Bootstrap regularity for integro-differential operators and its
   application to nonlocal minimal surfaces},
   journal={Ann. Sc. Norm. Super. Pisa Cl. Sci. (5)},
   volume={13},
   date={2014},
   number={3},
   pages={609--639},
   issn={0391-173X},
   review={\MR{3331523}},
}

\bib{MR2095633}{article}{
	author={Bass, Richard F.},
	author={Kassmann, Moritz},
	title={Harnack inequalities for non-local operators of variable order},
	journal={Trans. Amer. Math. Soc.},
	volume={357},
	date={2005},
	number={2},
	pages={837--850},
	issn={0002-9947},
	review={\MR{2095633}},
	doi={10.1090/S0002-9947-04-03549-4},
}

\bib{MR250205}{article}{
   author={Bombieri, E.},
   author={De Giorgi, E.},
   author={Giusti, E.},
   title={Minimal cones and the Bernstein problem},
   journal={Invent. Math.},
   volume={7},
   date={1969},
   pages={243--268},
   issn={0020-9910},
   review={\MR{250205}},
   doi={10.1007/BF01404309},
}

\bib{MR3586796}{article}{
   author={Bourgain, Jean},
   author={Brezis, Haim},
   author={Mironescu, Petru},
   title={Another look at Sobolev spaces},
   conference={
      title={Optimal control and partial differential equations},
   },
   book={
      publisher={IOS, Amsterdam},
   },
   date={2001},
   pages={439--455},
   review={\MR{3586796}},
}

\bib{MR4116635}{article}{
	author={Cabr\'{e}, Xavier},
	author={Cinti, Eleonora},
	author={Serra, Joaquim},
	title={Stable $s$-minimal cones in $\Bbb{R}^3$ are flat for $s\sim 1$},
	journal={J. Reine Angew. Math.},
	volume={764},
	date={2020},
	pages={157--180},
	issn={0075-4102},
	review={\MR{4116635}},
	doi={10.1515/crelle-2019-0005},
}

\bib{newprep}{article}{
	author={Cabr\'{e}, Xavier},
	author={Cinti, Eleonora},
	author={Serra, Joaquim},
	title={Stable solutions to the fractional Allen-Cahn equation in the nonlocal perimeter regime},
	journal={arXiv:2111.06285},
	volume={},
	date={2021},
	pages={},
	issn={},
	review={},
	doi={},
}

\bib{MR3934589}{article}{
   author={Cabr\'{e}, Xavier},
   author={Cozzi, Matteo},
   title={A gradient estimate for nonlocal minimal graphs},
   journal={Duke Math. J.},
   volume={168},
   date={2019},
   number={5},
   pages={775--848},
   issn={0012-7094},
   review={\MR{3934589}},
   doi={10.1215/00127094-2018-0052},
}

\bib{MR1310848}{article}{
   author={Caffarelli, Luis A.},
   author={C\'{o}rdoba, Antonio},
   title={Uniform convergence of a singular perturbation problem},
   journal={Comm. Pure Appl. Math.},
   volume={48},
   date={1995},
   number={1},
   pages={1--12},
   issn={0010-3640},
   review={\MR{1310848}},
   doi={10.1002/cpa.3160480101},
}

\bib{MR2675483}{article}{
   author={Caffarelli, L.},
   author={Roquejoffre, J.-M.},
   author={Savin, O.},
   title={Nonlocal minimal surfaces},
   journal={Comm. Pure Appl. Math.},
   volume={63},
   date={2010},
   number={9},
   pages={1111--1144},
   issn={0010-3640},
   review={\MR{2675483}},
   doi={10.1002/cpa.20331},
}

\bib{MR2782803}{article}{
   author={Caffarelli, Luis},
   author={Valdinoci, Enrico},
   title={Uniform estimates and limiting arguments for nonlocal minimal
   surfaces},
   journal={Calc. Var. Partial Differential Equations},
   volume={41},
   date={2011},
   number={1-2},
   pages={203--240},
   issn={0944-2669},
   review={\MR{2782803}},
   doi={10.1007/s00526-010-0359-6},
}
	
\bib{CaffVal}{article}{
   author={Caffarelli, Luis},
   author={Valdinoci, Enrico},
   title={Regularity properties of nonlocal minimal surfaces via limiting
   arguments},
   journal={Adv. Math.},
   volume={248},
   date={2013},
   pages={843--871},
   issn={0001-8708},
   review={\MR{3107529}},
   doi={10.1016/j.aim.2013.08.007},
}

\bib{MR546314}{article}{
   author={do Carmo, M.},
   author={Peng, C. K.},
   title={Stable complete minimal surfaces in ${\bf R}^{3}$ are planes},
   journal={Bull. Amer. Math. Soc. (N.S.)},
   volume={1},
   date={1979},
   number={6},
   pages={903--906},
   issn={0273-0979},
   review={\MR{546314}},
   doi={10.1090/S0273-0979-1979-14689-5},
}
	
\bib{MR4706440}{article}{
   author={Catino, Giovanni},
   author={Mastrolia, Paolo},
   author={Roncoroni, Alberto},
   title={Two rigidity results for stable minimal hypersurfaces},
   journal={Geom. Funct. Anal.},
   volume={34},
   date={2024},
   number={1},
   pages={1--18},
   issn={1016-443X},
   review={\MR{4706440}},
   doi={10.1007/s00039-024-00662-1},
}
	
\bib{Yauforth}{article}{
   author={Caselli, M.},
   author={Florit-Simon, E.},
   author={Serra, J.},
   title={Yau's conjecture for nonlocal minimal surfaces},
   journal={arXiv:2306.07100},
   volume={},
   date={2023},
   number={},
   pages={},
   issn={},
   review={},
   doi={},
}
	
\bib{ChoLi}{article}{
   author={Chodosh, Otis},
   author={Li,Chao},
   title={Stable minimal hypersurfaces in $\R^4$},
   journal={arXiv:2108.11462 (to appear in Acta Math.)},
   date={2021}
}

\bib{MR4546104}{article}{
   author={Chodosh, Otis},
   author={Li, Chao},
   title={Stable anisotropic minimal hypersurfaces in ${\bf R}^4$},
   journal={Forum Math. Pi},
   volume={11},
   date={2023},
   pages={Paper No. e3, 22},
   review={\MR{4546104}},
   doi={10.1017/fmp.2023.1},
}

\bib{CLMS}{article}{
   author={Chodosh, Otis},
   author={Li, Chao},
   author={Minter, Paul},
   author={Stryker, Douglas},
   title={Stable minimal hypersurfaces in $\mathbf{R}^5$},
   journal={arXiv:2401.01492},
   date={2024}
}

\bib{MR4045964}{article}{
   author={Chodosh, Otis},
   author={Mantoulidis, Christos},
   title={Minimal surfaces and the Allen-Cahn equation on 3-manifolds:
   index, multiplicity, and curvature estimates},
   journal={Ann. of Math. (2)},
   volume={191},
   date={2020},
   number={1},
   pages={213--328},
   issn={0003-486X},
   review={\MR{4045964}},
   doi={10.4007/annals.2020.191.1.4},
}

\bib{MR3981295}{article}{
   author={Cinti, Eleonora},
   author={Serra, Joaquim},
   author={Valdinoci, Enrico},
   title={Quantitative flatness results and $BV$-estimates for stable
   nonlocal minimal surfaces},
   journal={J. Differential Geom.},
   volume={112},
   date={2019},
   number={3},
   pages={447--504},
   issn={0022-040X},
   review={\MR{3981295}},
   doi={10.4310/jdg/1563242471},
}

\bib{ColdMin}{book}{
   author={Colding, Tobias Holck},
   author={Minicozzi, William P., II},
   title={A course in minimal surfaces},
   series={Graduate Studies in Mathematics},
   volume={121},
   publisher={American Mathematical Society, Providence, RI},
   date={2011},
   pages={xii+313},
   isbn={978-0-8218-5323-8},
   review={\MR{2780140}},
   doi={10.1090/gsm/121},
}

\bib{MR2783309}{article}{
   author={Da Lio, Francesca},
   author={Rivi\`ere, Tristan},
   title={Three-term commutator estimates and the regularity of
   $\frac12$-harmonic maps into spheres},
   journal={Anal. PDE},
   volume={4},
   date={2011},
   number={1},
   pages={149--190},
   issn={2157-5045},
   review={\MR{2783309}},
   doi={10.2140/apde.2011.4.149},
}

\bib{MR1942130}{article}{
   author={D\'{a}vila, J.},
   title={On an open question about functions of bounded variation},
   journal={Calc. Var. Partial Differential Equations},
   volume={15},
   date={2002},
   number={4},
   pages={519--527},
   issn={0944-2669},
   review={\MR{1942130}},
   doi={10.1007/s005260100135},
}

\bib{MR3798717}{article}{
   author={D\' avila, Juan},
   author={del Pino, Manuel},
   author={Wei, Juncheng},
   title={Nonlocal $s$-minimal surfaces and Lawson cones},
   journal={J. Differential Geom.},
   volume={109},
   date={2018},
   number={1},
   pages={111--175},
   issn={0022-040X},
   review={\MR{3798717}},
   doi={10.4310/jdg/1525399218},
}

\bib{DePhilippis-Pigati}{article}{
   author={De Philipppis, Guido},
   author={Pigati, Alessandro},
   title={
Non-degenerate minimal submanifolds as energy concentration sets: a variational approach},
   journal={preprint arXiv},
   date={2022},
}
\bib{MR3824212}{article}{
   author={Dipierro, Serena},
   author={Valdinoci, Enrico},
   title={Nonlocal minimal surfaces: interior regularity, quantitative
   estimates and boundary stickiness},
   conference={
      title={Recent developments in nonlocal theory},
   },
   book={
      publisher={De Gruyter, Berlin},
   },
   date={2018},
   pages={165--209},
   review={\MR{3824212}},
   doi={10.1515/9783110571561-006},
}


\bib{Fall}{article}{
	author={Fall, Mouhamed Moustapha},
	title={Regularity results for nonlocal equations and applications},
	journal={Calc. Var. Partial Differential Equations},
	volume={59},
	date={2020},
	number={5},
	pages={Paper No. 181, 53},
	issn={0944-2669},
	review={\MR{4153907}},
	doi={10.1007/s00526-020-01821-6},
}

\bib{Farina}{article}{
	author={Farina, Alberto},
	title={Liouville-type results for solutions of $-\Delta u=|u|^{p-1}u$ on
		unbounded domains of $\Bbb R^N$},
	language={English, with English and French summaries},
	journal={C. R. Math. Acad. Sci. Paris},
	volume={341},
	date={2005},
	number={7},
	pages={415--418},
	issn={1631-073X},
	review={\MR{2168740}},
	doi={10.1016/j.crma.2005.07.006},
}

\bib{FFMMM}{article}{
   author={Figalli, A.},
   author={Fusco, N.},
   author={Maggi, F.},
   author={Millot, V.},
   author={Morini, M.},
   title={Isoperimetry and stability properties of balls with respect to
   nonlocal energies},
   journal={Comm. Math. Phys.},
   volume={336},
   date={2015},
   number={1},
   pages={441--507},
   issn={0010-3616},
   review={\MR{3322379}},
   doi={10.1007/s00220-014-2244-1},
}

\bib{MR3680376}{article}{
   author={Figalli, Alessio},
   author={Valdinoci, Enrico},
   title={Regularity and Bernstein-type results for nonlocal minimal
   surfaces},
   journal={J. Reine Angew. Math.},
   volume={729},
   date={2017},
   pages={263--273},
   issn={0075-4102},
   review={\MR{3680376}},
   doi={10.1515/crelle-2015-0006},
}

\bib{MR562550}{article}{
   author={Fischer-Colbrie, Doris},
   author={Schoen, Richard},
   title={The structure of complete stable minimal surfaces in $3$-manifolds
   of nonnegative scalar curvature},
   journal={Comm. Pure Appl. Math.},
   volume={33},
   date={1980},
   number={2},
   pages={199--211},
   issn={0010-3640},
   review={\MR{562550}},
   doi={10.1002/cpa.3160330206},
}

\bib{Florit}{article}{
   author={Florit-Simon, Enric},
   title={Weyl Law and convergence in the classical limit for min-max nonlocal minimal surfaces},
   journal={arXiv:2406.12162},
   volume={},
   date={2024},
   number={},
   pages={},
   issn={},
   review={},
   doi={},
}

\bib{MR3814054}{article}{
   author={Gaspar, Pedro},
   author={Guaraco, Marco A. M.},
   title={The Allen-Cahn equation on closed manifolds},
   journal={Calc. Var. Partial Differential Equations},
   volume={57},
   date={2018},
   number={4},
   pages={Paper No. 101, 42},
   issn={0944-2669},
   review={\MR{3814054}},
   doi={10.1007/s00526-018-1379-x},
}

\bib{MR3743704}{article}{
   author={Guaraco, Marco A. M.},
   title={Min-max for phase transitions and the existence of embedded
   minimal hypersurfaces},
   journal={J. Differential Geom.},
   volume={108},
   date={2018},
   number={1},
   pages={91--133},
   issn={0022-040X},
   review={\MR{3743704}},
   doi={10.4310/jdg/1513998031},
}

\bib{MR1803974}{article}{
   author={Hutchinson, John E.},
   author={Tonegawa, Yoshihiro},
   title={Convergence of phase interfaces in the van der Waals-Cahn-Hilliard
   theory},
   journal={Calc. Var. Partial Differential Equations},
   volume={10},
   date={2000},
   number={1},
   pages={49--84},
   issn={0944-2669},
   review={\MR{1803974}},
   doi={10.1007/PL00013453},
}

\bib{IMN18}{article}{
	author={Irie, Kei},
	author={Marques, Fernando C.},
	author={Neves, Andr\'{e}},
	title={Density of minimal hypersurfaces for generic metrics},
	journal={Ann. of Math. (2)},
	volume={187},
	date={2018},
	number={3},
	pages={963--972},
	issn={0003-486X},
	review={\MR{3779962}},
	doi={10.4007/annals.2018.187.3.8},
}

\bib{Jiang-Ni}{article}{
	author={Jiang, Huiqiang},
	author={Ni, Wei-Ming},
	title={On steady states of van der Waals force driven thin film
		equations},
	journal={European J. Appl. Math.},
	volume={18},
	date={2007},
	number={2},
	pages={153--180},
	issn={0956-7925},
	review={\MR{2326181}},
	doi={10.1017/S0956792507006936},
}

\bib{MR1941020}{book}{
   author={Kassmann, Moritz},
   title={Harnack-Ungleichungen f\"{u}r nichtlokale Differentialoperatoren und
   Dirichlet-Formen},
   language={German},
   series={Bonner Mathematische Schriften [Bonn Mathematical Publications]},
   volume={336},
   note={Dissertation, Rheinische Friedrich-Wilhelms-Universit\"{a}t Bonn, Bonn,
   2000},
   publisher={Universit\"{a}t Bonn, Mathematisches Institut, Bonn},
   date={2001},
   pages={vi+91},
   review={\MR{1941020}},
}
		
\bib{Kawarada}{article}{
	author={Kawarada, Hideo},
	title={On solutions of initial-boundary problem for
		$u_{t}=u_{xx}+1/(1-u)$},
	journal={Publ. Res. Inst. Math. Sci.},
	volume={10},
	date={1974/75},
	number={3},
	pages={729--736},
	issn={0034-5318},
	review={\MR{0385328}},
	doi={10.2977/prims/1195191889},
}

\bib{Ma-Wei}{article}{
	author={Ma, Li},
	author={Wei, J. C.},
	title={Properties of positive solutions to an elliptic equation with negative exponent},
	journal={J. Funct. Anal.},
	volume={254},
	date={2008},
	number={4},
	pages={1058--1087},
	issn={0022-1236},
	review={\MR{2381203}},
	doi={10.1016/j.jfa.2007.09.017},
}

\bib{MR4191255}{article}{
   author={Marques, Fernando C.},
   author={Neves, Andr\'{e}},
   title={Morse index of multiplicity one min-max minimal hypersurfaces},
   journal={Adv. Math.},
   volume={378},
   date={2021},
   pages={Paper No. 107527, 58},
   issn={0001-8708},
   review={\MR{4191255}},
   doi={10.1016/j.aim.2020.107527},
}

\bib{Mazet}{article}{
   author={Mazet, Laurent},
   title={Stable minimal hypersurfaces in $\mathbb{R}^6$},
   journal={arXiv:2405.14676
   },
   date={2024}
}

\bib{Meadows}{article}{
	author={Meadows, Alexander M.},
	title={Stable and singular solutions of the equation $\Delta u=1/u$},
	journal={Indiana Univ. Math. J.},
	volume={53},
	date={2004},
	number={6},
	pages={1681--1703},
	issn={0022-2518},
	review={\MR{2106341}},
	doi={10.1512/iumj.2004.53.2560},
}

\bib{MR2483369}{article}{
   author={Meeks, William H., III},
   author={P\'{e}rez, Joaqu\'{\i}n},
   author={Ros, Antonio},
   title={Stable constant mean curvature surfaces},
   conference={
      title={Handbook of geometric analysis. No. 1},
   },
   book={
      series={Adv. Lect. Math. (ALM)},
      volume={7},
      publisher={Int. Press, Somerville, MA},
   },
   date={2008},
   pages={301--380},
   review={\MR{2483369}},
 }
   
   \bib{MR4066534}{article}{
   author={Millot, Vincent},
   author={Pegon, Marc},
   title={Minimizing 1/2-harmonic maps into spheres},
   journal={Calc. Var. Partial Differential Equations},
   volume={59},
   date={2020},
   number={2},
   pages={Paper No. 55, 37},
   issn={0944-2669},
   review={\MR{4066534}},
   doi={10.1007/s00526-020-1704-z},
}

\bib{MR4331016}{article}{
   author={Millot, Vincent},
   author={Pegon, Marc},
   author={Schikorra, Armin},
   title={Partial regularity for fractional harmonic maps into spheres},
   journal={Arch. Ration. Mech. Anal.},
   volume={242},
   date={2021},
   number={2},
   pages={747--825},
   issn={0003-9527},
   review={\MR{4331016}},
   doi={10.1007/s00205-021-01693-w},
}	

\bib{MR3900821}{article}{
   author={Millot, Vincent},
   author={Sire, Yannick},
   author={Wang, Kelei},
   title={Asymptotics for the fractional Allen-Cahn equation and stationary
   nonlocal minimal surfaces},
   journal={Arch. Ration. Mech. Anal.},
   volume={231},
   date={2019},
   number={2},
   pages={1129--1216},
   issn={0003-9527},
   review={\MR{3900821}},
   doi={10.1007/s00205-018-1296-3},
}

\bib{MR0445362}{article}{
   author={Modica, Luciano},
   author={Mortola, Stefano},
   title={Un esempio di $\Gamma ^{-}$-convergenza},
   language={Italian, with English summary},
   journal={Boll. Un. Mat. Ital. B (5)},
   volume={14},
   date={1977},
   number={1},
   pages={285--299},
   review={\MR{0445362}},
}

\bib{MEMS}{book}{
	author={Pelesko, John A.},
	author={Bernstein, David H.},
	title={Modeling MEMS and NEMS},
	publisher={Chapman \& Hall/CRC, Boca Raton, FL},
	date={2003},
	pages={xxiv+357},
	isbn={1-58488-306-5},
	review={\MR{1955412}},
}

\bib{Perez}{article}{
   author={P\'{e}rez, Joaqu\'{\i}n},
   title={Minimal and constant mean curvature surfaces},
   journal={lecture notes available online at https://wpd.ugr.es/~jperez/wordpress/wp-content/uploads/todoeng.pdf},
}

\bib{Pit81}{book}{
   author={Pitts, Jon T.},
   title={Existence and regularity of minimal surfaces on Riemannian
   manifolds},
   series={Mathematical Notes},
   volume={27},
   publisher={Princeton University Press, Princeton, N.J.; University of
   Tokyo Press, Tokyo},
   date={1981},
   pages={iv+330},
   isbn={0-691-08290-1},
   review={\MR{626027}},
}

\bib{MR3090533}{article}{
   author={Savin, Ovidiu},
   author={Valdinoci, Enrico},
   title={Regularity of nonlocal minimal cones in dimension 2},
   journal={Calc. Var. Partial Differential Equations},
   volume={48},
   date={2013},
   number={1-2},
   pages={33--39},
   issn={0944-2669},
   review={\MR{3090533}},
   doi={10.1007/s00526-012-0539-7},
}

\bib{Simon}{article}{
	author={Simon, Leon},
	title={Some examples of singular minimal hypersurfaces},
	journal={in preparation},
}

\bib{MR233295}{article}{
   author={Simons, James},
   title={Minimal varieties in riemannian manifolds},
   journal={Ann. of Math. (2)},
   volume={88},
   date={1968},
   pages={62--105},
   issn={0003-486X},
   review={\MR{233295}},
   doi={10.2307/1970556},
}

\bib{Son18}{article}{
   author={Song, Antoine},
   title={Existence of infinitely many minimal hypersurfaces in closed manifolds},
   journal={arXiv:1806.08816},
   date={2018}
}

\bib{MR3935478}{article}{
   author={Wang, Kelei},
   author={Wei, Juncheng},
   title={Finite Morse index implies finite ends},
   journal={Comm. Pure Appl. Math.},
   volume={72},
   date={2019},
   number={5},
   pages={1044--1119},
   issn={0010-3640},
   review={\MR{3935478}},
   doi={10.1002/cpa.21812},
}

\bib{MR4021161}{article}{
   author={Wang, Kelei},
   author={Wei, Juncheng},
   title={Second order estimate on transition layers},
   journal={Adv. Math.},
   volume={358},
   date={2019},
   pages={106856, 85},
   issn={0001-8708},
   review={\MR{4021161}},
   doi={10.1016/j.aim.2019.106856},
}
	
\bib{White}{article}{
   author={White, Brian},
   title={Lectures on Minimal Surface Theory},
   journal={arXiv:1308.3325},
   date={2013}
}

\bib{Yau82}{article}{
   author={Yau, Shing Tung},
   title={Problem section},
   conference={
      title={Seminar on Differential Geometry},
   },
   book={
      series={Ann. of Math. Stud.},
      volume={102},
      publisher={Princeton Univ. Press, Princeton, N.J.},
   },
   date={1982},
   pages={669--706},
   review={\MR{645762}},
}

\bib{MR4172621}{article}{
   author={Zhou, Xin},
   title={On the multiplicity one conjecture in min-max theory},
   journal={Ann. of Math. (2)},
   volume={192},
   date={2020},
   number={3},
   pages={767--820},
   issn={0003-486X},
   review={\MR{4172621}},
   doi={10.4007/annals.2020.192.3.3},
}

\end{biblist}
\end{bibdiv}

\vfill
\end{document}